\documentclass[reqno,12pt]{amsart} 
\usepackage{mathrsfs,amsmath,amsthm,amssymb,amsfonts,bbm}
\usepackage{esint}
\usepackage{tikz-cd}

\usepackage{color}
\usepackage[hyperfootnotes=false,pagebackref]{hyperref}
\hypersetup{
  colorlinks,
  citecolor=blue,
  linkcolor=red,
  urlcolor=blue}

\usepackage[left=1.5in, right=1.5in, top=1.2in, bottom = 1.2in]{geometry}
\allowdisplaybreaks[3]

\newtheorem{theorem}{Theorem}[section]
\newtheorem{lemma}[theorem]{Lemma}
\newtheorem{proposition}[theorem]{Proposition}
\newtheorem{corollary}[theorem]{Corollary}
\theoremstyle{definition}
\newtheorem{definition}[theorem]{Definition}

\theoremstyle{remark}
\newtheorem{remark}[theorem]{Remark}

\numberwithin{equation}{section}

\usepackage{comment}

\newcommand{\Ag}[1]{\langle#1\rangle}

\newcommand{\M}{\mathcal{M}}

\newcommand{\Z}{\mathbb{Z}}

\newcommand{\R}{\mathbb{R}}
\newcommand{\T}{\mathbb{T}}
\newcommand{\N}{\mathbb{N}}
\newcommand{\e}{\varepsilon}

\newcommand{\jz}[1]{\textcolor{red}{#1}}

\DeclareMathOperator{\diver}{div}

\newcommand{\doublerightarrow}{%
  \mathrel{%
    \mathchoice
      {\rightarrow\mspace{-10mu}\rightarrow} 
      {\rightarrow\mspace{-10mu}\rightarrow} 
      {\rightarrow\mspace{-10mu}\rightarrow} 
      {\rightarrow\mspace{-10mu}\rightarrow} 
  }%
}

\begin{document}
\title[Infinitely Many Scales]{Quantitative homogenization of elliptic equations with infinitely many scales}

\author{Zhongwei Shen}

\address{Zhongwei Shen: Institute for Theoretical Sciences, Westlake University, No. 600 Dunyu Road, Xihu District, Hangzhou,
Zhejiang 310030, P.R. China.}
\email{shenzhongwei@westlake.edu.cn}

\author{Yao Xu}
\address{Yao Xu: School of Mathematical Sciences, University of Chinese Academy of Sciences, Beijing, 100049, China.}
\email{xuyao89@gmail.com, xuyao@ucas.ac.cn}

\author{Jinping Zhuge}
\address{Jinping Zhuge: Morningside Center of Mathematics, Academy of Mathematics and systems science,
Chinese Academy of Sciences, Beijing 100190, China.}
\email{jpzhuge@amss.ac.cn}

\date{}

\begin{abstract}
    In this paper, we develop a general homogenization theory for elliptic equations with coefficients that  oscillate periodically at infinitely many scales $\varepsilon = (\varepsilon_1, \varepsilon_2, \cdots) \in (0,1)^\infty$, with $\e_1>\e_2>\cdots$ and $\varepsilon_n \to 0$ as $n \to \infty$. Such problems arise naturally in the study of fractal materials and diffusion in fluids. Under suitable scale-separation assumptions, we prove a qualitative homogenization theorem and obtain optimal $L^2$ convergence rates. We also establish interior and boundary Lipschitz estimates that are uniform in $\e$. \\

    \noindent \textbf{Keywords:} Multiscale homogenization; convergence rates; uniform Lipschitz estimates.
\end{abstract}

\maketitle


\section{Introduction}

\subsection{Motivations}
In this paper, we study the multiscale homogenization of elliptic equations. Precisely, we consider the boundary value problem,
\begin{equation}\label{eq_infinite}
    \begin{cases}
        -\mathrm{div}(A^{\varepsilon}(x)\nabla u_\varepsilon)=f&\text{in }\Omega,\\
        u_\varepsilon=g& \text{on }\partial\Omega,
   \end{cases}
\end{equation}
where $\Omega$ is a bounded Lipschitz domain in $\R^d$. In the classical homogenization theory of finitely many oscillating scales, one takes the coefficient matrix in the form of $A^\e(x) = A(x/\e_1, x/\e_2,\cdots, x/\e_n)$ with $\e = (\e_1, \e_2,\cdots, \e_n) \in (0,1)^n$ and $A = A(y_1,y_2,\cdots, y_n)$ satisfying certain self-similar structure conditions in each $y_i \in \R^d$. Here $n$ is a fixed integer that represents the number of oscillating scales. 
The standard method for the multiscale homogenization problem is the so-called reiterated homogenization \cite{Bensoussan1978,Allaire1996Multiscale, LLPW01, Niu20_reiterated}. More recently, new approaches have been developed to study uniform regularity \cite{NZ23, NZ24} and optimal convergence rates \cite{Niu25_Optimal}. Nevertheless, all of these results are proved using some inductive arguments on the number of scales $n$, and the constants in the estimates generally blow up as $n \to \infty$.

However, in many theoretical treatments of physical problems, it is natural and crucial to consider fractal-like materials with a large number of or even infinitely many oscillating scales. Particularly in fluid dynamics it is believed that a large collection of vortices across a wide range of scales may be an important factor driving the turbulence evolution, as Lewis F. Richardson summarized in rhyming verse \cite{Ric07} back in 1922: ``\textit{Big whirls have little whirls that feed on their velocity, and little whirls have lesser whirls and so on to viscosity.}'' While the real turbulence dynamics is much more complicated, the multiscale (or perpetual) homogenization theory has been applied in some simplified physical models, in the spirit of renormalization group method\footnote{Roughly speaking, renormalization group is a mathematical procedure to average out a material gradually from small scales to large scales, throw out the unnecessary small details and keep the most important information to get the large-scale properties of the material.}, to understand phase transitions, turbulent mixing, anomalous diffusion, etc; see \cite{Ave96} for a nice introduction in this direction. 
We refer to \cite{AM91,FP94,MK99,BO02,BO03} and references therein for some early developments on this topic. Recently, reiterated homogenization with infinitely many scales has been applied to prove the anomalous dissipation for advection-diffusion equations \cite{AV25,BSW23,BSW26}; and the coarse-graining theory\footnote{The coarse-graining theory in stochastic homogenization \cite{AS16,AKM19}, developed by S. Armstrong, T. Kuusi, et al., can be thought of as a rigorous renormalization group argument.} developed in stochastic homogenization has been used to obtain the superdiffusion for the same type of equations \cite{ABK24, ABK26}.










Due to the important applications described above, there is a practical need to develop a general homogenization theory for elliptic equations and systems\footnote{The methods in this paper do not distinguish between elliptic equations and systems.} with infinitely many scales --- this is the main objective of the present paper. At the same time, regarding multiscale homogenization itself, we also aim to understand the scope and potential limitations of homogenization problems involving infinitely many scales.

\subsection{Set-ups and main results}
Returning to equation \eqref{eq_infinite}, the first question concerns the well-definedness of coefficients involving infinitely many scales.
 Let $\e = (\e_1,\e_2,\cdots) \in (0,1)^\infty$ be the oscillating scales and $1 = \e_0 > \e_1 > \e_2 > \cdots$ with $\lim_{n\to \infty} \e_n = 0$. We define the coefficient matrix $A^\e(x)$ with infinitely many scales as the $L^\infty$-limit of coefficient matrices with finitely many scales; i.e., $A^\e(x) = \lim_{n\to \infty} A_n^\e(x)$, where $A_n^\e(x) = A_n(x,x/\e_1,\cdots, x/\e_n)$ is a coefficient matrix with $n$ oscillating scales, and $A_n = A_n(y_0, y_1,\cdots, y_n) \in C(\Omega \times \T^{d\times n})$ is periodic in each $y_i\in \T^d$ for $1\le i\le n$. If $A_n^\e(x)$ is continuous for each $n$, then the $L^\infty$-limit $A^\e(x)$ is also continuous and well-defined. 
Moreover, if we set
\begin{align*}
    B_n(y_0,y_1,\cdots, y_n) = A_n(y_0,y_1,\cdots, y_n) - A_{n-1}(y_0,y_1,\cdots, y_{n-1}),
\end{align*}
then
\begin{align}
  \label{def_An}
  A_n(y_0, y_1, \cdots, y_n)=\sum_{\ell=0}^nB_\ell(y_0, y_1, \cdots, y_\ell).
\end{align}
Thus, in this paper, we shall consider $A_n$ in a form of \eqref{def_An} and 
\begin{equation}\label{Linfity_limit}
    A^\e(x) = \lim_{n\to \infty} A_n^\e(x) \quad \text{uniformly in } x\in \Omega.
\end{equation}
Moreover, we assume that each $d\times d$ matrix $B_\ell(y_0, y_1, \cdots, y_\ell)$, 
defined on $\Omega \times \mathbb{R}^{d\times\ell}$,
is continuous and periodic in $y_1, \cdots, y_\ell$. 

Before stating our main assumptions and results, 
let us first mention that the coefficient matrix $A^\e(x)$ defined above has a fractal structure with infinitely many scales. Indeed, we can formally write 
\begin{equation*}
    A_\infty(y_0, y_1,\cdots) = \sum_{\ell=0}^\infty B_\ell(y_0, y_1, \cdots, y_\ell),
\end{equation*}
and
\begin{equation*}
    A^\e(x) = A_\infty\Big(x,\frac{x}{\e_1}, \frac{x}{\e_2}, \cdots \Big).
\end{equation*}
At scale $\e_0 = 1$, we see infinitely many oscillating scales in a decreasing order $\e_1 > \e_2 > \cdots $. If we zoom in and look at the coefficients at the scale $\e_m$, the rescaled coefficient matrix takes a form of
\begin{equation*}
    A^\e(\e_m x) = A_\infty\Big(\e_m x,\frac{ x}{\e_1/\e_m}, \frac{x}{\e_2/\e_m}, \cdots, \frac{x }{\e_{m-1}/\e_m}, x, \frac{x}{\e_{m+1}/\e_m}, \cdots \Big).
\end{equation*}
Observe that although the first $m$
oscillating scales are flattened, we are still left with infinitely many oscillating scales: 
 $\frac{\e_{m+1}}{\e_{m}} > \frac{\e_{m+2}}{\e_m} > \cdots $. This means that no matter how small a scale we zoom in to, we always see the oscillations of coefficients at infinitely many scales --- a phenomenon that demonstrates the characteristic of fractal structures.

We now introduce the ellipticity, boundedness and periodicity assumptions for $\{ A_n\}_{n\in \N}$ given in \eqref{def_An}.
\begin{itemize}
    \item Ellipticity: There exists a fixed $\mu > 0$ such that every $n \in \N$,
    \begin{equation}\label{cond_ellipticity}
        \xi \cdot A_n\xi \geq \mu|\xi|^2\quad \text{for any }\xi \in\mathbb{R}^{d}; 
    \end{equation}

    \item Boundedness: 
    \begin{equation}\label{cond_bounded}
        \sum_{\ell = 0}^\infty \| B_\ell \|_{L^\infty} \le \mu^{-1}.
    \end{equation}

    \item 1-Periodicity: For arbitrarily $n \in \N_+:= \N \setminus \{0\}$,
    \begin{equation}\label{cond_periodicity}
        A_n(y_0, y_1 + z_1, \cdots, y_n + z_n ) = A_n(y_0, y_1, \cdots, y_n),
    \end{equation}
    for any $z_j \in \Z^d, j = 1,2,\cdots, n$.
\end{itemize}
Clearly, \eqref{cond_bounded} implies that each $A_n$ satisfies the boundedness condition uniformly in $n$, i.e., 
\begin{equation}\label{cond_bounded-A}
    |A_n \xi| \le \mu^{-1} |\xi| \quad \text{for any } \xi\in \R^d.
\end{equation}
As a result, the limiting matrix $A^\varepsilon(x)$ also satisfies the same ellipticity and boundedness conditions and the Dirichlet problem \eqref{eq_infinite} is well-posed for $f\in H^{-1}(\Omega)$ and $g\in H^{1/2}(\partial \Omega)$. 


Our first result is a qualitative homogenization theorem. The same as in homogenization with finitely many scales \cite{Allaire1996Multiscale}, the qualitative homogenization holds under the scale-separation condition:
\begin{equation}\label{scale-separation}
    \e_1 \to 0 \text{ and } \frac{\e_{k+1}}{\e_k} \to 0 \quad \text{for any } k \ge 1,
\end{equation}
where $\e = (\e_1,\e_2,\cdots)$ is understood as a sequence of elements in $(0,1)^\infty$ without explicit indices. Throughout the paper, for convenience, we will write the condition \eqref{scale-separation} simply as $\e \doublerightarrow 0$.

\begin{theorem}\label{thm.qualitative}
Let $\Omega$ be a bounded Lipschitz domain in $\R^d$. Assume \eqref{cond_ellipticity}, \eqref{cond_bounded} and \eqref{cond_periodicity}. Let $f\in H^{-1}(\Omega)$ and $g\in H^{1/2}(\partial\Omega)$. Then the solution $u_\varepsilon$ of \eqref{eq_infinite} converges to $u_0$ weakly in $H^1(\Omega)$ as $\varepsilon\doublerightarrow 0$, where $u_0$ is the solution to
  \begin{equation}
    \label{eq_homogenized}
    \begin{cases}
      -\mathrm{div}(\widehat{A}\nabla u_0)=f&\text{in }\Omega,\\
      u_0=g&\text{on }\partial\Omega,
    \end{cases}
  \end{equation}
  and $\widehat{A}=\widehat{A}(x)$ is the homogenized coefficient matrix given in Theorem \ref{qual-homog_thm_hatA}. Moreover, as $\varepsilon\doublerightarrow 0$, $A^\varepsilon(x)$ $H$-converges to $\widehat{A}$ in $\mathcal{M}(\mu, \mu^{-3}; \Omega)$ (see Section \ref{sec_compactness-H} for the definition of $H$-convergence). 
\end{theorem}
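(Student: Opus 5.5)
The strategy is to reduce the infinite-scale problem to the finite-scale reiterated homogenization theory. The bridge is the uniform approximation \eqref{Linfity_limit}, combined with the stability of $H$-convergence under small $L^\infty$ perturbations and with the convergence of the finite-scale homogenized matrices $\widehat{A}_n$ to $\widehat{A}$ (Theorem \ref{qual-homog_thm_hatA}).

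\emph{Step 1 (finite scales).} For each fixed $n$, $A_n^\e(x)=A_n(x,x/\e_1,\dots,x/\e_n)$ is a continuous, periodic $n$-scale coefficient. By \eqref{cond_ellipticity} and \eqref{cond_bounded-A} it lies in $\mathcal M(\mu,\mu^{-1};\Omega)$ uniformly in $n$. I will invoke classical reiterated homogenization in the form of Allaire--Briane: as $\e\doublerightarrow0$, only $\e_1,\dots,\e_n$ matter, and they are well separated, so $A_n^\e$ $H$-converges to $\widehat{A}_n(x)$. This matrix is obtained by iterated homogenization from the finest scale $y_n$ down to $y_1$, with $y_0=x$ treated as a parameter. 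I take Theorem \ref{qual-homog_thm_hatA} to state that $\widehat{A}_n\to\widehat{A}$, uniformly in $x$ or at least in $L^\infty$, and that $\widehat{A}\in\mathcal M(\mu,\mu^{-3};\Omega)$. The constant $\mu^{-3}$ comes from the standard fact that $H$-limits of matrices in $\mathcal M(\mu,\mu^{-1})$ lie in $\mathcal M(\mu,\mu^{-3})$ in the nonsymmetric case. If that convergence is not available directly, I would prove it by the same perturbation argument as in Step 2, applied to $\widehat{A}_n$ and $\widehat{A}_m$: the $H$-limits of two sequences that are close in $L^\infty$ are themselves close.

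\emph{Step 2 (perturbation).} Set $\delta_n=\sup_\e\|A^\e-A_n^\e\|_{L^\infty}\le\sum_{\ell>n}\|B_\ell\|_{L^\infty}$, which tends to $0$ by \eqref{cond_bounded}, uniformly in $\e$. This uniformity is crucial. Fix $f,g$, and let $u_\e$ and $u_\e^n$ solve \eqref{eq_infinite} with coefficients $A^\e$ and $A_n^\e$ respectively. Subtracting the equations and testing with $u_\e-u_\e^n\in H^1_0$ gives
\[
\mu\|\nabla(u_\e-u_\e^n)\|_{L^2}\le \delta_n\|\nabla u_\e^n\|_{L^2}\le C\delta_n\big(\|f\|_{H^{-1}}+\|g\|_{H^{1/2}}\big).
\]
Likewise, if $u_0^n$ solves the $\widehat{A}_n$ problem, then $\|\nabla(u_0-u_0^n)\|_{L^2}\le C\|\widehat{A}-\widehat{A}_n\|_{L^\infty}\to0$.

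\emph{Step 3 (conclusion).} By the energy estimate, $u_\e$ is bounded in $H^1$. For any $\phi\in H^{-1}(\Omega)=(H^1_0)^*$ (or any element of the dual of $H^1$), I split
\[
\langle\phi,u_\e-u_0\rangle=\langle\phi,u_\e-u_\e^n\rangle+\langle\phi,u^n_\e-u^n_0\rangle+\langle\phi,u^n_0-u_0\rangle .
\]
Taking the limsup as $\e\doublerightarrow0$ kills the middle term by Step 1. Then letting $n\to\infty$ kills the outer terms by Step 2, which gives weak $H^1$ convergence. For $H$-convergence, I also need the fluxes: $A^\e\nabla u_\e-A_n^\e\nabla u_\e^n$ has $L^2$ norm $\le C\delta_n$, and $\widehat{A}\nabla u_0-\widehat{A}_n\nabla u_0^n\to0$. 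The same three-term argument then yields $A^\e\nabla u_\e\rightharpoonup\widehat{A}\nabla u_0$ weakly in $L^2$, for arbitrary $f$ and zero boundary data. That is precisely the definition of $H$-convergence in Section \ref{sec_compactness-H}.

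\emph{Main obstacle.} I expect the hardest part to be Step 1, together with the identification of $\widehat{A}$. The difficulty is making sure the finite-scale $H$-convergence holds with $x$-dependence through $y_0$ and only continuity assumptions, and that $\widehat{A}_n\to\widehat{A}$ in $L^\infty$, not merely pointwise. If only pointwise convergence is available, Step 2 for $u_0^n$ would instead use dominated convergence in the form $\|(\widehat{A}-\widehat{A}_n)\nabla u_0\|_{L^2}\to0$, which suffices.
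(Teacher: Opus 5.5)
Your proposal is correct and follows the paper's proof essentially step for step. The ingredients match: finite-scale reiterated homogenization $A_n^\e\xrightarrow{H}\widehat{A}_n$; the $\e$-uniform bound $\|A^\e-A_n^\e\|_{L^\infty}\le\sum_{\ell>n}\|B_\ell\|_{L^\infty}$ together with $\|\widehat{A}_n-\widehat{A}\|_{L^\infty}\to 0$ (Theorem \ref{qual-homog_thm_hatA}, via $H$-stability); energy estimates for both the solutions and the fluxes; and a three-term splitting with $\limsup_{\e\doublerightarrow 0}$ taken before $n\to\infty$. The paper tests $\nabla u_\e-\nabla u_0$ against $L^2$ vector fields and uses equal traces, which is equivalent to your duality pairing. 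The only slip is notational: with the paper's definition of $\M(\alpha,\beta;\Omega)$, nonsymmetric matrices with ellipticity $\mu$ and bound $\mu^{-1}$ lie in $\M(\mu,\mu^{-3};\Omega)$ rather than $\M(\mu,\mu^{-1};\Omega)$, and Lemma \ref{case-n=1_lem_Hcompact} keeps $H$-limits in that same class, so nothing in your argument changes.
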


To establish the optimal convergence rates, we assume that $B_\ell$'s are Lipschitz continuous and decay at a certain rate as $\ell \to \infty$. Specifically, let $\delta = (\delta_0, \delta_1,\delta_2,\cdots) \in [0,\infty)^\infty$ and assume
\begin{align}\label{cond_B}
  \max\Big\{\|B_\ell\|_{L^\infty}, \max_{0\leq j\leq \ell}\|\nabla_{y_j}B_\ell\|_{L^\infty}\Big\}\leq \delta_\ell\quad\text{with}\quad [\delta]_0:=\sum_{\ell=0}^\infty \delta_\ell<\infty.
\end{align}
We will use the notation 
\begin{align}
  \label{def_norm_delta}
  [\delta]_\alpha:=\sum_{\ell=1}^\infty \ell^\alpha \delta_\ell\quad \text{ for } \alpha>  0.
\end{align}
The boundedness of $[\delta]_0$ in \eqref{cond_B} implies \eqref{cond_bounded} and
ensures that $A_n^\varepsilon(x)$ has a uniform limit $A^\varepsilon(x) $ that is continuous in $\Omega$.

\begin{theorem}\label{thm.L2rates}
    Let $\Omega$ be a bounded $C^{1,1}$ domain in $\R^d$. Assume \eqref{cond_ellipticity}, \eqref{cond_periodicity} and \eqref{cond_B}.
    Let $u_\e$ and $u_0$ be the weak solutions of \eqref{eq_infinite} and \eqref{eq_homogenized}, respectively.
    Suppose $ [\delta]_1 < \infty$. Then
        \begin{align}\label{est.rate1}
        \quad \|u_\varepsilon-u_0\|_{L^2(\Omega)}\leq C\Lambda [\delta]_1\sup_{k\geq 1} \frac{\varepsilon_k}{\varepsilon_{k-1}} \cdot\left(\|f\|_{L^2(\Omega)}+\|g\|_{H^{3/2}(\partial\Omega)}\right),
        \end{align}
        where $C$ depends only on $d, \mu, \Omega$ and $\Lambda = \Lambda_\delta$ is a constant depending increasingly on $[\delta]_0$ and $[\delta]_1$.

\end{theorem}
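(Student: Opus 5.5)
We plan a telescoping reiterated-homogenization argument that compares $A^\e$ to $A_n^\e$, removes one scale at a time from the finest end, and keeps track of how each step's error depends on $\ell$. Write $\rho:=\sup_{k\ge1}\e_k/\e_{k-1}$. If $\rho$ is not small, say $\rho\ge c_0$, the estimate \eqref{est.rate1} follows from the energy bound $\|u_\e\|_{H^1}+\|u_0\|_{H^1}\le C(\|f\|_{L^2}+\|g\|_{H^{3/2}})$, provided $\Lambda[\delta]_1\gtrsim 1$; this can be arranged by letting $\Lambda$ absorb the constant. So we assume $\rho$ small.

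\textbf{Step 1: homogenized coefficients scale by scale.} We use the hierarchy of effective matrices from Theorem \ref{qual-homog_thm_hatA}. For each $n$, homogenize $A_n(y_0,\dots,y_n)$ in $y_n$ to obtain $\widehat A_{n,n-1}(y_0,\dots,y_{n-1})$, then in $y_{n-1}$, and so on. In the limit this gives matrices $\widehat A^{(k)}(y_0,\dots,y_k)$, the effective coefficient after averaging out all scales finer than $\e_k$; here $\widehat A^{(0)}=\widehat A$. The key structural estimate to prove is
\[
\|\widehat A^{(k)}-A_k\|_{L^\infty}\le C\sum_{\ell>k}\delta_\ell ,
\qquad
\|\nabla_{y_j}\widehat A^{(k)}\|_{L^\infty}\le C\Big(\sum_{\ell\ge j}\delta_\ell\Big) \quad (j\le k).
\]
To get this, note that a perturbation $B_\ell$ of size $\delta_\ell$ changes cell correctors by $O(\delta_\ell)$ in $H^1$, uniformly with respect to ellipticity. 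The Lipschitz dependence on the slow variables follows by differentiating the cell problem in $y_j$.

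\textbf{Step 2: one-step error.} Let $u^{(k)}$ solve the problem with coefficient $\widehat A^{(k)}(x,x/\e_1,\dots,x/\e_k)$, so that $u^{(0)}=u_0$ and $u^{(k)}\to u_\e$ as $k\to\infty$ by the $L^\infty$ convergence \eqref{Linfity_limit}. We then estimate
\[
\|u^{(k)}-u^{(k-1)}\|_{L^2}\lesssim \frac{\e_k}{\e_{k-1}}\,\Big(\sum_{\ell\ge k}\delta_\ell\Big)\cdot(\text{data}).
\]
Passing from $u^{(k)}$ to $u^{(k-1)}$ homogenizes only in $y_k$; the slower variables $x/\e_1,\dots,x/\e_{k-1}$ act as a slowly varying parameter at scale $\e_{k-1}$. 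After rescaling by $\e_{k-1}$, this is a single-scale locally periodic homogenization problem with ratio $\e_k/\e_{k-1}$. The oscillating part of the coefficient in $y_k$ has size $\sum_{\ell\ge k}\delta_\ell$ (the matrix $\widehat A^{(k)}-\widehat A^{(k-1)}$ and its corrector are that small), so the first-order corrector and flux corrector carry this factor. We then run the standard $L^2$ duality argument (Suslina/Shen style) with smoothing at scale $\e_k$. The $C^{1,1}$ boundary, the $H^2$ regularity of the dual problems and the $L^\infty$ Lipschitz bounds are uniform by Step 1. The dependence on the Lipschitz constants of the slow variables is exactly where $\Lambda$, a function of $[\delta]_0,[\delta]_1$, enters.

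\textbf{Step 3: summation.} Summing the Step 2 bound over $k$ and using
\[
\sum_k\sum_{\ell\ge k}\delta_\ell=\sum_\ell \ell\,\delta_\ell=[\delta]_1
\]
gives \eqref{est.rate1}. Since $u^{(K)}\to u_\e$ in $H^1$ as $K\to\infty$, we can pass to the limit.

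\textbf{Main obstacle.} The hard part is Step 2: obtaining the $L^2$ rate with constants independent of $k$. Gradients of $u^{(k)}$ involve derivatives $\e_j^{-1}\nabla_{y_j}$ of the slow variables, which are huge compared with scale $1$. The duality argument has to be done at scale $\e_{k-1}$, using bounds on $\nabla^2 u^{(k-1)}$ that are uniform in the number of scales. Such $H^2$-type control is false globally for multiscale coefficients. We would instead use the uniform Lipschitz estimates and the corresponding $L^2$ bounds of the form $\|\nabla u\|_{L^2}$ on $\e_{k-1}$-layers near $\partial\Omega$ and in the interior. Establishing these inductively with constants depending only on $[\delta]_0,[\delta]_1$ is the crux.
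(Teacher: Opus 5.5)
There is a genuine gap, and it sits where you placed the ``main obstacle'': Step~2 is asserted rather than proved, and the route you propose for it is not available.

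\textbf{No uniform regularity is needed.} The paper does not rescale by $\varepsilon_{k-1}$. It applies an improved one-step rate at unit scale to
\[
A'(x,y)=E_0(x)+E_1(x,y),\qquad E_0=A_{k-1}(x,x/\varepsilon_1,\dots,x/\varepsilon_{k-1}),\qquad E_1=B^n_k(x,x/\varepsilon_1,\dots,x/\varepsilon_{k-1},y).
\]
The error bound in that one-step rate is explicit in $\|E_1\|_\infty$, $\|\nabla_xE_1\|_\infty$ and $\|\nabla_xA'\|_\infty$. The argument then uses only the ordinary global estimate $\|\nabla^2u\|_{L^2}\le C(1+\|\nabla\widehat A\|_\infty)(\|f\|_{L^2}+\|g\|_{H^{3/2}})$ and the boundary-layer bound. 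This works even though $\|\nabla\widehat A\|_\infty\lesssim\|\nabla_xA'\|_\infty\lesssim[\delta]_0\sum_{j<k}\varepsilon_j^{-1}$ is huge. Every such factor appears multiplied by $\varepsilon_k$ and so becomes $\sum_{j<k}\varepsilon_k/\varepsilon_j$. The quadratic terms are removed by comparing with the trivial energy bound.

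The uniform Lipschitz estimates you want to invoke are unavailable here. They need $[\delta]_{1+\rho}<\infty$, a quantitative separation condition and $\varepsilon_1\le\nu$, none of which Theorem~\ref{thm.L2rates} assumes. Moreover, in the paper they are built on the convergence-rate machinery, so using them here would be circular.

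\textbf{Missing decay of the tail's slow gradients.} In the duality argument, the terms $\varepsilon\nabla_x\chi$, $\varepsilon\nabla_x\phi$ and $\varepsilon\nabla_x(A-\widehat A)$ are controlled by $\|\nabla_xA'\|_\infty\|E_1\|_\infty+\|\nabla_xE_1\|_\infty$. So you need $\max_{j\le k}\|\nabla_{y_j}B^n_k\|_\infty\lesssim R_k:=\sum_{\ell\ge k}\delta_\ell$.

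Your Step~1 bound $\|\nabla_{y_j}\widehat A^{(k)}\|_\infty\lesssim\sum_{\ell\ge j}\delta_\ell$ is just $H$-stability (Lemma~\ref{qual-homog_lemma_bkn}) and does not give this. It does not decay in $k$; for $j=0$ it is only bounded by $[\delta]_0$. With it, the per-step error is $\sum_{j<k}(\varepsilon_k/\varepsilon_j)R_j$. Summing over $k$ gives $C([\delta]_0+[\delta]_1)\mathcal E$, which loses the factor $[\delta]_1$ the statement requires (with $\Lambda$ increasing in $[\delta]_1$). Your claimed per-step bound $(\varepsilon_k/\varepsilon_{k-1})R_k$ silently presupposes the strong estimate.

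The paper obtains it by differentiating the cell problems in the backward recursion $B^n_k=B_k+\langle B^n_{k+1}(I+\nabla_{y_{k+1}}\chi_{A^{k+1}_n})\rangle_{y_{k+1}}$. This gives
\[
\delta^n_k\le\delta_k+(1+C_0R^n_{k+1})\,\delta^n_{k+1}+C_0[\delta]_0(R^n_{k+1})^2 .
\]
The accumulated product is then bounded by $\prod_\ell(1+C_0R_{\ell+1})\le e^{C_0[\delta]_1}$. This is exactly where $[\delta]_1<\infty$ and the constant $\Lambda$ enter.

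\textbf{The large-ratio case is also wrong as stated.} Your reduction when $\mathcal E=\sup_k\varepsilon_k/\varepsilon_{k-1}$ is not small cannot work: $\Lambda[\delta]_1\gtrsim1$ cannot be arranged, since $[\delta]_1$ may be arbitrarily small and $\Lambda$ must increase with it. Instead use $\widehat A_0=B_0$ and $H$-stability:
\[
\|A^\varepsilon-\widehat A\|_\infty\le\|A^\varepsilon-B_0\|_\infty+\|B_0-\widehat A\|_\infty\le C\sum_{k\ge1}\delta_k\le C[\delta]_1,
\]
and then conclude with the energy estimate.
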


\begin{remark}\label{rmk.typicalcase}
    The convergence rate in Theorem \ref{thm.L2rates} recovers the optimal convergence rates in the case of finitely many scales. In fact, setting $\delta_k = 0$ for all $k > m$, we obtain the same convergence rate as in \cite{Niu20_reiterated} for $m$ scales. Moreover, the convergence rate in \eqref{est.rate1}
    is also optimal in some typical cases of infinitely many scales. In particular, if $\varepsilon_j=\varepsilon_1^j$ with 
    $\e_1 \in (0,1)$, we have
    \begin{align*}
        \sup_{k\geq 1} \frac{\e_k}{\e_{k-1}}=\varepsilon_1. 
    \end{align*}
\end{remark}

\begin{remark}
    The factor $[\delta]_1$ in \eqref{est.rate1} provides additional smallness for the error estimate. For example, consider the Weierstrass-type\footnote{The function in \eqref{W-F} is 
    continuous but differentiable nowhere for suitable choices of $B_k$, $\e_1$ and $\tau$.} functions,
    \begin{equation}\label{W-F}
        A^\e(x) = B_0(x) + \sum_{k =1}^\infty \tau^k B_k(x/\e_1^k),
    \end{equation}
    with $\| B_k\|_\infty + \| \nabla  B_k \|_\infty \le C$.
    Note that  $[\delta]_1 = \sum_{k=1}^\infty C k \tau^k \le C \tau $ (assume $\tau \le 1/2$), which indicates that the oscillating part of $A^\e$ is a relatively small perturbation of the regular principal part $B_0(x)$. By \eqref{est.rate1}, we see that 
    \[
    \|u_\varepsilon-u_0\|_{L^2(\Omega)} \le C\e_1 \tau \left(\|f\|_{L^2(\Omega)}+\|g\|_{H^{3/2}(\partial\Omega)}\right).
    \]
    The improved convergence rate of $O(\e_1 \tau)$ is a combined effect of small perturbation and homogenization. As we shall explain later, this observation is key to controlling the accumulated errors in our proof of uniform Lipschitz regularity
    for the equation \eqref{eq_infinite}.
\end{remark}

The next two theorems give the uniform Lipschitz estimates under a natural scale-separation condition.

\begin{theorem}\label{thm.IntLip}
    Let $\{A_n\}_{n\in \N}$ satisfy the same assumptions as in Theorem \ref{thm.L2rates}. Also assume that  $[\delta]_{1+\rho}<\infty$ for some $\rho>0$. Let $u_\varepsilon$ be a solution to 
    \begin{equation*}
        -\mathrm{div}(A^\varepsilon(x)\nabla u_\varepsilon)=f \quad \text{in } B_R,
    \end{equation*}
    where $0<R\leq 1$ and $f\in L^p(B_{R})$ with $p>d$. Suppose that there exists $N \ge 1$ such that
  \begin{align}\label{int-lip_cond_separation_ratio}
    \bigg(\frac{\varepsilon_{k+1}}{\varepsilon_{k}}\bigg)^N\leq \frac{\varepsilon_{i+1}}{\varepsilon_{i}}, \quad \text{for any } k > i \ge 0.
  \end{align}
    Then there exists a constant $\nu>0$, depending only on $d, \mu, p, \rho, N$, $[\delta]_0 $ and $ [\delta]_1$, such that whenever $\varepsilon_1\leq \nu$, we have
  \begin{align}\label{int-lip_es_Lip}
    \| \nabla u_\e \|_{L^\infty(B_{R/2})} \leq C\bigg\{\bigg(\fint_{B_{R}}|\nabla u_\varepsilon|^2\bigg)^{1/2}+R\bigg(\fint_{B_{R}}|f|^p\bigg)^{1/p}\bigg\},
  \end{align}
where $C$ depends only on $d, \mu, p, [\delta]_0, \rho,$ and $ [\delta]_{1+\rho}$.
\end{theorem}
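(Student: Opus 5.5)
We prove Theorem \ref{thm.IntLip} with the large-scale regularity method of Avellaneda--Lin/Armstrong--Smart (an excess-decay argument driven by an approximation rate), applied one scale at a time. By rescaling $x\mapsto Rx$ we may assume $R=1$. Rescaling changes $\e_k$ to $\e_k/R$ and replaces the $y_0$-dependence by $Ry_0$, which only improves its Lipschitz bound since $R\le 1$. The scale ratios are unchanged, so \eqref{int-lip_cond_separation_ratio} still holds. Fix $r\in(0,1/2)$. The plan is to prove a flatness estimate for $u_\e$ on all scales $r\ge \e_1$ (in the rescaled setting, scales down to the smallest oscillation), and then handle scales below $\e_1$ by zooming in. At scale $\e_m$ the rescaled coefficient $A^\e(\e_m x)$ has the same structure. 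The first $m$ oscillations become slowly varying variables with Lipschitz norms $\lesssim \delta_\ell\, \e_m/\e_\ell$, and the remaining scales are $\e_{m+k}/\e_m$.

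\textbf{Step 1: approximation at a single scale.} Fix a ball $B_r$ with $\e_{m+1}\le r\le \e_m$ in the original variables. We compare $u_\e$ with a solution $w$ of $-\mathrm{div}(\widehat A_m\nabla w)=f$. Here $\widehat A_m$ is the coefficient obtained by homogenizing all scales $\e_{m+1},\e_{m+2},\dots$; it still depends on the slow variables $x, x/\e_1,\dots,x/\e_m$, which are smooth on $B_r$. Freezing them at the center costs $\sum_{\ell\le m}\delta_\ell\, r/\e_\ell$. By the interior analogue of Theorem \ref{thm.L2rates} (proved by the same two-scale expansion with cut-off, at the usual cost of a boundary-layer power loss), we get
\[
\Big(\fint_{B_{r/2}}|u_\e-w|^2\Big)^{1/2}\le C\Big[\Big(\sup_{k>m}\tfrac{\e_{k}}{\e_{k-1}}\cdot\tfrac{\e_{m+1}}{r}\Big)^{\sigma}\Big(\textstyle\sum_{\ell>m}\ell\,\delta_\ell\Big)+\sum_{\ell\le m}\delta_\ell\tfrac{r}{\e_\ell}\Big]\Big(\fint_{B_r}|u_\e|^2\Big)^{1/2}+\text{($f$ term)}.
\]
The key point, flagged in the remark after Theorem \ref{thm.L2rates}, is that the factor $[\delta]_1$ becomes a tail $\sum_{\ell>m}\ell\delta_\ell$. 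This tail is summable in $m$ precisely because $[\delta]_{1+\rho}<\infty$: it is $\le m^{-\rho}[\delta]_{1+\rho}$, and $\sum_m m^{-\rho}$ diverges, so we must combine it with geometric decay in $r$. Since $\widehat A_m$ is constant after freezing, $w$ enjoys $C^{1,1}$ estimates, and the constant is uniform because ellipticity is preserved under homogenization.

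\textbf{Step 2: iteration.} Define the excess $H(r)=\inf_{q}\frac1r(\fint_{B_r}|u_\e-q\cdot x-c|^2)^{1/2}$. Step 1 with the $C^{1,1}$ estimate for $w$ gives $H(\theta r)\le \frac12 H(r)+C\eta(r)\,(\fint_{B_r}|\nabla u_\e|^2)^{1/2}+Cr^{1-d/p}\|f\|_{L^p}$, where $\eta(r)$ is the bracket above. The general iteration lemma then yields the Lipschitz bound, provided $\int_0^1\eta(t)\,dt/t<\infty$ uniformly in $\e$. Splitting $\eta$ over the dyadic scales, we sum three contributions.
\begin{itemize}
\item The frozen-slow-variable errors $\sum_{\ell\le m}\delta_\ell r/\e_\ell$, integrated over $r\le\e_m$, give $\sum_\ell\delta_\ell<\infty$.
\item The geometric factors $(\e_{m+1}/r)^\sigma$ over $r\in[\e_{m+1},\e_m]$ integrate to $O(1)$ per scale.
\item This leaves $\sum_m \log(\e_m/\e_{m+1})\cdot(\text{stuff})$. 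Here condition \eqref{int-lip_cond_separation_ratio} enters: it compares the ratios $\e_{k+1}/\e_k$ for $k>m$ with $\e_{m+1}/\e_m$. Thus $\sup_{k>m}\e_k/\e_{k-1}\le(\e_{m+1}/\e_m)^{1/N}$, and the smallness factor $(\e_{m+1}/\e_m)^{\sigma/N}$ beats the logarithm $\log(\e_m/\e_{m+1})$ the scale spends.
\end{itemize}
The total sum is then bounded by $C\sum_m m^{-\rho}$, which fails to converge. We repair this by keeping $\sup_{t<1}t^{\sigma/N}\log(1/t)\cdot$ and using $\sum_{\ell>m}\ell\delta_\ell$ summed over $m$, which equals $\sum_\ell \ell^2\delta_\ell$. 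That is too much, so instead we estimate more carefully with the one-step error bound $\ell\delta_\ell$ replaced by $\ell^{1+\rho}$ weights, choosing the exponent split via Hölder. This is where $\rho$ and $\nu$ are set: $\varepsilon_1\le\nu$ ensures the first iteration step has small error.

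\textbf{Main obstacle.} The hardest part is making the accumulated error across infinitely many scale transitions summable with constants independent of $m$. This requires the sharp $m$-tail version of the rate in Step 1, with uniform dependence on the frozen slow variables and on the homogenized matrices $\widehat A_m$. Controlling the correctors' Lipschitz dependence on the slow variables in terms of $\delta$ is the technical heart of that step. I would first try to prove the single-scale rate with $\sum_{\ell>m}\ell\delta_\ell$ by applying Theorem \ref{thm.L2rates} to the shifted sequence, and then check the resulting series via \eqref{int-lip_cond_separation_ratio}.
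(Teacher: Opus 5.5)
\textbf{There is a genuine gap.} Your overall plan (zoom to each $\e_m$, compare with a partially homogenized solution whose error carries a small tail amplitude, use regularity of the comparison function, sum the errors over scales) is sound. But the proposal stops exactly where the infinitely-many-scales difficulty lies: you never obtain per-scale errors that are summable under $[\delta]_{1+\rho}<\infty$ alone.

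With the $\alpha=1$ rate you use, your bound on the window $r\in[\e_{m+1},\e_m]$ contributes about $\sigma^{-1}$ times the tail amplitude to $\int\eta(t)\,dt/t$, because $(\e_{m+1}/r)^{\sigma}$ is of order one near $r=\e_{m+1}$. Summing over $m$, the total you can control is of order $\sum_\ell \ell^2\delta_\ell$, which may be infinite when $\rho<1$. Raising the weights to $\ell^{1+\rho}$ only makes this worse, and ``choosing the exponent split via H\"older'' is not an argument.

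The missing idea is to trade rate for weight. For every $\alpha\in(0,1]$, Theorem \ref{conver_thm_rate_suboptimal} gives the error $C\Lambda[\delta]_\alpha\mathcal{E}^{\alpha\sigma}$ with $\mathcal{E}=\sup_k\e_k/\e_{k-1}$. It is proved as follows:
\begin{itemize}
\item run the one-step estimates only on the first $m\approx\mathcal{E}^{-\sigma}$ scales, where $\sum_{k\le m}R_k\le[\delta]_\alpha m^{1-\alpha}$;
\item discard the remaining scales by $H$-stability, at cost $[\delta]_\alpha m^{-\alpha}$.
\end{itemize}
After rescaling to $\e_m$, the amplitude becomes $[\widetilde\delta]_\alpha\le\sum_{j\le m}\frac{\e_m}{\e_j}\sum_{k\ge1}k^\alpha\delta_{k+m}$. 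Take $0<\alpha\le\min\{\rho,1\}$; the paper uses $\alpha=\rho/2$ to get a quantitative tail. Then $\sum_m\sum_{k\ge1}k^\alpha\delta_{k+m}\le\sum_\ell\ell^{1+\alpha}\delta_\ell<\infty$. Together with the slow-variable part, this is the summability $\sum_m M_m<\infty$ of Lemma \ref{lem.sumMm}, and it is what closes the iteration. Each window then costs $O(M_m/(\alpha\sigma))$, with the logarithm absorbed via \eqref{int-lip_cond_separation_ratio} as you describe.

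\textbf{Further corrections.}
\begin{itemize}
\item In your Step 1 display the two ratios must be added, $(\e_{m+1}/r+\sup_{k\ge m+2}\e_k/\e_{k-1})^{\alpha\sigma}$, not multiplied. At $r\approx\e_{m+1}$ no homogenization has happened, and the only available smallness is the amplitude factor. This is exactly why its summability is essential.
\item The freezing cost is $r\sum_{j\le m}\e_j^{-1}\sum_{\ell\ge j}\delta_\ell$, not $\sum_{\ell\le m}\delta_\ell\, r/\e_\ell$. The $y_j$-dependence of all later $B_\ell$ survives in the homogenized matrix. This corrected cost is still summable over windows and gives $[\delta]_1$.
\item Because $\eta$ has bumps of size $M_m$ near every $\e_{m+1}$, you need an iteration lemma that requires only $\int_0^1\eta\,dt/t<\infty$, not smallness of its tail. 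A Gronwall argument for $\Phi(s)=\int_s^1H(t)\,dt/t$ provides this. The paper instead applies Lemma \ref{shen.lem} only inside one window. It then runs a separate recursion for $H(\e_m)$ and $h(\e_m)$, in which the non-small terms involve $H$ alone and contract because $C_0(1-K^{-1})^\lambda\le\frac12$.
\item The condition $\e_1\le\nu$ is not about a small first-step error. Combined with \eqref{int-lip_cond_separation_ratio}, it forces every ratio $\e_{k+1}/\e_k\le\nu^{1/N}$. This yields \eqref{int-Lip_cond_sumbound} with $K\le K_0$, hence uniform bounds $[\widetilde\delta]_\alpha\le(K+1)[\delta]_\alpha$ for all rescalings and the contraction. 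It also yields the log-separation condition \eqref{cond_logseparation_m}.
\end{itemize}
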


Let $\Omega$ be a $C^{1,\gamma}$ domain and $0\in \partial \Omega$. Let $D_R := \Omega \cap B_R(0)$ and $\Delta_R := \partial \Omega \cap B_R(0)$. Let $L$ be the $C^{1,\gamma}$ character of $\Omega$ (see \eqref{ext_cond_psi}).
\begin{theorem}\label{thm.BdryLip}
    Let $\{ A_n \}_{n\in \N}, \delta = (\delta_0, \delta_1,\cdots)$ and $\e = (\e_1, \e_2, \cdots )$ satisfy the same assumptions as in Theorem \ref{thm.IntLip}.
    Let $u_\varepsilon$ be a solution to 
    \begin{equation}\label{eq_bdryint}
    \begin{cases}
        -\mathrm{div}(A^{\varepsilon}(x)\nabla u_\varepsilon)=f&\text{in } D_R,\\
        u_\varepsilon=g& \text{on } \Delta_R,
   \end{cases}
\end{equation}
    where $0<R\leq 1$, $f\in L^p(B_{R})$ with $p>d$, $g\in C^{1, \gamma}(\Delta_R)$. Then there exists a constant $\nu>0$, depending only on $d, \mu, p, \rho, N, [\delta]_0, [\delta]_1, \gamma $ and $L$, such that whenever $\varepsilon_1\leq \nu$,  we have
  \begin{align*}
    \| \nabla u_\e \|_{L^\infty(D_{R/2})} \leq C\bigg\{\bigg(\fint_{D_{R}}|\nabla u_\varepsilon|^2\bigg)^{1/2}+R\bigg(\fint_{D_{R}}|f|^p\bigg)^{1/p}+R^{-1}\|g\|_{C^{1, \gamma}(\Delta_R)}\bigg\},
  \end{align*}
  where $C$ depends only on $d, \mu, p, [\delta]_0, \gamma, L, \rho$ and $[\delta]_{1+\rho}$. 
\end{theorem}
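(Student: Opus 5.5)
The plan is to follow the same large-scale regularity scheme that will be used for Theorem \ref{thm.IntLip}, adapted to the boundary. By rescaling $x\mapsto Rx$ I may assume $R=1$. Rescaling keeps the structure of $A^\e$: the first variable becomes $Rx$, and the scale vector becomes $\e/R$. Subtracting a $C^{1,\gamma}$ extension of $g$ (with gradient bounded by $\|g\|_{C^{1,\gamma}}$) reduces the problem to $g=0$, at the price of a divergence-form term $\mathrm{div}(A^\e\nabla G)$ on the right-hand side. Because $A^\e$ is only continuous, I treat this term through a localized version of the boundary convergence rate rather than pointwise.

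\textbf{Step 1: scale-by-scale approximation.} Fix a radius $r$ with $\e_{m+1}\ll r\le \e_m$, the relevant level being $m$. In $D_r$ I approximate $u_\e$ by the solution of a problem in which the scales $\e_{m+1},\e_{m+2},\dots$ are homogenized out. The coefficient of this problem is the partially homogenized $\widehat A_{m}(x,x/\e_1,\dots,x/\e_m)$. After rescaling by $\e_m$ it becomes a smooth coefficient whose oscillation is governed only by the tail $\sum_{\ell>m}\delta_\ell$ and by the $y_0,\dots,y_m$ gradients.

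The boundary analogue of Theorem \ref{thm.L2rates}, localized to $D_r$ with the usual boundary-layer cutoff, gives
\[
\Big(\fint_{D_{r}}|u_\e-v|^2\Big)^{1/2}\le C\Big(\tfrac{\e_{m+1}}{r}\Big)^{\sigma}\Big(\sum_{\ell>m}\ell\delta_\ell\Big)\Big(\fint_{D_{2r}}|u_\e|^2\Big)^{1/2}+\text{(data terms)}.
\]
The factor $\sum_{\ell>m}\ell\delta_\ell$ is where the small-perturbation gain described in the remark after Theorem \ref{thm.L2rates} enters.

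\textbf{Step 2: boundary $C^{1,\alpha}$ flatness for the approximants.} The limiting operators $\widehat A$ and the partially homogenized operators have uniformly Hölder (indeed Lipschitz in $y_0$) coefficients in the rescaled variable. Classical Schauder theory on $C^{1,\gamma}$ domains then gives, for $v$ vanishing on the boundary, the excess decay
\[
H(v;\theta r)\le \tfrac12 H(v;r),\qquad H(w;t)=\inf_{q\in\R^d}\Big(\fint_{D_t}|w-(n\cdot x)q\text{-type affine}|^2\Big)^{1/2}/t + \text{data}.
\]
Here $H$ is measured against the boundary-adapted affine functions $q\cdot x$ with $q$ parallel to the normal at $0$, in the standard Avellaneda–Lin style formulation. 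The constants must be uniform in $m$, and this uniformity comes from $[\delta]_0$ and the ellipticity bound.

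\textbf{Step 3: iteration across scales.} I then run the Shen-type iteration lemma from $r=1$ down to $r\approx \e_1$, then inside each band $[\e_{k+1},\e_k]$, reusing the zoomed coefficient. Below each $\e_k$ the rescaled coefficient again has the same form with the shifted sequence $(\e_{k+1}/\e_k,\dots)$, so the same approximation applies. The accumulated errors form a series $\sum_k (\e_{k+1}/\e_k)^{\sigma'}\sum_{\ell\ge k}\ell\delta_\ell$. Condition \eqref{int-lip_cond_separation_ratio} makes the ratios comparable, so each is at most $\e_1^{1/N}$. Moreover $\sum_k\sum_{\ell\ge k}\ell\delta_\ell\le[\delta]_{1+\rho}$-type bounds control the summation (I use $\rho>0$ to make the double sum converge with room). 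Choosing $\nu$ small makes the error sum below $1/4$, so the excess decays geometrically down to the finest scale. In the limit, which I take via the uniform convergence \eqref{Linfity_limit} and the absence of a smallest scale, this gives $\sup_r r^{-1}(\fint_{D_r}|u_\e|^2)^{1/2}\le C\cdot$RHS at every boundary point. Combining with Theorem \ref{thm.IntLip} by the standard covering argument then yields the $L^\infty$ gradient bound.

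\textbf{Main obstacle.} I expect the hardest step to be the localized boundary convergence rate in Step 1 with the refined factor $\sum_{\ell>m}\ell\delta_\ell$. The boundary-layer error of the correctors near $\partial\Omega$ must be controlled with the same small factor and uniformly in $m$. I would first try multiscale correctors truncated by a cutoff vanishing at distance $\e_{m+1}$ from $\partial\Omega$. The layer would then be estimated by the $H^1$ energy in a strip, using only $\|\nabla_{y_j}B_\ell\|\le\delta_\ell$.
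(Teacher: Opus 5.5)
Your skeleton (rescale by $\e_m$, approximate by the homogenized problem, use boundary $C^{1,\gamma}$ excess decay, iterate) matches the paper's. However, the iteration in Step 3 does not close as written.

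\textbf{Gap 1: the errors are not small in $\nu$, and you never track the slope.} The approximants have variable Lipschitz coefficients; at level $m$ this is the rescaled homogenized matrix $\widehat{\widetilde A}$. So the excess decay in Step 2 omits an unavoidable term: since affine functions do not solve variable-coefficient equations, one only gets $H(\theta r)\le \tfrac12 H(r)+Cr\|\nabla\widehat{\widetilde A}\|\,h(r)$, where $h(r)=|\nabla P_r|$ is the slope. The Step 1 approximation error likewise multiplies $H(2r)+h(2r)$, because you can only subtract a constant from $u_\e$ before applying the rate. That error also carries $\sup_{k\ge m+2}(\e_k/\e_{k-1})^{\alpha\sigma}$ besides $(\e_{m+1}/r)^{\alpha\sigma}$, and the latter is $\approx 1$ at the bottom of each band. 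Accumulated over a band, the error is of size $M_m$ in \eqref{int-Lip_def_MN}. This contains $\sum_{\ell\ge m}\delta_\ell$ with no scale-ratio factor, so choosing $\nu$ small does not make it small. Because it multiplies $h$, you cannot get geometric decay of the excess alone, and chaining band-wise bounds $H+h\le C(H+h)$ gives $C^m$.

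The missing mechanism is the coupled recursion for $(H(\e_m),h(\e_m))$ in Lemma \ref{int-Lip_prop_Hh}, with the boundary $H$ containing $\|g-P\|_{C^{1,\gamma}(\Delta_r)}$. There $C_0$ multiplies only $H(\e_m)$, together with the decay factor $(r/\e_m)^\lambda$, and the increment of $h$ is $C_0H+C_1M_m(H+h)$. One then needs $\sum_m M_m<\infty$, takes smallness from its tail beyond some $m_0$, and uses crude bounds for the first $m_0$ levels. The role of $\nu$ is only to force $K\le K_0$ in \eqref{int-Lip_cond_sumbound}, so that $C_0(1-K^{-1})^\lambda\le\tfrac12$ and $\sum_m H(\e_m)$ converges, and to give the logarithmic separation \eqref{cond_logseparation_m} needed by the in-band iteration lemma.

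\textbf{Gap 2: your summation claim is false.} We have $\sum_{k\ge1}\sum_{\ell\ge k}\ell\delta_\ell=\sum_\ell \ell^2\delta_\ell=[\delta]_2$. This is not controlled by $[\delta]_{1+\rho}$: take $\delta_\ell=\ell^{-2-\rho-\eta}$ with $\rho+\eta\le 1$. The factors $(\e_{k+1}/\e_k)^{\sigma'}$ need not decay in $k$ (e.g.\ $\e_k=\e_1^k$), so they cannot rescue it. Paying the factor $[\delta]_1$ at every scale is too expensive. The paper instead uses the interpolated rate of Theorem \ref{conver_thm_rate_suboptimal} with $[\delta]_\alpha$ and $\alpha=\rho/2$, at the cost of the exponent $\alpha\sigma$. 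This gives $\sum_m[\widetilde\delta]_\alpha\lesssim\sum_k k^{1+\rho/2}\delta_k\le[\delta]_{1+\rho}$, as in Lemma \ref{lem.sumMm}.

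\textbf{Your ``main obstacle'' is not one.} Theorem \ref{conver_thm_rate_suboptimal} holds in any bounded Lipschitz domain. Apply it on $D_t$ with data $u_\e|_{\partial D_t}$ and average over $t\in(1/2,3/4)$, using the co-area formula and Caccioppoli. This gives the localized approximation of Theorem \ref{ext_lem_approx} directly, with no truncated multiscale correctors. Keeping $g$ inside $H$ also avoids the right-hand side $\mathrm{div}(A^\e\nabla G)$, which is awkward because $A^\e$ is continuous but typically not Lipschitz.
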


\begin{remark}\label{rmk.smallnu}
    The quantitative scale-separation condition \eqref{int-lip_cond_separation_ratio} is essentially the same as in \cite{Niu20_reiterated} for finitely many scales. However,
    for the case of infinite many scales, we need an additional  assumption $\e_1 < \nu$, which, together with \eqref{int-lip_cond_separation_ratio}, implies 
    $(\e_{k+1}/\e_k) < \nu^{1/N}$ for all $k \ge 1$. This is due to a technical reason in our approach, and actually ill-behaved examples can be constructed; see Remark \ref{rmk.illExample}. Note that in view of the rescaling property of the equation, $\e_1 = \e_1/\e_0 < \nu$ can be replaced by $\e_{m}/\e_{m-1} < \nu$, where $m$ is the smallest integer satisfying this condition (i.e., the first $m-1$ scales are not separated well). In this case, the uniform Lipschitz estimate can be established below $\e_m$-scale with $\e_m \ge \nu^m$, and thus a full-scale estimate depends additionally on the number $m$. If no such $m$ exists, then $\e_k \ge \nu^k$ for all $k \ge 1$ and Lemma \ref{int-lip_thm_Dini} yields the Lipschitz estimate.
    
    On the other hand, the smallness assumption of $\e_1$ can be removed in some interesting cases. In fact, if there exists $N \ge 1$ such that
    \begin{equation*}
        \bigg(\frac{\varepsilon_{k+1}}{\varepsilon_{k}}\bigg)^N\leq \frac{\varepsilon_{i+1}}{\varepsilon_{i}} \le \bigg(\frac{\varepsilon_{k+1}}{\varepsilon_{k}}\bigg)^{1/N}, \quad \text{for any } k > i \ge 0,
    \end{equation*}
    then Theorems \ref{thm.IntLip} and \ref{thm.BdryLip} hold without the assumption $\e_1 < \nu$; see Theorem \ref{thm.typicalLipEst} and Remark \ref{rmk.typical}. This, of course, includes the case $\e_j = \e_1^j$ for any $\e_1 \in (0,1)$.
\end{remark}

\begin{remark}
    For the uniform Lipschitz estimates, it is possible to weaken the Lipschitz regularity assumption \eqref{cond_B} on the coefficients to the H\"{o}lder continuity by an approximation argument. However, we will not pursue this generality to avoid additional complexity.
\end{remark}

\subsection{Key ideas in the proofs}

Our proofs are  based on reiterated homogenization argument --- homogenizing the coefficients in order from small scales to large scales --- which at a high level, resembles the renormalization group or coarse-graining argument in the deterministic and nondegenerate setting. But since the scales tend to zero, there is no smallest scale for the original equation \eqref{eq_infinite}. The natural starting point is the approximate equation with coefficient matrix $A_n^\e(x)$ for arbitrarily large $n$:
\begin{equation}
  \label{eq_approximate}
  \begin{cases}
    -\mathrm{div}(A_n^{\varepsilon}(x)\nabla u_{n, \varepsilon})=f&\text{in }\Omega,\\
        u_{n, \varepsilon}=g& \text{on }\partial\Omega,
  \end{cases}
\end{equation}
where $A_n = A_n(y_0,y_1,\cdots, y_n)$ is the partial sum given by \eqref{def_An} and $A_n^\e(x) = A_n(x,x/\e_1,\cdots, x/\e_n)$.
Since this problem has a finite number of scales,  a classical reiterated homogenization argument will work and we have $A_n^\e(x)$ $H$-converges to some homogenized matrix $\widehat{A}_n$ and $u_{n,\e}$ converges to $u_{n,0}$ weakly in $H^1(\Omega)$ as $\e \doublerightarrow 0$, where $u_{n,0}$ is the weak solution of
\begin{align}\label{eq_approx_homo}
  \begin{cases}
    -\mathrm{div}(\widehat{A}_n(x)\nabla u_{n, 0})=f&\text{in }\Omega, \\
    u_{n,0}=g &\text{on }\partial\Omega. 
  \end{cases}
\end{align}
Moreover, the stability of $H$-convergence and the assumption \eqref{cond_bounded} imply that $\widehat{A}_n$ converges in $L^\infty(\Omega)$ to a limiting matrix, denoted by $\widehat{A}$, as $n\to \infty$, which is defined as the homogenized matrix for the original problem \eqref{eq_infinite}. Thus, the qualitative homogenization theorem can be proved by a procedure demonstrated in the following diagram.
\begin{figure}[h]
    \centering
    \begin{tikzcd}[column sep=8em, row sep=5em]
    A_n^\e \arrow[r, " L^\infty\text{-limit}", " n\to \infty"'] \arrow[d, " \substack{ {\e \doublerightarrow 0}\\  \text{reiterated} \\  \text{homogenization} \\  \text{or} \\  H\text{-convergence}  }"'] 
    & A^\e \arrow[d, "\substack{ \e \doublerightarrow 0 \\  \text{Theorem \ref{thm.qualitative} }}"] \\
    \widehat{A}_n \arrow[r, " H\text{-stability}"," n\to \infty"'] 
    & \widehat{A}
    \end{tikzcd}
    \caption{Qualitative homogenization}
    \label{fig:Qualitative}
\end{figure}

\noindent\textbf{Quantitative convergence rates.}
As for qualitative homogenization, to establish the quantitative convergence rates, we also start by considering the approximate equation \eqref{eq_approx_homo} with arbitrary $n$ scales. The key is to establish a convergence rate that is independent of the number of scales $n$, which allows us to take the limit as $n\to \infty$ to obtain the convergence rate for the original equation with infinitely many scales.
This is by no means a direct consequence of the classical reiterated homogenization method; new ideas are required, as we now explain below.

Let us first mention that, the reiterated homogenization process can be broken down into a sequence of $H$-convergence of intermediate matrices, homogenizing the smallest scale in each step, i.e.,
\begin{equation}\label{ReteratedHomo-H-conv}
\begin{aligned}
    A_n(x,x/\e_1,\cdots, x/\e_n) & \xrightarrow[\e_n\to 0]{H} A_n^{n-1}(x,x/\e_1,\cdots, x/\e_{n-1}) \\
    & \xrightarrow[\e_{n-1}\to 0]{H} A_n^{n-2}(x,x/\e_1,\cdots, x/\e_{n-2}) \\
    & \qquad \cdots \\
    & \xrightarrow[\e_1\to 0]{H} A^0_n(x) = \widehat{A}_n(x).
\end{aligned}
\end{equation}
Our method will take advantage of the special structure of the multiscale coefficients $A_n^\e(x)$, written as
\begin{equation*}
    A_n^\e(x) = A_{n-1}\Big(x,\frac{x}{\e_1},\cdots, \frac{x}{\e_{n-1}}\Big) + B_n\Big(x, \frac{x}{\e_1}, \cdots, \frac{x}{\e_{n-1}}, \frac{x}{\e_n} \Big).
\end{equation*}
The key observation is that the first term on the right-hand side, called the principal part, oscillates relatively slowly with a large amplitude; while the second term, called the tail part, oscillates relatively faster with a smaller amplitude. This structure appears not only in $A_n^\e(x)$, but also in every intermediate matrix $A_n^{k,\e}(x), 1\le k\le n-1$, taking the form of
\begin{equation*}
    A_n^{k,\e}(x) = A_{k-1}\Big(x,\frac{x}{\e_1},\cdots, \frac{x}{\e_{k-1}}\Big) + B^n_k\Big(x, \frac{x}{\e_1}, \cdots, \frac{x}{\e_{k-1}}, \frac{x}{\e_n} \Big),
\end{equation*}
where $\{B^n_k\}_{1\le k \le n-1}$ are given by the backward recursive equations \eqref{iden_Blk} and \eqref{expression_Ank}. Using the $H$-stability and a careful estimate of the recursive equations, we are able to show that 
\begin{equation*}
    \| B^n_k \|_{L^\infty} + \sup_{1\le j\le k}\| \nabla_{y_j} B^n_k \| \le C \sum_{\ell = k}^n \delta_\ell;
\end{equation*}
see Lemma \ref{qual-homog_lemma_bkn} and Lemma \ref{conver_lemma_dkn}.
Under a certain decay assumption on $\delta_k$ (say, $[\delta]_{1} < \infty$), we see that $B^n_k$, though it oscillates faster, is a relatively small perturbation to the principal part $A_{k-1}$, for $k$ large enough. Due to this generic structure, it is natural to consider a base situation with only two scales $\e_0 = 1$ and $\e \ll 1$. In fact, we consider the coefficient matrix,
\begin{equation*}
    A(x, x/\e) = E_0(x) + E_1(x,x/\e),
\end{equation*}
where $E_0(x)$ is the principal part with slow oscillation and large amplitude, and $E_1(x,x/\e)$ is the tail part with fast oscillation and small amplitude. Let $u_\e$ and $u_0$ be the weak solutions corresponding to the coefficient matrix $A(x,x/\e)$ and its homogenized matrix $\widehat{A}(x)$; see \eqref{eq_n=1_general} and \eqref{eq_n=1.homo}. In this case, we prove an improved error estimate that takes into account of the smallness of $E_1$:
\begin{equation*}
    \| u_\e - u_0 \|_{L^2(\Omega)} \le C \e (\| E_1 \|_{L^\infty} + \| \nabla_x E_1 \|_{L^\infty}) + \text{similar terms};
\end{equation*}
see Theorem \ref{thm_n=1_general} for the precise statement.
This improvement - exploiting the smallness of $E_1$
  in the one-step homogenization - allows us to carry out the reiterated homogenization argument as in  \eqref{ReteratedHomo-H-conv} with accumulated errors that can be controlled uniformly in $n$ under the additional assumption $[\delta]_{\alpha} <\infty$ for some $\alpha > 0$; see Theorem \ref{conver_thm_rate_suboptimal}. It turns out that the optimal $L^2$ convergence rate can be derived if $\alpha = 1$,  as in Theorem \ref{thm.L2rates}.





\noindent\textbf{Uniform Lipschitz estimates.}
The proof of the uniform Lipschitz estimates proceeds largely within the framework of \cite{S17}, a structured refinement of \cite{AS16} based on the $L^2$ convergence rates. 
Since the essential difficulty is not reduced by considering the approximate problem \eqref{eq_approx_homo} and establishing the Lipschitz estimates uniform in $n$, we will work directly on the original problem with infinitely many scales. 

In multiscale homogenization, the uniform Lipschitz estimates are established through an iterative argument, in contrast to the proof of convergence rate, proceeding from the largest scale down to the smaller scales. To illustrate our main ideas, we take the simplified setting $\mathrm{div}(A^\e \nabla u_\e) = 0$ in $B_1$. Recall that in \cite{Niu20_reiterated}, under appropriate scale-separation conditions, it was shown that for any $r\in (\e_{k+1}, \e_k)$
\begin{equation}\label{est.largeLip}
    \bigg( \fint_{B_{r}} |\nabla u_\e|^2 \bigg)^{1/2} \le C \bigg( \fint_{B_{\e_k}} |\nabla u_\e|^2 \bigg)^{1/2},
\end{equation}
where $C$ is a large constant. Iterating this estimate, we obtain
\begin{equation*}
    \bigg( \fint_{B_{\e_n}} |\nabla u_\e|^2 \bigg)^{1/2} \le C^n \bigg( \fint_{B_{1}} |\nabla u_\e|^2 \bigg)^{1/2}.
\end{equation*}
This large-scale estimate is satisfactory for a fixed number of scales. However, for infinitely many scales considered in the present paper, as $n\to \infty$ and $\e_n \to 0$, the constant $C^n \to \infty$ and the previous method fails. The failure indicates that an essential modification is needed for infinitely many scales.

Recall the well-known quantity
\begin{equation}\label{def.Hr}
    H(r) = H(r; u_\varepsilon) := \frac{1}{r}\inf_{P\in \mathcal{P}}\bigg(\fint_{B_r}|u_\varepsilon-P|^2\bigg)^{1/2},
\end{equation}
where $\mathcal{P}$ denotes the linear space of affine functions. This quantity, measuring how flat a function is, is frequently used to establish the large-scale Lipschitz estimates in elliptic homogenization. Let $h(r) = |\nabla P_r|$ with $P_r \in \mathcal{P}$ being the minimizer in \eqref{def.Hr}. Roughly speaking,  $h(r)$ is comparable to the $L^2$ average of $|\nabla u_\e|$ in $B_r$ and generally larger than $H(r)$. In our situation, these two quantities combined can be used to establish \eqref{est.largeLip} between two successive scales. The new idea here is  establishing an iterative formula for $H(r)$ and $h(r)$, across all scales, with improved errors again due to the special structure of the coefficients. More precisely, under certain scale-separation conditions, we show that for any $r \in [\e_{m+1}, \e_m]$,
\begin{equation}\label{eq.Hrhr}
    \begin{cases}
        \displaystyle H(r) \le C_0 \Big( \frac{r}{\e_m} \Big)^\lambda H(\e_m) + C_1 M_m \big\{ H(\e_m) + h(\e_m) \big\}, \\
        \displaystyle |h(r) - h(\e_m)| \le C_0 H(\e_m) + C_1 M_m \big\{ H(\e_m) + h(\e_m) \big\},
    \end{cases}
\end{equation}
where $\lambda \in (0,1), C_0, C_1$ are constants independent of $\e$ and $m$. The constants $\{M_m \}_{m\in \N}$ satisfy
\begin{align*}
    M_m \le \max\bigg\{\sum_{j=0}^{m}\frac{\varepsilon_m}{\varepsilon_j}\sum_{k=j}^{\infty}\delta_k, \sum_{j=0}^m\frac{\varepsilon_m}{\varepsilon_j}\cdot\sum_{k=1}^\infty k^\alpha\delta_{k+m}\bigg\}.
\end{align*}
For large $m$, the extra factor $M_m$ becomes small, ensuring $\sum_{m = 1}^\infty M_m < \infty$, under certain scale-separation condition on $\e$ and decay condition on $\delta$. This crucial improvement actually follows from the extra factor $[\delta]_\alpha$ in the $L^2$ convergence rate of Theorem \ref{conver_thm_rate_suboptimal} as well as  the rescaling properties that gradually shift the oscillation to the small tail part of the coefficients. Finally, with $\sum_{m = 1}^\infty M_m < \infty$, we can iterate \eqref{eq.Hrhr} as $m\to \infty$ and obtain  the uniform boundedness of $H(r) + h(r)$ for all $r\in (0,1)$.

The remainder of the paper is organized as follows.  Section 2 addresses the qualitative homogenization of the Dirichlet problem \eqref{eq_infinite} and
gives the proof of Theorem \ref{thm.qualitative}.
Section 3 is devoted to the convergence rates and contains the proof of Theorem \ref{thm.L2rates}.
The interior Lipschitz estimates (Theorem \ref{thm.IntLip}) are established in Section 4, and
 the boundary Lipschitz estimates (Theorem \ref{thm.BdryLip}) are proved in Section 5.

 \textbf{Acknowledgement.} Y. Xu is partially supported by NNSF of China (No. 12201604, 12371106). J. Zhuge is partially supported by NNSF of China (No. 12494541, 12288201, 12471115).

\section{Qualitative homogenization}\label{sec_qualitative-homogenization}

\subsection{Reiterated homogenization}\label{sec_approximate-problems}
As discussed in the introduction, we shall start with the approximate problem \eqref{eq_approximate} for arbitrary but fixed $n \ge 1$.
In this subsection, we review the method of reiterated homogenization to analyze \eqref{eq_approximate} and provide some formal calculations adapted to our settings that will be referenced throughout the subsequent sections. 



By the method of reiterated homogenization, the homogenized matrix $\widehat{A}_n(x)$ under $\e \doublerightarrow 0$ is obtained by repeatedly homogenizing one scale at a time in order from the smallest scale to the largest. 
Precisely, we first fix $\e_1,\e_2,\cdots, \e_{n-1}$ in \eqref{eq_approximate} and let $\e_n \to 0$. Then the fastest variable of equation \eqref{eq_approximate} will be homogenized and the solution $u_{n,\e} \to u^{n-1}_{n,\e}$ weakly in $H^1(\Omega)$, where $u^{n-1}_{n,\e}$ is the weak solution of
\begin{equation}\label{eq.approx.n-1}
    \begin{cases}
    -\mathrm{div}(A_n^{n-1,\varepsilon}(x)\nabla u^{n-1}_{n, \varepsilon})=f&\text{in }\Omega,\\
        u^{n-1}_{n, \varepsilon}=g& \text{on }\partial\Omega,
  \end{cases}
\end{equation}
 $A_n^{n-1,\varepsilon}(x) = A^{n-1}_n(x,x/\e_1,\cdots, x/\e_{n-1})$, and $A^{n-1}_n(y_0, y_1, \cdots, y_{n-1})$ is the $(n-1)$-scale intermediate matrix, given by
\begin{align}\label{eq.Ann-1}
\begin{aligned}
    & A_n^{n-1}(y_0, y_1, \cdots, y_{n-1}) \\
    & =\fint_{\mathbb{T}^d} \Big\{ A_n(y_0, \cdots, y_n)+A_n(y_0, \cdots, y_n)\nabla_{y_n}\chi_{A_n}(y_0, \cdots, y_n) \Big\} dy_n.
\end{aligned}
\end{align}
Here $\chi_{A_n} = (\chi_{A_n}^j)_{1\le j\le d}$ are the correctors satisfying the equations w.r.t. the variable $y_n$,
\begin{align*}
  \begin{cases}
    -\mathrm{div}_{y_n}(A_n\nabla_{y_n} \chi_{A_n}^j)=\mathrm{div}_{y_n}(A_ne^j)=\mathrm{div}_{y_n}(B_ne^j)\quad \text{in }\mathbb{T}^d,\\
  \chi_{A_n}^j \text{ is $1$-periodic in }y_n \text{ and }\int_{\mathbb{T}^d}\chi_{A_n}dy_n=0.
  \end{cases}
\end{align*}
By \eqref{eq.Ann-1} and \eqref{def_An}, and a simple calculation,
\begin{equation}\label{def_Ank}
\aligned
    & A_n^{n-1}(y_0, y_1, \cdots, y_{n-1})\\
    & = B_0(y_0)+B_1(y_0, y_1)+\cdots+B_{n-2}(y_0, \cdots, y_{n-2})\\
    & \qquad + B_{n-1}(y_0, \cdots, y_{n-1})+\fint_{\mathbb{T}^d}B_ndy_n+\fint_{\mathbb{T}^d}B_n\nabla_{y_n}\chi_{A_n} dy_n.
    \endaligned
\end{equation}
Recall that the matrix $A_n^{n-1}$ satisfies the ellipticity, boundedness and periodicity conditions as in \eqref{cond_ellipticity}--\eqref{cond_periodicity}. 


Now the equation \eqref{eq.approx.n-1} has $n-1$ scales, and we apply the same procedure to homogenize the smallest scale $\e_{n-1}$. We obtain the $(n-2)$-scale intermediate matrix in a form of $A_n^{n-2, \e}(x) = A_n^{n-2}(x,x/\e_1,\cdots,x/\e_{n-2})$ and repeat this procedure. In general, for $0\le k \le n-1$, the $k$-scale intermediate matrix is derived via the backward recursive equations:
\begin{align}\label{iden_Blk}
\begin{split}
B_{k}^n(y_0, \cdots, y_{k})&=B_{k}(y_0, \cdots, y_{k})+\fint_{\mathbb{T}^d}B_{k+1}^n(y_0, \cdots, y_{k+1})dy_{k+1}\\&+\fint_{\mathbb{T}^d}B_{k+1}^n(y_0, \cdots, y_{k+1})\nabla_{y_{k+1}} \chi_{A_n^{k+1}}(y_0, \cdots, y_{k+1})dy_{k+1},
\end{split}
\end{align}
and
\begin{align}\label{expression_Ank}
  A_n^{k}(y_0, \cdots, y_{k})=\sum_{\ell=0}^{k-1} B_\ell(y_0, \cdots, y_\ell)+B_{k}^{n}(y_0, \cdots, y_{k}), 
\end{align}
where we have used the convention $B^n_n=B_n$, $A_n^n=A_n$ (to include the case $k = n-1$), and $\chi_{A_n^{k+1}} = (\chi_{A_n^{k+1}}^j)_{1\le j\le d}$ are the corresponding correctors corresponding to $A_n^{k+1}$ w.r.t. the variable $y_{k+1}$:
\begin{align}\label{eq.chi.k+1}
  \begin{cases}
    -\mathrm{div}_{y_{k+1}}(A_n^{k+1}\nabla_{y_{k+1}} \chi_{A_n^{k+1}}^j) = \mathrm{div}_{y_{k+1}}(A_n^{k+1} e^j)=\mathrm{div}_{y_{k+1}}(B^n_{k+1} e^j)\quad \text{in }\mathbb{T}^d,\\
  \chi_{A_n^{k+1}}^j \text{ is $1$-periodic in }y_{k+1} \text{ and }\int_{\mathbb{T}^d}\chi_{A_n^{k+1}}^j dy_{k+1} = 0.
  \end{cases}
\end{align}
The final matrix $A^0_n(y_0)$ is the homogenized matrix obtained by the method of reiterated homogenization, and we write $\widehat{A}_n(y_0)=A_n^0(y_0)$.

It was proved in \cite{Allaire1996Multiscale, Niu20_reiterated} that if $\e \doublerightarrow 0$, then
$u_{n, \varepsilon}$ converges weakly to $u_{n, 0}$ in $H^1(\Omega)$ and $A_n^\varepsilon \nabla u_{n, \varepsilon}$ converges weakly to $\widehat{A}_n\nabla u_{n, 0}$ in $L^2(\Omega; \mathbb{R}^d)$, where $u_{n, 0}$ is the solution of \eqref{eq_approx_homo}.
In this qualitative reiterated homogenization theorem, we only require that $\Omega$ is a bounded Lipschitz domain, $f\in H^{-1}(\Omega)$ and $g\in H^{1/2}(\partial\Omega)$.

\subsection{\texorpdfstring{Compactness of $H$-convergence}{Compactness of H-convergence}}\label{sec_compactness-H}

An effective way to universally control the size of the intermediate and homogenized matrices is using the notion of $H$-convergence (or $G$-convergence in the case of symmetric coefficient matrices).
Let $\Omega\subset \R^d$ be a bounded domain. For $0<\alpha\le \beta < \infty$, let $\mathcal{M}(\alpha,\beta;\Omega)$ denote the set of $A\in L^\infty(\Omega;\R^{d\times d})$ that satisfy
\begin{equation}\label{qual-homog_def_Hconvergence}
	A(x)\xi\cdot \xi \ge \alpha |\xi|^2,\quad A^{-1}(x)\xi\cdot \xi \ge \beta^{-1}|\xi|^2, \quad \text{a.e. } x\in \Omega.
\end{equation}

\begin{definition}[\cite{T09}]\label{H-convergence}
	We say a sequence $\{A_k\}_{k\ge 1} \subset \mathcal{M}(\alpha,\beta;\Omega)$ $H$-converges to $\bar{A}\in \mathcal{M}(\alpha',\beta';\Omega)$ (denoted by $A_k \xrightarrow[k\to \infty]{H} \bar{A}$) for some $0<\alpha'\le \beta'<\infty$, if for all $f\in H^{-1}(\Omega)$, the sequence of solutions $\{u_k\}_{k\ge 1} \subset H^1_0(\Omega)$ of $-\mathrm{div} (A_k \nabla u_k) = f$ converges to $\bar{u}$ weakly in $H^1_0(\Omega)$; and the sequence $A_k \nabla u_k$ converges to $\bar{A} \nabla \bar{u}$ weakly in $L^2(\Omega;\R^d)$, where $\bar{u}$ is the weak solution of $-\mathrm{div} (\bar{A} \nabla \bar{u}) = f$ in $\Omega$.
\end{definition}

Recall that the $H$-limit of an $H$-converging sequence is unique. Also observe that the condition \eqref{qual-homog_def_Hconvergence} in the class $\mathcal{M}(\alpha, \beta; \Omega)$ is equivalent to the condition \eqref{cond_ellipticity} and \eqref{cond_bounded-A}. Indeed, if $A\in \mathcal{M}(\alpha, \beta; \Omega)$, by the Cauchy-Schwarz inequality, $\beta^{-1}|\xi|^2 \le \xi\cdot A^{-1}\xi \le |\xi| |A^{-1}\xi|$ for a.e. $x\in\Omega$. Thus, $|A^{-1}\xi| \ge \beta^{-1}|\xi|$ for any $\xi$. Replacing $\xi$ by $A\xi$ in this inequality, we get $|A\xi| \le \beta |\xi|$, which gives the upper bound of $A$. Conversely, if $A$ satisfies \eqref{cond_ellipticity} and \eqref{cond_bounded-A}, then we can show $\xi \cdot A^{-1}\xi \ge \mu^3 |\xi|^2$ and thus $A \in \mathcal{M}(\mu, \mu^{-3}; \Omega)$.
 
The importance of the class $\mathcal{M}(\alpha,\beta;\Omega)$ is reflected by its compactness under $H$-convergence.
\begin{lemma}[{\cite[Theorem 6.5]{T09}}]\label{case-n=1_lem_Hcompact}
  For any sequence $\{A_k\}\subset \mathcal{M}(\alpha, \beta; \Omega)$ there exists a subsequence $\{A_{k_\ell}\}$ and an element $\bar{A}\in \mathcal{M}(\alpha, \beta; \Omega)$ such that $A_{k_\ell}$ $H$-converges to $\bar{A}$. 
\end{lemma}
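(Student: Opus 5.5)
The plan is the classical Murat--Tartar route: a diagonal extraction over a countable dense set of right-hand sides, followed by the oscillating test function (div--curl) argument to identify a limit matrix. This is Tartar's compactness theorem, and I would reproduce its structure.

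\textbf{Extraction.} For every $f\in H^{-1}(\Omega)$, the solution $u_k^f\in H^1_0(\Omega)$ of $-\mathrm{div}(A_k\nabla u_k^f)=f$ satisfies $\|u_k^f\|_{H^1_0}\le \alpha^{-1}\|f\|_{H^{-1}}$ by the first inequality in \eqref{qual-homog_def_Hconvergence}. Also $\|A_k\nabla u_k^f\|_{L^2}\le \beta\|\nabla u_k^f\|_{L^2}$, since $|A\xi|\le\beta|\xi|$ as observed above.

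Fix a countable dense subset $\{f_m\}$ of $H^{-1}(\Omega)$. By weak compactness and a diagonal argument, extract a subsequence along which $u_k^{f_m}\rightharpoonup u^{f_m}$ weakly in $H^1_0$ and $A_k\nabla u_k^{f_m}\rightharpoonup \sigma^{f_m}$ weakly in $L^2$, for every $m$.

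The maps $f\mapsto u_k^f$ and $f\mapsto A_k\nabla u_k^f$ are linear and uniformly bounded. Hence the convergence extends to all $f\in H^{-1}$, giving bounded linear limit maps $f\mapsto u^f$ and $f\mapsto\sigma^f$.

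\textbf{Identifying the limit matrix.} Fix $\omega\Subset\Omega'$, with $\Omega'\Supset\Omega$ a larger domain. For each $e^j$, take $w^j\in H^1_0(\Omega')$ with $w^j=x_j$ on $\omega$. Run the same extraction for the adjoint problems with the transposes $A_k^T$ on $\Omega'$ (the transposes lie in the same class), producing test functions $v_k^j\rightharpoonup w^j$ with $A_k^T\nabla v_k^j\rightharpoonup \eta^j$ in $L^2$.

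Define $\bar A$ on $\omega$ by $\bar A^T e^j:=\eta^j$, then exhaust $\Omega$ by such $\omega$; the limits are compatible by uniqueness. The div--curl lemma applies to the products $A_k\nabla u_k\cdot\nabla v_k^j=\nabla u_k\cdot A_k^T\nabla v_k^j$: the divergences are compact in $H^{-1}_{loc}$ and the gradients are curl-free. It gives
\[
\sigma^f\cdot\nabla w^j=\nabla u^f\cdot\eta^j \quad\text{on }\omega,
\]
which says $\sigma^f\cdot e^j=\nabla u^f\cdot \bar A^Te^j$, that is, $\sigma^f=\bar A\nabla u^f$.

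\textbf{Bounds on $\bar A$.} Apply the div--curl lemma to $A_k\nabla v_k\cdot\nabla v_k$ for suitable sequences, so that weak lower semicontinuity can be used on energies. This yields $\bar A\xi\cdot\xi\ge\alpha|\xi|^2$ from $\nabla u\cdot\sigma\ge\alpha|\nabla u|^2$ pointwise in the limit. It yields $\bar A^{-1}\xi\cdot\xi\ge\beta^{-1}|\xi|^2$ from
\[
A_k\nabla u\cdot\nabla u\ \ge\ \beta^{-1}|A_k\nabla u|^2,
\]
which is the second condition in \eqref{qual-homog_def_Hconvergence} applied with $\xi=A_k\nabla u$.

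\textbf{Main obstacle.} The delicate step is preserving the constants exactly: $\bar A\in\mathcal{M}(\alpha,\beta;\Omega)$ rather than some worse class. The second bound requires the liminf of $|A_k\nabla u_k|^2$ to be compared with the limit of the energy density. That needs a localized lower-semicontinuity argument using the div--curl product identity on arbitrary nonnegative test functions $\varphi\in C_c^\infty$, with corrector-type sequences built so that $\nabla u$ equals a constant $\xi$ locally. Finally, $u^f$ solves $-\mathrm{div}(\bar A\nabla u^f)=f$, because $-\mathrm{div}\,\sigma^f=f$ passes to the weak limit. This proves $H$-convergence of the subsequence.
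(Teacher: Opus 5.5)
Your outline is the Murat--Tartar argument behind \cite[Theorem 6.5]{T09}, which the paper simply cites. The extraction, the div--curl identification $\sigma^f=\bar A\nabla u^f$, and the two pointwise bounds are all correct \emph{once suitable test sequences exist}. The gap is their existence. The claim that running the extraction for the transposes ``produces test functions $v_k^j\rightharpoonup w^j$'' does not follow from anything you have shown. The extraction only gives a bounded linear limit map $\mathcal{L}^*:g\mapsto v^g$, $H^{-1}(\Omega')\to H^1_0(\Omega')$, where $v_k^g\in H^1_0(\Omega')$ solves $-\mathrm{div}(A_k^T\nabla v_k^g)=g$. The div--curl lemma needs a sequence with the prescribed limit $w^j$ and with a fixed (hence compact) divergence. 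For that you need $w^j\in\mathrm{Range}(\mathcal{L}^*)$, i.e.\ surjectivity of $\mathcal{L}^*$. The same unproved fact is hidden in two other places: the ``corrector-type sequences with $\nabla u=\xi$ locally'' of your last paragraph, and the compatibility ``by uniqueness'' of the matrices built on different $\omega$.

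This is the step where the constant $\beta$ really enters. Since $-\mathrm{div}(A_k^T\nabla v_k^g)=g$, one has $\|g\|_{H^{-1}}\le\|A_k^T\nabla v_k^g\|_{L^2}$. The second condition in \eqref{qual-homog_def_Hconvergence}, which $A_k^T$ also satisfies, then gives
\[
\langle g,v_k^g\rangle=\int_{\Omega'}A_k^T\nabla v_k^g\cdot\nabla v_k^g\ \ge\ \beta^{-1}\|A_k^T\nabla v_k^g\|_{L^2}^2\ \ge\ \beta^{-1}\|g\|_{H^{-1}}^2 .
\]
Letting $k\to\infty$ with $g$ fixed yields $\langle g,\mathcal{L}^*g\rangle\ge\beta^{-1}\|g\|_{H^{-1}}^2$. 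By Lax--Milgram, $\mathcal{L}^*$ is then an isomorphism of $H^{-1}(\Omega')$ onto $H^1_0(\Omega')$, and $g^j:=(\mathcal{L}^*)^{-1}w^j$ gives your $v_k^j$. You also need to extend $A_k$ to $\Omega'\setminus\Omega$ inside the class (e.g.\ by $\alpha I$) just to pose the problems on $\Omega'$. With both fixes in place you may take $\omega=\Omega$ and skip the exhaustion, and the sequences for the bounds can be taken as $\sum_j\xi_jv_k^j$. The rest of your argument then goes through.
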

This compactness can be used to establish the uniform upper and lower bounds of the intermediate and homogenized coefficient matrices at each reiterated step. 
Consider the coefficient matrix $A_n(y_0, y_1,\cdots, y_{n-1}, x/\e_n)$ as a function of $x \in \R^d$, where $(y_0, y_1, \cdots, y_{n-1})\in \Omega\times \mathbb{R}^{d(n-1)}$ are fixed parameters. We know from \eqref{cond_ellipticity}, \eqref{cond_bounded-A}, and the previous argument that 
$$
A_n(y_0, y_1, \cdots, y_{n-1}, x/\varepsilon_n)\in \mathcal{M}(\mu, \mu^{-3}) \quad \text{for each }y_0, y_1, \cdots, y_{n-1}.
$$
Hereafter we write $\mathcal{M}(\alpha, \beta) = \mathcal{M}(\alpha, \beta; \Omega)$ if the coefficient matrices are defined in the entire $\R^d$ and drop in every $\mathcal{M}(\alpha, \beta; \Omega)$ for any bounded $\Omega \subset \R^d$. As $\e_n \to 0$, by the classical homogenization theorem with only one oscillating scale, it holds
\[
A_n(y_0, y_1, \cdots, y_{n-1}, x/\e_n) \xrightarrow[\e_n\to 0]{H} A_n^{n-1}(y_0, y_1, \cdots, y_{n-1}),
\]
where $A_n^{n-1}$ is given by \eqref{eq.Ann-1}.
Therefore, by Lemma \ref{case-n=1_lem_Hcompact}, we have
\begin{align*}
  A_n^{n-1}(y_0, y_1, \cdots, y_{n-1})\in \mathcal{M}(\mu, \mu^{-3})\quad\text{for each } y_0, y_1, \cdots, y_{n-1}.
\end{align*}
Then, by setting $y_{n-1}=x/\varepsilon_{n-1}$, we get $$A_n^{n-1}(y_0, y_1, \cdots, y_{n-2}, x/ \varepsilon_{n-1})\in \mathcal{M}(\mu, \mu^{-3})\quad \text{for each }y_0, y_1, \cdots, y_{n-2}. $$
This allows us to apply the same argument repeatedly to conclude that for $0\leq k\leq n$
\begin{align}\label{es_Ank_bound}
  A_n^k(y_0, \cdots, y_k)\in \mathcal{M}(\mu, \mu^{-3}) \quad \text{for each }y_0, \cdots, y_k. 
\end{align}
In other words, each $A_n^k$, including $\widehat{A}_n=A_n^0$, satisfies the ellipticity and boundedness conditions with the constant depending only on $\mu$. 
 In particular, by \eqref{expression_Ank},
 \begin{align}
   \label{es_Bkn_bound}
 \|B_k^n\|_{L^\infty} = \|A^k_n - A_{k-1}\|_{L^\infty} \le \|A^k_n \|_{L^\infty} + \|A_{k-1}\|_{L^\infty} \leq C, 
 \end{align}
where $C$ depends only on $\mu$ (thus uniform in $n$ and $k$).

\subsection{\texorpdfstring{Stability of $H$-convergence}{Stability of H-convergence}}\label{sec_stability-H}

In this subsection, we use the stability result of $H$-convergence, demonstrated in \cite[etc.]{Boccardo1982, Ene1997, T09}, to establish the convergence of $\widehat{A}_n$ as $n\to \infty$, as well as their Lipschitz regularity uniform in $n$.

\begin{lemma}[{\cite[Lemma 10.9]{T09}}]\label{qual-homog_lem_stability}
  Let $\{A_k\}_{k\geq 1}$ and $\{A_k'\}_{k\geq 1}$ be two sequences of matrices in $\mathcal{M}(\alpha, \beta; \Omega)$ such that for each $k\geq 1$,
  \begin{align*}
    \|A_k-A_k'\|_{L^\infty(\Omega)}\leq \tau\quad \text{for some }\tau>0.
  \end{align*}
  Suppose that $A_k$ $H$-converges to $\bar{A}$ and $A_k'$ $H$-converges to $\bar{A}'$ as $k\rightarrow \infty$. Then
  \begin{align*}
    \|\bar{A}-\bar{A}'\|_{L^\infty(\Omega)}\leq C\tau,
  \end{align*}
where $C = \beta/\alpha$. 
\end{lemma}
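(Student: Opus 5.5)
We prove this stability estimate (Lemma \ref{qual-homog_lem_stability}) by the standard energy/duality method with oscillating test functions. The idea is to test the difference of the two equations against one solution and compare the limits of the fluxes.

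\textbf{Step 1: reduce to fluxes of $\bar u$.}
Fix a ball $\omega\Subset\Omega$ and a vector $\xi\in\R^d$. Pick $f\in H^{-1}(\Omega)$ so that the solution $\bar u$ of $-\mathrm{div}(\bar A\nabla\bar u)=f$ equals $\xi\cdot x$ on $\omega$. This is possible by taking $\bar u=\varphi\,\xi\cdot x$ with a cutoff $\varphi$ and defining $f$ accordingly.

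Let $u_k\in H^1_0(\Omega)$ solve $-\mathrm{div}(A_k\nabla u_k)=f$. Then $u_k\rightharpoonup\bar u$ and $A_k\nabla u_k\rightharpoonup\bar A\nabla\bar u$.

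\textbf{Step 2: the adjoint problem and the key identity.}
Let $v_k\in H^1_0(\Omega)$ solve the adjoint problem $-\mathrm{div}((A_k')^{T}\nabla v_k)=g$. The transposes $(A_k')^T$ lie in the same class $\mathcal M(\alpha,\beta)$ and $H$-converge to $(\bar A')^T$, a standard fact from \cite{T09}. So $v_k\rightharpoonup\bar v$ and $(A_k')^T\nabla v_k\rightharpoonup(\bar A')^T\nabla\bar v$.

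Consider
\[
\int_\Omega (A_k-A_k')\nabla u_k\cdot\nabla v_k\,\phi ,\qquad \phi\in C_c^\infty(\omega).
\]
Its absolute value is at most $\tau\|\phi\|_\infty\|\nabla u_k\|_{L^2}\|\nabla v_k\|_{L^2}$.

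Write the integrand as $A_k\nabla u_k\cdot\nabla v_k\,\phi-\nabla u_k\cdot (A_k')^T\nabla v_k\,\phi$. Each product is the product of a divergence-controlled field and a gradient. By the div–curl lemma (compensated compactness), the integral converges to
\[
\int (\bar A-\bar A')\nabla\bar u\cdot\nabla\bar v\,\phi .
\]

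\textbf{Step 3: make the gradients local and recover $\bar A-\bar A'$.}
Choose $g$ so that $\bar v=\eta\cdot x$ on $\omega$ as well. Then on $\omega$ the limit integrand is $(\bar A-\bar A')\xi\cdot\eta\,\phi$, and we obtain
\[
\Big|\int_\omega(\bar A-\bar A')\xi\cdot\eta\,\phi\Big|\le \tau\,\|\phi\|_{L^\infty}\,\limsup_k\|\nabla u_k\|_{L^2(\omega)}\|\nabla v_k\|_{L^2(\omega)}.
\]

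This is the main obstacle: the right side involves global norms, while we need a pointwise bound.

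Handle it with the energy-norm trick. Replace the bound by $\tau\int|\nabla u_k||\nabla v_k|\phi$ with $\phi\ge0$. Then use the localized weak-lower-semicontinuity (energy) estimate
\[
\limsup_k\int\phi\,|\nabla u_k|^2\le \alpha^{-1}\limsup_k\int\phi A_k\nabla u_k\cdot\nabla u_k=\alpha^{-1}\int\phi\bar A\nabla\bar u\cdot\nabla\bar u\le\frac{\beta}{\alpha}\int\phi|\xi|^2 .
\]
The equality is again the div–curl lemma. The final inequality uses the fact that $\bar A\in\mathcal M(\alpha,\beta)$ gives $|\bar A\xi|\le\beta|\xi|$.

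Applying Cauchy–Schwarz with the weight $\phi$ gives
\[
\Big|\int(\bar A-\bar A')\xi\cdot\eta\,\phi\Big|\le \tau\frac{\beta}{\alpha}\,|\xi||\eta|\int\phi .
\]
Letting $\phi$ concentrate at Lebesgue points yields $|(\bar A-\bar A')\xi\cdot\eta|\le(\beta/\alpha)\tau|\xi||\eta|$ a.e. in $\omega$. Covering $\Omega$ by such balls gives the claim with $C=\beta/\alpha$.
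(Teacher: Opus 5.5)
Your argument is correct. The paper gives no proof of its own and simply quotes \cite[Lemma 10.9]{T09}, and what you wrote is the standard energy/div--curl proof of that result. You pair the $A_k$-solutions with solutions for the transposes $(A_k')^{T}$ via the div--curl lemma, which identifies $\int(\bar A-\bar A')\xi\cdot\eta\,\phi$ as the limit of $\int (A_k-A_k')\nabla u_k\cdot\nabla v_k\,\phi$. You then use the localized energy bound $\limsup_k\int\phi|\nabla u_k|^2\le\alpha^{-1}\int\phi\,\bar A\xi\cdot\xi\le(\beta/\alpha)|\xi|^2\int\phi$, and its analogue for $v_k$, which is exactly where the constant $\beta/\alpha$ comes from. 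The details you leave implicit are routine: take $\xi,\eta$ in a countable dense set so the exceptional null set is uniform, note that $\bar A,(\bar A')^{T}\in\mathcal{M}(\alpha,\beta;\Omega)$ by compactness and uniqueness of $H$-limits, and read $\|\cdot\|_{L^\infty}$ with the operator norm on matrices.
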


\begin{lemma}\label{qual-homog_lem_difference}
    Assume \eqref{cond_ellipticity}--\eqref{cond_periodicity}.
    Let $\widehat{A}_n$ denote the homogenized matrix in \eqref{eq_approx_homo} derived through reiterated homogenization. Then there exists a constant $C>0$, depending only on $\mu$, such that for any $n,m \in \N$, $n< m$
    \begin{equation}\label{est.hatAn-Am}
        \|\widehat{A}_n- \widehat{A}_m\|_{L^\infty(\Omega)} \le C \sum_{k=n+1}^m \|B_k\|_{L^\infty}.
     \end{equation}
    If in addition assume
    \begin{equation}\label{cond_Dy0B}
        \sum_{k=0}^\infty\|\nabla_{y_0}B_k\|_{L^\infty}<\infty,
    \end{equation}
    then for any $n\in\mathbb{N}$ and any $x_1, x_2\in \Omega$,
    \begin{align}\label{est.hatAn-Lip}
        |\widehat{A}_n(x_1)-\widehat{A}_n(x_2)|\leq C\sum_{k=0}^n\|\nabla_{y_0}B_k\|_{L^\infty}|x_1-x_2|. 
    \end{align}
\end{lemma}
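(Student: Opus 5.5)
The plan is to compare the two reiterated homogenization chains for $A_n$ and $A_m$ level by level, using the $H$-stability of Lemma \ref{qual-homog_lem_stability} at each step. The key point is that one never needs to subtract explicit formulas for $B_k^n$ and $B_k^m$.

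\textbf{Proof of \eqref{est.hatAn-Am}.} Regard $A_n$ as a function of $(y_0,\dots,y_m)$ that is constant in $y_{n+1},\dots,y_m$. Homogenizing $A_n$ in a variable on which it does not depend returns the same matrix, so the chain for $A_n$ is
\[
A_n^k := A_n \quad \text{for } n\le k\le m,
\]
followed by the usual steps $k<n$. By \eqref{def_An},
\[
\|A_m-A_n\|_{L^\infty}\le \tau:=\sum_{k=n+1}^m\|B_k\|_{L^\infty}.
\]
I would show by backward induction on $k$ that
\[
\|A_m^k-A_n^k\|_{L^\infty}\le \tau \qquad \text{for all } 0\le k\le m.
\]
At each step fix $(y_0,\dots,y_{k-1})$ and view $A_m^k(y_0,\dots,y_{k-1},x/\e_k)$ and $A_n^k(y_0,\dots,y_{k-1},x/\e_k)$ as two sequences indexed by $\e_k\to0$. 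Both lie in $\mathcal M(\mu,\mu^{-3})$ by \eqref{es_Ank_bound}, and they differ in $L^\infty$ by at most the previous bound. By the one-scale homogenization theorem they $H$-converge to $A_m^{k-1}$ and $A_n^{k-1}$ respectively. Lemma \ref{qual-homog_lem_stability} then gives the difference bound at level $k-1$, but multiplied by $C=\beta/\alpha=\mu^{-4}$.

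This multiplication is the main obstacle: naively iterating yields $C^{m}\tau$, which is not uniform. The fix I would try first is to avoid iterating and instead compare the whole multiscale sequences at once. Consider
\[
A_m(x,x/\e_1,\dots,x/\e_m) \quad\text{and}\quad A_n(x,x/\e_1,\dots,x/\e_n)
\]
as two sequences in $\mathcal M(\mu,\mu^{-3};\Omega)$ along a sequence $\e\doublerightarrow0$. Their pointwise difference is at most $\tau$ for every $\e$. By the qualitative reiterated homogenization theorem (\cite{Allaire1996Multiscale, Niu20_reiterated}), with the local-in-$\Omega$ refinement that $H$-limits are local so that the dependence on $y_0$ is fine, they $H$-converge to $\widehat A_m$ and $\widehat A_n$. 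A single application of Lemma \ref{qual-homog_lem_stability} then gives
\[
\|\widehat A_n-\widehat A_m\|_{L^\infty(\Omega)}\le \mu^{-4}\tau,
\]
with a constant depending only on $\mu$.

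\textbf{Proof of \eqref{est.hatAn-Lip}.} The same one-shot device applies. Fix $x_1,x_2\in\Omega$ and $h=x_2-x_1$, and compare the sequences
\[
A_n(x+h,x/\e_1,\dots)\quad\text{and}\quad A_n(x,x/\e_1,\dots)
\]
on a subdomain where both are defined. Their $L^\infty$ difference is at most
\[
\sum_{k\le n}\|\nabla_{y_0}B_k\|_{L^\infty}\,|h|.
\]
The first sequence $H$-converges to $\widehat A_n(\cdot+h)$. To see this, note that the oscillating variables only see $x/\e_j$, and shifting $x$ by $h$ in the fast variables is harmless: the homogenized matrix is independent of translations of the periodic variables, since the cell problems are translation invariant on $\T^d$. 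Lemma \ref{qual-homog_lem_stability} then yields the Lipschitz bound, and evaluating at the point $x_1$ is legitimate because $\widehat A_n$ is continuous. This step relies on the explicit continuous formula \eqref{iden_Blk}.
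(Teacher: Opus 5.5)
Your proof of \eqref{est.hatAn-Am} is the paper's proof. You correctly saw that a level-by-level comparison picks up a factor $\mu^{-4}$ at every step. You then compare $A_n^\e$ and $A_m^\e$ directly: both lie in $\mathcal M(\mu,\mu^{-3};\Omega)$, and $\|A_n^\e-A_m^\e\|_{L^\infty}\le\sum_{k=n+1}^m\|B_k\|_{L^\infty}$. A single application of Lemma \ref{qual-homog_lem_stability} to their reiterated $H$-limits finishes the argument.

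For \eqref{est.hatAn-Lip} your route is valid but differs slightly from the paper's, which \emph{freezes} the slow variable. The paper compares $A_n(x_1,x/\e_1,\dots,x/\e_n)$ with $A_n(x_2,x/\e_1,\dots,x/\e_n)$. These are defined on all of $\R^d$ and differ by at most $\sum_{k\le n}\|\nabla_{y_0}B_k\|_{L^\infty}|x_1-x_2|$. They $H$-converge to the \emph{constant} matrices $\widehat A_n(x_1)$ and $\widehat A_n(x_2)$, so the stability lemma gives the pointwise inequality at once.

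You instead translate the slow variable, which costs a few extra steps:
\begin{itemize}
\item restricting to $\{x\in\Omega: x+h\in\Omega\}$ (or a small ball around $x_1$) and using locality of $H$-convergence;
\item turning the resulting $L^\infty$ bound into a pointwise one via continuity of $\widehat A_n$.
\end{itemize}
In exchange you get the estimate at all base points simultaneously, which is not needed here.

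Your stated reason that $A_n(x+h,x/\e_1,\dots,x/\e_n)$ $H$-converges to $\widehat A_n(\cdot+h)$ is muddled. You never shift the fast variables, so translation invariance of the cell problems on $\T^d$ is beside the point. An $\e$-dependent phase shift $h/\e_j$ would also need more justification than that. The correct reason is simpler. $\widetilde A(y_0,y_1,\dots,y_n):=A_n(y_0+h,y_1,\dots,y_n)$ is a coefficient of the same type. The recursive cell problems \eqref{iden_Blk}--\eqref{eq.chi.k+1} treat $y_0$ as a frozen parameter, so its reiterated homogenized matrix is $\widehat A_n(y_0+h)$. This is the same observation that underlies the paper's freezing argument.
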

\begin{proof}

By the reiterated homogenization described as in Section \ref{sec_approximate-problems}, we have $A_n^\varepsilon\xrightarrow[\varepsilon\doublerightarrow 0]{H} \widehat{A}_n$ and $A_m^\varepsilon\xrightarrow[\varepsilon\doublerightarrow 0]{H} \widehat{A}_m$. Note that by \eqref{cond_ellipticity} and \eqref{cond_bounded-A}, both $A_n^\varepsilon$ and $A_m^\varepsilon$ belong to $\mathcal{M}(\mu, \mu^{-3}; \Omega)$. Due to \eqref{def_An}, 
\begin{align*}
  \|A_n^\varepsilon-A_m^\varepsilon\|_{L^\infty(\Omega)}\leq \sum_{k=n+1}^m\|B_k\|_{L^\infty}. 
\end{align*}
By Lemma \ref{qual-homog_lem_stability}, this yields \eqref{est.hatAn-Am}. 

For the estimate \eqref{est.hatAn-Lip}, we set $y_0$ in $A_n(y_0, \cdots, y_n)$ to be $x_1$ and $x_2$ respectively, and denote
\begin{align*}
  A_{n, i}^\varepsilon(x)=A_n(x_i, x/\varepsilon_1, \cdots, x/\varepsilon_n),\quad i=1, 2. 
\end{align*}
In this situation, by the process of reiterated homogenization,
\begin{align*}
  A_{n ,i}^\varepsilon(x)\xrightarrow[\varepsilon\doublerightarrow 0]{H} \widehat{A}_n(x_i),
\end{align*}
and $A_{n, i}^\varepsilon\in \mathcal{M}(\mu, \mu^{-3})$. Observe that
\begin{align*}
  |A_{n, 1}^\varepsilon(x) - A_{n, 2}^\varepsilon(x) |\leq \|\nabla_{y_0}A_n\|_{L^\infty}|x_1-x_2|\leq \sum_{k=0}^n\|\nabla_{y_0}B_k\|_{L^\infty}|x_1-x_2|. 
\end{align*}
Therefore, Lemma \ref{qual-homog_lem_stability} implies \eqref{est.hatAn-Lip}. 
\end{proof}

\begin{theorem}\label{qual-homog_thm_hatA}
    Assume \eqref{cond_ellipticity}--\eqref{cond_periodicity}. Then there exists a homogenized matrix $\widehat{A} \in \M(\mu,\mu^{-3};\Omega)$ such that $\widehat{A}_n \to \widehat{A}$ uniformly in $\Omega$ as $n\to \infty$, and
    \begin{equation}\label{est.hatAn-hatA}
        \| \widehat{A}_n - \widehat{A} \|_{L^\infty(\Omega)} \le C \sum_{k=n+1}^\infty \|B_k\|_{L^\infty},
    \end{equation}
where $C$ is the constant given in Lemma \ref{qual-homog_lem_difference}.   
Moreover, if in addition \eqref{cond_Dy0B} holds,
then $\widehat{A}$ is Lipschitz in $\Omega$ and
    \begin{align}\label{est.hatA-Lip}
      |\widehat{A}(x_1)-\widehat{A}(x_2)|\leq C\sum_{k=0}^\infty\|\nabla_{y_0}B_k\|_{L^\infty}|x_1-x_2|. 
    \end{align}
  \end{theorem}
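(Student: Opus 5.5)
The plan is to obtain $\widehat{A}$ as a uniform Cauchy limit of $\widehat{A}_n$, using Lemma \ref{qual-homog_lem_difference}, and then to pass the bounds \eqref{est.hatAn-Am} and \eqref{est.hatAn-Lip} to the limit.

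\textbf{Step 1: the Cauchy property.} By \eqref{cond_bounded}, the series $\sum_k \|B_k\|_{L^\infty}$ converges. Its tails $T_n := \sum_{k=n+1}^\infty \|B_k\|_{L^\infty}$ therefore tend to $0$. Estimate \eqref{est.hatAn-Am} gives $\|\widehat{A}_n-\widehat{A}_m\|_{L^\infty(\Omega)}\le C\,T_n$ for all $m>n$. Hence $\{\widehat{A}_n\}$ is Cauchy in $L^\infty(\Omega)$, and in fact uniformly Cauchy on $\Omega$. Each $\widehat{A}_n$ can be taken continuous, or at least bounded measurable, and $L^\infty$ is complete, so a limit $\widehat{A}$ exists with $\widehat{A}_n\to\widehat{A}$ uniformly.

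\textbf{Step 2: the rate \eqref{est.hatAn-hatA}.} Fix $n$ and let $m\to\infty$ in $\|\widehat{A}_n-\widehat{A}_m\|_{L^\infty}\le C\sum_{k=n+1}^m\|B_k\|_{L^\infty}\le C\,T_n$. This yields \eqref{est.hatAn-hatA} directly.

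\textbf{Step 3: membership in $\M(\mu,\mu^{-3};\Omega)$.} By \eqref{es_Ank_bound} with $k=0$, each $\widehat{A}_n$ lies in $\M(\mu,\mu^{-3};\Omega)$. By the discussion after Definition \ref{H-convergence}, this means $\xi\cdot\widehat{A}_n\xi\ge\mu|\xi|^2$ and $|\widehat{A}_n\xi|\le\mu^{-1}|\xi|$, which is the equivalent formulation with constant $\mu$. Both inequalities are closed under pointwise (a.e.) uniform limits, so $\widehat{A}$ satisfies them too. The same equivalence then gives $\xi\cdot\widehat{A}^{-1}\xi\ge\mu^3|\xi|^2$, so $\widehat{A}\in\M(\mu,\mu^{-3};\Omega)$.

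There is one subtlety here. The condition $A^{-1}\xi\cdot\xi\ge\beta^{-1}|\xi|^2$ must either be passed to the limit directly, or be rederived from the ellipticity and boundedness bounds. Passing it directly is legitimate: the uniform ellipticity keeps the inverses $\widehat{A}_n^{-1}$ uniformly bounded, and then $\widehat{A}_n^{-1}\to\widehat{A}^{-1}$ uniformly.

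\textbf{Step 4: the Lipschitz bound \eqref{est.hatA-Lip}.} Assume \eqref{cond_Dy0B}. Estimate \eqref{est.hatAn-Lip} gives, for every $n$,
\[
|\widehat{A}_n(x_1)-\widehat{A}_n(x_2)|\le C\sum_{k=0}^\infty\|\nabla_{y_0}B_k\|_{L^\infty}\,|x_1-x_2|,
\]
with a constant uniform in $n$. Letting $n\to\infty$ pointwise gives \eqref{est.hatA-Lip}.

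The only point needing care is that the convergence is pointwise everywhere rather than merely a.e. This holds because the $\widehat{A}_n$ are genuine functions of $y_0$, and the convergence is uniform in $y_0$. Indeed, Lemma \ref{qual-homog_lem_stability} was applied with $y_0$ frozen, so the bounds hold for every $y_0$.

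I expect no real obstacle. The mildest issue is the a.e. versus everywhere distinction in Steps 3 and 4, which the frozen-$y_0$ interpretation resolves.
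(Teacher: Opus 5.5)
Your proof is correct and follows essentially the same route as the paper: use Lemma \ref{qual-homog_lem_difference} to show $\{\widehat{A}_n\}$ is Cauchy in $L^\infty(\Omega)$, inherit $\M(\mu,\mu^{-3};\Omega)$ from \eqref{es_Ank_bound}, let $m\to\infty$ in \eqref{est.hatAn-Am}, and let $n\to\infty$ in \eqref{est.hatAn-Lip}. One small correction in Step 3: membership in $\M(\mu,\mu^{-3})$ only yields $|\widehat{A}_n\xi|\le\mu^{-3}|\xi|$, not $\mu^{-1}|\xi|$, so the round trip through that equivalence would lose constants; your alternative of passing $\widehat{A}_n^{-1}\xi\cdot\xi\ge\mu^3|\xi|^2$ directly to the limit is the argument that actually gives $\widehat{A}\in\M(\mu,\mu^{-3};\Omega)$.
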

\begin{proof}
    According to Lemma \ref{qual-homog_lem_difference}, $\{\widehat{A}_n\}_n$ is a Cauchy sequence in $L^\infty(\Omega)$ under our assumption, whose limit, denoted by $\widehat{A}$, belongs to $\M(\mu,\mu^{-3};\Omega)$ as $\widehat{A}_n\in \M(\mu,\mu^{-3};\Omega)$ by \eqref{es_Ank_bound}. Thus, taking $m\to\infty$ in \eqref{est.hatAn-Am}, we obtain \eqref{est.hatAn-hatA}. Moreover, taking $n\to \infty$ in \eqref{est.hatAn-Lip}, we obtain \eqref{est.hatA-Lip}. 
\end{proof}

For further applications in quantitative homogenization via reiterated homogenization, the size estimates of the intermediate matrices $A^k_n$ and $B^n_k$ are crucial.
  \begin{lemma}\label{qual-homog_lemma_bkn}
  Let $B_k^n$ and $A_n^k$ be matrices given in Section \ref{sec_approximate-problems}. We have for each $0\leq k\leq n$,
  \begin{align}\label{est.Bnk.Linfty}
    \|B_k^n\|_{L^\infty}\leq C\sum_{\ell=k}^n\|B_\ell\|_{L^\infty},
  \end{align}
  and for each $0\leq j\leq k$,
  \begin{align}
    \|\nabla_{y_j}A_n^k\|_{L^\infty}\leq C\sum_{\ell=j}^n\|\nabla_{y_j}B_\ell\|_{L^\infty},\nonumber\\
    \|\nabla_{y_j}B_k^n\|_{L^\infty}\leq C\sum_{\ell=j}^n\|\nabla_{y_j}B_\ell\|_{L^\infty}, \label{est.DjBnk.Linfty}
  \end{align}
  where $C$ depends only on $\mu$. 
\end{lemma}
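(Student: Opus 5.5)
Both estimates follow from the $H$-stability Lemma \ref{qual-homog_lem_stability}, applied at the level of the intermediate matrices rather than the final homogenized one. The idea is to mimic the proof of Lemma \ref{qual-homog_lem_difference}, but to stop the reiterated homogenization at level $k$ instead of level $0$.

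\textbf{The $L^\infty$ bound \eqref{est.Bnk.Linfty}.} Fix $k$ and $n\ge k$, and freeze the slow variables $(y_0,\dots,y_k)$. Consider two $(n-k)$-scale coefficient matrices of the remaining variables:
\[
x\mapsto A_n\big(y_0,\dots,y_k,x/\e_{k+1},\dots,x/\e_n\big)
\quad\text{and}\quad
x\mapsto A_k(y_0,\dots,y_k).
\]
The second one is constant in $x$. Running the backward recursion \eqref{iden_Blk}--\eqref{eq.chi.k+1} from level $n$ down to level $k$ is exactly reiterated homogenization in the variables $y_{k+1},\dots,y_n$ with $(y_0,\dots,y_k)$ as parameters. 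Hence, as $(\e_{k+1},\dots,\e_n)\doublerightarrow 0$, the first matrix $H$-converges to $A_n^k(y_0,\dots,y_k)$. The second, being constant, $H$-converges to itself.

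Both matrices lie in $\mathcal M(\mu,\mu^{-3})$, and by \eqref{def_An}
\[
\|A_n-A_k\|_{L^\infty}\le \sum_{\ell=k+1}^n\|B_\ell\|_{L^\infty}.
\]
Lemma \ref{qual-homog_lem_stability} therefore gives
\[
|A_n^k-A_k|\le C\sum_{\ell=k+1}^n\|B_\ell\|_{L^\infty}.
\]
By \eqref{expression_Ank}, $B_k^n=A_n^k-A_{k-1}=(A_n^k-A_k)+B_k$, so adding $\|B_k\|_{L^\infty}$ yields \eqref{est.Bnk.Linfty}. The case $k=n$ is trivial.

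\textbf{The gradient bounds \eqref{est.DjBnk.Linfty}.} Fix $0\le j\le k$ and all parameters $y_i$ with $i\le k$, $i\neq j$. Take two values $y_j=z_1$ and $y_j=z_2$. The two $(n-k)$-scale matrices $A_n(\dots,z_1,\dots,x/\e_{k+1},\dots)$ and $A_n(\dots,z_2,\dots,x/\e_{k+1},\dots)$ $H$-converge to $A_n^k(\dots,z_1,\dots)$ and $A_n^k(\dots,z_2,\dots)$ respectively. Their difference is bounded by
\[
\|\nabla_{y_j}A_n\|_{L^\infty}|z_1-z_2|\le \sum_{\ell=j}^n\|\nabla_{y_j}B_\ell\|_{L^\infty}\,|z_1-z_2|,
\]
because $B_\ell$ does not depend on $y_j$ when $\ell<j$. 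Lemma \ref{qual-homog_lem_stability} then gives the Lipschitz bound for $A_n^k$ in $y_j$, which is the first inequality.

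For $B_k^n=A_n^k-\sum_{\ell<k}B_\ell$, we get
\[
\|\nabla_{y_j}B_k^n\|_{L^\infty}\le \|\nabla_{y_j}A_n^k\|_{L^\infty}+\sum_{\ell=j}^{k-1}\|\nabla_{y_j}B_\ell\|_{L^\infty},
\]
which is bounded by the same sum up to changing $C$.

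\textbf{Main point to justify.} The only subtle step is identifying $A_n^k(y_0,\dots,y_k)$ as the $H$-limit of the frozen-parameter $(n-k)$-scale problem. This requires the qualitative reiterated homogenization theorem of \cite{Allaire1996Multiscale, Niu20_reiterated} applied in the variables $y_{k+1},\dots,y_n$, with the recursive formulas matching that theorem's. One must also note that continuity in the parameters makes pointwise freezing legitimate. Since the constant in Lemma \ref{qual-homog_lem_stability} is $\beta/\alpha=\mu^{-4}$, $C$ depends only on $\mu$.
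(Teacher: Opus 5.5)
Your proof is correct and follows the paper's argument: you freeze $(y_0,\dots,y_k)$, identify $A_n^k$ as the $H$-limit of the frozen $(n-k)$-scale matrix, and apply the stability Lemma~\ref{qual-homog_lem_stability} twice — against a constant matrix for the size bound, and with two values of $y_j$ for the Lipschitz bound. The only difference is cosmetic: you compare with $A_k$ and then add back $B_k$, whereas the paper compares directly with $A_{k-1}$; your choice has the small advantage that the comparison matrix is still elliptic when $k=0$, where $A_{-1}=0$ is not in $\mathcal{M}(\mu,\mu^{-3})$.
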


\begin{proof}
  The proof is similar to that of Lemma \ref{qual-homog_lem_difference}. Recall that for fixed parameters $y_0, y_1, \cdots, y_k$,
  \begin{align}\label{qual-homog_es_An-Hconver}
    A_n(y_0, \cdots, y_k, x/\varepsilon_{k+1}, \cdots, x/\varepsilon_n)\xrightarrow[\varepsilon\doublerightarrow 0]{H} A_n^k(y_0, \cdots, y_k),
  \end{align}
  and $A_{k-1}(y_0, \cdots, y_{k-1})$ (as a constant matrix in $x$) $H$-converges to itself. Also note that
  \begin{align*}
    |A_n(y_0, \cdots, y_k, x/\varepsilon_{k+1}, \cdots, x/\varepsilon_n)-A_{k-1}(y_0, \cdots, y_{k-1})|\leq \sum_{\ell=k}^n\|B_\ell\|_{L^\infty}.
  \end{align*}
  As a result, Lemma \ref{qual-homog_lem_stability} yields
  \begin{align*}
    |A_n^k(y_0, \cdots, y_k)-A_{k-1}(y_0, \cdots, y_{k-1})|\leq C\sum_{\ell=k}^n\|B_\ell\|_{L^\infty},
  \end{align*}
  which, together with \eqref{expression_Ank} (which can be written as $A^k_n = A_{k-1} + B^n_k$), gives
  \begin{align*}
    |B_k^n(y_0, \cdots, y_k)|\leq C\sum_{\ell=k}^n\|B_\ell\|_{L^\infty},
  \end{align*}
  where $C$ depends only on $\mu$.

  On the other hand, since for $0\leq j\leq k$ and any $y_j, y_j' \in \R^d$,
  \begin{align*}
    &\Big|A_n\Big(y_0, \cdots, y_j,\cdots, y_k, \frac{x}{\varepsilon_{k+1}}, \cdots, \frac{x}{\varepsilon_n}\Big)-A_n\Big(y_0, \cdots, y_j', \cdots, y_k, \frac{x}{\varepsilon_{k+1}}, \cdots, \frac{x}{\varepsilon_n}\Big)\Big|\\&\leq \sum_{\ell=j}^n\|\nabla_{y_j}B_\ell\|_{L^\infty}|y_j-y_j'|,
  \end{align*}
  we deduce from \eqref{qual-homog_es_An-Hconver} that
  \begin{align*}
    \|\nabla_{y_j}A_n^k\|_{L^\infty}\leq C\sum_{\ell=j}^n\|\nabla_{y_j}B_\ell\|_{L^\infty}. 
  \end{align*}
Notice that, for $0\leq j\leq k$, $\nabla_{y_j}B_k^n=\nabla_{y_j}A_n^k-\nabla_{y_j}A_{k-1}$. The estimate of $\|\nabla_{y_j}B_k^n\|_{L^\infty}$ follows by the triangle inequality. 
\end{proof}

\subsection{Qualitative homogenization theorem}

\begin{proof} [Proof of Theorem \ref{thm.qualitative}]
    We start with the approximate equation \eqref{eq_approximate}. The classical reiterated homogenization theory yields that $A_n^\varepsilon\xrightarrow[\varepsilon\doublerightarrow 0]{H} \widehat{A}_n$. This means that $u_{n,\e} \to u_{n,0}$ weakly in $H^1(\Omega)$ and $A_{n,\e} \nabla u_{n,\e} \to \widehat{A_{n}} \nabla u_{n,0}$ weakly in $L^2(\Omega)$, where $u_{n,0}$ is the weak solution of \eqref{eq_approx_homo}.
    Thanks to Theorem \ref{qual-homog_thm_hatA}, we know that $\widehat{A}_n \to \widehat{A}$ uniformly, where $\widehat{A}$ satisfies the ellipticity and boundedness conditions as in \eqref{cond_ellipticity} and \eqref{cond_bounded-A} and
  \begin{align*}
    \| \widehat{A}_n - \widehat{A} \|_{L^\infty(\Omega)} \le C \sum_{k=n+1}^\infty \|B_k\|_{L^\infty}.
  \end{align*}
    Combining the equations for $u_{n,0}$ \eqref{eq_approx_homo} and $u_0$ \eqref{eq_homogenized}, we get the equation for $u_{n,0} - u_0$:
    \begin{equation*}
    \begin{cases}
      -\diver(\widehat{A}\nabla (u_{n,0} - u_0)= \diver ((\widehat{A}_n - \widehat{A}) \nabla u_{n,0}) &\text{in }\Omega,\\
      u_{n,0} - u_0 = 0&\text{on }\partial\Omega.
    \end{cases}
  \end{equation*}
  It follows by the energy estimates that 
  \begin{align}\label{qual-homog_conver_u0}
  \begin{aligned}
      \|\nabla(u_{n, 0}-u_0)\|_{L^2(\Omega)} & \leq C\|\widehat{A}_n-\widehat{A}\|_{L^\infty(\Omega)} \| \nabla u_{n,0}\|_{L^2(\Omega)} \\
      & \leq C \Big(\| f\|_{H^{-1}(\Omega)} + \| g\|_{H^{1/2}(\partial \Omega)} \Big) \sum_{k=n+1}^\infty \|B_k\|_{L^\infty},
  \end{aligned}
  \end{align}
  and
  \begin{align}\label{est.flux.An-A0}
  \begin{split}
    &\|\widehat{A}_n\nabla u_{n, 0}-\widehat{A}\nabla u_0\|_{L^2(\Omega)}\\
    &\leq \|(\widehat{A}_n-\widehat{A})\nabla u_{n, 0}\|_{L^2(\Omega)}+\|\widehat{A}(\nabla u_{n, 0}-\nabla u_0)\|_{L^2(\Omega)}\\
    &\leq C\Big(\| f\|_{H^{-1}(\Omega)} + \| g\|_{H^{1/2}(\partial \Omega)} \Big) \sum_{k=n+1}^\infty \|B_k\|_{L^\infty},
    \end{split}
  \end{align}
  where we have used \eqref{est.hatAn-hatA} in both of these inequalities.
  
  On the other hand, by
  \begin{align*}
    \|A^\varepsilon-A_n^\varepsilon\|_{L^\infty(\Omega)}\leq C\sum_{k=n+1}^\infty\|B_k\|_{L^\infty}
  \end{align*}
  and a similar energy argument, we obtain
  \begin{align}\label{qual-homog_conver_uepsilon}
    \|\nabla (u_\varepsilon-u_{n, \varepsilon})\|_{L^2(\Omega)} \leq C\Big(\| f\|_{H^{-1}(\Omega)} + \| g\|_{H^{1/2}(\partial \Omega)} \Big) \sum_{k=n+1}^\infty\|B_k\|_{L^\infty},
  \end{align}
  and
  \begin{align}\label{est.flux.Ane-Ae}
   \begin{split}
   &\quad\|A_n^\varepsilon\nabla u_{n, \varepsilon}-A^\varepsilon\nabla u_\varepsilon\|_{L^2(\Omega)}\\&\leq  C \Big(\| f\|_{H^{-1}(\Omega)} + \| g\|_{H^{1/2}(\partial \Omega)} \Big) \sum_{k=n+1}^\infty \|B_k\|_{L^\infty}.
   \end{split}
  \end{align}

  Finally, we show $\nabla u_\e \to \nabla u_0$ weakly in $L^2(\Omega)$ and $A^\e \nabla u_\e \to \widehat{A} \nabla u_0$ weakly in $L^2(\Omega)$ as $\e \doublerightarrow 0$. In fact, for any $\phi \in L^2(\Omega)^d$,
  \begin{equation*}
  \begin{aligned}
      \bigg| \int_{\Omega} (\nabla u_\e - \nabla u_0) \cdot \phi \bigg| & \le \bigg| \int_{\Omega} (\nabla u_{n,\e} - \nabla u_\e) \cdot \phi \bigg| \\
      & \quad  + \bigg| \int_{\Omega} (\nabla u_{n,\e} - \nabla u_{n,0}) \cdot \phi \bigg| + \bigg| \int_{\Omega} (\nabla u_{n,0} - \nabla u_0) \cdot \phi \bigg| \\
      & \le \bigg| \int_{\Omega} (\nabla u_{n,\e} - \nabla u_{n,0}) \cdot \phi \bigg| \\
      & \quad + C\| \phi \|_{L^2(\Omega)} \Big(\| f\|_{H^{-1}(\Omega)} + \| g\|_{H^{1/2}(\partial \Omega)} \Big) \sum_{k=n+1}^\infty \|B_k\|_{L^\infty},
  \end{aligned}
  \end{equation*}
  where we have used \eqref{qual-homog_conver_u0} and \eqref{qual-homog_conver_uepsilon} in the last inequality.
  Since for each fixed $n\ge 1$, $\e \doublerightarrow 0$ implies
  \begin{equation*}
      \sup_{1\le i \le n} \frac{\e_i}{\e_{i-1}} \to 0.
  \end{equation*}
  It follows from \cite{Allaire1996Multiscale, Niu20_reiterated} that $u_{n,\e} \to u_{n,0}$ weakly in $H^1(\Omega)$ as $\e \doublerightarrow 0$. Thus, 
  \begin{align*}
      & \limsup_{\e \doublerightarrow 0} \bigg| \int_{\Omega} (\nabla u_\e - \nabla u_0) \cdot \phi \bigg| \\
      &\le C\| \phi \|_{L^2(\Omega)} \left(\| f\|_{H^{-1}(\Omega)} + \| g\|_{H^{1/2}(\partial \Omega)} \right) \sum_{k=n+1}^\infty \|B_k\|_{L^\infty}.
  \end{align*}
  By taking $n\to \infty$ and using the assumption \eqref{cond_bounded}, we obtain
  \begin{equation*}
      \lim_{\e \doublerightarrow 0} \int_{\Omega} (\nabla u_\e - \nabla u_0) \cdot \phi = 0.
  \end{equation*}
  This, together with the fact $u_\e = u_0$ on $\partial \Omega$, implies that $u_\e \to u_0$ weakly in $H^1(\Omega)$. By a similar argument using \eqref{est.flux.An-A0} and \eqref{est.flux.Ane-Ae}, we can show that $A^\e \nabla u_\e \to \widehat{A} \nabla u_0$ weakly in $L^2(\Omega)$ as $\e \doublerightarrow 0$. It follows  that as $\varepsilon\doublerightarrow 0$, $A^\varepsilon(x)$ $H$-converges to $\widehat{A}$ in $\mathcal{M}(\mu, \mu^{-3}; \Omega)$. The proof is complete.
\end{proof}

\section{Convergence rates}\label{sec_convergence-rates}

\subsection{Recursive systems}\label{sec_recursive-systems}
To obtain the convergence rates for the equation \eqref{eq_infinite}, it suffices to consider the approximate problem \eqref{eq_approximate} with $n$ scales and establish convergence rates independent of $n$. Thus, we will fix $n >1$. 
In view of the process of reiterated homogenization described in Section \ref{sec_approximate-problems}
or \eqref{ReteratedHomo-H-conv}, we need to consider all intermediate problems with coefficient matrices $A_n^{k,\e}(x)$.
Recall that $A^{k}_n$ is obtained through the backward recursive system \eqref{expression_Ank} and \eqref{iden_Blk}, which can be written in a concise form
\begin{equation*}
\begin{cases}
    A_n^k = A_{k-1} + B^n_k,\\
    B^n_k = B^n_{k+1} + \Ag{ B_{k+1}^n (I + \nabla_{y_{k+1}} \chi_{A^{k+1}_n}) }_{y_{k+1}},
\end{cases}
\end{equation*}
for $1\le k \le n-1$ with $B_n^n = B_n$ and $A^n_n = A_n$, where $\Ag{\cdot}_{y_{k+1}}$ denotes the mean value over $y_{k+1} \in \T^d$. To prove a convergence rate uniform in the number of scales, we need accurate size estimates of $B^n_k$ that could be accumulated through the recursive system. The $L^\infty$ bound of $B^n_k$ has been given by \eqref{est.Bnk.Linfty}, which is sufficient for us. We also need the $L^\infty$ bound of $\nabla_{y_j} B^n_k$ decaying as $n, k \to \infty$, which is not implied by \eqref{est.DjBnk.Linfty}.

Denote for $0\leq k\leq n$
$$\delta_k^n=\max_{0\leq j\leq k}\|\nabla_{y_j}B_k^n\|_{L^\infty}. $$
We have the following backward recursive estimates for $\delta^n_k$.

\begin{lemma}\label{recursive_lem_recursiveformula}
  It holds that $\delta_n^n\leq \delta_n$, and for $0\le k\leq n-1$,
  \begin{align}\label{conver_ineq_recursive}
    \delta_{k}^{n}\leq \delta_{k}+\delta_{k+1}^n+C_0\sum_{\ell=k+1}^n\delta_\ell\delta_{k+1}^n+C_0[\delta]_0\bigg(\sum_{\ell=k+1}^n\delta_\ell\bigg)^2,
  \end{align}
  where $C_0$ depends only on $d, \mu$.
\end{lemma}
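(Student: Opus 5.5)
The plan is to differentiate the backward recursion \eqref{iden_Blk} in $y_j$, $0\le j\le k$, and estimate each term. The base case $\delta_n^n\le\delta_n$ is immediate, since $B_n^n=B_n$ and \eqref{cond_B} holds.

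For $k\le n-1$, the first two terms are direct. We have $\|\nabla_{y_j}B_k\|_{L^\infty}\le\delta_k$. Differentiation commutes with averaging over $y_{k+1}$, so
\[
\Big\|\nabla_{y_j}\fint B^n_{k+1}\,dy_{k+1}\Big\|_{L^\infty}\le \delta^n_{k+1}
\quad\text{for } j\le k .
\]
This produces the leading terms $\delta_k+\delta^n_{k+1}$ with coefficient one.

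The remaining work is the corrector term $\fint B^n_{k+1}\nabla_{y_{k+1}}\chi_{A^{k+1}_n}\,dy_{k+1}$. Its $y_j$-derivative splits into two pieces.

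\emph{First piece:} $\fint (\nabla_{y_j}B^n_{k+1})\nabla_{y_{k+1}}\chi\,dy_{k+1}$. The corrector equation \eqref{eq.chi.k+1} has right-hand side $\operatorname{div}_{y_{k+1}}(B^n_{k+1}e^j)$. The energy estimate on $\T^d$, with ellipticity of $A^{k+1}_n$ given by \eqref{es_Ank_bound}, gives
\[
\|\nabla_{y_{k+1}}\chi\|_{L^2(\T^d)}\le C\|B^n_{k+1}\|_{L^\infty}\le C\sum_{\ell=k+1}^n\delta_\ell
\]
by \eqref{est.Bnk.Linfty}. By Cauchy--Schwarz this piece is at most $C_0\delta^n_{k+1}\sum_{\ell=k+1}^n\delta_\ell$, which is the third term.

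\emph{Second piece:} $\fint B^n_{k+1}\nabla_{y_{k+1}}\nabla_{y_j}\chi\,dy_{k+1}$. Its size is $\|B^n_{k+1}\|_{L^\infty}\,\|\nabla_{y_{k+1}}\partial_{y_j}\chi\|_{L^2}$. Differentiating the corrector equation in the parameter $y_j$ gives
\[
-\operatorname{div}_{y_{k+1}}\big(A^{k+1}_n\nabla_{y_{k+1}}\partial_{y_j}\chi\big)
=\operatorname{div}_{y_{k+1}}\big((\partial_{y_j}A^{k+1}_n)\nabla_{y_{k+1}}\chi+(\partial_{y_j}B^n_{k+1})e\big).
\]
The energy estimate then bounds $\|\nabla_{y_{k+1}}\partial_{y_j}\chi\|_{L^2}$ by
\[
C\big(\|\nabla_{y_j}A^{k+1}_n\|_{L^\infty}\|\nabla_{y_{k+1}}\chi\|_{L^2}+\delta^n_{k+1}\big).
\]
Lemma \ref{qual-homog_lemma_bkn} gives $\|\nabla_{y_j}A^{k+1}_n\|_{L^\infty}\le C[\delta]_0$, and the previous step gives $\|\nabla_{y_{k+1}}\chi\|_{L^2}\le C\sum_{\ell\ge k+1}\delta_\ell$. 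Multiplying by $\|B^n_{k+1}\|_{L^\infty}\le C\sum_{\ell\ge k+1}\delta_\ell$, the second piece is bounded by
\[
C_0\sum\delta_\ell\,\delta^n_{k+1}+C_0[\delta]_0\Big(\sum_{\ell=k+1}^n\delta_\ell\Big)^2 .
\]
These are exactly the last two terms of \eqref{conver_ineq_recursive}.

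The main obstacle is justifying differentiability of $\chi$ in the parameters $y_j$. Rather than differentiating the equation, I would work with difference quotients in $y_j$: subtract the corrector equations at $y_j$ and $y_j+h$, and apply the same energy estimate to the difference. This yields Lipschitz bounds, which suffice because $\delta^n_k$ only involves $L^\infty$ norms of derivatives, that is, Lipschitz constants. A second point to watch is that the coefficient of $\delta^n_{k+1}$ must be exactly one (plus the small correction), not $C$, so that the recursion does not blow up. The splitting above preserves this, since the averaged term contributes with constant $1$ and every other contribution carries a factor $\sum_{\ell\ge k+1}\delta_\ell$.
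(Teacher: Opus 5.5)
Your proposal is correct and follows essentially the same route as the paper: you differentiate the recursion \eqref{iden_Blk} in $y_j$, bound $\nabla_{y_{k+1}}\chi_{A_n^{k+1}}$ and its $y_j$-derivative by energy estimates for \eqref{eq.chi.k+1} and its parameter-differentiated version, and close with Lemma \ref{qual-homog_lemma_bkn} for $\|B^n_{k+1}\|_{L^\infty}$ and $\|\nabla_{y_j}A^{k+1}_n\|_{L^\infty}\le C[\delta]_0$. Your difference-quotient justification of the parameter differentiation adds rigor that the paper leaves implicit, and it preserves the coefficient one on $\delta^n_{k+1}$.
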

\begin{proof}
  The estimate for $\delta_n^n$ is trivial, since $B_n^n=B_n$. For $0\leq j\leq k\leq n-1$, by the equation \eqref{eq.chi.k+1} of $\chi_{A_n^{k+1}}$, one can see that  for each $(y_0, \cdots, y_k)$,
  \begin{align*}
    \|\nabla_{y_{k+1}}\chi_{A_n^{k+1}}\|_{L^2_{y_{k+1}}(\mathbb{T}^d)}\leq C\|B_{k+1}^n\|_{L^2_{y_{k+1}}(\mathbb{T}^d)},
  \end{align*}
  and
  \begin{align*}
\|\nabla_{y_{k+1}}\nabla_{y_j}\chi_{A_n^{k+1}}\|_{L^2_{y_{k+1}}(\mathbb{T}^d)} & \leq C\|\nabla_{y_j}B_{k+1}^n\|_{L^2_{y_{k+1}}(\mathbb{T}^d)}\\&\quad+C\|\nabla_{y_j}A_n^{k+1}\|_{L^\infty_{y_{k+1}}(\mathbb{T}^d)}\|\nabla_{y_{k+1}}\chi_{A_n^{k+1}}\|_{L^2_{y_{k+1}}(\mathbb{T}^d)},
  \end{align*}
 where $C$ depends only on $d, \mu$. In view of \eqref{iden_Blk}, this implies  that
  \begin{align*}
    &|\nabla_{y_j}B_{k}^n|\leq |\nabla_{y_j}B_{k}|+\fint_{\mathbb{T}^d}|\nabla_{y_j}B_{k+1}^n|dy_{k+1}\\
&\qquad\qquad+\fint_{\mathbb{T}^d}
\Big( |\nabla_{y_j}B_{k+1}^n||\nabla_{y_{k+1}} \chi_{A_n^{k+1}}|+|B_{k+1}^n||\nabla_{y_j}\nabla_{y_{k+1}} \chi_{A_n^{k+1}}|\Big)dy_{k+1}\\
&\leq |\nabla_{y_j}B_{k}|+\|\nabla_{y_j}B_{k+1}^n\|_{L^2_{y_{k+1}}(\mathbb{T}^d)}+C\|\nabla_{y_j}B_{k+1}^n\|_{L^2_{y_{k+1}}(\mathbb{T}^d)}\|B_{k+1}^n\|_{L^2_{y_{k+1}}(\mathbb{T}^d)}\\&\qquad+C\|B_{k+1}^n\|_{L^2_{y_{k+1}}(\mathbb{T}^d)}^2\|\nabla_{y_j}A_n^{k+1}\|_{L^\infty_{y_{k+1}}(\mathbb{T}^d)}.
  \end{align*}
Since by Lemma \ref{qual-homog_lemma_bkn}, 
\begin{align*}
  \|\nabla_{y_j}A_n^{k+1}\|_{L^\infty}\leq C\sum_{\ell=j}^{n}\|\nabla_{y_j}B_\ell\|_{L^\infty},
\end{align*}
we deduce that
\begin{align*}
  \|\nabla_{y_j}B_{k}^n\|_{L^\infty}&\leq \|\nabla_{y_j}B_{k}\|_{L^\infty}+\|\nabla_{y_j}B_{k+1}^n\|_{L^\infty}+C\|B_{k+1}^n\|_{L^\infty}\|\nabla_{y_j}B_{k+1}^n\|_{L^\infty}\\&\quad+C\|B_{k+1}^n\|_{L^\infty}^2\sum_{\ell=j}^{n}\|\nabla_{y_j} B_\ell\|_{L^\infty}.
\end{align*}
Taking the supremum over $0\le j\le k$, we obtain 
  \begin{align*}
    \delta_{k}^{n}&\leq \delta_{k}+\delta_{k+1}^n+C\sum_{\ell=k+1}^n\delta_\ell\delta_{k+1}^n+C\bigg(\sum_{\ell=k+1}^n\delta_\ell\bigg)^2\sum_{\ell=0}^{n}\delta_\ell\\&\leq \delta_{k}+\delta_{k+1}^n+C\sum_{\ell=k+1}^n\delta_\ell\delta_{k+1}^n+C[\delta]_0\bigg(\sum_{\ell=k+1}^n\delta_\ell\bigg)^2,
  \end{align*}
  where we have used Lemma \ref{qual-homog_lemma_bkn} as well as the assumption \eqref{cond_B}. The constant $C$ depends only on $d$ and $\mu$. The proof is complete. 
\end{proof}

The next lemma gives a good control for the recursive system \eqref{conver_ineq_recursive}.

\begin{lemma}\label{conver_lemma_dkn}
  Let $\delta_k^n$ satisfy the recursive conditions in Lemma \ref{recursive_lem_recursiveformula}. Suppose $[\delta]_1<\infty$. Then for any $n\geq 1$ and $0\le k\leq n$, 
  \begin{align}\label{re-0}
    \delta_k^n\leq \exp\big\{C_0[\delta]_1\big\}\cdot \big\{ 1+C_0[\delta]_0[\delta]_1\big\}\cdot R_k^n,
  \end{align}
  where
  \begin{align}
    \label{conver_def_Rkn}
    R_k^n=\sum_{j=k}^n\delta_j. 
  \end{align}
\end{lemma}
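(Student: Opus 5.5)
The plan is to unroll the backward recursion \eqref{conver_ineq_recursive} explicitly and then bound the resulting product and sum using the two quantities $[\delta]_0$ and $[\delta]_1$. Using the notation $R_k^n$ from \eqref{conver_def_Rkn}, I first rewrite \eqref{conver_ineq_recursive} as
\begin{equation*}
\delta_k^n \le \big(1 + C_0 R_{k+1}^n\big)\,\delta_{k+1}^n + \delta_k + C_0[\delta]_0 \big(R_{k+1}^n\big)^2, \qquad 0\le k\le n-1.
\end{equation*}
This is a linear inequality in $\delta_{k+1}^n$ with nonnegative coefficients, so it can be iterated down from $\delta_n^n\le \delta_n$.

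Set $P_k := \prod_{i=k+1}^{n}(1+C_0R_i^n)$, with $P_n=1$. I will prove by backward induction on $k$ the bound
\begin{equation*}
\delta_k^n \le P_k\Big( R_k^n + C_0[\delta]_0\sum_{j=k}^{n-1}\big(R_{j+1}^n\big)^2\Big).
\end{equation*}
The base case $k=n$ is $\delta_n^n\le\delta_n=R_n^n$. For the inductive step, insert the hypothesis for $k+1$ into the rewritten recursion. Since $P_{k}=(1+C_0R_{k+1}^n)P_{k+1}$ and $P_k\ge 1$, the terms $\delta_k$ and $C_0[\delta]_0(R_{k+1}^n)^2$ can be absorbed into $P_k$. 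The identity $R_k^n=\delta_k+R_{k+1}^n$ then closes the induction.

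Next I bound the two factors separately.
\begin{itemize}
\item For the product, use $1+t\le e^t$. Because $k\ge 0$, every index satisfies $i\ge 1$, so
\begin{equation*}
\log P_k\le C_0\sum_{i=1}^{n}R_i^n=C_0\sum_{\ell=1}^n \ell\,\delta_\ell\le C_0[\delta]_1,
\end{equation*}
after exchanging the order of summation.
\item For the quadratic sum, use monotonicity $R_{j+1}^n\le R_{k}^n$ for $j\ge k$. This gives
\begin{equation*}
\sum_{j=k}^{n-1}(R_{j+1}^n)^2\le R_k^n\sum_{i=1}^n R_i^n\le R_k^n[\delta]_1,
\end{equation*}
by the same exchange of summation.
\end{itemize}
Combining these yields $\delta_k^n\le \exp\{C_0[\delta]_1\}\{1+C_0[\delta]_0[\delta]_1\}R_k^n$, which is \eqref{re-0}.

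The only delicate point is the bookkeeping in the induction, namely ensuring that the absorbed terms carry the factor $P_k\ge1$ in the right places. The essential observation is $\sum_{i\ge1}R_i^n\le[\delta]_1$. This is exactly where the hypothesis $[\delta]_1<\infty$, rather than only $[\delta]_0<\infty$, enters: it keeps both the product and the quadratic accumulation bounded uniformly in $n$.
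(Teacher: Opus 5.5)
Your proposal is correct and follows essentially the same route as the paper: the paper writes $\delta_k^n\le a_k\delta_{k+1}^n+b_k$, unrolls it into $\prod a_\ell\,\delta_n^n+\sum_j\prod a_\ell\, b_j$, and then bounds the product by $\exp\{C_0[\delta]_1\}$ and the quadratic sum by $R_k^n[\delta]_1$ using monotonicity of $R_j^n$. You package the same unrolling as a closed-form bound proved by backward induction (absorbing the $b_k$ terms via $P_k\ge 1$), which gives exactly the same constants.
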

\begin{proof}
For fixed $n$, we write \eqref{conver_ineq_recursive} as
\begin{align*}
  \delta_k^n\leq a_k\delta_{k+1}^n+b_k,
\end{align*}
where
\begin{align*}
    a_k=1+C_0R_{k+1}^n,\quad b_k=\delta_k+C_0[\delta]_0 (R_{k+1}^n)^2.
\end{align*}
By using a backward induction on $k$, we can show that  
\begin{align*}
  \delta_k^n\leq \prod_{\ell=k}^{n-1}a_\ell \delta_n^n+\sum_{j=k}^{n-1} \prod_{\ell=k}^{j-1}a_\ell b_j,
\end{align*}
where we have employed the convention that $\prod_{\ell=k}^{j-1}a_\ell=1$ if $j-1<k$. To bound the first term, we note that  for each $k$, 
\begin{align*}
    \prod_{\ell=k}^{n-1} a_\ell&=\prod_{\ell=k}^{n-1}(1+C_0R_{\ell+1}^n)=\exp\bigg\{\sum_{\ell=k}^{n-1} \log(1+C_0R_{\ell+1}^n)\bigg\}\\
    &\leq \exp\bigg\{\sum_{\ell=k}^{n-1} C_0R_{\ell+1}^n\bigg\}
    \leq \exp\bigg\{C_0\sum_{j=k+1}^n\sum_{\ell=k}^{j-1} \delta_j\bigg\}\leq \exp\big\{C_0[\delta]_1\big\},
  \end{align*}
which gives
\begin{align*}
  \prod_{\ell=k}^{n-1} a_\ell \delta_n^n\leq \exp\big\{C_0[\delta]_1\big\}\delta_n^n.
\end{align*}
  For the second term, since $\prod_{\ell=k}^{j-1} a_\ell\leq \prod_{\ell=k}^{n-1}a_\ell$, we have
  \begin{align*}
    \sum_{j=k}^{n-1}\prod_{\ell=k}^{j-1}a_\ell b_j&\leq \exp\big\{C_0[\delta]_1\big\}\cdot \sum_{j=k}^{n-1} \left\{\delta_j+C_0[\delta]_0 (R_{j+1}^n)^2\right\}\\
    &\leq \exp\big\{C_0[\delta]_1\big\}\cdot \bigg(R_k^{n-1}+C_0[\delta]_0 R_k^n\sum_{j=k}^{n-1} R_{j+1}^n\bigg)\\&\leq \exp\big\{C_0[\delta]_1\big\}\cdot \big\{R_k^{n-1}+C_0[\delta]_0[\delta]_1R_k^n\big\}. 
  \end{align*}
Combining these two estimates,  we obtain the desired inequality \eqref{re-0}.
\end{proof}

For future use, we introduce the notation 
\begin{align}
  \label{conver_def_Rk}
  R_k:=\sum_{\ell=k}^\infty \delta_\ell\quad\text{for }k\geq 0.
\end{align}
Notice  that
\begin{align*}
  \sum_{k=1}^\infty k^\alpha R_k=\sum_{k=1}^\infty k^\alpha \sum_{j=k}^\infty \delta_j=\sum_{j=1}^\infty \sum_{k=1}^j k^\alpha \delta_j,
\end{align*}
and that there exist $c_\alpha, C_\alpha$ depending only on $\alpha\in [0,\infty)$ such that
\begin{align*}
  c_\alpha j^{\alpha+1}\leq \sum_{k=1}^j k^\alpha\leq C_\alpha j^{\alpha+1}.
\end{align*}
It follows that 
\begin{align}\label{recursive_es_Rk}
  c_\alpha [\delta]_{\alpha+1}\leq \sum_{k=1}^\infty k^\alpha R_k\leq C_\alpha [\delta]_{\alpha+1}.
\end{align}

\subsection{Improved one-step convergence rates}\label{sec_case-n=1}
In this section, we establish an improved convergence rate in a two-scale problem, which reflects the homogenization effect of a small perturbation.

Consider the Dirichlet problem,
\begin{equation}
  \label{eq_n=1_general}
  \begin{cases}
    -\mathrm{div}(A(x, x/\e)\nabla u_\varepsilon)=f&\text{in }\Omega, \\
    u_\varepsilon=g &\text{on }\partial\Omega,
  \end{cases}
\end{equation}
where $0<\varepsilon\leq 1$, the coefficient matrix $A(x, y)=E_0(x)+E_1(x, y)$ 
is $1$-periodic in $y = (y^1,\cdots, y^d) \in \T^d$ 
and satisfies
\begin{gather}\label{conver_cond_case-n=1}
\left\{
  \begin{split}
    & \xi \cdot A(x, y)\xi \geq \mu|\xi|^2\quad \text{for any }\xi \in\mathbb{R}^{d},\\
  & \|E_0\|_{L^\infty}+\|E_1\|_{L^\infty}\leq \mu^{-1},\\
  & \|\nabla_x E_0\|_{L^\infty}+\|\nabla_x E_1\|_{L^\infty}<\infty.
  \end{split}
  \right.
\end{gather}
The homogenized equation of \eqref{eq_n=1_general} is given by
\begin{align}\label{eq_n=1.homo}
  \begin{cases}
    -\mathrm{div}(\widehat{A}\nabla u_0)=f&\text{in }\Omega, \\
    u_0=g &\text{on }\partial\Omega.
  \end{cases}
\end{align}
In the following, we provide the detailed equations for the corrector, flux corrector and the homogenized matrix, with emphasis on the role played by the structure of $A(x,y)$.

Denote by $\chi(x, y)$ and $\phi(x, y)$ the corrector and flux corrector of equation \eqref{eq_n=1_general}. More precisely, the corrector $\chi=(\chi^j)_{1\le j\le d}$ satisfies 
\begin{equation}\label{eq_one-scale_chi}
  \begin{cases}
    -\mathrm{div}_y(A\nabla_y \chi^j)=\mathrm{div}_y(A e^j)=\mathrm{div}_y (E_1 e^j)&\text{in }\mathbb{T}^d,\\
    \chi^j\text{ is $1$-periodic in $y$ and }\int_{\mathbb{T}^d}\chi^j dy=0.
  \end{cases}
\end{equation}
In the second equality, we have used the fact $\nabla_y A = \nabla_y E_1$, as $E_0$ is independent of $y$. This simple fact allows us to transfer the smallness of $E_1$ to the correctors.
The homogenized matrix is given by
\begin{align}\label{n=1_def_hatA}
\begin{split}
    \widehat{A}(x) & = \fint_{\mathbb{T}^d} \left\{ A(x, y)+A(x, y)\nabla_{y}\chi(x, y) \right\} dy\\&=E_0(x)+\fint_{\mathbb{T}^d} \left\{ E_1(x, y)+E_1(x, y)\nabla_{y}\chi(x, y)\right\}  dy.
\end{split}
\end{align}
Let
\begin{equation}\label{F}
    F_{ij} = A_{ij}+ \sum_{\ell = 1}^d A_{i\ell}\partial_{y^\ell}\chi^j-\widehat{A}_{ij}.
\end{equation}
The flux corrector $\phi=(\phi_{kij})_{1\le k,i,j\le d}$ is given by
\begin{align}\label{eq.phi}
  \begin{cases}
    \Delta_y \phi_{kij}=\partial_{y^k} F_{ij} -\partial_{y^i}F_{kj} \quad \text{in }\mathbb{T}^d,\\
  \phi_{kij}\text{ is $1$-periodic in $y$ and }\int_{\mathbb{T}^d}\phi_{kij} dy=0. 
  \end{cases}
\end{align}
Note that $\phi_{kij}$ is skew-symmetric in $k$ and $i$, and
\begin{align}\label{conver_iden_flux}
\sum_{k=1}^d \partial_{y^k}\phi_{kij} = F_{ij}.
\end{align}
We also denote by $\chi^*(x, y)$ and $\phi^*(x, y)$ the corrector and flux corrector for the corresponding equation with the coefficient matrix $A^*$. They share the same properties as $\chi$ and $\phi$.
\begin{lemma}\label{lem_chi_E1}
  Under the assumptions above, we have
  \begin{equation}\label{est_E1}
      \|\chi\|_{L^\infty H^1}+\|\phi\|_{L^\infty H^1}+\|A-\widehat{A}\|_{L^\infty}\leq C\|E_1\|_{L^\infty},
  \end{equation}
  and
  \begin{equation}\label{est_chi_DE1}
  \begin{aligned}
    & \|\nabla_x\chi\|_{L^\infty H^1}+\|\nabla_x\phi\|_{L^\infty H^1}+\|\nabla_x(A-\widehat{A})\|_{L^\infty}\\
    &\quad\leq C(\|\nabla_xA\|_{L^\infty}\|E_1\|_{L^\infty}+\|\nabla_x E_1\|_{L^\infty}),
    \end{aligned}
  \end{equation}
  where $L^\infty H^1=L^\infty(\Omega; H^1(\mathbb{T}^d))$ and $C$ depends only on $d $ and $ \mu$. 
\end{lemma}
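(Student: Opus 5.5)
The plan is to run the standard periodic corrector estimates with $x$ frozen as a parameter, tracking that every right-hand side involves only $E_1$ and never $E_0$.

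\textbf{Bounds (2.29).} Fix $x\in\Omega$ and test \eqref{eq_one-scale_chi} with $\chi^j$. Ellipticity gives
\begin{equation*}
\mu\|\nabla_y\chi^j\|_{L^2(\T^d)}\le \|E_1\|_{L^\infty}.
\end{equation*}
Poincaré's inequality (using $\int\chi^j=0$) then yields $\|\chi\|_{L^\infty H^1}\le C\|E_1\|_{L^\infty}$. By \eqref{n=1_def_hatA}, $\widehat A-E_0=\langle E_1+E_1\nabla_y\chi\rangle$, so $|\widehat A-E_0|\le C\|E_1\|_{L^\infty}$. Hence $\|A-\widehat A\|_{L^\infty}\le |E_1|+|E_0-\widehat A|\le C\|E_1\|_{L^\infty}$.

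The flux corrector is handled next. By \eqref{F}, $F=(A-\widehat A)+A\nabla_y\chi$, so $\|F(x,\cdot)\|_{L^2(\T^d)}\le C\|E_1\|_{L^\infty}$. Testing \eqref{eq.phi} with $\phi_{kij}$ gives $\|\nabla_y\phi\|_{L^2}\le 2\|F\|_{L^2}$. Poincaré's inequality again gives $\|\phi\|_{L^\infty H^1}\le C\|E_1\|_{L^\infty}$.

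\textbf{Bounds (2.30).} Differentiate \eqref{eq_one-scale_chi} in $x$; this is legitimate since $\nabla_x A\in L^\infty$, and can be justified via difference quotients in $x$, which satisfy the same equations. The result is
\begin{equation*}
-\mathrm{div}_y(A\nabla_y\partial_x\chi^j)=\mathrm{div}_y(\partial_x E_1 e^j)+\mathrm{div}_y(\partial_x A\,\nabla_y\chi^j).
\end{equation*}
The energy estimate gives
\begin{equation*}
\|\nabla_y\partial_x\chi\|_{L^2}\le C\big(\|\nabla_xE_1\|_{L^\infty}+\|\nabla_xA\|_{L^\infty}\|\nabla_y\chi\|_{L^2}\big).
\end{equation*}
Combined with the first step, this is $\le C(\|\nabla_x E_1\|_{L^\infty}+\|\nabla_xA\|_{L^\infty}\|E_1\|_{L^\infty})$. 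Since $\partial_x\chi$ has mean zero, Poincaré's inequality gives the full $H^1$ bound.

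For $\widehat A$, write $\partial_x(\widehat A-E_0)=\langle \partial_xE_1+\partial_xE_1\nabla_y\chi+E_1\nabla_y\partial_x\chi\rangle$. Each term is bounded by the right side of \eqref{est_chi_DE1}, so $\partial_x(A-\widehat A)=\partial_xE_1-\partial_x(\widehat A-E_0)$ is controlled.

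Finally, $\partial_x F=\partial_x(A-\widehat A)+\partial_xA\nabla_y\chi+A\nabla_y\partial_x\chi$. Its $L^2_y$ norm is bounded by the same quantity. Differentiating \eqref{eq.phi} in $x$ and applying the energy estimate gives $\|\nabla_x\phi\|_{L^\infty H^1}$ with the same bound.

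\textbf{Main obstacle.} The only point requiring care is the key structural observation that $E_0$ drops out. It disappears from the corrector right-hand side because $\mathrm{div}_y(E_0e^j)=0$, and from $A-\widehat A$ because $E_0$ cancels exactly. This is what makes every bound proportional to $E_1$ rather than to $\|A\|_{L^\infty}$.

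The term $\|\nabla_xA\|_{L^\infty}\|E_1\|_{L^\infty}$, which involves the full $A$ and not just $E_1$, arises unavoidably from $\partial_x A\,\nabla_y\chi$. It is still linear in $E_1$ because $\nabla_y\chi$ is.

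The regularity of $\chi$ in $x$ (differentiability, measurability) should be justified by difference quotients in $x$, which satisfy the same equations with $\partial_x$ replaced by difference quotients. Uniform bounds on these give Lipschitz dependence in $x$, so no new ideas are needed.
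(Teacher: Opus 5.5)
Your proposal is correct and follows essentially the same route as the paper. Both arguments freeze $x$ and run the energy estimate for $\chi$ and for $\partial_x\chi$, using that $E_0$ drops out of the corrector equation. Both then use the identity $A-\widehat{A}=E_1-\langle E_1\rangle-\langle E_1\nabla_y\chi\rangle$ and bound $F$ and $\nabla_x F$ in $L^2_y$. Finally, both apply the energy estimate to the flux-corrector equation and to its $x$-derivative. Your difference-quotient remark makes rigorous the $x$-differentiation that the paper carries out formally.
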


\begin{proof}
By the energy estimates, it follows from equation \eqref{eq_one-scale_chi} that for each $x\in\Omega$,
\begin{align}\label{est.Dychi}
  \|\nabla_{y}\chi\|_{L_{y}^2(\mathbb{T}^d)}\leq C\|E_1\|_{L_{y}^2(\mathbb{T}^d)}.
\end{align}
Using the Poincar\'{e} inequality and taking the supremum over $x\in \Omega$, we obtain the estimate of $\|\chi\|_{L^\infty H^1}$ in \eqref{est_E1}. Also, 
by taking the derivative in $x$ in  the equation \eqref{eq_one-scale_chi}, we obtain 
\begin{equation*}
    -\mathrm{div}_y(A\nabla_y \partial_{x^k}\chi^j)= \mathrm{div}(\partial_{x^k} E_1 e^j) + \mathrm{div}_y(\partial_{x^k} A\nabla_y \chi^j).
\end{equation*}
Thus, the energy estimate yields
\begin{align}\label{est.DyDxchi}
  \begin{split}
  \|\nabla_y\nabla_x\chi\|_{L_{y}^2(\mathbb{T}^d)}&\leq C\|\nabla_xE_1\|_{L_y^2(\mathbb{T}^d)}+C\|\nabla_xA\nabla_y\chi\|_{L_y^2(\mathbb{T}^d)}\\&\leq C\|\nabla_x E_1\|_{L_y^2(\mathbb{T}^d)}+C\|\nabla_x A\|_{L^\infty}\|E_1\|_{L_y^2(\mathbb{T}^d)},
  \end{split}
\end{align}
which leads to  the estimate of $\| \nabla_x \chi \|_{L^\infty H^1}$ in \eqref{est_chi_DE1}. 

Next, note that
\begin{align*}
  F = A+A\nabla_{y}\chi-\widehat{A}=E_1+A\nabla_{y}\chi-\fint_{\mathbb{T}^d}E_1 dy-\fint_{\mathbb{T}^d}E_1\nabla_{y} \chi dy.
\end{align*}
We obtain that  for each $x$,
\begin{align*}
    \|F\|_{L^2_{y}(\mathbb{T}^d)}&\leq C\|E_1\|_{L^2_{y}(\mathbb{T}^d)},\\
    \|\nabla_x F\|_{L^2_{y}(\mathbb{T}^d)}&\leq C\|\nabla_xE_1\|_{L^2_{y}(\mathbb{T}^d)}+C\|\nabla_xA\|_{L^\infty}\|E_1\|_{L_y^2(\mathbb{T}^d)},
\end{align*}
where we have used  \eqref{est.Dychi} and \eqref{est.DyDxchi}.
As a result, by the energy estimate for \eqref{eq.phi}, the flux corrector $\phi$ satisfies
\begin{align*}
  \|\phi\|_{H^1_{y}(\mathbb{T}^d)}&\leq C\|E_1\|_{L^2_{y}(\mathbb{T}^d)},\\
  \|\nabla_x\phi\|_{H^1_{y}(\mathbb{T}^d)}&\leq C\|\nabla_xE_1\|_{L^2_{y}(\mathbb{T}^d)}+C\|\nabla_xA\|_{L^\infty}\|E_1\|_{L_y^2(\mathbb{T}^d)}.
\end{align*}

Finally, note that
\begin{align*}
  A-\widehat{A}=E_1-\fint_{\mathbb{T}^d}E_1 dy-\fint_{\mathbb{T}^d}E_1\nabla_{y} \chi dy,
\end{align*}
from which, together with \eqref{est.Dychi} and \eqref{est.DyDxchi}, we  deduce that
\begin{align*}
  \|A-\widehat{A}\|_{L^\infty}&\leq C\|E_1\|_{L^\infty},\\
  \|\nabla_x(A-\widehat{A})\|_{L^\infty}&\leq C\|\nabla_xE_1\|_{L^\infty}+C\|\nabla_xA\|_{L^\infty}\|E_1\|_{L^\infty}. 
\end{align*}
The proof is complete.
\end{proof}

In this section, for a  function $g(x,y)$ defined on $\Omega \times \T^d$, we write $g^\e(x) = g(x,x/\e)$.
Thus, $A^\e(x) = A(x,x/\e), \chi^\e(x) = \chi(x,x/\e), \phi^\e(x) = \phi(x,x/\e)$.
 Define the smoothing operator by 
$$S_\varepsilon(g^\varepsilon)(x):= g(\cdot, x/\e)* \varphi_\e(x) = \int_{\mathbb{R}^d}g(z,x/\varepsilon)\varphi_\varepsilon(x-z)dz,$$ 
where $\varphi_\varepsilon(x)=\varepsilon^{-d}\varphi(x/{\varepsilon})$ and $\varphi$ is a function in $C_0^\infty(B(0, 1/2))$ such that  $\varphi\geq 0$ and $\int_{\mathbb{R}^d}\varphi =1$. Note that the smoothing at $\e$-scale is only done w.r.t the non-periodic slow variable.

We recall two lemmas taken from \cite[Lemmas 2.2 and 2.3]{Niu20_reiterated}.

\begin{lemma}
    Suppose that $h = h(x,y) \in L^\infty(\R^d; L^2(\T^d))$ and $f = f(x) \in L^2(\R^d)$. Then
    \begin{equation*}
        \| S_\e(h^\e f) \|_{L^2(\R^d)} \le C \| f\|_{L^2(\R^d)} \sup_{x\in \R^d} \| h(x,\cdot)\|_{L^2(\T^d)},
    \end{equation*}
    where $C$ depends only on $d$.
\end{lemma}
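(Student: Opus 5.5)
The plan is to reduce to an estimate on $\R^d\times\T^d$ that integrates out the periodic variable, with no regularity in $y$ required. First write out the operator:
\begin{equation*}
S_\e(h^\e f)(x)=\int_{\R^d} h(z,x/\e)\,f(z)\,\varphi_\e(x-z)\,dz,
\end{equation*}
noting that the smoothing acts only in the slow variable while the fast variable $x/\e$ is frozen at the output point. Since $\varphi_\e\ge 0$ and $\int\varphi_\e=1$, the Cauchy--Schwarz inequality with respect to the probability measure $\varphi_\e(x-z)\,dz$ gives
\begin{equation*}
|S_\e(h^\e f)(x)|^2\le \int_{\R^d}|h(z,x/\e)|^2|f(z)|^2\varphi_\e(x-z)\,dz.
\end{equation*}

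Integrating in $x$ and applying Fubini, it remains to control, for each fixed $z$,
\begin{equation*}
I(z):=\int_{\R^d}|h(z,x/\e)|^2\varphi_\e(x-z)\,dx
\le \|\varphi\|_{L^\infty}\,\e^{-d}\int_{B(z,\e/2)}|h(z,x/\e)|^2\,dx .
\end{equation*}
Changing variables $y=x/\e$ turns this into $\|\varphi\|_\infty\int_{B(z/\e,1/2)}|h(z,y)|^2\,dy$. The ball $B(z/\e,1/2)$ has diameter $1$, so it is covered by at most $2^d$ translates of the unit cell. By periodicity of $h(z,\cdot)$ in $y$, each translate contributes at most $\|h(z,\cdot)\|_{L^2(\T^d)}^2$, hence
\begin{equation*}
I(z)\le C\sup_{z}\|h(z,\cdot)\|^2_{L^2(\T^d)}.
\end{equation*}
Plugging this back gives
\begin{equation*}
\|S_\e(h^\e f)\|_{L^2}^2\le C\sup_z\|h(z,\cdot)\|_{L^2(\T^d)}^2\int_{\R^d}|f(z)|^2\,dz,
\end{equation*}
with $C$ depending only on $d$ once $\varphi$ is fixed.

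The only subtle point is the choice of Cauchy--Schwarz weighting: the square must be placed on $h f$ against the probability measure $\varphi_\e(x-z)\,dz$, so that the $x$-integral falls on $h(z,x/\e)$ with $z$ frozen. This is exactly what allows periodicity to replace any regularity of $h$ in $y$. Measurability of $(x,z)\mapsto h(z,x/\e)$ is standard, for example by first approximating $h$ by continuous functions.
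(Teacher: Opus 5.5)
Your argument is correct. The paper gives no proof of its own here; it quotes the estimate from \cite[Lemma 2.2]{Niu20_reiterated}, and your steps (Cauchy--Schwarz against the probability measure $\varphi_\e(x-z)\,dz$, Fubini, and the periodicity bound $\e^{-d}\int_{B(z,\e/2)}|h(z,x/\e)|^2\,dx\le C\|h(z,\cdot)\|_{L^2(\T^d)}^2$ via at most $2^d$ unit cells) are the standard proof of that estimate.
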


\begin{lemma}\label{lem.h-Seh}
    Suppose that $h = h(x,y) \in L^\infty(\R^d \times \T^d), \nabla_x h \in L^\infty(\R^d \times \T^d)$ and $f = f(x) \in H^1(\R^d)$. Then
    \begin{equation*}
        \| h^\e f - S_\e (h^\e f) \|_{L^2(\R^d)} \le C\e \Big\{ \| \nabla_x h\|_{L^\infty} \| f\|_{L^2(\R^d)} + \| h\|_{L^\infty} \| \nabla f\|_{L^2(\R^d)} \Big\},
    \end{equation*}
    where $C$ depends only on $d$.
\end{lemma}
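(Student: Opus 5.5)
The statement is Lemma \ref{lem.h-Seh}. I would prove it by writing the difference as an average of increments in the slow variable only; the fast variable $x/\e$ is the same in both terms and never gets differentiated. By definition of $S_\e$ and $\int\varphi_\e=1$,
\begin{equation*}
h^\e f(x) - S_\e(h^\e f)(x) = \int_{\R^d} \big[ h(x,x/\e) f(x) - h(z,x/\e) f(z) \big] \varphi_\e(x-z)\, dz .
\end{equation*}
I split the bracket as
\begin{equation*}
\big[h(x,x/\e)-h(z,x/\e)\big] f(x) + h(z,x/\e)\big[f(x)-f(z)\big],
\end{equation*}
which gives two terms, $I_1$ and $I_2$.

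For $I_1$, the support of $\varphi_\e$ gives $|x-z|\le \e/2$. The Lipschitz bound in the first variable, uniform in $y$, gives $|h(x,x/\e)-h(z,x/\e)| \le \|\nabla_x h\|_{L^\infty}\,\e/2$. Hence $|I_1(x)| \le C\e \|\nabla_x h\|_{L^\infty} |f(x)|$ pointwise, and taking the $L^2$ norm gives the first term of the lemma.

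For $I_2$, I bound $|h(z,x/\e)| \le \|h\|_{L^\infty}$, which gives
\begin{equation*}
|I_2(x)| \le \|h\|_{L^\infty} \int |f(x)-f(z)|\varphi_\e(x-z)\,dz .
\end{equation*}
Substituting $z=x-\e w$ with $|w|\le 1/2$ and writing
\begin{equation*}
f(x)-f(x-\e w) = \int_0^1 \e\, w\cdot\nabla f(x-t\e w)\,dt
\end{equation*}
(first for smooth $f$, then by density in $H^1$), Minkowski's integral inequality gives
\begin{equation*}
\|I_2\|_{L^2} \le \|h\|_{L^\infty} \int\!\!\int_0^1 \e |w|\, \|\nabla f(\cdot - t\e w)\|_{L^2}\,\varphi(w)\,dt\,dw \le C\e \|h\|_{L^\infty}\|\nabla f\|_{L^2},
\end{equation*}
using translation invariance of the $L^2$ norm. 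Adding the two bounds proves the lemma with $C$ depending only on $d$ (through $\varphi$).

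The only point needing care is that $x/\e$ sits inside $h$ in $I_2$. This is harmless because we bound $h$ in $L^\infty$ before integrating, so no $L^2(\T^d)$-type averaging in $y$ is needed, and no measurability issue arises since $h$ is bounded and continuous in $x$. I expect no real obstacle.
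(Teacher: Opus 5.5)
Your proof is correct. You split the integrand into $[h(x,x/\e)-h(z,x/\e)]f(x)$ and $h(z,x/\e)[f(x)-f(z)]$, use the Lipschitz bound in the slow variable on the first term, and use the translation estimate $\|f-f(\cdot-\e w)\|_{L^2}\le \e|w|\,\|\nabla f\|_{L^2}$ with Minkowski's inequality on the second; this is the standard argument. The paper gives no proof of its own and simply quotes the lemma from \cite{Niu20_reiterated}. The only technical point, which you essentially address, is to take the representative of $h$ that is Lipschitz in $x$ for a.e.\ $y$, so that $h(x,x/\e)$ is well defined.
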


Let $\eta_\varepsilon\in C_0^\infty(\Omega)$ be a cutoff function satisfying $0\leq \eta_\varepsilon\leq 1$, $\eta_\varepsilon=1$ on $\Omega\setminus \Omega_{4\varepsilon}$, $\eta_\varepsilon=0$ on $\Omega_{3\varepsilon}$, where $$\Omega_t:=\{x\in \Omega: \mathrm{dist}(x, \partial\Omega)<t\}.$$

\begin{theorem}\label{thm_n=1_general}
  Let $\Omega$ be a bounded $C^{1, 1}$ domain in $\mathbb{R}^d$. Let $u_\e$ and $u_0$ be the weak solutions of \eqref{eq_n=1_general} and \eqref{eq_n=1.homo}, respectively. Then
  \begin{align*}
    \begin{split}
         \|u_\varepsilon-u_0\|_{L^2(\Omega)}& \leq C\varepsilon\big\{\|E_1\|_{L^\infty}+\|\nabla_xE_1\|_{L^\infty}+\varepsilon\|\nabla_xE_1\|_{L^\infty}^2\\
         &\qquad +(\|E_1\|_{L^\infty}+\varepsilon\|E_1\|_{L^\infty}\|\nabla_xE_1\|_{L^\infty})\|\nabla_xA\|_{L^\infty}\\
         &\qquad +\varepsilon \|E_1\|_{L^\infty}^2\|\nabla_xA\|_{L^\infty}^2\big\}\times (\|f\|_{L^2(\Omega)}+\|g\|_{H^{3/2}(\partial\Omega)}), 
    \end{split}
  \end{align*}
  where $C$ depends only on $d, \mu$ and $\Omega$. 
\end{theorem}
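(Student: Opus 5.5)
We follow the standard two-scale expansion argument of \cite{Niu20_reiterated} (the version with a smoothing operator in the slow variable and a boundary-layer cutoff), keeping track of every occurrence of the correctors so that the smallness of $E_1$ recorded in Lemma \ref{lem_chi_E1} propagates to the final bound. Set
\begin{equation*}
w_\e = u_\e - u_0 - \e\, S_\e\big(\chi^\e \eta_\e \nabla u_0\big),
\end{equation*}
where the smoothing acts only on the slow variable, as in the definition of $S_\e$. Since $\eta_\e$ vanishes near $\partial\Omega$, we have $w_\e\in H^1_0(\Omega)$. The first step is a direct computation of $-\mathrm{div}(A^\e\nabla w_\e)$. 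Using the identity $A + A\nabla_y\chi - \widehat A = F = \mathrm{div}_y\phi$ from \eqref{conver_iden_flux} and the skew-symmetry of $\phi$, we rewrite it as $\mathrm{div}$ of a sum of terms of the following types:
\begin{itemize}
\item[(i)] $(\widehat A - A^\e)(1-\eta_\e)\nabla u_0$, supported in the boundary layer $\Omega_{4\e}$;
\item[(ii)] $(A^\e - \widehat A)\big(\eta_\e\nabla u_0 - S_\e(\eta_\e\nabla u_0)\big)$ together with the analogous commutator with $\chi^\e$;
\item[(iii)] $\e\,\phi^\e\,\nabla S_\e(\eta_\e\nabla u_0)$-type terms, coming from writing $F^\e = \e\,\mathrm{div}(\phi^\e) - \e(\mathrm{div}_x\phi)^\e$;
\item[(iv)] terms involving $\e\,(\nabla_x\chi)^\e$, $\e\,(\nabla_x\phi)^\e$ and $\e\,\chi^\e\nabla S_\e(\cdots)$, produced when $\nabla$ falls on the slow variable.
\end{itemize}

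The second step bounds each term in $L^2$. By \eqref{est_E1}, the factors $\chi,\phi, A-\widehat A$ are all $O(\|E_1\|_{L^\infty})$. By \eqref{est_chi_DE1}, their $x$-derivatives are $O(\|\nabla_x E_1\|_{L^\infty} + \|\nabla_x A\|_{L^\infty}\|E_1\|_{L^\infty})$. The smoothing lemmas handle the products of $L^\infty_xL^2_y$ functions with $L^2$ functions, and Lemma \ref{lem.h-Seh} gives the $O(\e)$ commutator bounds. This is where the terms $\e\|\nabla_xE_1\|$ and $\e\|E_1\|\,\|\nabla_x A\|$ arise. The quadratic terms such as $\e^2\|\nabla_xE_1\|^2$ and $\e^2\|E_1\|^2\|\nabla_xA\|^2$ appear when products of a corrector derivative with $\nabla_x\widehat A$ occur. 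They also appear in the second-order term $\nabla^2 u_0$, where the $H^2$ regularity of $u_0$ depends on $\|\nabla\widehat A\|$, which by \eqref{est_chi_DE1} is bounded by $\|\nabla_x E_0\|+\|\nabla_x E_1\|+\ldots$.

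This yields an $H^1$ bound $\|\nabla w_\e\|_{L^2}\le C\,\mathcal E\,\e^{1/2}$, where $\mathcal E$ is the bracket in the statement. The $\e^{1/2}$ loss comes from the layer estimate $\|\nabla u_0\|_{L^2(\Omega_{4\e})}\le C\e^{1/2}\|u_0\|_{H^2}$. We use the $C^{1,1}$ regularity of $\Omega$ and $\|u_0\|_{H^2}\le C(\|f\|_{L^2}+\|g\|_{H^{3/2}})$, with constants controlled because $\widehat A$ is Lipschitz.

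The third and main step upgrades this to the $O(\e)$ $L^2$ rate by a duality argument. For $G\in L^2(\Omega)$, let $v_0$ solve the adjoint homogenized problem $-\mathrm{div}(\widehat A^*\nabla v_0)=G$ with $v_0=0$ on $\partial\Omega$. Let $v_\e$ solve the corresponding $\e$-problem, and form its first-order approximation with $\chi^*,\phi^*$. We test the equation for $w_\e$ against $v_\e$ and expand. Each pairing then becomes a product of two quantities, each of size $\e^{1/2}$ times the smallness factor, or a bulk term of size $\e$ times the smallness factor.

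The obstacle is to make sure that the factor $\|E_1\|$ (or $\|\nabla_x E_1\|$) appears \emph{linearly}, and not only as a constant, in every term, including the boundary-layer products $\|(A-\widehat A)\nabla u_0\|_{L^2(\Omega_{4\e})}\,\|\nabla v_\e\|_{L^2(\Omega_{4\e})}$. For those we plan to use $A-\widehat A = O(\|E_1\|)$ on the $u_0$ side. On the $v$ side, we will not use any smallness and will only rely on the layer estimates $\|\nabla v_0\|_{L^2(\Omega_{4\e})}\le C\e^{1/2}\|G\|_{L^2}$, together with $\|\nabla(v_\e - v_0 - \e\chi^{*\e}\cdots)\|_{L^2}\le C\e^{1/2}\|G\|_{L^2}$ (possibly with a constant depending on $\mu$ only). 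This gives $\e\cdot\mathcal E$. Collecting all the terms and taking the supremum over $\|G\|_{L^2}=1$ yields the stated estimate. The $L^2$ norm of the corrector term $\e S_\e(\chi^\e\eta_\e\nabla u_0)$ is trivially $O(\e\|E_1\|)$, so it can be absorbed into the bound.
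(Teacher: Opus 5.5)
Your architecture (the same $w_\e$, the flux-corrector rewriting, an $H^1$ bound, then duality) is the paper's. However, the duality step as you plan it does not close, and that step is where the content of the theorem lies. The dual bounds $\|\nabla v_0\|_{L^2(\Omega_{4\e})}\le C\e^{1/2}\|G\|_{L^2(\Omega)}$ and $\|\nabla(v_\e-v_0-\e S_\e(\cdots))\|_{L^2(\Omega)}\le C\e^{1/2}\|G\|_{L^2(\Omega)}$ are false with $C=C(d,\mu,\Omega)$. Both pass through $\|\nabla^2 v_0\|_{L^2}\le C(1+\|\nabla\widehat A\|_{L^\infty})\|G\|_{L^2}$, where $\|\nabla\widehat A\|_{L^\infty}\le C\|\nabla_xA\|_{L^\infty}$ by \eqref{conver_es_A-Lip}. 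The second also passes through $\|\nabla_x\chi^*\|$. These are exactly the quantities the estimate must keep explicit; in the application $\|\nabla_xA'\|_{L^\infty}\sim\sum_{j<k}\e_j^{-1}$ is huge.

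Write $a=\|E_1\|_{L^\infty}$, $b=\|\nabla_xE_1\|_{L^\infty}$, $c=\|\nabla_xA\|_{L^\infty}$. Without smallness on the dual side, the honest bound is only $\|\nabla\widetilde w_\e\|_{L^2}\lesssim\e^{1/2}(1+c^{1/2}+\e^{1/2}b+\e^{1/2}c)\|G\|_{L^2}$. Its product with $\|\nabla w_\e\|_{L^2}\lesssim\e^{1/2}(a+ac^{1/2}+\e^{1/2}b+\e^{1/2}ac)$ contains $\e^2bc$. This is not bounded by $\e$ times the bracket in the statement: let $a\to0$ with $b$ fixed and $\e c\gg 1+\e b$. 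Your layer pairing against $\nabla v_\e$ likewise produces $\e^{3/2}ac^{3/2}$, which is also uncontrolled.

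Two pieces of your bookkeeping also lose the needed structure. First, the summary $\|\nabla w_\e\|_{L^2}\le C\e^{1/2}\mathcal E$ drops the factors $\e^{1/2}$ in front of $b$ and $ac$, so it cannot be used inside any product. Second, the layer bound $\|\nabla u_0\|_{L^2(\Omega_{4\e})}\le C\e^{1/2}\|u_0\|_{H^2}$ must be replaced by the interpolated bound $C\e^{1/2}\big(\|\nabla u_0\|_{L^2}+\|\nabla u_0\|_{L^2}^{1/2}\|\nabla^2u_0\|_{L^2}^{1/2}\big)$. Otherwise $ac^{1/2}$ becomes $ac$ and terms like $\e ac^2$ appear.

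The fix, which is the paper's route, is to use the smallness symmetrically. Since $A^*=E_0^*+E_1^*$ has the same structure, Lemma \ref{lem_chi_E1} holds for $\chi^*,\phi^*$. Hence $\widetilde w_\e=v_\e-v_0-\e S_\e(\widetilde\eta_\e(\chi^*)^\e\nabla v_0)$, with the larger cutoff $\widetilde\eta_\e=0$ on $\Omega_{8\e}$, obeys the same refined bound \eqref{conver_es_w_H1} as $w_\e$. Then split $\int_\Omega w_\e G=\int_\Omega A^\e\nabla w_\e\cdot\nabla v_\e$ as in \eqref{eq.weG}:
\begin{itemize}
\item $I_1\le\|\nabla w_\e\|_{L^2}\|\nabla\widetilde w_\e\|_{L^2}\lesssim\e(a+ac^{1/2}+\e^{1/2}b+\e^{1/2}ac)^2\lesssim\e(a^2+a^2c+\e b^2+\e a^2c^2)$. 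This product, not the $H^1$ step, is where the quadratic terms of the bracket come from.
\item $I_2$ follows from \eqref{es_wpsi} with $\psi=v_0$. No dual smallness is needed, since the layer term is $a\cdot\e^{1/2}(1+c)^{1/2}\cdot\e^{1/2}(1+c)^{1/2}$.
\item $I_3$ follows from \eqref{es_wpsi} with $\psi=\e S_\e(\widetilde\eta_\e(\chi^*)^\e\nabla v_0)$. Its layer term vanishes because of the support of $\widetilde\eta_\e$, and its bulk term uses the dual smallness $\|\nabla\psi\|_{L^2}\lesssim(a+\e b+\e ac)\|G\|_{L^2}$.
\end{itemize}
Every resulting term carries $a$ or $b$ linearly. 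Since $a\le\mu^{-1}$, each is dominated by the stated bracket.
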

\begin{proof}
    The proof is a refinement of that in \cite[Lemmas 3.2 and 4.1]{Niu20_reiterated}. Let
    \begin{equation}\label{def.we}
        w_\varepsilon=u_\varepsilon-u_0-\varepsilon S_\varepsilon(\eta_\varepsilon\chi^\varepsilon\nabla u_0).
    \end{equation}
    Then
    \begin{align*}
        -\mathrm{div}(A^\varepsilon \nabla w_{\varepsilon}) & =\mathrm{div}[(A^\varepsilon-\widehat{A})\nabla u_0]+\mathrm{div}[A^\varepsilon S_\varepsilon\big(\eta_\varepsilon(\nabla_y\chi)^\varepsilon\nabla u_0\big)]\\
        &\qquad+\varepsilon\mathrm{div}\big[A^\varepsilon S_\varepsilon\big([\nabla_x(\eta_\varepsilon\chi\nabla u_0)]^\varepsilon\big)\big]\\
        & = \mathrm{div}\big[(A^\varepsilon-\widehat{A})\nabla u_0-S_\varepsilon\big((A^\varepsilon-\widehat{A})\eta_\varepsilon \nabla u_0\big)\big]\\
        &\qquad + \mathrm{div}[A^\varepsilon S_\varepsilon\big(\eta_\varepsilon(\nabla_y\chi)^\varepsilon\nabla u_0\big)-S_\varepsilon\big(A^\varepsilon(\nabla_y\chi)^\varepsilon \eta_\varepsilon \nabla u_0\big)]\\
        &\qquad + \mathrm{div}\big[S_\varepsilon\big( F(x,x/\e) \eta_\varepsilon \nabla u_0\big)\big]\\
        & \qquad + \varepsilon\mathrm{div}\big[A^\varepsilon S_\varepsilon\big([\nabla_x(\eta_\varepsilon\chi\nabla u_0)]^\varepsilon\big)\big],
    \end{align*}
  where $F=F(x, y)$ is given by \eqref{F}.
    It follows that  for any test function $\psi\in H_0^1(\Omega)$,
    \begin{align*}
        &\left|\int_\Omega A^\varepsilon\nabla w_\varepsilon\cdot\nabla\psi dx\right|\\&\leq\int_{\Omega}|(A^{\varepsilon}-\widehat{A})\nabla u_0-S_{\varepsilon}\big((A^{\varepsilon}-\widehat{A})\eta_{\varepsilon}\nabla u_0\big)||\nabla\psi|dx\\
        &\quad + \int_{\Omega}\left|A^\varepsilon S_\varepsilon\left(\eta_\varepsilon(\nabla_y\chi)^\varepsilon\nabla u_0\right)-S_\varepsilon\left(\eta_\varepsilon A^\varepsilon(\nabla_y\chi)^\varepsilon\nabla u_0\right)||\nabla\psi|dx\right.\\
        &\quad + \bigg| \int_{\Omega} S_\varepsilon\big( F(x,x/\e) \eta_\varepsilon \nabla u_0\big) \cdot \nabla\psi dx \bigg| + C \varepsilon\int_{\Omega}\big|S_\varepsilon\big([\nabla_x(\eta_\varepsilon\chi\nabla u_0)]^\varepsilon\big)\big||\nabla\psi|dx \\
        & = J_1 + J_2 + J_3 + J_4.
    \end{align*}
    We will estimate $J_1$-$J_4$ separately, skipping some detailed calculations, which can be found in \cite[Lemma 3.2]{Niu20_reiterated}.
    
    By Lemma \ref{lem.h-Seh}, we obtain
    \begin{align*}
        &J_1 \le  C \|A-\widehat{A}\|_{L^\infty} \|\nabla u_0\|_{L^2(\Omega_{4\varepsilon})} \|\nabla\psi\|_{L^2(\Omega_{5\varepsilon})}\\
         & + C\varepsilon\big\{\|\nabla_x(A-\widehat{A})\|_{L^\infty}\| \nabla u_0\|_{L^2(\Omega)} + \|A-\widehat{A}\|_{L^\infty} \|\nabla^2u_0\|_{L^2(\Omega\setminus\Omega_{3\varepsilon})}\big\} \|\nabla\psi\|_{L^2(\Omega)},
    \end{align*}
    and
    \begin{align*}
        J_2 \le C\varepsilon\|\nabla_xA\|_{L^\infty}\|\nabla_y\chi\|_{L^\infty L^2}  \|\nabla u_0\|_{L^2(\Omega)} \|\nabla\psi\|_{L^2(\Omega)}.
    \end{align*}
    For $J_3$, using the skew-symmetry of the flux corrector $\phi$, we have
    \begin{equation*}
    \begin{aligned}
        J_3 & = \e \bigg| \int_{\Omega} S_\e\big( [\nabla_x( \phi \nabla u_0 \eta_\e)]^\e \big) \cdot \nabla \psi dx \bigg| \\
        & \le C\|\phi\|_{L^\infty L^2} \|\nabla u_0\|_{L^2(\Omega_{4\varepsilon})} \|\nabla\psi\|_{L^2(\Omega_{5\varepsilon})} + C \varepsilon \|\nabla_x\phi\|_{L^\infty L^2} \|\nabla u_0\|_{L^2(\Omega)} \|\nabla\psi\|_{L^2(\Omega)}\\ 
        &\qquad + C\varepsilon \|\phi\|_{L^\infty L^2} \|\nabla^2u_0\|_{L^2(\Omega\setminus\Omega_{3\varepsilon})} \|\nabla\psi\|_{L^2(\Omega)}.
    \end{aligned}
    \end{equation*}
    For $J_4$, we have
    \begin{align*}
        J_4 & \le C\|\chi \|_{L^\infty L^2} \|\nabla u_0\|_{L^2(\Omega_{4\varepsilon})} \|\nabla\psi\|_{L^2(\Omega_{5\varepsilon})} + C \varepsilon \|\nabla_x\chi\|_{L^\infty L^2} \|\nabla u_0\|_{L^2(\Omega)} \|\nabla\psi\|_{L^2(\Omega)}\\ 
        &\qquad + C\varepsilon \|\chi \|_{L^\infty L^2} \|\nabla^2u_0\|_{L^2(\Omega\setminus\Omega_{3\varepsilon})} \|\nabla\psi\|_{L^2(\Omega)}.
    \end{align*}
    Summing up $J_1$-$J_4$, we obtain
    \begin{align}\label{es_wpsi}
    \begin{split}
        & \left|\int_\Omega A^\varepsilon\nabla w_\varepsilon\cdot\nabla\psi dx\right|\\
        & \leq C\varepsilon\Big\{\big(\|A-\widehat{A}\|_{L^\infty}+\|\phi\|_{L^\infty L^2}+\|\chi\|_{L^\infty L^2}\big)\|\nabla^2u_0\|_{L^2(\Omega\setminus\Omega_{3\varepsilon})}\\
        & \qquad\qquad  + \big(\|\nabla_x(A-\widehat{A})\|_\infty+\|\nabla_xA\|_{L^\infty}\|\nabla_y\chi\|_{L^\infty L^2}\\
        & \qquad\qquad  + \|\nabla_x\phi\|_{L^\infty L^2}+\|\nabla_x\chi\|_{L^\infty L^2}\big)\|\nabla u_0\|_{L^2(\Omega)}\Big\} \|\nabla\psi\|_{L^2(\Omega)}\\
        &\quad + C\big(\|A-\widehat{A}\|_{L^\infty}+\|\phi\|_{L^\infty L^2}+\|\chi\|_{L^\infty L^2}\big) \|\nabla u_0\|_{L^2(\Omega_{4\varepsilon})} \|\nabla\psi\|_{L^2(\Omega_{5\varepsilon})}\\
        &\leq C\varepsilon\Big\{\|E_1\|_{L^\infty}\|\nabla^2u_0\|_{L^2(\Omega\setminus\Omega_{3\varepsilon})}  \\
        & \qquad \qquad +\big(\|\nabla_xA\|_{L^\infty}\|E_1\|_{L^\infty}+\|\nabla_x E_1\|_{L^\infty}\big)\|\nabla u_0\|_{L^2(\Omega)}\Big\}  \|\nabla\psi\|_{L^2(\Omega)} \\ 
        &\qquad + C\|E_1\|_{L^\infty}\|\nabla u_0\|_{L^2(\Omega_{4\varepsilon})} \|\nabla\psi\|_{L^2(\Omega_{5\varepsilon})},
    \end{split}
    \end{align}
    where we have used Lemma \ref{lem_chi_E1} in the second inequality, and $C$ depends only on $d $ and $\mu$. Setting $\psi=w_\varepsilon$ in the above inequality, we deduce that
\begin{align}\label{conver_es_wH1_0}
  \begin{split}
    \|\nabla w_\varepsilon\|_{L^2(\Omega)} & \leq C \Big\{\|E_1\|_{L^\infty}\big(\varepsilon\|\nabla^2 u_0\|_{L^2(\Omega\setminus \Omega_{3\varepsilon})}+\| \nabla u_0\|_{L^2(\Omega_{4\varepsilon})}\big)\\
    &\qquad + \varepsilon\big(\|\nabla_xA\|_{L^\infty}\|E_1\|_{L^\infty}+\|\nabla_x E_1\|_{L^\infty}\big)\|\nabla u_0\|_{L^2(\Omega)}\Big\}.
  \end{split}
\end{align}
Note that by the energy estimate and $H^2$ estimate in $C^{1,1}$ domains,
\begin{align}
    & \| \nabla u_0 \|_{L^2(\Omega)} \le C (\|f\|_{L^2(\Omega)}+\|g\|_{H^{1/2}(\partial\Omega)}), \label{est.Du0.L2} \\
    & \| \nabla^2 u_0 \|_{L^2(\Omega)} \le C(\| \nabla \widehat{A} \|_{L^\infty}+1) (\|f\|_{L^2(\Omega)}+\|g\|_{H^{3/2}(\partial\Omega)}), \label{est.DDu0.L2}
\end{align}
and by the boundary layer estimate,
\begin{align}\label{est.Du0,layer}
    \| \nabla u_0 \|_{L^2(\Omega_{4\e})} \le C\e^{1/2} \big(\| \nabla u_0 \|_{L^2(\Omega)} + \| \nabla u_0 \|_{L^2(\Omega)}^{1/2} \| \nabla^2 u_0 \|_{L^2(\Omega)}^{1/2}\big).
\end{align}
Moreover, it follows from the first line of \eqref{n=1_def_hatA} and the equation \eqref{eq_one-scale_chi} that
\begin{align}\label{conver_es_A-Lip}
\begin{split}
    \|\nabla \widehat{A}\|_{L^\infty} & \leq C \big\{\|\nabla_xA\|_{L^\infty}+\|\nabla_xA\|_{L^\infty}\|\chi\|_{L^\infty H^1}+\|A\|_{L^\infty}\|\nabla_x\chi\|_{L^\infty H^1} \big\} \\
    & \leq C\|\nabla_xA\|_{L^\infty},
\end{split}
\end{align}
where we have used the fact $\|A\|_{L^\infty}\leq \mu^{-1}$. By inserting \eqref{est.Du0.L2}-\eqref{conver_es_A-Lip} into \eqref{conver_es_wH1_0}, we obtain
\begin{align}\label{conver_es_w_H1}
  \begin{split}
    \|\nabla w_\varepsilon\|_{L^2(\Omega)} & \leq C\varepsilon^{1/2} \Big\{ \|E_1\|_{L^\infty}+\|E_1\|_{L^\infty}\|\nabla_xA\|_{L^\infty}^{1/2}+\varepsilon^{1/2}\|\nabla_xE_1\|_{L^\infty}\\
    &\qquad + \varepsilon^{1/2}\|E_1\|_{L^\infty}\|\nabla_xA\|_{L^\infty} \Big\} \cdot \Big(\|f\|_{L^2(\Omega)}+\|g\|_{H^{3/2}(\partial \Omega)}\Big). 
  \end{split}
\end{align}

Next,  we apply a duality argument as in the proof of \cite[Lemma 4.1]{Niu20_reiterated}. Let $v_\varepsilon$ be the solution of the dual problem
\begin{align*}
  \begin{cases}
    -\mathrm{div}(A^*(x,{x}/{\varepsilon})\nabla v_\varepsilon) = G &\text{in }\Omega,\\
    v_\varepsilon=0&\text{on }\partial\Omega,
  \end{cases}
\end{align*}
and $v_0$ the corresponding homogenized solution. Define 
$$\widetilde{w}_\varepsilon=v_\varepsilon-v_0-\varepsilon S_\varepsilon\big(\widetilde{\eta}_\varepsilon (\chi^*)^\varepsilon \nabla v_0\big)\in H^1_0(\Omega),$$
where $\widetilde{\eta}_\varepsilon\in C_0^\infty(\Omega)$ is a cutoff function such that $0\leq \widetilde{\eta}_\varepsilon\leq 1$, $\widetilde{\eta}_\varepsilon=1$ on $\Omega\setminus \Omega_{10\varepsilon}$, $\widetilde{\eta}_\varepsilon=0$ on $\Omega_{8\varepsilon}$. Note that $\widetilde{w}_\varepsilon$ satisfies the same estimates as $w_\varepsilon$. Then
\begin{align}\label{eq.weG}
\begin{aligned}
  \bigg|\int_\Omega w_\varepsilon\cdot G dx\bigg|&=\bigg|\int_\Omega A^\varepsilon \nabla w_\varepsilon\cdot \nabla v_\varepsilon dx\bigg|\\&\leq \bigg|\int_{\Omega}A^{\varepsilon}\nabla w_{\varepsilon}\cdot\nabla\widetilde{w}_{\varepsilon}dx\bigg|+\bigg|\int_{\Omega}A^\varepsilon\nabla w_{\varepsilon}\cdot\nabla v_{0}dx\bigg|\\&\qquad+\bigg|\int_{\Omega}A^\varepsilon\nabla w_{\varepsilon}\cdot\nabla\big[\varepsilon S_{\varepsilon}\big(\eta_\varepsilon(\chi^*)^\varepsilon \nabla v_0\big)\big]dx\bigg| \\
  & = I_1 + I_2 + I_3.
\end{aligned}
\end{align}
By a calculation as in \cite[Lemma 4.1]{Niu20_reiterated}, using \eqref{conver_es_w_H1}, we see that
\begin{align}\label{est.I1}
\begin{aligned}
  I_1 \le & C\varepsilon \big\{ \|E_1\|_{L^\infty}^2+\|E_1\|_{L^\infty}^2\|\nabla_xA\|_{L^\infty} + \varepsilon\|\nabla_xE_1\|_{L^\infty}^2\\
  & \quad + \varepsilon\|E_1\|_{L^\infty}^2\|\nabla_xA\|_{L^\infty}^2 \big\} \cdot  \left(\|f\|_{L^2(\Omega)} + \|g\|_{H^{3/2}(\partial \Omega)}\right) \| G \|_{L^2(\Omega)}.
  \end{aligned}
\end{align}
Using \eqref{es_wpsi}, we have
\begin{align}\label{est.I2}
\begin{aligned}
  I_2 & \le C \varepsilon\big\{\|E_1\|_{L^\infty}+\|\nabla_xE_1\|_{L^\infty}+\|E_1\|_{L^\infty}\|\nabla_xA\|_{L^\infty}\big\}\\
  & \qquad \cdot (\|f\|_{L^2(\Omega)}+\|g\|_{H^{3/2}(\partial \Omega)}) \| G \|_{L^2(\Omega)},
  \end{aligned}
\end{align}
and
\begin{align}\label{est.I3}
\begin{aligned}
  I_3 & \le C \varepsilon\big\{ \|E_1\|_{L^\infty}+\|\nabla_xE_1\|_{L^\infty} + \|E_1\|_{L^\infty}\|\nabla_xA\|_{L^\infty}\big\} \\
  & \qquad \cdot \big\{ \|E_1\|_{L^\infty} + \varepsilon\|\nabla_xE_1\|_{L^\infty} + \varepsilon\|E_1\|_{L^\infty}\|\nabla_xA\|_{L^\infty}\big\} \\
  & \qquad \cdot (\|f\|_{L^2(\Omega)}+\|g\|_{H^{3/2}(\partial \Omega)}) \| G \|_{L^2(\Omega)}.
  \end{aligned}
\end{align}
Inserting the estimates for $I_1$-$I_3$ into \eqref{eq.weG}, we obtain 
\begin{equation*}
\begin{aligned}
    \bigg|\int_\Omega w_\varepsilon\cdot G dx\bigg| & \le C\varepsilon \Big\{ \|E_1\|_{L^\infty}+\|\nabla_xE_1\|_{L^\infty}+\varepsilon\|\nabla_xE_1\|_{L^\infty}^2 \\
    & \qquad + (\|E_1\|_{L^\infty}+\varepsilon\|E_1\|_{L^\infty}\|\nabla_xE_1\|_{L^\infty})\|\nabla_xA\|_{L^\infty}\\
    & \qquad + \varepsilon \|E_1\|_{L^\infty}^2\|\nabla_xA\|_{L^\infty}^2\Big\} \cdot (\|f\|_{L^2(\Omega)}+\|g\|_{H^{3/2}(\partial\Omega)}) \| G \|_{L^2(\Omega)}.
  \end{aligned}
\end{equation*}
By duality and \eqref{def.we}, we conclude  that
\begin{align*}
  \|u_\varepsilon-u_0\|_{L^2(\Omega)}& \le C\varepsilon \Big\{ \|E_1\|_{L^\infty}+\|\nabla_xE_1\|_{L^\infty}+\varepsilon\|\nabla_xE_1\|_{L^\infty}^2 \\
    & \qquad + (\|E_1\|_{L^\infty}+\varepsilon\|E_1\|_{L^\infty}\|\nabla_xE_1\|_{L^\infty})\|\nabla_xA\|_{L^\infty}\\
    & \qquad + \varepsilon \|E_1\|_{L^\infty}^2\|\nabla_xA\|_{L^\infty}^2\Big\} \cdot (\|f\|_{L^2(\Omega)}+\|g\|_{H^{3/2}(\partial\Omega)}),
\end{align*}
where we have also used the estimate
\begin{align}\label{conver_es_Su0}
  \begin{split}
    \|\varepsilon S_\varepsilon(\eta_\varepsilon \chi^\varepsilon \nabla u_0)\|_{L^2(\Omega)}&\leq C\varepsilon \|E_1\|_{L^\infty}\|\nabla u_0\|_{L^2(\Omega)}\\&\leq C\varepsilon \|E_1\|_{L^\infty} (\|f\|_{L^2(\Omega)}+\|g\|_{H^{1/2}(\partial\Omega)}).
  \end{split}
\end{align}
The proof is complete. 
\end{proof}

\subsection{Reiterated homogenization: quantitative version}
In this subsection, we prove the optimal convergence rates in Theorem \ref{thm.L2rates} by a quantitative version of reiterated homogenization.



\begin{proof}[Proof of Theorem \ref{thm.L2rates}]
In view of the proof of Theorem \ref{thm.qualitative}, we know that $u_{n,\e} \to u_\e$ in $H^1(\Omega)$ and $u_{n,0} \to u_0$ in $H^1(\Omega)$ as $n\to \infty$, where $u_{n,\e}$ is the weak solution of the approximate problem \eqref{eq_approximate} and $u_{n,0}$ is the solution of the corresponding homogenized equation \eqref{eq_approx_homo}. If we can establish a convergence rate for $u_{n,\e}$, which is uniform in $n$, then we obtain the same convergence rate of $u_\e$ by taking $n\to \infty$.

Fix $n \ge 1$ and consider $1\leq k\leq n$. Let $u_{n, \varepsilon}^{k}$ be the weak solution to
  \begin{align*}
    \begin{cases}
      -\mathrm{div}(A_n^{k}(x, \frac{x}{\varepsilon_1}, \cdots, \frac{x}{\varepsilon_{k}})\nabla u_{n, \varepsilon}^{k})=f&\text{in }\Omega,\\
      u_{n, \varepsilon}^{k}=g &\text{on }\partial\Omega,
    \end{cases}
  \end{align*}
  where $A_n^{k}$ is given by \eqref{expression_Ank} with the convention $A_n^n = A_n$. 
  We will apply Theorem \ref{thm_n=1_general} and the method of reiterated homogenization. 
  
    Denote $A'(x, y)=E_0(x)+E_1(x, y)$, where
    \begin{align*}
        E_0(x) & =\sum_{\ell=0}^{k-1}B_\ell\Big(x, \frac{x}{\varepsilon_1}, \cdots, \frac{x}{\varepsilon_{\ell}}\Big),\\ E_1(x, y) & =B_k^n\Big(x, \frac{x}{\varepsilon_1}, \cdots, \frac{x}{\varepsilon_{k-1}}, y\Big).
    \end{align*}
    Then $A_n^k\big(x, {x}/{\varepsilon_1}, \cdots, 
    {x}/{\varepsilon_{k}}\big)=A'\big(x, {x}/{\varepsilon_k}\big)$. Thanks to \eqref{es_Ank_bound} and \eqref{es_Bkn_bound}, $A'$ satisfies the conditions in the first two lines of \eqref{conver_cond_case-n=1}, with a universal ellipticity constant $\mu'$ depending only on $d$ and $\mu$. Moreover, by a direct calculation, we deduce from Lemmas \ref{qual-homog_lemma_bkn} and  \ref{conver_lemma_dkn} that
    \begin{align*}
    \begin{split}
      \|E_1\|_{L^\infty}&\leq \|B_k^n\|_{L^\infty}\leq C\sum_{\ell=k}^n\delta_\ell\leq C R_k,\\
    \|\nabla_x E_1\|_{L^\infty}&\leq \sum_{j=0}^{k-1}\varepsilon_j^{-1}\|\nabla_{y_j}B_k^n\|_{L^\infty}\leq \sum_{j=0}^{k-1}\varepsilon_j^{-1}\delta_k^n\\&\leq \exp\big\{C_0[\delta]_1\big\}\big\{ 1+C_0[\delta]_0[\delta]_1\big\}\cdot R_k^n\sum_{j=0}^{k-1}\varepsilon_j^{-1} \\&=\Lambda_0R_k\sum_{j=0}^{k-1}\varepsilon_j^{-1},\\
    \|\nabla_x A'\|_{L^\infty}&\leq \sum_{j=0}^{k-1}\varepsilon_j^{-1}\|\nabla_{y_j}A_n^k\|_{L^\infty}\leq C\sum_{j=0}^{k-1}\varepsilon_j^{-1}\sum_{\ell=j}^{n}\delta_\ell\leq C[\delta]_0\sum_{j=0}^{k-1}\varepsilon_j^{-1},
    \end{split}
    \end{align*}
    where $R_k^n$ and $R_k$ are defined by \eqref{conver_def_Rkn} and \eqref{conver_def_Rk} respectively, $C$ depends only on $d$ and $\mu$, and 
    \begin{align*}
        \Lambda_0:=\exp\big\{C_0[\delta]_1\big\}\big\{ 1+C_0[\delta]_0[\delta]_1\big\}.
    \end{align*}

  Observe that, during the one-step homogenization, $u_{n, \varepsilon}^k$ converges to $u_{n, \varepsilon}^{k-1}$ as $\varepsilon_k\rightarrow 0$. Applying Theorem \ref{thm_n=1_general} to this case, we have
  \begin{align}\label{conver_es_unk}
    \begin{split}
      &\|u_{n, \varepsilon}^k-u_{n, \varepsilon}^{k-1}\|_{L^2(\Omega)} \\
      & \leq C\varepsilon_k \Big\{ \|E_1\|_{L^\infty}+\|\nabla_xE_1\|_{L^\infty}+\varepsilon_k\|\nabla_xE_1\|_{L^\infty}^2\\
      &\qquad +(\|E_1\|_{L^\infty}+\varepsilon_k\|E_1\|_{L^\infty}\|\nabla_xE_1\|_{L^\infty})\|\nabla_xA'\|_{L^\infty}\\
      &\qquad + \varepsilon_k \|E_1\|_{L^\infty}^2\|\nabla_xA'\|_{L^\infty}^2 \Big\} \cdot\Big(\|f\|_{L^2(\Omega)}+\|g\|_{H^{3/2}(\partial\Omega)}\Big)\\
      &\leq C\varepsilon_k\bigg\{R_k+\Lambda_0 R_k\sum_{j=0}^{k-1}\varepsilon_j^{-1}+\varepsilon_k\bigg(\Lambda_0 R_k\sum_{j=0}^{k-1}\varepsilon_j^{-1}\bigg)^2 \\ & \qquad  +\bigg(R_k+\varepsilon_k\Lambda_0 R_k^2 \sum_{j=0}^{k-1}\varepsilon_j^{-1}\bigg)[\delta]_0\sum_{j=0}^{k-1}\varepsilon_j^{-1} + \varepsilon_k\bigg([\delta]_0 R_k\sum_{j=0}^{k-1}\varepsilon_j^{-1}\bigg)^2\bigg\}\\
      & \qquad \qquad \cdot \Big(\|f\|_{L^2(\Omega)}+\|g\|_{H^{3/2}(\partial\Omega)}\Big) \\ 
      &\leq C\bigg\{\Lambda R_k\sum_{j=0}^{k-1}\frac{\varepsilon_k}{\varepsilon_j}+\bigg(\Lambda R_k\sum_{j=0}^{k-1}\frac{\varepsilon_k}{\varepsilon_j}\bigg)^2\bigg\}\cdot\Big(\|f\|_{L^2(\Omega)}+\|g\|_{H^{3/2}(\partial\Omega)}\Big),
    \end{split}
  \end{align}
  where
  \begin{align}\label{def.Lambda}
    \Lambda=\Lambda_0+[\delta]_0,
  \end{align}
and $C$ depends only on $d, \mu $ and $\Omega$. The constant $\Lambda \ge 1$ will be fixed throughout. Note that $u_{n, \varepsilon}^n=u_{n, \varepsilon}$ and $u_{n, \varepsilon}^0=u_{n, 0}$. On the other hand, by the Poincar\'{e} inequality and the energy estimate, we trivially have
\begin{align}\label{conver_es_unk_energy}
\|u_{n, \varepsilon}^k-u_{n, \varepsilon}^{k-1}\|_{L^2(\Omega)} \leq C (\|f\|_{L^2(\Omega)}+\|g\|_{H^{3/2}(\partial\Omega)}),
\end{align}
where $C$ depends only on $d, \mu $ and $\Omega$. Combining \eqref{conver_es_unk} and \eqref{conver_es_unk_energy}, and by considering $\Lambda R_k\sum_{j=0}^{k-1}\frac{\varepsilon_k}{\varepsilon_j}\geq 1$ and $\Lambda R_k\sum_{j=0}^{k-1}\frac{\varepsilon_k}{\varepsilon_j}\leq 1$ separately, the estimate \eqref{conver_es_unk} can be improved to
\begin{align}\label{conver_es_unk_combine}
  \|u_{n, \varepsilon}^k-u_{n, \varepsilon}^{k-1}\|_{L^2(\Omega)}\leq C\Lambda R_k\sum_{j=0}^{k-1}\frac{\varepsilon_k}{\varepsilon_j}\cdot\Big(\|f\|_{L^2(\Omega)}+\|g\|_{H^{3/2}(\partial\Omega)}\Big).
\end{align}
Summing \eqref{conver_es_unk_combine} in $k$ from $1$ to $n$, we obtain that
\begin{equation}\label{est.une-un0}
\aligned
  \|u_{n,\varepsilon}-u_{n, 0}\|_{L^2(\Omega)}&\leq \sum_{k=1}^n\|u_{n, \varepsilon}^k-u_{n, \varepsilon}^{k-1}\|_{L^2(\Omega)}\\&\leq C\sum_{k=1}^n \Lambda R_k\sum_{j=0}^{k-1}\frac{\varepsilon_k}{\varepsilon_j}\cdot\Big(\|f\|_{L^2(\Omega)}+\|g\|_{H^{3/2}(\partial\Omega)}\Big)\\&\leq C \Lambda[\delta]_1\sup_{1\le k\le n} \sum_{j = 0}^{k-1} \frac{\e_k}{\e_j}\cdot \Big(\|f\|_{L^2(\Omega)}+\|g\|_{H^{3/2}(\partial\Omega)}\Big),
  \endaligned
\end{equation}
where we have used \eqref{recursive_es_Rk} with $\alpha = 0$ in the last step. It follows from \eqref{qual-homog_conver_u0} and \eqref{qual-homog_conver_uepsilon} that $u_{n, \varepsilon}\rightarrow u_\varepsilon$ and $u_{n, 0}\rightarrow u_0$ in $H^1(\Omega)$ as $n\rightarrow\infty$. Hence, taking the limit in \eqref{est.une-un0} as $n\to \infty$, one gets
\begin{align}\label{es_rate_sumratio}
  \|u_\varepsilon-u_0\|_{L^2(\Omega)}\leq C\Lambda[\delta]_1\sup_{k\geq 1} \sum_{j = 0}^{k-1} \frac{\e_k}{\e_j}\cdot\Big(\|f\|_{L^2(\Omega)}+\|g\|_{H^{3/2}(\partial\Omega)}\Big).
\end{align}

Finally, we show that \eqref{es_rate_sumratio} implies the desired estimate. Let $\mathcal{E}:= \sup_{k\geq 1}\frac{\varepsilon_k}{\varepsilon_{k-1}}$. If $\mathcal{E} < 1/2$, then $\e_k \le \mathcal{E}^{k-j} \e_j$ for any $k > j \ge 0$.
It follows that
\begin{equation*}
    \sum_{j=0}^{k-1} \frac{\e_k}{\e_j} \le \sum_{j=0}^{k-1} \mathcal{E}^{k - j} \le \frac{\mathcal{E}}{1 - \mathcal{E}} \le 2\mathcal{E}.
\end{equation*}
which, together with \eqref{es_rate_sumratio}, implies  \eqref{est.rate1}. Now if $\mathcal{E} \ge 1/2$, then \eqref{est.rate1} is a consequence of $H$-stability. In fact, by the observation $\widehat{A}_0 = B_0$ and Theorem \ref{qual-homog_thm_hatA}, we have
\begin{align*}
  \|A^\varepsilon-\widehat{A}\|_{L^\infty}\leq \|A^\varepsilon-B_0\|_{L^\infty}+\|B_0-\widehat{A}\|_{L^\infty}\leq C\sum_{k=1}^\infty \delta_k \le C[\delta]_1,
\end{align*}
from which the energy estimate yields
\begin{equation*}\label{conver_es_rate_energy}
    \|u_\varepsilon-u_0\|_{L^2(\Omega)} \leq C\Lambda [\delta]_1 \{\|f\|_{L^2(\Omega)}+\|g\|_{H^{1}(\partial\Omega)}\}.
\end{equation*}
This concludes \eqref{est.rate1} for the case $\mathcal{E} \ge 1/2$ and therefore ends the proof.
\end{proof}

\subsection{Suboptimal convergence rate in Lipschitz domains}


To prove the uniform Lipschitz estimate, we establish a suboptimal algebraic convergence rate in a Lipschitz domain with less regular boundary data. Again, we begin with a one-step convergence rate.
\begin{theorem}\label{thm_suboptimal}
  Suppose $\Omega$ is a bounded Lipschitz domain. Let $u_\varepsilon$ be the solution of equation \eqref{eq_n=1_general} and $u_0$ the solution of the homogenized equation \eqref{eq_n=1.homo}. Then there exists $\sigma>0$, depending only on $\mu $ and $\Omega$, such that
  \begin{align*}
    \|u_\varepsilon-u_0\|_{L^2(\Omega)}&\leq C\big\{ \varepsilon^{\sigma}\|E_1\|_{L^\infty}+\varepsilon\|\nabla_x E_1\|_{L^\infty}+\varepsilon\|E_1\|_{L^\infty}\|\nabla_xA\|_{L^\infty} \big\} \\
    & \qquad \times (\|f\|_{L^2(\Omega)} + \|g\|_{H^1(\partial\Omega)} ),
  \end{align*}
  where $C$ depends only on $d, \mu$ and $\Omega$. 
\end{theorem}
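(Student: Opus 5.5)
The plan is to reuse the two-scale expansion from the proof of Theorem \ref{thm_n=1_general}, with the same $w_\varepsilon=u_\varepsilon-u_0-\varepsilon S_\varepsilon(\eta_\varepsilon\chi^\varepsilon\nabla u_0)$ and the estimate \eqref{es_wpsi}. The difference is that in a Lipschitz domain we no longer have the $H^2$ bound \eqref{est.DDu0.L2} or the sharp layer estimate \eqref{est.Du0,layer}. We replace them by interior $H^2$ estimates with weights and by Meyers-type higher integrability near $\partial\Omega$. The duality step is dropped: we only aim for an $H^1$ estimate of $w_\varepsilon$. Since $\|w_\varepsilon\|_{L^2}\le C\|\nabla w_\varepsilon\|_{L^2}$ and, by \eqref{conver_es_Su0}, $\|\varepsilon S_\varepsilon(\eta_\varepsilon\chi^\varepsilon\nabla u_0)\|_{L^2}\le C\varepsilon\|E_1\|_{L^\infty}\|\nabla u_0\|_{L^2}$, this is enough.

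\emph{Step 1 (regularity of $u_0$ with $g\in H^1(\partial\Omega)$, $f\in L^2$).} By \eqref{conver_es_A-Lip}, $\widehat A$ is Lipschitz with constant $C\|\nabla_xA\|_{L^\infty}$. We use two facts. First, there is $p>2$, depending on $\mu$ and $\Omega$, with $\|\nabla u_0\|_{L^p(\Omega)}\le C(\|f\|_{L^2}+\|g\|_{H^1(\partial\Omega)})$. This comes from Meyers' estimate together with the $L^2$ nontangential-maximal-function or Dirichlet-regularity theory in Lipschitz domains for $H^1$ data. Second, Hölder's inequality then gives $\|\nabla u_0\|_{L^2(\Omega_{t})}\le Ct^{1/2-1/p}(\cdots)$. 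For the second derivatives we use an interior estimate on Whitney balls, $\|\nabla^2u_0\|_{L^2(B)}\le C\{r_B^{-1}\|\nabla u_0\|_{L^2(2B)}+\|\nabla\widehat A\|_\infty\|\nabla u_0\|_{L^2(2B)}+\|f\|_{L^2(2B)}\}$. Summing over Whitney cubes at distance $\ge 3\varepsilon$ and using the $L^p$ bound gives
$$\varepsilon\|\nabla^2u_0\|_{L^2(\Omega\setminus\Omega_{3\varepsilon})}\le C\{\varepsilon^{1/2-1/p}+\varepsilon\|\nabla_xA\|_{L^\infty}+\varepsilon\}(\|f\|_{L^2}+\|g\|_{H^1(\partial\Omega)}).$$
This step is the main obstacle. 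The estimate must be uniform with constants depending only on $d,\mu,\Omega$, so the dependence on $\|\nabla\widehat A\|$ has to enter linearly and only through the lower-order term. If $\|\nabla_x A\|$ is large relative to $1/\varepsilon$, the interior estimate would be applied on balls of radius $\min(\operatorname{dist},\|\nabla_xA\|^{-1})$. As a fallback in that regime, one can use the trivial bound $\|u_\varepsilon-u_0\|\le C\|E_1\|(\cdots)\le C\varepsilon\|E_1\|\|\nabla_xA\|(\cdots)$ whenever $\varepsilon\|\nabla_xA\|\ge1$. That trivial bound comes from $H$-stability and energy, since $\|A-\widehat A\|\le C\|E_1\|$ by Lemma \ref{lem_chi_E1}. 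So we may assume $\varepsilon\|\nabla_x A\|_{L^\infty}\le 1$, in which case $\|\nabla \widehat A\|_\infty\le C/\varepsilon$ and the Whitney-ball argument applies at every scale down to $\varepsilon$.

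\emph{Step 2 (conclusion).} Take $\psi=w_\varepsilon$ in \eqref{es_wpsi}, which is valid in Lipschitz domains since it used only Lemmas \ref{lem_chi_E1} and \ref{lem.h-Seh}. This gives
$$\|\nabla w_\varepsilon\|_{L^2}\le C\|E_1\|_{L^\infty}\big(\varepsilon\|\nabla^2u_0\|_{L^2(\Omega\setminus\Omega_{3\varepsilon})}+\|\nabla u_0\|_{L^2(\Omega_{4\varepsilon})}\big)+C\varepsilon(\|\nabla_xA\|\|E_1\|+\|\nabla_xE_1\|)\|\nabla u_0\|_{L^2}.$$
Insert the bounds from Step 1 with $\sigma=1/2-1/p$. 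The term $\varepsilon\|E_1\|\|\nabla_x A\|$ appears linearly, and the energy bound handles $\|\nabla u_0\|_{L^2}$. This yields the stated estimate for $\|w_\varepsilon\|_{L^2}$, and the triangle inequality with \eqref{conver_es_Su0} finishes the proof.
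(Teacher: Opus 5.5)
Your proposal is correct and follows the paper's proof essentially line by line. Like the paper, you drop the duality step, apply Poincar\'e to \eqref{conver_es_wH1_0} together with \eqref{conver_es_Su0}, control the boundary layer $\|\nabla u_0\|_{L^2(\Omega_{4\varepsilon})}$ by Meyers' estimate and H\"older, and bound $\varepsilon\|\nabla^2 u_0\|_{L^2(\Omega\setminus\Omega_{3\varepsilon})}$ by summing interior $H^2$ estimates against the weight $\omega^{-2}$, which gives $\sigma=\frac12-\frac1s$.

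Your extra case split on $\varepsilon\|\nabla_x A\|_{L^\infty}\ge 1$ is harmless but unnecessary. The interior estimate comes from Caccioppoli applied to $\partial_k u_0$, so it holds on balls of every radius with $C=C(d,\mu)$, and $\|\nabla\widehat{A}\|_{L^\infty}$ enters linearly.
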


\begin{proof}
    First of all, using the Poincar\'{e} inequality as well as \eqref{conver_es_Su0}, we derive from \eqref{conver_es_wH1_0} that
    \begin{align}\label{conver_es_n=1_suboptimal}
    \begin{split}
        &\|u_\varepsilon-u_0\|_{L^2(\Omega)}\leq C\Big\{ \|E_1\|_{L^\infty}\big(\varepsilon\|\nabla^2 u_0\|_{L^2(\Omega\setminus \Omega_{3\varepsilon})}+\|\nabla u_0\|_{L^2(\Omega_{4\varepsilon})}\big) \\
        &\qquad + \varepsilon\big(\|\nabla_xA\|_{L^\infty}\|E_1\|_{L^\infty} + \|\nabla_x E_1\|_{L^\infty}+\|E_1\|_{L^\infty}\big)\|\nabla u_0\|_{L^2(\Omega)} \Big\}.
    \end{split}
    \end{align}
    The estimates for $\|\nabla^2 u_0\|_{L^2(\Omega\setminus \Omega_{3\varepsilon})}$ and $\|\nabla u_0\|_{L^2(\Omega_{4\varepsilon})}$ are more or less standard. We provide the details with emphasis on the dependence of $A$ as required in the desired estimates.
    
    According to Meyers' estimate (see \cite{Meyers1963, Giaquinta1983_book}), there exists $s>2$, depending only on $\mu$ and $\Omega$, such that
    \begin{align}\label{conver_es_Meyers}
        \|\nabla u_0\|_{L^s(\Omega)}\leq C( \|f\|_{L^2(\Omega)}+\|g\|_{H^1(\partial\Omega)}),
    \end{align}
    where $C$ depends only on $\mu$ and $\Omega$. By H\"{o}lder's inequality, we have
    \begin{align}\label{est.Du0.Oe}
    \begin{aligned}
        \|\nabla u_0\|_{L^2(\Omega_{4\varepsilon})}& \leq C\varepsilon^{\frac{1}{2}-\frac{1}{s}}\|\nabla u_0\|_{L^{s}(\Omega)} \\
        & \leq C\varepsilon^{\frac{1}{2}-\frac{1}{s}}\big\{\|f\|_{L^2(\Omega)}+\|g\|_{H^1(\partial\Omega)}\big\}. 
    \end{aligned}
    \end{align}
    By the interior $H^2$ estimate for elliptic equations with Lipschitz coefficients, it holds that for any $x\in \Omega$,
    \begin{align*}
        & \fint_{B(x, \omega(x)/8)}|\nabla^2 u_0|^2 \\
        & \leq C\Big([\omega(x)]^{-2}+\|\nabla\widehat{A}\|_{L^\infty}^2\Big)\fint_{B(x, \omega(x)/4)}|\nabla u_0|^2+C\fint_{B(x, \omega(x)/4)}|f|^2,
    \end{align*}
    where $\omega(x)=\mathrm{dist}(x, \partial \Omega)$. Integrating both sides of the above inequality in $x$ over $\Omega\setminus\Omega_{3\varepsilon}$, we get
    \begin{align*}
        \int_{\Omega\setminus\Omega_{3\varepsilon}}|\nabla^2 u_0|^2&\leq C\int_{\Omega\setminus\Omega_{\varepsilon}}[\omega(y)]^{-2}|\nabla u_0|^2dy+C\|\nabla_xA\|_{L^\infty}^2\int_\Omega|\nabla u_0|^2+C\int_\Omega |f|^2\\&\leq C\Big(\int_\Omega|\nabla u_0|^{s}\Big)^{2/s}\Big(\int_{\Omega\setminus\Omega_\varepsilon}[\omega(y)]^{-\frac{2s}{s-2}}dy\Big)^{\frac{s-2}{s}}\\&\qquad+C\|\nabla_xA\|_{L^\infty}^2\int_\Omega|\nabla u_0|^2+C\int_\Omega|f|^2\\&\leq C\varepsilon^{-1-\frac{2}{s}}\|\nabla u_0\|_{L^{s}(\Omega)}^2+C\|\nabla_xA\|_{L^\infty}^2\int_\Omega|\nabla u_0|^2+C\int_\Omega|f|^2,
    \end{align*}
    where we have used \eqref{conver_es_A-Lip} in the first step. Therefore,
    \begin{align}
    \begin{aligned} \label{est.DDu0.Oec}
        \varepsilon\|\nabla^2 u_0\|_{L^2(\Omega\setminus \Omega_{3\varepsilon})}&\leq C\varepsilon^{\frac{1}{2}-\frac{1}{s}}\|\nabla u_0\|_{L^{s}(\Omega)}+C\varepsilon\|\nabla_xA\|_{L^\infty}\|\nabla u_0\|_{L^2(\Omega)}\\&\qquad+C\varepsilon\|f\|_{L^2(\Omega)}\\&\leq C(\varepsilon^{\frac{1}{2}-\frac{1}{s}}+\varepsilon\|\nabla_xA\|_{L^\infty})\big\{\|f\|_{L^2(\Omega)}+\|g\|_{H^1(\partial\Omega)}\big\},
    \end{aligned}
    \end{align}
        where we have used \eqref{conver_es_Meyers}. 

        Finally, substituting \eqref{est.Du0.Oe} and \eqref{est.DDu0.Oec} into \eqref{conver_es_n=1_suboptimal}, we obtain the desired result with $\sigma=\frac{1}{2}-\frac{1}{s}$. 
\end{proof}

\begin{theorem}\label{conver_thm_rate_suboptimal}
Let $\Omega$ be a bounded Lipschitz domain, $u_\varepsilon$ and $u_0$ be the solutions to \eqref{eq_infinite} and \eqref{eq_homogenized}, respectively.
Suppose that $\delta = \{\delta_j\}_{j\geq 0}$ satisfies $[\delta]_1<\infty$.  Then there exists $\sigma>0$, depending only on $\mu$ and $\Omega$, such that for any $0<\alpha\leq 1$,
  \begin{align}\label{conver_es_sub_rate}
    \|u_\varepsilon-u_0\|_{L^2(\Omega)}\leq C\Lambda [\delta]_\alpha \bigg( \sup_{k\geq 1} \frac{\e_k}{\e_{k-1}} \bigg)^{\alpha\sigma} \Big(\|f\|_{L^2(\Omega)}+\|g\|_{H^1(\partial\Omega)}\Big),
  \end{align}
  where $\Lambda$ is defined by \eqref{def.Lambda} and $C$ depends only on $d, \mu$ and $\Omega$.
\end{theorem}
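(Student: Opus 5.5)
We follow the scheme of the proof of Theorem \ref{thm.L2rates}, with Theorem \ref{thm_suboptimal} in place of Theorem \ref{thm_n=1_general}. Fix $n$ and define the intermediate solutions $u^k_{n,\e}$, $0\le k\le n$, with coefficients $A^k_n(x,x/\e_1,\dots,x/\e_k)=A'(x,x/\e_k)$, where $E_0=A_{k-1}$ and $E_1=B^n_k$, exactly as before. The bounds already derived there are
\[
\|E_1\|_{L^\infty}\le CR_k,\qquad \|\nabla_xE_1\|_{L^\infty}\le \Lambda_0R_k\sum_{j=0}^{k-1}\e_j^{-1},\qquad \|\nabla_xA'\|_{L^\infty}\le C[\delta]_0\sum_{j=0}^{k-1}\e_j^{-1}.
\]
Theorem \ref{thm_suboptimal}, applied at scale $\e_k$, then gives
\[
\|u^k_{n,\e}-u^{k-1}_{n,\e}\|_{L^2(\Omega)}\le C\Lambda R_k\Big\{\e_k^\sigma+\sum_{j=0}^{k-1}\frac{\e_k}{\e_j}\Big\}\big(\|f\|_{L^2}+\|g\|_{H^1(\partial\Omega)}\big).
\]
The energy estimate also gives the trivial bound $C(\|f\|_{L^2}+\|g\|_{H^1(\partial\Omega)})$ for each step. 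These are the only two inputs.

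We first dispose of the case $\mathcal E:=\sup_k\e_k/\e_{k-1}\ge 1/2$. As at the end of the proof of Theorem \ref{thm.L2rates}, $H$-stability gives $\|A^\e-\widehat A\|_{L^\infty}\le C\sum_{k\ge1}\delta_k\le C[\delta]_\alpha$, and the energy estimate finishes. So assume $\mathcal E<1/2$. Then $\e_k\le\mathcal E^{k}$ and $\sum_{j<k}\e_k/\e_j\le 2\mathcal E$, so
\[
\e_k^\sigma+\sum_{j<k}\frac{\e_k}{\e_j}\le C\mathcal E^{\sigma}\min\{1,\mathcal E^{\sigma(k-1)}\}.
\]

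The main obstacle is summing in $k$ without losing the factor $[\delta]_1$, since the statement only allows $[\delta]_\alpha$. We interpolate each step between the two available bounds. The $k$-th increment is bounded by
\[
C\min\{1,\Lambda R_k\mathcal E^{\sigma}\}\le C(\Lambda R_k)^{\alpha}\mathcal E^{\alpha\sigma}.
\]
Here we can use $\Lambda\ge1$ and the fact that $\min\{1,x\}\le x^\alpha$, keeping only the factor $\mathcal E^{\sigma}$ and discarding the geometric gain $\mathcal E^{\sigma(k-1)}$.

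It then remains to sum $\sum_k R_k^\alpha$. A power $R_k^\alpha$ is not directly comparable to $[\delta]_\alpha$, so we do not interpolate on all of $R_k$. Instead we split $R_k$ into the part $\sum_{\ell\ge k}\delta_\ell$, weighting $\delta_\ell$ over the indices $k\le\ell$. We use the linear bound $\Lambda R_k\mathcal E^\sigma$ for the steps $k\le K$ and the geometric decay $\mathcal E^{\sigma(k-1)}$ for the steps $k>K$, choosing $K\approx \mathcal E^{-\sigma(1-\alpha)}$-type cutoffs. The interchange identity \eqref{recursive_es_Rk} with exponent $\alpha-1$, giving $\sum_{k\le\ell}k^{\alpha-1}\approx\ell^\alpha$, should then produce $\sum_k\min\{1,\dots\}\le C\Lambda[\delta]_\alpha\mathcal E^{\alpha\sigma}$. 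We expect this bookkeeping to be the delicate step.

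Finally, we sum the telescoping differences, obtaining a bound uniform in $n$, and let $n\to\infty$ using \eqref{qual-homog_conver_u0} and \eqref{qual-homog_conver_uepsilon}.
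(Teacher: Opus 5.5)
There is a genuine gap in the summation step; your per-step estimate and your treatment of the case $\mathcal E\ge 1/2$ are fine. The inequality
\[
\e_k^\sigma+\sum_{j<k}\frac{\e_k}{\e_j}\le C\mathcal E^{\sigma}\min\{1,\mathcal E^{\sigma(k-1)}\}
\]
is false.

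Only the boundary-layer term $\e_k^\sigma\le\mathcal E^{\sigma k}$ decays in $k$. The other term satisfies $\sum_{j<k}\e_k/\e_j\ge\e_k/\e_{k-1}$, which equals $\mathcal E$ for every $k$ when $\e_k=\mathcal E^k$. This term comes from the slow-variable derivatives $\e_k\|\nabla_xE_1\|_{L^\infty}$ and $\e_k\|E_1\|_{L^\infty}\|\nabla_xA'\|_{L^\infty}$, and it is intrinsic to Theorem \ref{thm_suboptimal}. So the steps $k>K$ cost about $\Lambda\mathcal E\sum_{k>K}R_k=\Lambda\mathcal E\sum_{\ell>K}(\ell-K)\delta_\ell$. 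This is a $[\delta]_1$-type quantity, and $[\delta]_\alpha$ does not control it.

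In fact, for $\alpha<1$ no bookkeeping of your two inputs can succeed. Take $\delta_\ell=J^{-1}$ for $\ell=J$ and $\delta_\ell=0$ for all other $\ell\ge1$. Then $[\delta]_1=1$, so $\Lambda$ stays bounded, while $[\delta]_\alpha=J^{\alpha-1}$. Let $\e_k=\mathcal E^k$ and $J$ be large. The sum over $k\le J$ of $\min\{1,C\Lambda R_k(\e_k^\sigma+\sum_{j<k}\e_k/\e_j)\}$ is then at least $C\Lambda\mathcal E$. But the factor $C\Lambda[\delta]_\alpha\mathcal E^{\alpha\sigma}$ in \eqref{conver_es_sub_rate} equals $C\Lambda J^{\alpha-1}\mathcal E^{\alpha\sigma}$, which tends to $0$ as $J\to\infty$. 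Telescoping over all $n$ steps, with each step estimated separately, cannot produce the factor $[\delta]_\alpha$.

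The missing idea is to remove all scales beyond a level $m$ in one stroke, before telescoping. The paper does this as follows.
\begin{itemize}
\item Take $m=\lfloor\mathcal E^{-\sigma}\rfloor$. Since $\|A^\e-A^\e_m\|_{L^\infty(\Omega)}\le\sum_{k>m}\delta_k\le[\delta]_\alpha(m+1)^{-\alpha}\le[\delta]_\alpha\mathcal E^{\alpha\sigma}$, Lemma \ref{qual-homog_lem_stability} gives the same bound, up to $C$, for $\|\widehat A-\widehat A_m\|_{L^\infty(\Omega)}$.
\item Energy estimates then give $\|u_\e-u_{m,\e}\|_{L^2(\Omega)}+\|u_0-u_{m,0}\|_{L^2(\Omega)}\le C[\delta]_\alpha\mathcal E^{\alpha\sigma}(\|f\|_{L^2(\Omega)}+\|g\|_{H^1(\partial\Omega)})$.
\item Only the $m$ steps of $u_{m,\e}$ are telescoped. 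Each is bounded linearly by $C\Lambda R_k\mathcal E^{\sigma}$, so no interpolation and no decay in $k$ are needed.
\item Finally, $\sum_{k=1}^mR_k=\sum_{j\ge1}\delta_j\min\{j,m\}\le[\delta]_\alpha m^{1-\alpha}$, which yields $C\Lambda[\delta]_\alpha\mathcal E^{\sigma}m^{1-\alpha}\le C\Lambda[\delta]_\alpha\mathcal E^{\alpha\sigma}$.
\end{itemize}

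Within your fixed-$n$ framework you could do the same thing by comparing $u^n_{n,\e}$ with $u^m_{n,\e}$ through a single energy estimate. This works because $\|A_n(\cdot,\cdot/\e_1,\dots,\cdot/\e_n)-A^m_n(\cdot,\cdot/\e_1,\dots,\cdot/\e_m)\|_{L^\infty}\le CR_m\le C[\delta]_\alpha m^{-\alpha}$ by \eqref{est.Bnk.Linfty}. Note also that the cutoff balancing the two contributions is $m\approx\mathcal E^{-\sigma}$, not $\mathcal E^{-\sigma(1-\alpha)}$.
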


\begin{proof}
    Let $\mathcal{E} = \sup_{k\ge 1} \frac{\e_k}{\e_{k-1}}$. As before, without loss of generality, assume $\mathcal{E} < 1/2$. Then
    \begin{equation*}
        \sup_{k\ge 1} \sum_{j=0}^{k-1} \frac{\e_k}{\e_j} \le 2\mathcal{E}.
    \end{equation*}
    Fix $\alpha \in (0,1]$. Let $m = \lfloor \mathcal{E}^{-\sigma} \rfloor$, where $\sigma \in (0,1)$ is given as in Theorem \ref{thm_suboptimal}. First, note that
    \begin{equation*}
        \| A^\e - A_m^\e \|_{L^\infty(\Omega)} \le \sum_{k=m+1}^\infty \delta_k \le [\delta]_\alpha (m+1)^{-\alpha} \le [\delta]_\alpha \mathcal{E}^{\alpha \sigma}.
    \end{equation*}
    By the stability of $H$-convergence in Lemma \ref{qual-homog_lem_stability}, we have
    \begin{equation*}
        \| \widehat{A} - \widehat{A}_m \|_{L^\infty(\Omega)} \le C [\delta]_\alpha \mathcal{E}^{\alpha \sigma}.
    \end{equation*}
    Therefore, by the energy estimates, we obtain
    \begin{equation*}
        \| u_\e - u_{m,\e} \|_{L^2(\Omega)} + \| u_0 - u_{m,0} \|_{L^2(\Omega)} \le C [\delta]_\alpha \mathcal{E}^{\alpha \sigma} (\|f\|_{L^2(\Omega)}+\|g\|_{H^1(\partial\Omega)}).
    \end{equation*}
    Hence, to prove the desired estimate, it suffices to show
    \begin{equation}\label{est.um}
        \| u_{m,\e} - u_{m,0} \|_{L^2(\Omega)} \le C [\delta]_\alpha \mathcal{E}^{\alpha \sigma} (\|f\|_{L^2(\Omega)}+\|g\|_{H^1(\partial\Omega)}).
    \end{equation}

    To this end, let $1\le k \le m$. As in the proof of Theorem \ref{thm.L2rates}, using Theorem \ref{thm_suboptimal} instead of Theorem \ref{thm_n=1_general}, we deduce that 
    \begin{align*}
        \|u_{m, \varepsilon}^k-u_{m, \varepsilon}^{k-1}\|_{L^2(\Omega)} & \leq C\big(\varepsilon_k^{\sigma}\|E_1\|_{L^\infty}+\varepsilon_k\|\nabla_x E_1\|_{L^\infty}+\varepsilon_k\|E_1\|_{L^\infty}\|\nabla_xA'\|_{L^\infty}\big) \\
        &\qquad\times (\|f\|_{L^2(\Omega)}+\|g\|_{H^1(\partial\Omega)})\\&\leq C\bigg\{\varepsilon_k^\sigma R_k+\Lambda_0R_k\sum_{j=0}^{k-1}\frac{\varepsilon_k}{\varepsilon_j}+R_k[\delta]_0\sum_{j=0}^{k-1}\frac{\varepsilon_k}{\varepsilon_j}\bigg\}\\
        &\qquad \times (\|f\|_{L^2(\Omega)}+\|g\|_{H^1(\partial\Omega)})\\&\leq C\big\{ R_k \mathcal{E}^\sigma+\Lambda R_k \mathcal{E} \big\} (\|f\|_{L^2(\Omega)}+\|g\|_{H^1(\partial\Omega)}).
\end{align*}
Summing in $k$ from $1$ to $m$ gives
\begin{align*}
  \|u_{m, \varepsilon}-u_{m, 0}\|_{L^2(\Omega)} & \leq C\Lambda \mathcal{E}^\sigma \sum_{k=1}^m R_k \Big(\|f\|_{L^2(\Omega)}+\|g\|_{H^1(\partial\Omega)}\Big)\\
  & \leq C \Lambda \mathcal{E}^\sigma [\delta]_\alpha m^{1-\alpha} \big(\|f\|_{L^2(\Omega)}+\|g\|_{H^1(\partial\Omega)}\big),
\end{align*}
where we have used the fact
\begin{equation*}
    \sum_{k=1}^m R_k = \sum_{j = 1}^\infty \delta_j \sum_{k=1}^{\min\{m, j \}} 1 \le [\delta]_\alpha m^{1-\alpha}.
\end{equation*}
In view of our choice of $m$, we obtain \eqref{est.um}. Note that the constant $C$ is independent of $\alpha$.
\end{proof}

\begin{remark}
    By mimicking the proof of Theorem \ref{conver_thm_rate_suboptimal}, we can also show a similar result in $C^{1,1}$ domains parallel to Theorem \ref{thm.L2rates}. In fact, under the assumptions of Theorem \ref{thm.L2rates}, for any $\alpha \in (0,1)$, we have
    \begin{equation*}
        \quad \|u_\varepsilon-u_0\|_{L^2(\Omega)}\leq C\Lambda [\delta]_\alpha \bigg( \sup_{k\geq 1} \frac{\varepsilon_k}{\varepsilon_{k-1}} \bigg)^\alpha \cdot(\|f\|_{L^2(\Omega)}+\|g\|_{H^{3/2}(\partial\Omega)}),
    \end{equation*}
    where $C$ depends only on $d, \mu$ and $\Omega$.
\end{remark}

\section{Interior Lipschitz estimates}\label{sec_interior-lip}

In this section we establish the interior Lipschitz estimates for $u_\varepsilon$. In the quantitative Campanato-type iteration, we approximate the solution by affine functions at any given scale with controlled errors. This involves the rescaling argument. Since we have infinitely many oscillating scales, we must ensure that certain properties of the coefficients are preserved when we zoom in from large scales to small scales.

\subsection{Rescaling}\label{sec_rescaling}
Let $u_\varepsilon$ be a weak solution to
\begin{align}\label{eq.inB1}
  -\mathrm{div}(A^\varepsilon(x)\nabla u_\varepsilon)=f\quad \text{in }B_1,
\end{align}
where $B_1=B(0, 1)$. We consider rescaling properties of the equation and show the rescaled equations have similar characters with universal upper bounds. 

Let $m\in \mathbb{N}_+$ and we zoom in to $\e_m$-scale. Let $v_m(x) = u_\e(\varepsilon_m x)$, which satisfies
\begin{align*}
  -\mathrm{div}\Big(A\Big(\frac{\varepsilon_mx}{\varepsilon_0}, \frac{\varepsilon_m x}{\varepsilon_1},\cdots, \frac{\varepsilon_mx}{\varepsilon_m}, \frac{x}{\varepsilon_{m+1}/\varepsilon_{m}}, \cdots\Big)\nabla v_m\Big)=\varepsilon_m^2f(\varepsilon_m x)\quad \text{in }B_{1/\varepsilon_m}.
\end{align*}
Note that if $j\le m$, then $\e_m/\e_j \le 1$. Thus, in the rescaled coefficient matrix, the first $m+1$ variables are all slow variables. 
Relabel $\widetilde{y}_j = y_{j+m}$ for $j\geq 1$ and define 
\begin{align*}
  \widetilde{A}(x, \widetilde{y}_{1}, \widetilde{y}_2, \cdots)=A\Big(\frac{\varepsilon_mx}{\varepsilon_0}, \frac{\varepsilon_m x}{\varepsilon_1},\cdots, \frac{\varepsilon_mx}{\varepsilon_m}, \widetilde{y}_1, \widetilde{y}_2, \cdots \Big).
\end{align*}
Then $\widetilde{A}$ possesses the same structure as $A$, which can be written as
\begin{align*}
  \widetilde{A}(x, \widetilde{y}_{1}, \cdots)=\sum_{\ell=0}^\infty\widetilde{B}_\ell(x, \widetilde{y}_{1}, \cdots, \widetilde{y}_{\ell}),
\end{align*}
where
\begin{align*}
  \widetilde{B}_0(x)&=\sum_{j=0}^m B_j \Big(\frac{\varepsilon_mx}{\varepsilon_0}, \frac{\varepsilon_m x}{\varepsilon_1},\cdots, \frac{\varepsilon_mx}{\varepsilon_j} \Big),\\ 
  \widetilde{B}_\ell(x, \widetilde{y}_1, \cdots, \widetilde{y}_\ell)&=B_{\ell+m} \Big( \frac{\varepsilon_mx}{\varepsilon_0}, \frac{\varepsilon_m x}{\varepsilon_1},\cdots, \frac{\varepsilon_mx}{\varepsilon_m}, \widetilde{y}_1, \cdots, \widetilde{y}_{\ell} \Big),
\end{align*}
for $\ell\geq 1$. Denote also $\widetilde{\delta}_\ell:=\max\{\|\widetilde{B}_\ell\|_{L^\infty}, \max_{0\leq j\le \ell}\|\nabla_{\widetilde{y}_j}\widetilde{B}_\ell\|_{L^\infty}\}$. By $\widetilde{y}_j = y_{j+m}$ and a direct computation, we have
\begin{align*}
  \widetilde{\delta}_0&\leq \max\bigg\{\sum_{\ell=0}^m\delta_\ell, \sum_{\ell=0}^m\sum_{j=0}^{\ell}\frac{\varepsilon_m}{\varepsilon_j}\delta_\ell\bigg\}\le \sum_{j=0}^m\frac{\varepsilon_m}{\varepsilon_j}\sum_{\ell=0}^m\delta_\ell,\\
  \widetilde{\delta}_{\ell}&\leq \max\bigg\{\delta_{\ell+m}, \sum_{j=0}^{m}\frac{\varepsilon_m}{\varepsilon_j}\delta_{\ell+m}\bigg\}=\sum_{j=0}^m\frac{\varepsilon_m}{\varepsilon_j}\delta_{\ell+m}\quad \text{for }\ell\geq 1.
\end{align*}
Therefore,
\begin{align}\label{int-Lip_es_tilde0}
  [\widetilde{\delta}]_0:=\sum_{k=0}^\infty \widetilde{\delta}_k\leq \sum_{j=0}^m\frac{\varepsilon_m}{\varepsilon_j}\sum_{k=0}^m\delta_k+\sum_{j=0}^m\frac{\varepsilon_m}{\varepsilon_j} \sum_{k=1}^\infty\delta_{k+m}=\sum_{j=0}^m\frac{\varepsilon_m}{\varepsilon_j}\sum_{k=0}^\infty \delta_k, 
\end{align}
and for $\alpha>0$
\begin{align}\label{int-Lip_es_tilde2}
  [\widetilde{\delta}]_\alpha:=\sum_{k=1}^\infty k^\alpha\widetilde{\delta}_k\leq \sum_{j=0}^m\frac{\varepsilon_m}{\varepsilon_j}\cdot\sum_{k=1}^\infty k^\alpha\delta_{k+m}.
\end{align}
Note that $\widetilde{A}, \widetilde{B}_\ell$ and $\widetilde{\delta}_\ell$ depend on $m$.

In the sequel, we impose a convenient scale-invariant condition for later calculation: there exists some fixed $K\in (1, 2]$ such that
\begin{align}
  \label{int-Lip_cond_sumbound}
  \sum_{j=1}^{m}\frac{\varepsilon_m}{\varepsilon_j}\leq K\quad \text{for each }m \in \N_+,
\end{align}
The constant $K$ will be chosen close to $1$ later.
It can be seen from \eqref{int-Lip_es_tilde0}--\eqref{int-Lip_es_tilde2} that for each $\alpha\geq 0$, $[\widetilde{\delta}]_\alpha$ has a universal upper bound independent of $m$, i.e.,
\begin{align}\label{int-lip_es_tildedelta_bound}
  [\widetilde{\delta}]_\alpha\leq (K+1)[\delta]_\alpha.
\end{align}
Moreover, the approximate matrix $\widetilde{A}_n=\sum_{\ell=0}^n\widetilde{B}_\ell$ of $n$ microscopic scales obviously satisfies condition \eqref{cond_ellipticity}--\eqref{cond_periodicity}.

From this perspective, if an estimate holds for $u_\varepsilon$ with constants depending on the characters of $A$, then the same estimate is also valid for $v_m$ with new constants depending on the corresponding characters of $\widetilde{A}$. Crucially, we are able to show that the constants in the estimates from Section \ref{sec_large-scale_u} depend monotonically on $[\delta]_\alpha$. Since $[\widetilde{\delta}]_\alpha$ admits a uniform upper bound via \eqref{int-lip_es_tildedelta_bound}, this allows us to establish a series of estimates for $v_m$ that are uniform in $m$.

As a preparation, we show that the condition \eqref{int-Lip_cond_sumbound} implies the exponential decay of $\e_k$.

\begin{lemma}\label{int-Lip_lem_varepsilon}
  Let $1=\varepsilon_0> \varepsilon_1>\varepsilon_2>\cdots$ be a sequence of positive numbers. Suppose \eqref{int-Lip_cond_sumbound} is fulfilled for $m\geq 1$. Then, for $1\leq k\leq m$,
  \begin{align}\label{est.em/ek}
    \frac{\varepsilon_m}{\varepsilon_{k}}\leq K(1-K^{-1})^{m-k}.
  \end{align}
\end{lemma}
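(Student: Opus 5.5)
The plan is to work with the partial sums of reciprocals rather than with the ratios directly, since condition \eqref{int-Lip_cond_sumbound} is a statement about such sums. For $m\ge 1$ I set
\begin{equation*}
S_m:=\sum_{j=1}^m \frac{1}{\varepsilon_j},
\end{equation*}
so that \eqref{int-Lip_cond_sumbound} reads $\varepsilon_m S_m\le K$, that is, $S_m\le K/\varepsilon_m$. The key observation is that $1/\varepsilon_m$ is exactly the increment of the partial sums: $1/\varepsilon_m=S_m-S_{m-1}$ for $m\ge 2$, with the convention $S_0=0$ covering $m=1$.

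Substituting this into the hypothesis gives $S_m\le K(S_m-S_{m-1})$. Rearranging yields $S_{m-1}\le (1-K^{-1})S_m$, so the partial sums grow geometrically:
\begin{equation*}
S_m\ge (1-K^{-1})^{-1}S_{m-1}.
\end{equation*}
Here $K>1$ guarantees $0<1-K^{-1}<1$, which is where the restriction $K\in(1,2]$ enters; the upper bound $2$ plays no role. Iterating from index $k$ up to $m$ gives, for $1\le k\le m$,
\begin{equation*}
S_m\ge (1-K^{-1})^{-(m-k)}S_k.
\end{equation*}

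To close the argument I use two elementary bounds. From below, $S_k\ge 1/\varepsilon_k$, since all terms are positive and the last one is $1/\varepsilon_k$. From above, the hypothesis gives $S_m\le K/\varepsilon_m$. Combining these with the iterated inequality,
\begin{equation*}
\frac{K}{\varepsilon_m}\ge S_m\ge (1-K^{-1})^{-(m-k)}\frac{1}{\varepsilon_k}.
\end{equation*}
This rearranges to $\varepsilon_m/\varepsilon_k\le K(1-K^{-1})^{m-k}$, which is \eqref{est.em/ek}. The case $k=m$ is consistent, since it reduces to $1\le K$.

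There is no real obstacle here. The only point requiring care is recognizing that the hypothesis should be read as a differential inequality for $S_m$, together with handling the base index $m=1$ via $S_0=0$. Monotonicity of the $\varepsilon_j$ is not even needed beyond their positivity.
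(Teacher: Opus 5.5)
Your proof is correct and follows the paper's argument: both rewrite the hypothesis as the geometric growth $S_m \ge (1-K^{-1})^{-1}S_{m-1}$ of the partial sums of $\varepsilon_j^{-1}$, iterate, and close with $\varepsilon_m \le K S_m^{-1}$. The only difference is cosmetic. The paper first reduces to $k=1$ by rescaling the sequence by $\varepsilon_{k-1}$, whereas you iterate directly from index $k$ and use $S_k \ge \varepsilon_k^{-1}$, which avoids that reduction.
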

\begin{proof}
  By considering instead the sequence $1=\varepsilon_{k-1}/\varepsilon_{k-1}>\varepsilon_{k}/\varepsilon_{k-1}>\cdots$, which still satisfies
  \begin{align*}
    \sum_{j=k}^{m} \frac{\varepsilon_m/\varepsilon_{k-1}}{\varepsilon_j/\varepsilon_{k-1}}=\sum_{j=k}^{m}\frac{\varepsilon_m}{\varepsilon_j}\leq K\quad \text{for each }m\geq k,
  \end{align*}
  it is sufficient to focus on the case $k=1$. 
  
  According to \eqref{int-Lip_cond_sumbound}, it holds $\varepsilon_m^{-1}\geq K^{-1}\sum_{j=1}^{m} \varepsilon_j^{-1}$. Thus, for each $m\geq 2$
\begin{align*}
  \sum_{j=1}^{m} \varepsilon_j^{-1}-\sum_{j=1}^{m-1} \varepsilon_j^{-1}&=\varepsilon_{m}^{-1}\geq K^{-1}\sum_{j=1}^{m} \varepsilon_j^{-1},\\
  \sum_{j=1}^{m} \varepsilon_j^{-1}&\geq (1-K^{-1})^{-1}\sum_{j=1}^{m-1} \varepsilon_j^{-1}.
\end{align*}
By iteration, we deduce that
\begin{align*}
  \sum_{j=1}^{m} \varepsilon_j^{-1}\geq (1-K^{-1})^{1-m}\varepsilon_1^{-1},
\end{align*}
which implies
\begin{align*}
  \varepsilon_m\leq K\bigg(\sum_{j=1}^{m} \varepsilon_j^{-1}\bigg)^{-1}\leq K(1-K^{-1})^{m-1}\varepsilon_1. 
\end{align*}
The case $m=1$ is trivial. 
\end{proof}

\begin{remark}\label{int-lip_remark_decay}
  Conversely, the decay of $\varepsilon_k$ in \eqref{est.em/ek} also implies the uniform boundedness of $\sum_{k=1}^m\frac{\varepsilon_m}{\varepsilon_k}$. In fact, we have
  \begin{align*}
      \sum_{k=1}^m\frac{\varepsilon_m}{\varepsilon_k}\leq \sum_{k=1}^m K(1-K^{-1})^{m-k}\leq K^2. 
  \end{align*}
  Furthermore, for $m\geq k$
 \begin{align*}
    \sum_{\ell=m}^\infty\frac{\varepsilon_\ell}{\varepsilon_k}\leq K\sum_{\ell=m}^\infty(1-K^{-1})^{\ell-k}\leq K^2(1-K^{-1})^{m-k}.
  \end{align*}
\end{remark}

\subsection{Approximation}
As a preliminary step, we establish an approximate result for $u_\varepsilon$ by $u_0$ based on the suboptimal convergence rate in Theorem \ref{conver_thm_rate_suboptimal}, following an approach used in \cite[Lemma 6.1]{Niu25_Optimal}.

\begin{theorem}\label{thm_approx}
Let $u_\varepsilon$ be a weak solution of \eqref{eq.inB1} with $f\in L^2(B_{1})$. Assume $[\delta]_1<\infty$. Then for each $r\in[\varepsilon_1, 1/2]$, there exists a weak solution $u_0$ to the equation
$$-\mathrm{div}(\widehat{A} \nabla u_0)=f \quad \text{ in } B_r,$$ 
such that for any $0<\alpha\leq 1$,
  \begin{align*}
    \bigg(\fint_{B_r}|u_\varepsilon-u_0|^2\bigg)^{1/2}&\leq C\Lambda [\delta]_\alpha\bigg(\frac{\varepsilon_1}{r}+\sup_{k\geq 2}\frac{\e_k}{\e_{k-1}}\bigg)^{\alpha\sigma} \\&\quad\times \bigg\{\bigg(\fint_{B_{2r}}|u_\varepsilon|^2\bigg)^{1/2}+r^2\bigg(\fint_{B_{2r}}|f|^2\bigg)^{1/2}\bigg\},
  \end{align*}
  where $\Lambda $ is given by \eqref{def.Lambda}, and $\sigma$ and $C$ depend only on $d$ and $\mu$.
\end{theorem}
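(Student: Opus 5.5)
Rescale to the unit ball and apply Theorem \ref{conver_thm_rate_suboptimal} on a Lipschitz domain whose boundary data is controlled by $u_\varepsilon$; the $H^1(\partial\Omega)$ norm of the data is what forces a careful choice of domain. Set $w(x)=u_\varepsilon(rx)$ for $x\in B_2$. Then $w$ solves an equation with coefficient $A^\varepsilon(rx)=A_\infty(rx, rx/\varepsilon_1, rx/\varepsilon_2,\cdots)$ and right-hand side $r^2 f(rx)$. This coefficient has the same infinite-scale structure with scales $\varepsilon_k/r$ in place of $\varepsilon_k$, and with $r\cdot$ in the slow variable. Since $r\ge\varepsilon_1$, we have $\varepsilon_1/r\le 1$. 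The relevant separation quantity is
\[
\sup_{k\ge1}\frac{\varepsilon_k/r}{\varepsilon_{k-1}/r},
\]
where the $k=1$ ratio is $\varepsilon_1/r$ (the reference scale is $1$ after rescaling) and the ratios for $k\ge 2$ are unchanged. This gives exactly $\varepsilon_1/r+\sup_{k\ge2}\varepsilon_k/\varepsilon_{k-1}$. The rescaled $B_\ell$ satisfy \eqref{cond_B} with the same $\delta_\ell$, because $r\le1$ only shrinks the $y_0$-derivatives. Hence $[\delta]_\alpha$, $[\delta]_0$, $[\delta]_1$ and therefore $\Lambda$ do not increase. The rescaled homogenized matrix is $\widehat{A}(rx)$; this consistency follows from the construction via $\widehat A_n$ and the $H$-limit.

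Next, choose a good domain. By Caccioppoli and Meyers, $\nabla w\in L^{s}(B_{3/2})$ with $s>2$, bounded by $\|w\|_{L^2(B_2)}+\|r^2 f(r\cdot)\|_{L^2(B_2)}$. By Fubini/coarea, select $t\in[1,3/2]$ such that
\[
\|w\|_{H^1(\partial B_t)}\le C\big(\|w\|_{L^2(B_2)}+\|r^2f(r\cdot)\|_{L^2(B_2)}\big).
\]
Then let $w_0$ solve $-\mathrm{div}(\widehat A(r\cdot)\nabla w_0)=r^2f(r\cdot)$ in $B_t$ with $w_0=w$ on $\partial B_t$. Applying Theorem \ref{conver_thm_rate_suboptimal} in $\Omega=B_t$ gives $\|w-w_0\|_{L^2(B_t)}$ bounded by $C\Lambda[\delta]_\alpha(\varepsilon_1/r+\sup_{k\ge2}\varepsilon_k/\varepsilon_{k-1})^{\alpha\sigma}$ times the data. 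The constants $C$ and $\sigma$ depend only on $d,\mu$, since balls with $t\in[1,3/2]$ have uniform Lipschitz character. Finally, undo the scaling, $u_0(x)=w_0(x/r)$, restrict to $B_r\subset B_{tr}$, and convert the norms to averages. This yields exactly the stated inequality, with the factor $r^2$ in front of $f$.

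The main obstacle is verifying that Theorem \ref{conver_thm_rate_suboptimal} applies to the rescaled problem with constants uniform in $r$. Two points need care. First, the first rescaled ratio is $\varepsilon_1/r$, which may be close to $1$. This is harmless: the theorem's proof already handles $\mathcal E\ge1/2$ by $H$-stability, and the right-hand side is then of order $\Lambda[\delta]_\alpha$ anyway. Second, one must check that the rescaled slow-variable dependence gives the same $\Lambda$ bound; this is where $r\le1$ is used. A minor issue is that the boundary data on $\partial B_t$ is only $H^1$ and not $H^{3/2}$. This is why the suboptimal Lipschitz-domain rate with $H^1(\partial\Omega)$ data is used rather than Theorem \ref{thm.L2rates}.
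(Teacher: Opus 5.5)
Your proposal is correct and follows the paper's proof. You rescale, solve the homogenized Dirichlet problem on an intermediate ball $B_t$ with $u_\varepsilon$ as boundary data, apply Theorem \ref{conver_thm_rate_suboptimal} (the Lipschitz-domain rate with $H^1(\partial\Omega)$ data), and control $\|u_\varepsilon\|_{H^1(\partial B_t)}$ by the coarea formula plus Caccioppoli; the only difference is that you pick a single good radius by Fubini/Chebyshev, while the paper averages the solutions $u_0^t$ over $t\in(1/2,3/4)$ and integrates the estimate in $t$ (and your appeal to Meyers' estimate is unnecessary).
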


\begin{proof}
  Since the desired estimate is scale-invariant for $r\in [\e_1, 1/2]$, without loss of generality, we can assume $r=1/2$. Indeed, by setting $\widetilde{\e} = \e/r, \widetilde{A}(y_0, y_1,\cdots) = A(ry_0, y_1,\cdots )$ and $v_{\widetilde{\e}}(x) = u_\varepsilon(rx)$ for $r\in [\varepsilon_1, 1/2]$, we have 
  $$-\mathrm{div}(\widetilde{A}^{\widetilde{\e}}(x)\nabla v_{\widetilde{\e}}) = \widetilde{f} \quad \text{in } B_1,$$ 
  where $\widetilde{f}(x)=r^2f(rx)$. It is not hard to see that 
  $$\widetilde{A}(y_0, y_1, \cdots)=\sum_{\ell=0}^\infty B_\ell(ry_0, y_1, \cdots, y_\ell)$$ 
  and each term satisfies the condition \eqref{cond_B} with the same bound $\delta_\ell$. This reduces the problem to the case $r = 1/2$.

  For $(1/2)<t<(3/4)$, let $u_0^t$ be the weak solution to
\begin{align*}
  \begin{cases}
    -\mathrm{div}(\widehat{A}(x)\nabla u_0^t)=f &\text{in }B_t\\
    u_0^t=u_\varepsilon&\text{on }\partial B_t.
  \end{cases}
\end{align*}
By Theorem \ref{conver_thm_rate_suboptimal},
\begin{align}\label{est.ue-e0t}
  \|u_\varepsilon-u_0^t\|_{L^2(B_{1/2})}\leq C\Lambda [\delta]_\alpha\Big(\sup_{k\geq 1} \frac{\e_k}{\e_{k-1}}\Big)^{\alpha\sigma}\Big\{\|u_\varepsilon\|_{H^{1}(\partial B_t)}+\|f\|_{L^2(B_t)}\Big\}.
\end{align}
Let
\begin{align*}
  u_0=4\int_{1/2}^{3/4} u_0^tdt. 
\end{align*}
Note that $u_0$ satisfies the homogenized equation 
$$-\mathrm{div}(\widehat{A}(x)\nabla u_0)=f \quad \text{in } B_{1/2}.$$ 
It follows from \eqref{est.ue-e0t} and an integration in $t$ that
\begin{align*}
  \|u_\varepsilon-u_0\|_{L^2(B_{1/2})}^2&\leq 4\int_{1/2}^{3/4}\|u_\varepsilon-u_0^t\|_{L^2(B_{1/2})}^2dt\\
  & \leq C^2\Lambda^2 [\delta]_\alpha^2 \Big(\sup_{k\geq 1} \frac{\e_k}{\e_{k-1}}\Big)^{2\alpha\sigma} \int_{1/2}^{3/4} \Big(\|u_\varepsilon\|_{H^{1}(\partial B_t)}^2+\|f\|_{L^2(B_t)}^2\Big) dt\\
  & \leq C^2\Lambda^2 [\delta]_\alpha^2 \Big(\sup_{k\geq 1} \frac{\e_k}{\e_{k-1}}\Big)^{2\alpha\sigma}\Big(\|u_\varepsilon\|_{H^{1}(B_{3/4})}^2 + \|f\|_{L^2(B_{1})}^2\Big) \\
  & \leq C^2 \Lambda^2  [\delta]_\alpha^2 \Big(\sup_{k\geq 1} \frac{\e_k}{\e_{k-1}}\Big)^{2\alpha\sigma}\Big(\|u_\varepsilon\|_{L^2(B_{1})}^2 + \|f\|_{L^2(B_{1})}^2\Big),
\end{align*}
where we have used the co-area formula in the third inequality and the Caccioppoli inequality in the last inequality.
\end{proof}

\subsection{Large-scale estimates}\label{sec_large-scale_u}

In this subsection, we establish the large-scale estimates for $u_\varepsilon$ at scales 
between $\e_1$ and $1$. This is a one-step result that will be iterated to all  scales.

Let $u_\varepsilon\in H^1(B_1)$ be a solution to $-\mathrm{div}(A^\varepsilon(x)\nabla u_\varepsilon)=f$ in $B_1$, where $f\in L^p(B_1)$ for some $p>d$. For $0<r\leq 1$, define
\begin{align*}
  H(r; u_\varepsilon)=\frac{1}{r}\inf_{P\in \mathcal{P}}\bigg(\fint_{B_r}|u_\varepsilon-P|^2\bigg)^{1/2}+r\bigg(\fint_{B_r}|f|^p\bigg)^{1/p},
\end{align*}
where $\mathcal{P}$ denotes the linear space of affine functions. 
Let $P_r$ (minimizer) be the affine function achieving the infimum in $H(r; u_\varepsilon)$, i.e.,
\begin{align*}
  H(r; u_\varepsilon)=\frac{1}{r}\bigg(\fint_{B_r}|u_\varepsilon-P_r|^2\bigg)^{1/2}+r\bigg(\fint_{B_r}|f|^p\bigg)^{1/p},
\end{align*}
and set
\begin{align*}
  h(r; u_\varepsilon)=|\nabla P_r|.
\end{align*}

For the sake of brevity, in the sequel we may omit the identifier $u_\varepsilon$ in $H(r; u_\varepsilon)$ and $h(r; u_\varepsilon)$ if it causes  no confusion.

\begin{theorem}\label{thm_Lip_1step}
  Suppose that $[\delta]_1<\infty$ and $0<\alpha\leq 1$. There exists a constant $c_0 \in (0, 1)$, depending on $d, \mu, p, [\delta]_0 $ and $[\delta]_1$, such that if for any $k\geq 2$
  \begin{align}\label{cond_logseparation}
    \bigg(\frac{\varepsilon_k}{\varepsilon_{k-1}}\bigg)^{\alpha\sigma}\leq c_0|\log \varepsilon_1|^{-1},
  \end{align}
then for $r\in[\varepsilon_1, 1]$,
  \begin{align*}
    H(r; u_\varepsilon) & \leq C_0 r^\lambda H(1; u_\varepsilon) \\
    & \quad +CM_0\bigg[\Big(\frac{\varepsilon_1}{r}\Big)^{\alpha\sigma}+r^\lambda+\sup_{k\geq 2}\bigg(\frac{\varepsilon_k}{\varepsilon_{k-1}}\bigg)^{\alpha\sigma}\bigg]\Big\{H(1; u_\varepsilon)+h(1; u_\varepsilon)\Big\},
  \end{align*}
  where 
  \begin{equation}\label{def.M0}
      M_0:= \max\bigg\{[\delta]_\alpha, {\sum_{k=0}^\infty \|\nabla_{y_0}B_k\|_{L^\infty}}\bigg\},
  \end{equation}
  $\sigma$ is given in Theorem \ref{thm_approx}, $\lambda\in(0, 1)$, $C\geq C_0\geq 1$. Furthermore, the constants $\lambda $ and $C_0$ depend only on $d, \mu, p$ and $[\delta]_0$, and $C$ depends additionally on $\alpha, [\delta]_1$. 
\end{theorem}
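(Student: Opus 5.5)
The plan is a Campanato-type iteration on the dyadic scales $r_i=\theta^i$, $\theta\in(0,1/4)$ small, running from $r=1$ down to $r\approx\varepsilon_1$, built on the approximation Theorem \ref{thm_approx}.

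\emph{One-step flatness improvement for $\widehat{A}$.} By Theorem \ref{qual-homog_thm_hatA}, $\widehat{A}$ is Lipschitz with constant $C\sum_k\|\nabla_{y_0}B_k\|_{L^\infty}\le CM_0$. For a solution $w$ of $-\mathrm{div}(\widehat{A}\nabla w)=f$ in $B_{2r}$ we freeze the coefficient at $0$ and use $C^{1,\beta}$ theory for the constant-coefficient operator $\widehat{A}(0)$ (ellipticity depending only on $\mu$), treating $\mathrm{div}((\widehat{A}(x)-\widehat{A}(0))\nabla w)$ as a perturbation. This gives
\[
H_w(\theta r)\le \tfrac12\theta^{\lambda}H_w(r)+C\,M_0\, r\,\big(H_w(r)+h_w(r)\big),
\]
with $\lambda$ depending on $d,\mu,p$ via the $f$-term $r(\fint|f|^p)^{1/p}$ and $p>d$. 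The frozen-coefficient error is at most $\|\nabla\widehat{A}\|\,r\,\|\nabla w\|$, and $\|\nabla w\|$ is controlled by $H+h$.

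\emph{Transfer to $u_\varepsilon$.} On $B_r$ with $r\in[\varepsilon_1,1/2]$, take $w=u_0$ from Theorem \ref{thm_approx}, applied to $u_\varepsilon-P$ with $P$ the minimizer at scale $2r$. This is legitimate because affine $P$ solves the $\varepsilon$-equation only up to the divergence term $\mathrm{div}(A^\varepsilon\nabla P)$. I will handle this by approximating $u_\varepsilon$ itself and noting that the error bound contains $\fint|u_\varepsilon|^2$. I then subtract a constant, and treat the affine part through the fact that the homogenized operator applied to $P$ yields $\mathrm{div}(\widehat{A}\nabla P)$, which is bounded by $M_0 |\nabla P|$. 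Its contribution is $M_0 h$.

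The outcome is a recursive inequality
\[
H(\theta r)\le \tfrac12\theta^\lambda H(r)+C\big[\Lambda[\delta]_\alpha\,\eta(r)^{\alpha\sigma}\theta^{-d/2-1}+M_0 r\big]\big(H(2r)+h(2r)\big),
\]
with $\eta(r)=\varepsilon_1/r+\sup_{k\ge2}\varepsilon_k/\varepsilon_{k-1}$. Dependence on $[\delta]_1$ enters only through $\Lambda$, which is absorbed in $C$.

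\emph{Iteration.} Use the standard lemma $|h(t)-h(s)|\le CH(2s)$ for $t\in[s,2s]$, and iterate the recursion over $r_i$ down to $\varepsilon_1$. The $H$-part contracts geometrically and produces the term $C_0r^\lambda H(1)$. The error terms sum to $CM_0[(\varepsilon_1/r)^{\alpha\sigma}+r^\lambda+\sup_k(\varepsilon_k/\varepsilon_{k-1})^{\alpha\sigma}]$, multiplied by $\sup_i(H(r_i)+h(r_i))$. The terms $(\varepsilon_1/r_i)^{\alpha\sigma}$ and $r_i$ are geometric in $i$ and so sum to their values at the endpoint scale.

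\emph{Main obstacle.} The main obstacle is that the term $\sup_{k\ge2}(\varepsilon_k/\varepsilon_{k-1})^{\alpha\sigma}$ is not geometric in $i$. It is added about $|\log\varepsilon_1|/|\log\theta|$ times, and this can make $h(r_i)$ grow. This is exactly where \eqref{cond_logseparation} enters. With $c_0$ small, the accumulated factor $|\log\varepsilon_1|\sup_k(\cdot)^{\alpha\sigma}\le c_0$ is small. A bootstrap then gives $\sup_i (H(r_i)+h(r_i))\le C(H(1)+h(1))$: I assume the bound up to step $i$ and close it using the smallness of $c_0$ and the summability of the geometric terms.

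Finally, I reinsert this a priori bound into the summed recursion to get the stated estimate for $H(r)$. The constant $C_0$ comes only from the contraction step, so it depends only on $d,\mu,p,[\delta]_0$, while $C$ depends additionally on $\alpha$ and $[\delta]_1$ through $\Lambda$.
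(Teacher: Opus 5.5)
Your architecture matches the paper's. You approximate $u_\varepsilon-b$ ($b$ a constant) by an $\widehat A$-solution through Theorem \ref{thm_approx}. It cannot be $u_\varepsilon-P$, since an affine $P$ is not an $A^\varepsilon$-solution, and this is exactly why the error carries $h(2r)$. You then use $C^{1,\beta}$ flatness for $\widehat A$, including the $r\|\nabla\widehat A\|_{L^\infty}h$ term; freezing at $0$ instead of quoting $C^{1,1-d/p}$ estimates is a harmless variant. This gives
\[
H(\theta r)\le \tfrac12 H(r)+CM_0\,\omega(r)\{H(2r)+h(2r)\},\qquad \omega(r)=\Big(\frac{\varepsilon_1}{r}\Big)^{\alpha\sigma}+r+\gamma,\quad \gamma=\sup_{k\ge2}\Big(\frac{\varepsilon_k}{\varepsilon_{k-1}}\Big)^{\alpha\sigma}.
\]
After that one needs $\max_{\varepsilon_1\le r\le 1}(H+h)\le C(H(1)+h(1))$, and then re-inserts it. The gap is in that step, the one you flag as the main obstacle. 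Suppose your bootstrap assumes $S_i:=\max_{j\le i}(H(r_j)+h(r_j))\le A\,(H(1)+h(1))$ and tries to recover the same $A$ at step $i+1$. It closes only if $CM_0\sum_{r_i\ge\varepsilon_1}\omega(r_i)<1$. Smallness of $c_0$ controls only the $\gamma$-part of this sum. The geometric parts are summable but not small. For instance, $\sum_{r_i\ge\varepsilon_1}(\varepsilon_1/r_i)^{\alpha\sigma}$ exceeds $\theta^{\alpha\sigma}$ (its last term alone) and is typically of size $(1-\theta^{\alpha\sigma})^{-1}$. Meanwhile $M_0$ and the $\theta$-dependent constants are not small. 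So ``assume the bound up to step $i$ and close it'' does not close as written.

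Either of two mechanisms repairs this.

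(i) Discrete Gronwall. The recursion, the doubling properties \eqref{int-Lip_cond_Hh}, and $h(r_{i+1})\le h(1)+C\sum_{j\le i}H(r_j)$ give $S_{i+1}\le C(H(1)+h(1))+CM_0\sum_{l\le i}\omega(r_l)S_l$. Hence
\[
S_n\le C\exp\Big(CM_0\sum_{l}\omega(r_l)\Big)\big(H(1)+h(1)\big),\qquad \sum_l\omega(r_l)\le \frac{1}{1-\theta^{\alpha\sigma}}+\frac{1}{1-\theta}+\gamma\Big(1+\frac{|\log\varepsilon_1|}{|\log\theta|}\Big),
\]
which is bounded under \eqref{cond_logseparation}. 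This route uses only boundedness, not smallness, of $\gamma|\log\varepsilon_1|$, at the price of a constant exponential in $M_0$.

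(ii) The paper's Lemma \ref{shen.lem}. Pass to $\int H(r)\,dr/r$ and absorb only on $[a\varepsilon_1,b]$; there $\int(\omega_1(\varepsilon_1/r)+\omega_2(r))\,dr/r\le(8C_2)^{-1}$ once $a$ is large and $b$ small. The two end ranges each contain boundedly many dyadic scales, so \eqref{int-Lip_cond_Hh} alone handles them. Here the smallness of $c_0$ is genuinely needed, because $\gamma|\log\varepsilon_1|$ must also be absorbed.

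With either mechanism supplied, the rest of your outline goes through.
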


\begin{remark}
    {It is important to keep track of the dependence of the constants on $[\delta]_0$ and $[\delta]_1$ in our proofs. However, there is one rule they all follow: large constants (such as $C_0$ and $C$ in the above theorem) depend increasingly on $[\delta]_0$ and/or $[\delta]_1$; small constants (such as $c_0$ and $\lambda$ in the above theorem) depend decreasingly on $[\delta]_0$ and/or $[\delta]_1$. Therefore, if $[\delta]_0$ and $[\delta]_1$ are uniformly bounded above (which turns out to be true during rescaling), then large constants all have a uniform upper bound and small constants all have a uniform lower bound, as we expected.}
\end{remark}

\begin{proof}[Proof of Theorem \ref{thm_Lip_1step}]
    According to Theorem \ref{thm_approx}, for any $r\in [\varepsilon_1, 1/2]$, there exists a weak solution $u_0$ to $-\mathrm{div}(\widehat{A} \nabla u_0)=f$ in $B_r$ such that
    \begin{align}\label{int-Lip_es_approx}
    \begin{split}
        \bigg(\fint_{B_r}|u_\varepsilon-u_0|^2\bigg)^{1/2}&\leq C\Lambda [\delta]_\alpha\bigg(\frac{\varepsilon_1}{r}+\sup_{k\geq 2}\frac{\e_k}{\e_{k-1}}\bigg)^{\alpha\sigma} \\&\quad\times\bigg\{\bigg(\fint_{B_{2r}}|u_\varepsilon-b|^2\bigg)^{1/2}+r^2\bigg(\fint_{B_{2r}}|f|^2\bigg)^{1/2}\bigg\},
    \end{split}
    \end{align}
    for any $b\in\mathbb{R}$, as subtracting a constant $b$ from a solution would not change the equation. By the $C^{1, 1-d/p}$ regularity of $u_0$ (see \cite{Niu20_reiterated, Niu25_Optimal}), for any $\theta\in(0, 1/2)$, 
    \begin{align}\label{est.C1a}
    \begin{aligned}
        &\frac{1}{\theta r}\inf_{P\in \mathcal{P}}\bigg(\fint_{B_{\theta r}}|u_0-P|^2\bigg)^{1/2}+\theta r\bigg(\fint_{B_{\theta r}}|f|^p\bigg)^{1/p}\leq C\theta^{1-d/p}\\&\quad\times\bigg[\frac{1}{r}\bigg(\fint_{B_{r}}|u_0-P_r|^2\bigg)^{1/2}+r\bigg(\fint_{B_{r}}|f|^p\bigg)^{1/p}+r\|\nabla_x\widehat{A}\|_{L^\infty(B_r)} h(r)\bigg],
    \end{aligned}
    \end{align}
    where $P_r$ is the minimizer of $H(r)$ and $C$ depends only on $d, \mu, p $ and $\|\nabla_x\widehat{A}\|_{L^\infty(B_r)}$.  Thanks to Theorem \ref{qual-homog_thm_hatA}, we have
    \begin{align}\label{est.DAhat}
        \|\nabla_x\widehat{A}\|_{L^\infty(B_1)}\leq C\sum_{k=0}^\infty \|\nabla_{y_0}B_k\|_{L^\infty}\leq C[\delta]_0.
    \end{align} 
    However, we will keep the extra $\|\nabla_x\widehat{A}\|_{L^\infty(B_1)}$ in \eqref{est.C1a}, whose smallness will be crucial in a rescaling process.

    Now, we can find and fix a small constant $\theta\in (0, 1/8)$ in \eqref{est.C1a} such that $C\theta^{1-d/p} < 1/2$. Thus,
    \begin{align}
    \begin{aligned}\label{est.u0-P}
        & \frac{1}{\theta r}\inf_{P\in \mathcal{P}}\bigg(\fint_{B_{\theta r}}|u_0-P|^2\bigg)^{1/2}+\theta r\bigg(\fint_{B_{\theta r}}|f|^p\bigg)^{1/p}\\
        &\leq \frac{1}{2r}\bigg(\fint_{B_{r}}|u_0-P_r|^2\bigg)^{1/2}+\frac{r}{2}\bigg(\fint_{B_{r}}|f|^p\bigg)^{1/p}+\frac{r}{2}\|\nabla_x\widehat{A}\|_{L^\infty(B_r)} h(r)\\
        &\leq \frac{1}{2r}\bigg(\fint_{B_{r}}|u_\varepsilon-u_0|^2\bigg)^{1/2}+\frac{1}{2}H(r)+\frac{r}{2}\|\nabla_x\widehat{A}\|_{L^\infty(B_r)} h(r).
    \end{aligned}
    \end{align}
    Note that $\theta$ depends only on $d, \mu, p$ and $[\delta]_0$. 
    
    Combining \eqref{int-Lip_es_approx} and \eqref{est.u0-P}, we obtain
    \begin{align}
        H(\theta r)&\leq \frac{1}{\theta r}\bigg(\fint_{B_{\theta r}}|u_\varepsilon-u_0|^2\bigg)^{\frac{1}{2}}+\frac{1}{\theta r}\inf_{P\in \mathcal{P}}\bigg(\fint_{B_{\theta r}}|u_0-P|^2\bigg)^{\frac{1}{2}}+\theta r\bigg(\fint_{B_{\theta r}}|f|^p\bigg)^{\frac{1}{p}}\nonumber\\
        & \leq \frac{(1+\theta^{-1-d/2})}{r}\bigg(\fint_{B_r}|u_\varepsilon-u_0|^2\bigg)^{1/2}+\frac{1}{2}H(r)+\frac{r}{2}\|\nabla_x\widehat{A}\|_{L^\infty(B_r)} h(r)\nonumber\\
        &\leq C_\theta \Lambda [\delta]_\alpha\bigg(\frac{\varepsilon_1}{r}+\sup_{k\geq 2}\frac{\e_k}{\e_{k-1}}\bigg)^{\alpha\sigma} \bigg\{\frac{1}{r}\bigg(\fint_{B_{2r}}|u_\varepsilon-b|^2\bigg)^{1/2}+r\bigg(\fint_{B_{2r}}|f|^2\bigg)^{1/2}\bigg\}\nonumber\\
        &\qquad + \frac{1}{2}H(r)+\frac{r}{2}\|\nabla_x\widehat{A}\|_{L^\infty(B_r)} h(r)\nonumber\\&\leq C \Lambda [\delta]_\alpha\bigg(\frac{\varepsilon_1}{r}+\sup_{k\geq 2}\frac{\e_k}{\e_{k-1}}\bigg)^{\alpha\sigma}\{H(2r)+h(2r)\}\nonumber\\
        &\qquad +\frac{1}{2}H(r)+\frac{r}{2}\|\nabla_x\widehat{A}\|_{L^\infty(B_r)} h(r),\label{est.H-tr.long}
    \end{align}
    where we have let $b=P_r-\nabla P_r \cdot x$ for the last step and $C$ depends only on $d, \mu, p$ and $[\delta]_0$. It is easy to verify that $H$ and $h$ satisfy that for $r\in(0, 1/2]$ (see \cite[Theorem 6.4.1]{Shen_book} or \cite[Lemma 6.4]{Niu20_reiterated}),
    \begin{align}\label{int-Lip_cond_Hh}
    \begin{split}
        \max_{r\leq t\leq 2r} H(t)\leq C H(2r), \\\max_{r\leq t, s\leq 2r} | h(t)-h(s)|\leq C H(2r),
    \end{split}
    \end{align}
    with $C$ depending on $d$, which yields
    \begin{align*}
        h(r)\leq h(2r)+CH(2r).
    \end{align*}
    Thus, it follows from \eqref{est.H-tr.long} that for $r\in[\varepsilon_1, 1/2]$,
    \begin{align}\label{est.H-thetar}
    \begin{split}
        H(\theta r)&\leq \frac{1}{2}H(r)\\&\quad+CM_0\bigg[\Big(\frac{\varepsilon_1}{r}\Big)^{\alpha\sigma}+r+\sup_{k\geq 2}\bigg(\frac{\varepsilon_k}{\varepsilon_{k-1}}\bigg)^{\alpha\sigma}\bigg]\{H(2r)+h(2r)\},
        \end{split}
    \end{align}
    where $M_0$ is given by \eqref{def.M0} and we have used the first inequality in \eqref{est.DAhat}. The constant $C$ in \eqref{est.H-thetar} depends only on $d, \mu, p, [\delta]_0$ and $[\delta]_1$. 
    In summary, we have verified that $H(r)$ and $h(r)$ satisfy the conditions in Lemma \ref{shen.lem}. Now, let $c_0\leq 1$ be given in Lemma \ref{shen.lem} and
    \begin{align*}
        \sup_{k\geq 2}\bigg(\frac{\varepsilon_k}{\varepsilon_{k-1}}\bigg)^{\alpha\sigma}\leq c_0|\log \varepsilon_1|^{-1}.
    \end{align*}
    Note that $c_0$ is independent of $\alpha$. Consequently, Lemma \ref{shen.lem} tells us that
    \begin{align*}
        \max_{\varepsilon_1\leq r\leq 1}\{H(r)+h(r)\}\leq C\{H(1)+h(1)\}.
    \end{align*}
    Substituting this back into \eqref{est.H-thetar}, we arrive at
    \begin{align}\label{int-Lip_es_induction}
    \begin{split}
        H(\theta r)&\leq \frac{1}{2}H(r)\\
        &\quad + CM_0\bigg[\Big(\frac{\varepsilon_1}{r}\Big)^{\alpha\sigma}+r+\sup_{k\geq 2}\bigg(\frac{\varepsilon_k}{\varepsilon_{k-1}}\bigg)^{\alpha\sigma}\bigg]\{H(1)+h(1)\}. 
    \end{split}
    \end{align}
    Taking $r = \theta^{n-1} \in [\e_1, 1]$, we apply \eqref{int-Lip_es_induction} repeatedly and obtain
    \begin{align*}
        \begin{aligned}
            H(\theta^n) & \le \frac{1}{2^{n-1}}H(\theta) \\
            & + CM_0\sum_{k=1}^{n-1}2^{k-n+1}\bigg[\Big(\frac{\varepsilon_1}{\theta^{k}}\Big)^{\alpha\sigma}+\theta^{k}+\sup_{k\geq 2}\bigg(\frac{\varepsilon_k}{\varepsilon_{k-1}}\bigg)^{\alpha\sigma}\bigg] \{H(1)+h(1)\} \\
            & \le \frac{C_0}{2^{n}}H(1)+CM_0\bigg[\Big(\frac{\varepsilon_1}{\theta^{n}}\Big)^{\alpha\sigma}+\frac{1}{2^{n}}+\sup_{k\geq 2}\bigg(\frac{\varepsilon_k}{\varepsilon_{k-1}}\bigg)^{\alpha\sigma}\bigg]\{H(1)+h(1)\},
        \end{aligned}
    \end{align*}
    where \eqref{int-Lip_cond_Hh} is used in the last inequality. This yields that for any $r\in[\varepsilon_1, \theta]$,
    \begin{align*}
        H(r)\leq C_0 r^\lambda H(1)+CM_0\bigg[\Big(\frac{\varepsilon_1}{r}\Big)^{\alpha \sigma}+r^\lambda+\sup_{k\geq 2}\bigg(\frac{\varepsilon_k}{\varepsilon_{k-1}}\bigg)^{\alpha \sigma}\bigg]\{H(1)+h(1)\},
    \end{align*}
    where $\lambda=\frac{\log 2}{-\log \theta} \in (0,1)$. The case $r\in [\theta, 1]$ is trivial since $H(r)\leq CH(1)$ due to \eqref{int-Lip_cond_Hh}. The proof is complete. 
\end{proof}

\begin{corollary}\label{int_coro_firststep}
    Under the same  assumptions as in  Theorem \ref{thm_Lip_1step},  for $r\in [\varepsilon_1, 1]$, we have
  \begin{align*}
    & H(r; u_\varepsilon)\leq C_0 r^\lambda H(1; u_\varepsilon)+CM_0\{H(1; u_\varepsilon)+h(1; u_\varepsilon)\},\\
    & |h(r; u_\varepsilon)-h(1; u_\varepsilon)|\leq C_0H(1; u_\varepsilon)+CM_0\{H(1; u_\varepsilon)+h(1; u_\varepsilon)\},
  \end{align*}
  where $M_0 $ and $\lambda$ are given in Theorem \ref{thm_Lip_1step}, $C_0$ depends only on $d, \mu, p $ and $[\delta]_0$, while $C$ depends additionally on $\alpha$ and $[\delta]_1$. 
\end{corollary}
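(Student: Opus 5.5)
The corollary should follow from Theorem \ref{thm_Lip_1step} once we check that the bracketed factor there is bounded, and once we pass from $H$ to $h$ using the standard comparison \eqref{int-Lip_cond_Hh}.

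\textbf{First inequality.} For $r\in[\varepsilon_1,1]$ we have $(\varepsilon_1/r)^{\alpha\sigma}\le 1$ and $r^\lambda\le 1$. By \eqref{cond_logseparation} and $c_0\le 1$,
\[
\sup_{k\ge2}(\varepsilon_k/\varepsilon_{k-1})^{\alpha\sigma}\le c_0|\log\varepsilon_1|^{-1}\le C.
\]
If $\varepsilon_1$ is close to $1$, the factor $|\log\varepsilon_1|^{-1}$ could be large. In that range, however, the case is trivial: $r\in[\varepsilon_1,1]$ is comparable to $1$, and \eqref{int-Lip_cond_Hh} gives $H(r)\le CH(1)\le C_0 r^\lambda H(1)$ after enlarging $C_0$. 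Alternatively, one can simply use $\varepsilon_k/\varepsilon_{k-1}<1$. Either way the bracket is bounded by an absolute constant, and the first inequality follows with $C$ multiplied by $3$.

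\textbf{Second inequality.} I would use a dyadic telescoping argument. Let $r_j=2^{-j}$ and let $J$ be such that $r_{J+1}<r\le r_J$. Applying the second line of \eqref{int-Lip_cond_Hh} on each dyadic annulus gives
\[
|h(r)-h(1)|\le |h(r)-h(r_J)|+\sum_{j=0}^{J-1}|h(r_{j+1})-h(r_j)|\le C\sum_{j=0}^{J}H(r_j).
\]
Here $r_j\ge r\ge\varepsilon_1$, except possibly for the last term, which \eqref{int-Lip_cond_Hh} controls by $H(r_J)$. Inserting the bound from Theorem \ref{thm_Lip_1step} at each $r_j$ yields
\[
\sum_j H(r_j)\le C_0H(1)\sum_j 2^{-j\lambda}+CM_0\{H(1)+h(1)\}\sum_{j\le J}\Big[\Big(\frac{\varepsilon_1}{r_j}\Big)^{\alpha\sigma}+r_j^\lambda+\sup_{k\ge2}\Big(\frac{\varepsilon_k}{\varepsilon_{k-1}}\Big)^{\alpha\sigma}\Big].
\]
The first two terms in the bracket sum as geometric series: $\sum_j(\varepsilon_1/r_j)^{\alpha\sigma}\le C/(\alpha\sigma)$, dominated by the smallest $r_j$, and $\sum_j r_j^\lambda\le C/\lambda$. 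The geometric sum multiplying $C_0H(1)$ is bounded by $C/\lambda$, which depends only on $d,\mu,p,[\delta]_0$, so it can be absorbed into $C_0$.

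\textbf{Main obstacle.} The third term in the bracket does not decay in $j$ and is summed $J\approx\log_2(1/\varepsilon_1)$ times, giving
\[
J\sup_{k\ge2}\Big(\frac{\varepsilon_k}{\varepsilon_{k-1}}\Big)^{\alpha\sigma}\le C|\log\varepsilon_1|\cdot c_0|\log\varepsilon_1|^{-1}\le C.
\]
This is exactly why the logarithmic separation condition \eqref{cond_logseparation} is imposed, and it is the step to verify carefully. The resulting constant $C$ depends on $\alpha$ (through $1/(\alpha\sigma)$) and on $[\delta]_1$, while the coefficient of $H(1)$ depends only on $d,\mu,p,[\delta]_0$, as the statement requires. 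For $\varepsilon_1$ near $1$, $J$ is bounded and the term is trivially controlled.
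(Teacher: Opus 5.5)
Your proposal is correct and follows essentially the same argument as the paper: the first estimate is read off from Theorem \ref{thm_Lip_1step} with the bracket bounded by a constant. The second comes from telescoping $h$ over a geometric sequence of radii, summing the bound of Theorem \ref{thm_Lip_1step}, and using \eqref{cond_logseparation} to control the non-decaying term $(n+1)\sup_{k\ge 2}(\varepsilon_k/\varepsilon_{k-1})^{\alpha\sigma}\lesssim |\log\varepsilon_1|\,c_0|\log\varepsilon_1|^{-1}$.

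The only difference is that you telescope along dyadic radii $2^{-j}$ rather than the paper's $\theta^j$, which lets you apply \eqref{int-Lip_cond_Hh} directly on each step. Also, all $r_j$ with $j\le J$ are already $\ge r\ge\varepsilon_1$, so no exceptional term arises.
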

\begin{proof}
    The first estimate follows directly from Theorem \ref{thm_Lip_1step}. For the second estimate, let $n\in\mathbb{N}$ satisfy $\theta^{n+1}< r\leq \theta^n$. Without loss of generality, we suppose that $n\geq 1$; otherwise the estimate of $h$ follows trivially from \eqref{int-Lip_cond_Hh}. Note that by \eqref{int-Lip_cond_Hh}
    \begin{align*}
        |h(r)-h(1)|\leq |h(r)-h(\theta^n)|+\sum_{j=0}^{n-1}|h(\theta^{j+1})-h(\theta^j)|\leq C\sum_{j=0}^n H(\theta^j). 
    \end{align*}
    Applying Theorem \ref{thm_Lip_1step}, we obtain
    \begin{align*}
        & |h(r)-h(1)|\\
        & \leq C_0\sum_{j=0}^{n}\theta^{j\lambda} H(1)+CM_0\sum_{j=0}^{n}\bigg[\Big(\frac{\varepsilon_{1}}{\theta^j}\Big)^{\alpha\sigma}+\theta^{j\lambda}+\sup_{k\geq 2}\bigg(\frac{\varepsilon_k}{\varepsilon_{k-1}}\bigg)^{\alpha\sigma}\bigg]\{H(1)+h(1)\}\\&\leq C_0H(1)+CM_0\{H(1)+h(1)\},
    \end{align*}
    where we have used the fact that
    \begin{align*}
        (n+1)\sup_{k\geq 2}\bigg(\frac{\varepsilon_k}{\varepsilon_{k-1}}\bigg)^{\alpha\sigma}&\leq C|\log r|\sup_{k\geq 2}\bigg(\frac{\varepsilon_k}{\varepsilon_{k-1}}\bigg)^{\alpha\sigma}\\&\leq C|\log \varepsilon_1|\sup_{k\geq 2}\bigg(\frac{\varepsilon_k}{\varepsilon_{k-1}}\bigg)^{\alpha\sigma}\leq Cc_0,
    \end{align*}
    due to the scale-separation condition \eqref{cond_logseparation}.  
    \end{proof}

We finish this subsection by proving the iteration lemma (a variant of \cite[Lemma 8.5]{S17}) that has been used in the proof of Theorem \ref{thm_Lip_1step}.

\begin{lemma}\label{shen.lem}
    Let $H(r)$ and $h(r)$ be two nonnegative continuous functions on the interval $(0,1]$ and let $\kappa \in (0,1/4)$. Assume that
\begin{align}\label{cond.Hh}
\max_{r\leq t\leq 2r} H(t)\leq C_0H(2r), ~~~~~\max_{r\leq t, s\leq 2r} | h(t)-h(s)|\leq C_0H(2r),
\end{align}
for any $r\in [\kappa, 1/2]$,  and  for some fixed $\theta \in (0,1/8)$,
\begin{align}\label{cond.Htheta}
H( \theta  r) \leq \frac{1}{2} H(r) + C_0 \big(\omega_1 (\kappa/r)+\omega_2(r)+\gamma\big)\{ H(2r)+h(2r)\},
\end{align}
for any $r\in [\kappa, 1/2]$, where 
$\omega_1, \omega_2$ are nonnegative increasing functions on $[0, 1]$, such that $\omega_i(0)=0$, and 
\begin{align*}
  \int_0^1 \frac{\omega_i(s)}{s} ds < \infty.
\end{align*}
Then there  exists a small constant $c_0$, depending only on $C_0$ decreasingly,  such that if $\gamma \leq c_0 |\log \kappa|^{-1}$, we have
\begin{align*}
\max_{\kappa\leq r\leq 1} \{H(r)+h(r)\}\leq C \{H(1) +h(1)\} ,
\end{align*}
where $C$ depends only on $C_0,  \theta$, and $\omega_1, \omega_2$.
\end{lemma}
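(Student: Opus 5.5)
The plan follows the iteration argument of \cite[Lemma 8.5]{S17}, run on the dyadic-type sequence $r_j=\theta^j$ down to scale $\kappa$. The term $\gamma$ in \eqref{cond.Htheta} does not decay in $r$. It is harmless only because there are $O(|\log\kappa|)$ steps, so the accumulated contribution is of size $C_0\gamma|\log\kappa|\le C_0c_0$, which can be made small.

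\textbf{Step 1.} Let $N$ be the largest integer with $\theta^{N}\ge\kappa$; then $N\le C|\log\kappa|$ with $C$ depending on $\theta$. Using the first inequality in \eqref{cond.Hh}, replace $H(2r)$ by $CH(r)$ up to a constant. Using the second, replace $h(2r)$ by $h(r)+CH(r)$. For $r=\theta^{j}$, \eqref{cond.Htheta} then reads
\[
H(\theta^{j+1})\le \tfrac12 H(\theta^j)+C\beta_j\{H(\theta^j)+h(\theta^j)\},
\qquad
\beta_j:=\omega_1(\kappa\theta^{-j})+\omega_2(\theta^j)+\gamma .
\]
The second part of \eqref{cond.Hh}, chained through intermediate dyadic scales, gives $|h(\theta^{j+1})-h(\theta^j)|\le CH(\theta^{j})$. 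There is a harmless loss from $\theta<1/8$, absorbed into constants.

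\textbf{Step 2.} Define $\Phi_j:=\max_{0\le i\le j}\{H(\theta^i)+h(\theta^i)\}$. Iterating the $H$-recursion gives
\[
H(\theta^{j})\le 2^{-j}H(1)+C\sum_{i<j}2^{i-j+1}\beta_i\,\Phi_i .
\]
Summing the $h$-differences then yields
\[
h(\theta^j)\le h(1)+C\sum_{i\le j}H(\theta^i)\le h(1)+CH(1)+C\Phi_j\sum_{i<j}\beta_i ,
\]
where the geometric weights are summed first. Hence $\Phi_j\le C\{H(1)+h(1)\}+C\,S\,\Phi_j$ with $S:=\sum_{i<N}\beta_i$.

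\textbf{Step 3 (main obstacle).} The key is to make $C\,S\le 1/2$ so that $\Phi_j$ can be absorbed, giving $\Phi_N\le C\{H(1)+h(1)\}$.

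The $\omega$-sums are bounded by Dini integrals. Indeed, $\sum_j\omega_2(\theta^j)\le C\int_0^1\omega_2(s)/s\,ds$ by monotonicity. Likewise $\kappa\theta^{-j}$ for $j\le N$ runs geometrically up to at most $1$, so $\sum_{j\le N}\omega_1(\kappa\theta^{-j})\le \omega_1(1)+C\int_0^1\omega_1(s)/s\,ds$. The $\gamma$-sum is $N\gamma\le Cc_0$.

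These Dini sums are finite but not small, so the naive absorption fails. I would handle this as follows. Choose $j_0$ (depending only on $\omega_2,\theta,C_0$) such that the tail $\sum_{j\ge j_0}\omega_2(\theta^j)$ is small. Before $j_0$, control the finitely many steps directly by \eqref{cond.Hh}, at cost $C^{j_0}$. Symmetrically, handle the last $j_1$ steps near $\kappa$ for $\omega_1$: the terms $\omega_1(\kappa\theta^{-j})$ with $\kappa\theta^{-j}$ close to $1$ are finitely many, and the rest have small sum. In the middle range, restart the iteration from scale $\theta^{j_0}$ and absorb. Then extend to the final $j_1$ steps by \eqref{cond.Hh} again.

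This is the delicate bookkeeping. The constant $c_0$ depends only on $C_0$, via the $\gamma$-term. The final $C$ depends on $C_0,\theta,\omega_1,\omega_2$, through $j_0$, $j_1$ and the Dini integrals.

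\textbf{Step 4.} Interpolate for general $r\in[\kappa,1]$ between consecutive $\theta^j$ using \eqref{cond.Hh} to conclude.
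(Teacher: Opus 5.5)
Your Step 1 uses \eqref{cond.Hh} in the wrong direction. The first inequality there says $H(t)\le C_0H(2r)$ for $t\in[r,2r]$, so it controls $H$ at smaller scales by $H$ at the larger scale. It gives no bound $H(2r)\le CH(r)$. Such a reverse doubling is neither assumed nor true for the flatness quantity of Section~\ref{sec_interior-lip}: for $u=x_1+Q$ with $Q$ homogeneous harmonic of degree $k$, one has $H(2r)/H(r)=2^{k-1}$. Likewise, the second inequality only yields $h(2r)\le h(r)+C_0H(2r)$, which still involves $H(2r)$. So the recursion $H(\theta^{j+1})\le\tfrac12H(\theta^j)+C\beta_j\{H(\theta^j)+h(\theta^j)\}$ is not justified, and its case $j=0$ uses $r=1\notin[\kappa,1/2]$.

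The repair is to evaluate at the next larger lattice point. Since $\theta<1/8$, we have $2\theta^j<\theta^{j-1}$, and chaining \eqref{cond.Hh} downward gives $H(2\theta^j)+h(2\theta^j)\le C(C_0,\theta)\{H(\theta^{j-1})+h(\theta^{j-1})\}$ for $j\ge 1$. Your running maximum $\Phi$ tolerates this one-step lag. With it, Steps 2--4 go through, including the cut-offs at $j_0$ and $j_1$, and this matches the paper's strategy in substance.

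The repaired argument does not, however, deliver what you claim about $c_0$. The constant multiplying $S$ in your absorption contains factors of order $C_0^{\log_2(1/\theta)}$. These come from the lag, and also from chaining $|h(\theta^{j+1})-h(\theta^j)|\le CH(\theta^j)$ through the dyadic scales between $\theta^{j+1}$ and $\theta^j$; this is the ``harmless loss'' you already mention. Since $S$ contains $N\gamma\simeq\gamma|\log\kappa|/|\log\theta|$, your threshold becomes $c_0=c_0(C_0,\theta)$, not $c_0(C_0)$ as the lemma asserts.

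The paper avoids the lattice $\{\theta^j\}$ altogether:
\begin{itemize}
\item It first proves $H(s)+h(s)\le C_1\{H(1)+h(1)+\int_s^1 H(r)\,dr/r\}$ for $\kappa\le s\le 1/4$, with $C_1=C_1(C_0)$.
\item It inserts this into \eqref{cond.Htheta}, divides by $r$, and integrates over $r\in(a\kappa,b)$. Here $a$ is large and $b$ small, so that the Dini integrals of $\omega_1,\omega_2$ over this range are at most $(8C_2)^{-1}$; this is the continuous analogue of your $j_0,j_1$ truncation.
\item It then absorbs $\int H(r)\,dr/r$ rather than a maximum.
\end{itemize}
Because every scale serves as a starting point of the iteration, $\theta$ enters only through the substitution $r\mapsto\theta r$ and the final constant. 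The $\gamma$-contribution is $C_2\gamma|\log\kappa|$ with $C_2=C_2(C_0)$, which gives $c_0=(8C_2)^{-1}$.

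For the application in Theorem~\ref{thm_Lip_1step}, the extra $\theta$-dependence would be harmless, since there $\theta$ depends only on $d,\mu,p,[\delta]_0$. Still, as written, your route proves a weaker statement than the lemma.
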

\begin{remark}
  The constant $C$ is increasing in $C_0$ and decreasing in $\theta$. 
\end{remark}

\begin{proof}
The lemma can be proved by a combination of \cite[Lemma 6.2]{Niu25_Optimal} and \cite[Lemma 6.7]{Gu22_incompressible}.

By the second inequality in \eqref{cond.Hh},
\begin{align*}
  h(r)\leq h(2r)+C_0H(2r)
\end{align*}
for $\kappa\leq r\leq 1/2$, from which we deduce that
\begin{align*}
  \int_s^{1/2}\frac{h(r))}{r}dr\leq \int_{2s}^1\frac{h(r)}{r}dr+C_0\int_{2s}^1\frac{H(r)}{r}dr
\end{align*}
for $\kappa\leq s\leq 1/4$. This implies that
\begin{align*}
  \int_{s}^{2s}\frac{h(r)}{r}dr&\leq \int_{1/2}^1\frac{h(r)}{r}dr+C_0\int_{2s}^1\frac{H(r)}{r}dr\\&\leq C_0(\log 2)[h(1)+H(1)]+C_0\int_{2s}^1\frac{H(r)}{r}dr. 
\end{align*}
By this estimate and the second inequality in \eqref{cond.Hh}, it follows that for $\kappa\leq s\leq 1/4$
\begin{align}\label{est.hs}
    \begin{aligned}
    h(s)&\leq (\log 2)^{-1}\int_s^{2s}\frac{|h(r)-h(s)|+h(r)}{r}dr\\&\leq C_0(\log 2)^{-1}\Big\{H(2s)+h(1)+H(1)+\int_{2s}^1\frac{H(r)}{r}dr\Big\}. 
    \end{aligned}
\end{align}
Moreover, by the first inequality in \eqref{cond.Hh}, we have
\begin{align*}
  H(2s)&\leq C_0(\log 2)^{-1}\int_{2s}^1\frac{H(r)}{r}dr,\qquad \text{if }\kappa\leq s\leq 1/8,\\
  H(2s)&\leq C_0^2H(1), \qquad \text{if }1/8\leq s\leq 1/4,
\end{align*}
which, together with \eqref{est.hs} and \eqref{cond.Hh}, gives
\begin{align}\label{est.Hshs}
  H(s)+h(s)\leq C_1\bigg\{h(1)+H(1)+\int_s^1\frac{H(r)}{r}dr\bigg\}
\end{align}
for $\kappa\leq s\leq 1/4$, where $C_1$ depends only on $C_0$ increasingly. Combined with \eqref{cond.Htheta}, this implies that
\begin{align*}
H(\theta r)&\leq \frac{1}{2}H(r)+C_2(\omega_1(\kappa/r)+\omega_2(r)+\gamma)\{h(1)+H(1)\}\\&\qquad+C_2(\omega_1(\kappa/r)+\omega_2(r)+\gamma)\int_r^1\frac{H(t)}{t}dt,
\end{align*}
where $C_2$ depends only on $C_0$ increasingly. Dividing this inequality by $r$ and integrating over $r\in(a \kappa, b)$ for some $a>1>b>0$, we have
\begin{align}
  \int_{a \theta \kappa}^{b\theta}\frac{H(r)}{r}dr&\leq \frac{1}{2}\int_{a\kappa}^{b}\frac{H(r)}{r}dr+C_2\int_{a\kappa}^{b}\frac{\omega_1(\kappa/r)+\omega_2(r)+\gamma}{r}dr \{h(1)+H(1)\}\nonumber\\&\qquad+C_2\int_{a\kappa}^{b}\frac{\omega_1(\kappa/r)+\omega_2(r)+\gamma}{r}dr\int_r^1\frac{H(t)}{t}dt. \label{est.SH/r}
\end{align}
Using the integrability of $\omega_i$, we can find $a$ large enough and $b$ small enough, depending only on $C_2$ and $\omega_i$, such that
\begin{align*}
\int_{a\kappa}^{b}\frac{\omega_1(\kappa/r)+\omega_2(r)+\gamma}{r}dr&=\int_{\kappa/b}^{1/a}\frac{\omega_1(r)}{r}dr+\int_{a \kappa}^b\frac{\omega_2(r)}{r}dr+\int_{a \kappa}^b\frac{\gamma}{r}dr\\&\leq (8C_2)^{-1}+\gamma|\log\kappa|,
\end{align*}
in which process the choices of $a$ and $b$ are assumed to satisfy $a\kappa<b$, otherwise the desired result is trivial using \eqref{cond.Hh} as $b/a\leq \kappa\leq 1$. It follows that, 
\begin{align*}
\int_{a\kappa}^{b}\frac{\omega_1(\kappa/r)+\omega_2(r)+\gamma}{r}dr\int_r^1\frac{H(t)}{t}dt
\leq \{(8C_2)^{-1}+\gamma|\log\kappa|\}\int_{a\kappa}^1\frac{H(t)}{t}dt. 
\end{align*}
As a result, setting $c_0=(8C_2)^{-1}$ and using the assumption $\gamma<c_0|\log\kappa|^{-1}$, we deduce from \eqref{est.SH/r} that
\begin{align*}
  \int_{a \theta \kappa}^{b\theta}\frac{H(r)}{r}dr \leq \frac{1}{2}\int_{a\kappa}^{b}\frac{H(r)}{r}dr+\frac{1}{4}\{h(1)+H(1)\}+\frac{1}{4}\int_{a\kappa}^1\frac{H(t)}{t}dt,
\end{align*}
and thus
\begin{align*}
    \int_{a \theta \kappa}^{b\theta}\frac{H(r)}{r}dr \leq 3\int_{b\theta}^{1}\frac{H(r)}{r}dr+\{h(1)+H(1)\},
\end{align*}
which yields
\begin{align*}
  \int_{a \theta \kappa}^{1}\frac{H(r)}{r}dr & = \int_{a \theta \kappa}^{b\theta}\frac{H(r)}{r}dr+\int_{b\theta}^{1}\frac{H(r)}{r}dr\\
  & \leq 4\int_{b\theta}^{1}\frac{H(r)}{r}dr+\{h(1)+H(1)\}\\
  & \leq C\{h(1)+H(1)\}. 
\end{align*}
Substituting this integral into \eqref{est.Hshs}, we obtain the desired estimate for $a\theta \kappa\leq r\leq 1/4$. The other cases can be derived easily by using \eqref{cond.Hh}. 
\end{proof}

\subsection{Iteration across all scales}\label{sec_iteration}
By the analyses in Section \ref{sec_rescaling}, the results in Section \ref{sec_large-scale_u} are valid for rescaled equations with universal constants $c_0, \lambda, C_0, C$ under condition \eqref{int-Lip_cond_sumbound}. Precisely, under \eqref{int-Lip_cond_sumbound} and the scale-separation condition 
\begin{align}\label{cond_logseparation_m}
  \bigg(\frac{\varepsilon_k}{\varepsilon_{k-1}}\bigg)^{\alpha\sigma}\leq c_0\Big|\log \frac{\varepsilon_{m+1}}{\varepsilon_m}\Big|^{-1} \quad\text{for any } k\geq m+2,
\end{align}
where $\alpha\in(0, 1]$ and $\sigma>0$ is a constant depending only on $d$ and $\mu$, the rescaled solution $v_m(x) = u_\e(\varepsilon_m x)$ defined in Section \ref{sec_rescaling} satisfies, as in Corollary \ref{int_coro_firststep}, that for $r\in [\varepsilon_{m+1}/\varepsilon_m, 1]$,
\begin{align*}
  & H(r; v_m)\leq C_0 r^\lambda H(1; v_m)+CM_m\{H(1; v_\varepsilon)+h(1; v_m)\},\\
  & |h(r; v_m)-h(1; v_m)|\leq C_0H(1; v_\varepsilon)+CM_m\{H(1; v_\varepsilon)+h(1; v_m)\},
\end{align*}
where, due to the definition of $M_0$ in \eqref{def.M0} (applied to the rescaled matrices $\widetilde{B}_\ell$ in Section \ref{sec_rescaling}), 
\begin{align}\label{int-Lip_def_MN}
\begin{aligned}
  M_m: & = \max \Big\{ [\widetilde{\delta}]_\alpha, \sum_{k=0}^\infty \| \nabla_{x} \widetilde{B}_\ell \|_{L^\infty} \Big\} \\
  & \le \max\bigg\{\sum_{j=0}^{m}\frac{\varepsilon_m}{\varepsilon_j}\sum_{k=j}^{\infty}\delta_k, \sum_{j=0}^m\frac{\varepsilon_m}{\varepsilon_j}\cdot\sum_{k=1}^\infty k^\alpha\delta_{k+m}\bigg\}.
  \end{aligned}
\end{align}
Observe that $H(r; v_m)= \varepsilon_m H(r\varepsilon_m; u_\varepsilon)$ and $h(r; v_m)=\varepsilon_m h(r \varepsilon_m; u_\varepsilon)$. Rescaling it back, we obtain the following lemma.

\begin{lemma}\label{int-Lip_prop_Hh}
    Suppose that $[\delta]_1<\infty$, $0<\alpha\leq 1$ and \eqref{int-Lip_cond_sumbound} holds. There exists a constant $c_0\leq 1$, depending on $d, \mu, p, [\delta]_0 $ and $[\delta]_1$, such that if the scale-separation condition \eqref{cond_logseparation_m} holds for $m\geq 0$, then for all $r\in [\varepsilon_{m+1}, \varepsilon_m]$
    \begin{align*}
    \begin{split}
        & H(r; u_\varepsilon)\leq C_0 \Big(\frac{r}{\varepsilon_m}\Big)^\lambda H(\varepsilon_m; u_\varepsilon)+C_1M_m\{H(\varepsilon_m; u_\varepsilon)+h(\varepsilon_m; u_\varepsilon)\},\\
        & |h(r; u_\varepsilon)-h(\varepsilon_m; u_\varepsilon)|\leq C_0H(\varepsilon_m; u_\varepsilon)+C_1M_m\{H(\varepsilon_m; u_\varepsilon)+h(\varepsilon_m; u_\varepsilon)\},
    \end{split}
    \end{align*}
    where $M_m$ is given by \eqref{int-Lip_def_MN}, $\lambda\in(0, 1)$ and $C_1\geq C_0\geq 1$. Furthermore, $\lambda$ and $C_0$ depend only on $d, \mu, p $ and $[\delta]_0$, while $C_1$ depends additionally on $\alpha$ and $[\delta]_1$. 
\end{lemma}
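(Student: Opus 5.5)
The plan is to obtain the lemma by applying Corollary \ref{int_coro_firststep} to the rescaled function $v_m(x)=u_\varepsilon(\varepsilon_m x)$ of Section \ref{sec_rescaling} and then scaling back. Set $\widetilde{\varepsilon}_j=\varepsilon_{j+m}/\varepsilon_m$, so that $\widetilde{\varepsilon}_0=1$ and $\widetilde{\varepsilon}_1=\varepsilon_{m+1}/\varepsilon_m$. Then $v_m$ solves $-\mathrm{div}(\widetilde{A}^{\widetilde\varepsilon}\nabla v_m)=\varepsilon_m^2 f(\varepsilon_m\cdot)$ in $B_{1/\varepsilon_m}\supset B_1$. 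Here $\widetilde{A}=\sum_\ell\widetilde{B}_\ell$ has the same structure as $A$, namely continuity, periodicity in $\widetilde y_1,\widetilde y_2,\dots$, and ellipticity with the same $\mu$. The ellipticity holds because each partial sum $\widetilde{A}_n$ is a specialization of $A_{n+m}$ with its slow variables fixed.

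The first step is to check the hypotheses of Theorem \ref{thm_Lip_1step} and Corollary \ref{int_coro_firststep} for the rescaled problem, with all constants uniform in $m$. By \eqref{int-Lip_es_tilde0}, \eqref{int-Lip_es_tilde2}, \eqref{int-lip_es_tildedelta_bound} and \eqref{int-Lip_cond_sumbound}, we get $[\widetilde\delta]_0\le (K+1)[\delta]_0\le 3[\delta]_0$ and $[\widetilde\delta]_1\le 3[\delta]_1$. The remark following Theorem \ref{thm_Lip_1step} says that large constants increase in $[\delta]_0,[\delta]_1$ and small constants decrease in them. Evaluating the constants $c_0,\lambda,C_0,C$ of Corollary \ref{int_coro_firststep} at the values $3[\delta]_0$ and $3[\delta]_1$ therefore produces constants depending only on $d,\mu,p,[\delta]_0,[\delta]_1$ (and $\alpha$ for $C_1$), uniformly in $m$; the same $\sigma$ is used throughout. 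With this $c_0$, the separation hypothesis \eqref{cond_logseparation} for the rescaled sequence reads $(\widetilde\varepsilon_k/\widetilde\varepsilon_{k-1})^{\alpha\sigma}\le c_0|\log\widetilde\varepsilon_1|^{-1}$ for $k\ge2$. This is exactly \eqref{cond_logseparation_m}, since $\widetilde\varepsilon_k/\widetilde\varepsilon_{k-1}=\varepsilon_{k+m}/\varepsilon_{k+m-1}$. The forcing term only alters $H$ by the scale-invariant $f$-part, because $r(\fint_{B_r}|\varepsilon_m^2 f(\varepsilon_m x)|^p)^{1/p}=\varepsilon_m\cdot(\varepsilon_m r)(\fint_{B_{\varepsilon_m r}}|f|^p)^{1/p}$.

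The second step is to identify the quantity $M_0$ of \eqref{def.M0} for $\widetilde{A}$. It equals $\max\{[\widetilde\delta]_\alpha,\sum_\ell\|\nabla_x\widetilde{B}_\ell\|_{L^\infty}\}$. For $\ell\ge1$ the chain rule gives $\|\nabla_x\widetilde{B}_\ell\|_{L^\infty}\le\sum_{j=0}^m(\varepsilon_m/\varepsilon_j)\delta_{\ell+m}$. For $\ell=0$ we have $\|\nabla_x\widetilde{B}_0\|_{L^\infty}\le\sum_{k=0}^m\sum_{j=0}^k(\varepsilon_m/\varepsilon_j)\delta_k$. Exchanging the order of summation bounds the total by $\sum_{j=0}^m(\varepsilon_m/\varepsilon_j)\sum_{k\ge j}\delta_k$. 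Together with \eqref{int-Lip_es_tilde2} this gives the bound \eqref{int-Lip_def_MN} on $M_m$. This bookkeeping is the step that needs care, namely that the $y_0$-derivative of $\widetilde B_0$ collects the weights $\varepsilon_m/\varepsilon_j$ with tails $\sum_{k\ge j}\delta_k$; the estimate itself is routine.

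Finally, Corollary \ref{int_coro_firststep} gives, for $r\in[\widetilde\varepsilon_1,1]$, the two inequalities for $H(r;v_m)$ and $h(r;v_m)$ with $M_m$ in place of $M_0$. Since $v_m$ is the dilation of $u_\varepsilon$, affine minimizers correspond, and we have $H(r;v_m)=\varepsilon_m H(\varepsilon_m r;u_\varepsilon)$ and $h(r;v_m)=\varepsilon_m h(\varepsilon_m r;u_\varepsilon)$. Dividing by $\varepsilon_m$ and substituting $r\mapsto r/\varepsilon_m$ yields both inequalities for $r\in[\varepsilon_{m+1},\varepsilon_m]$, with $(r/\varepsilon_m)^\lambda$ and $C_1:=\max\{C,C_0\}$.
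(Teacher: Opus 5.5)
Your proposal is correct and follows essentially the same route as the paper: rescale to $v_m(x)=u_\varepsilon(\varepsilon_m x)$, use \eqref{int-lip_es_tildedelta_bound} together with the monotone dependence of the constants to apply Corollary \ref{int_coro_firststep} uniformly in $m$, identify the rescaled $M_0$ with $M_m$, and scale back via $H(r;v_m)=\varepsilon_m H(\varepsilon_m r;u_\varepsilon)$ and $h(r;v_m)=\varepsilon_m h(\varepsilon_m r;u_\varepsilon)$. You also carry out checks that the paper leaves implicit: the bound on $\nabla_x\widetilde B_0$ that produces the tails $\sum_{k\ge j}\delta_k$ in \eqref{int-Lip_def_MN}, the translation of \eqref{cond_logseparation} into \eqref{cond_logseparation_m}, and the scaling of the $f$-term.
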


The next lemma shows that $M_m$ is summable --- a key property that allows us to control the errors across all scales.

\begin{lemma}\label{lem.sumMm}
    Assume $[\delta]_{1+\rho} < \infty$ for some $\rho>0$ and \eqref{int-Lip_cond_sumbound}. Let $\alpha = \rho/2$ in \eqref{int-Lip_def_MN}. Then for any $T > 0$, there exists $m_0 \in \N$, depending only on $\rho, T, [\delta]_0$ and $[\delta]_{1+\rho}$, such that
    \begin{equation*}
        T \sum_{\ell = m_0}^\infty M_\ell \le 1.
    \end{equation*}
\end{lemma}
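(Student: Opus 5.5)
The plan is to show that $\sum_{m=0}^\infty M_m<\infty$. Once this is known, it suffices to take $m_0$ so large that the tail satisfies $\sum_{\ell\ge m_0}M_\ell\le 1/T$. To make $m_0$ depend only on $\rho,T,[\delta]_0,[\delta]_{1+\rho}$, I will bound the tail by an explicit quantity built from those parameters. By \eqref{int-Lip_def_MN}, each $M_m$ is at most the sum of two terms:
\[
M_m\le \sum_{j=0}^{m}\frac{\varepsilon_m}{\varepsilon_j}R_j+\Big(\sum_{j=0}^m\frac{\varepsilon_m}{\varepsilon_j}\Big)\sum_{k=1}^\infty k^{\alpha}\delta_{k+m},
\]
where $R_j=\sum_{k\ge j}\delta_k$ as in \eqref{conver_def_Rk}. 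I will treat the two pieces separately.

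\emph{Second piece.} By \eqref{int-Lip_cond_sumbound} we have $\sum_{j=0}^m \varepsilon_m/\varepsilon_j\le 1+K\le 3$, so it remains to sum $\sum_{k\ge1}k^\alpha\delta_{k+m}$ over $m\ge m_0$. Substituting $\ell=k+m$ gives
\[
\sum_{m\ge m_0}\sum_{k\ge 1}k^{\alpha}\delta_{k+m}=\sum_{\ell>m_0}\delta_\ell\sum_{k=1}^{\ell-m_0}k^\alpha\le \sum_{\ell>m_0}\ell^{1+\alpha}\delta_\ell .
\]
With $\alpha=\rho/2$ this is at most $m_0^{-\rho/2}\sum_{\ell}\ell^{1+\rho}\delta_\ell=m_0^{-\rho/2}[\delta]_{1+\rho}$. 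This explains the choice $\alpha=\rho/2$: the leftover power $\rho/2$ gives a quantitative decay of the tail.

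\emph{First piece.} This is the convolution-type term and the main point of the proof. Lemma \ref{int-Lip_lem_varepsilon} gives $\varepsilon_m/\varepsilon_j\le K q^{m-j}$ for $1\le j\le m$, with $q=1-K^{-1}<1$. For $j=0$ I use $\varepsilon_m\le K q^{m-1}$ directly (note $\varepsilon_1<1$). Hence
\[
\sum_{m\ge m_0}\sum_{j=0}^m \frac{\varepsilon_m}{\varepsilon_j}R_j\le C_K\sum_{j\ge0}R_j\sum_{m\ge \max(m_0,j)}q^{m-j}.
\]
I split the sum over $j$ at $m_0/2$.
\begin{itemize}
\item For $j\ge m_0/2$, the inner geometric sum is at most $(1-q)^{-1}$. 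The remaining sum $\sum_{j\ge m_0/2}R_j$ is controlled via \eqref{recursive_es_Rk}: $\sum_{j\ge J}R_j\le \sum_{\ell\ge J}\ell\,\delta_\ell\le J^{-\rho}[\delta]_{1+\rho}$.
\item For $j<m_0/2$, the inner sum is at most $C q^{m_0/2}$, and $\sum_j R_j\le C[\delta]_1\le C[\delta]_{1+\rho}$. The $j=0$ term also carries $R_0\le[\delta]_0$.
\end{itemize}
Here $K\in(1,2]$ is fixed, so $q\le 1/2$ and the constants $C_K$ are absolute.

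Combining both pieces,
\[
\sum_{\ell\ge m_0}M_\ell\le C\big(m_0^{-\rho/2}+m_0^{-\rho}+2^{-m_0/2}\big)\big([\delta]_0+[\delta]_{1+\rho}\big),
\]
with $C$ absolute. This tends to $0$ as $m_0\to\infty$, so choosing $m_0$ with $T$ times the right side at most $1$ gives the claim, and this $m_0$ depends only on $\rho$, $T$, $[\delta]_0$ and $[\delta]_{1+\rho}$.

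The one place needing care is the $j$-convolution in the first piece. Without the geometric decay from Lemma \ref{int-Lip_lem_varepsilon}, summing $\varepsilon_m/\varepsilon_j$ over $m$ could diverge. I handle this by interchanging the order of summation before summing in $m$.
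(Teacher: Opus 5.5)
Your proposal is correct and follows essentially the paper's route: you bound the maximum in \eqref{int-Lip_def_MN} by the sum of its two terms, and you treat the $k^\alpha\delta_{k+m}$ term by reindexing to $\sum_{k>m_0}k^{1+\alpha}\delta_k\le m_0^{-\rho/2}[\delta]_{1+\rho}$. You handle the $R_j$ term, as the paper does, by interchanging the order of summation and using the geometric decay from Lemma \ref{int-Lip_lem_varepsilon}.

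The differences are cosmetic. The paper first converts $R_j$ into $\delta_k$ via $\sum_{j\le k}\varepsilon_j^{-1}\le 3\varepsilon_k^{-1}$ and splits at $k=m$ rather than at $m_0/2$. Also, your claim that the constants are absolute needs one adjustment: for $j=0$ the factor $K/q$ blows up as $K\to 1$. The cleanest fix is to apply Lemma \ref{int-Lip_lem_varepsilon} with $K=2$, which is allowed since $K\le 2$.
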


\begin{proof}
    Clearly, by \eqref{int-Lip_def_MN}, we have
    \begin{equation}\label{eq.SMn}
        \sum_{\ell = m}^\infty M_\ell \le \sum_{\ell=m}^\infty \sum_{j=0}^{\ell}\frac{\varepsilon_\ell} {\varepsilon_j}\sum_{k=j}^\infty\delta_k + \sum_{\ell=m}^\infty\sum_{j=0}^\ell\frac{\varepsilon_\ell}{\varepsilon_j}\sum_{k=1}^\infty k^\alpha\delta_{k+\ell}.
    \end{equation}
    We estimate the two terms on the right-hand side separately.
    
    By condition \eqref{int-Lip_cond_sumbound}, we have
    \begin{equation*}
        \sum_{j=0}^k \frac{1}{\e_j} \le \frac{3}{\e_k},
    \end{equation*}
    where we have used the fact $K \le 2$.
    It follows that 
    \begin{equation}\label{est.SMm-1}
    \begin{aligned}
    \sum_{\ell=m}^\infty \sum_{j=0}^{\ell}\frac{\varepsilon_\ell} {\varepsilon_j}\sum_{k=j}^\infty\delta_k & = \sum_{\ell=m}^\infty\sum_{k=0}^{\ell}\sum_{j=0}^{k}\frac{\varepsilon_\ell}{\varepsilon_j}\delta_k+\sum_{\ell=m}^\infty\sum_{k=\ell+1}^{\infty}\sum_{j=0}^{\ell}\frac{\varepsilon_\ell}{\varepsilon_j}\delta_k\\
    & \leq 3\sum_{\ell=m}^\infty\sum_{k=0}^{\ell}\frac{\varepsilon_\ell}{\varepsilon_{k}}\delta_k+3\sum_{k=m+1}^\infty (k-1)\delta_k.\\
        \end{aligned}
    \end{equation}
    By Lemma \ref{int-Lip_lem_varepsilon} and Remark \ref{int-lip_remark_decay}, we calculate that for $m\in \mathbb{N}$,
    \begin{equation*}
        \begin{aligned}
        &\sum_{\ell=m}^\infty\sum_{k=0}^{\ell} \frac{\varepsilon_\ell}{\varepsilon_{k}}\delta_k = \sum_{\ell=m}^\infty\varepsilon_\ell\delta_0+\sum_{k=1}^{m-1}\sum_{\ell=m}^\infty\frac{\varepsilon_\ell}{\varepsilon_{k}}\delta_k+\sum_{k=m}^{\infty}\sum_{\ell=k}^\infty\frac{\varepsilon_\ell}{\varepsilon_{k}}\delta_k \\
        &\leq 9\bigg\{[\delta]_0(1-K^{-1})^{m-1}+\sum_{k=1}^{m-1}(1-K^{-1})^{m-k}\delta_k+\sum_{k=m}^{\infty}\delta_k\bigg\} \\&\leq 9 \bigg\{[\delta]_0(1-K^{-1})^{m-1}+\max\{m^{-1}, (1-K^{-1})^{m-1}\}\sum_{k=1}^{m-1}k\delta_k\\&\qquad\qquad+m^{-1}\sum_{k=m}^{\infty}k\delta_k\bigg\} \\
        &\leq 9 ([\delta]_0+[\delta]_1)\max\{m^{-1}, (1-K^{-1})^{m-1}\}.
        \end{aligned}
    \end{equation*}
    On the other hand, by the assumption $[\delta]_{1+\rho} < \infty$, we have
    \begin{equation*}
        \sum_{k=m+1}^\infty (k-1)\delta_k \le \frac{1}{m^\rho}[\delta]_{1+\rho}.
    \end{equation*}
    Bringing the last two displayed inequalities back into \eqref{est.SMm-1}, we obtain
    \begin{equation*}
        \sum_{\ell=m}^\infty \sum_{j=0}^{\ell}\frac{\varepsilon_\ell} {\varepsilon_j}\sum_{k=j}^\infty\delta_k \le 27([\delta]_0+[\delta]_1)\max\{m^{-1}, (1-K^{-1})^{m-1}\}+\frac{3}{m^\rho}[\delta]_{1+\rho}.
    \end{equation*}

    Next, we turn to the second term on the right-hand side of \eqref{eq.SMn}. By using $[\delta]_{1+\rho} < \infty$ and $\alpha = \rho/2$, we obtain
    \begin{align*}
        &\sum_{\ell=m}^\infty \sum_{j=0}^\ell \frac{\varepsilon_\ell}{\varepsilon_j}\sum_{k=1}^\infty k^\alpha\delta_{k+\ell}\leq 3\sum_{\ell=m}^\infty\sum_{k=1}^\infty k^\alpha\delta_{k+\ell}\\
        & \le 3\sum_{\ell=m}^\infty\sum_{k=\ell+1}^\infty k^\alpha\delta_{k}= 3\sum_{k=m+1}^\infty\sum_{\ell=m}^{k-1} k^\alpha\delta_{k} \\
        &\leq 3\sum_{k=m+1}^\infty k^{1+\alpha}\delta_k\leq \frac{3}{m^{\rho-\alpha}}\sum_{k=m+1}^\infty k^{1+\rho}\delta_k\leq \frac{3}{m^{\rho/2}}[\delta]_{1+\rho}.
    \end{align*}
    As a result, we get
    \begin{equation*}
        \sum_{\ell = m}^\infty M_\ell \le 27 ([\delta]_0+[\delta]_1)\max\{m^{-1}, (1-K^{-1})^{m-1}\}+\frac{6}{m^{\rho/2}}[\delta]_{1+\rho}.
    \end{equation*}
    Obviously, the right-hand side of the above inequality converges to zero as $m\to \infty$ and there exists $m_0$, depending on $\rho, T, [\delta]_0$ and $[\delta]_{1+\rho}$ such that $\sum_{\ell = m_0}^\infty M_\ell \le T^{-1}$. This ends the proof.
\end{proof}

Now we are ready to prove our main theorems in this section.

\begin{theorem}\label{thm_Hh_bound}
  Let $c_0$ be given by Lemma \ref{int-Lip_prop_Hh}. Suppose $[\delta]_{1+\rho}<\infty$ for some $\rho>0$ and the conditions \eqref{int-Lip_cond_sumbound} and \eqref{cond_logseparation_m} hold with $\alpha=\rho/2$ for all $m \in \N$. There exists $K_0\in(1, 2]$, depending only on $d, \mu, p$ and $[\delta]_0$, such that if $1<K\leq K_0$, then for any $r\in(0, 1]$,
  \begin{align*}
    H(r; u_\varepsilon)+h(r; u_\varepsilon)\leq C\{H(1; u_\varepsilon)+h(1; u_\varepsilon)\},
  \end{align*}
  where $C$ depends on $d, \mu, p, [\delta]_0, \rho$ and $[\delta]_{1+\rho}$.
\end{theorem}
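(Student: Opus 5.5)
We iterate Lemma \ref{int-Lip_prop_Hh} across the scales $[\varepsilon_{m+1},\varepsilon_m]$, $m=0,1,2,\dots$, and control the accumulated error with the summability of $M_m$ from Lemma \ref{lem.sumMm}. Write $a_m:=H(\varepsilon_m)$ and $b_m:=h(\varepsilon_m)$, where $H=H(\cdot\,;u_\varepsilon)$ and $h=h(\cdot\,;u_\varepsilon)$.

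\emph{Recursion at the scale points.} Applying Lemma \ref{int-Lip_prop_Hh} with $r=\varepsilon_{m+1}$ gives
\begin{align*}
a_{m+1}&\le C_0\Big(\frac{\varepsilon_{m+1}}{\varepsilon_m}\Big)^\lambda a_m+C_1M_m(a_m+b_m),\\
b_{m+1}&\le b_m+C_0a_m+C_1M_m(a_m+b_m).
\end{align*}
The danger is the term $C_0a_m$ in the second line. Summed naively, it adds $C_0\sum_m a_m$, so we need $\sum_m a_m$ to be finite.

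\emph{Closing the $H$-recursion.} Lemma \ref{int-Lip_lem_varepsilon} with $k=m$ gives $\varepsilon_{m+1}/\varepsilon_m\le K(1-K^{-1})$, which tends to $0$ as $K\downarrow1$. We therefore choose $K_0$, depending only on $d,\mu,p,[\delta]_0$ through $C_0$ and $\lambda$, so that $C_0\big(K_0(1-K_0^{-1})\big)^\lambda\le 1/2$. Then
\[
a_{m+1}\le \tfrac12 a_m+C_1M_m(a_m+b_m).
\]
Set $S_m:=\max_{j\le m}(a_j+b_j)$. Adding the two recursions gives
\[
a_{m+1}+b_{m+1}\le b_m+(\tfrac12+C_0)a_m+2C_1M_mS_m.
\]
This still loses a factor from $C_0a_m$, so the argument has to work with sums. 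Summing the $a$-recursion from $m_0$ to $n$ gives
\[
\sum_{m_0}^{n+1}a_m\le 2a_{m_0}+4C_1\sum_{m_0}^{n}M_mS_m,
\]
and summing the $b$-recursion gives
\[
b_{n+1}\le b_{m_0}+C_0\sum_{m_0}^{n}a_m+C_1\sum_{m_0}^{n}M_mS_m.
\]
Combining these,
\[
S_{n+1}\le C\big(a_{m_0}+b_{m_0}\big)+C'\Big(\sum_{m\ge m_0}M_m\Big)S_{n+1},
\]
with $C,C'$ depending only on $C_0,C_1$. Apply Lemma \ref{lem.sumMm} with $T=2C'$, which is allowed because $\alpha=\rho/2$ there. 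This yields $m_0$, depending on $\rho,[\delta]_0,[\delta]_{1+\rho}$ and the constants, such that the last term is absorbed. Hence $S_n\le 2C(a_{m_0}+b_{m_0})$ uniformly in $n$.

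\emph{The finitely many initial scales.} For $m<m_0$, iterate the crude bound $a_{m+1}+b_{m+1}\le (1+C_0+2C_1M_m)(a_m+b_m)$, using that $M_m$ is bounded by a constant times $[\delta]_0+[\delta]_\alpha$ by \eqref{int-Lip_def_MN} and \eqref{int-Lip_cond_sumbound}. This gives $a_{m_0}+b_{m_0}\le C^{m_0}(H(1)+h(1))$. Since $m_0$ depends only on the allowed quantities, the constant is admissible.

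\emph{Intermediate radii.} For $r\in[\varepsilon_{m+1},\varepsilon_m]$, Lemma \ref{int-Lip_prop_Hh} bounds $H(r)+h(r)\le C(a_m+b_m)$ with uniform constants, which concludes the proof.

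The main obstacle is the second step: the $b$-recursion has the non-small coefficient $C_0a_m$, which must be offset by geometric decay of $a_m$. That decay comes from choosing $K$ close to $1$ so that consecutive scale ratios are small. This is also where I expect the dependence of $K_0$ on $d,\mu,p,[\delta]_0$ to enter.
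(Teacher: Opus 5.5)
Your proposal is correct and follows essentially the paper's argument: contract the $H$-recursion by taking $K$ close to $1$, sum it to control the non-small $C_0H(\varepsilon_m)$ increments in the $h$-recursion, absorb the error using the tail smallness of $\sum_{m\ge m_0}M_m$ from Lemma \ref{lem.sumMm}, bound the first $m_0$ scales by a crude $C^{m_0}$ factor, and handle intermediate radii at the end. Your one-step contraction $a_{m+1}\le\frac12 a_m+C_1M_m(a_m+b_m)$, which needs $m\ge1$ and so $m_0\ge1$, is a tidier form of the paper's unrolled induction claim. There are two small bookkeeping fixes: define $S_m:=\max_{m_0\le j\le m}(a_j+b_j)$ rather than the maximum over all $j\le m$, since otherwise the absorbed inequality $S_{n+1}\le C(a_{m_0}+b_{m_0})+\cdots$ is false as written; and the crude factor for $m<m_0$ should be $1+2C_0+2C_1M_m$.
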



\begin{proof}
Let $C_1$ be given by Lemma \ref{int-Lip_prop_Hh}. By Lemma \ref{lem.sumMm}, there exists $m_0 \in \N$ such that
\begin{align}\label{int-Lip_es_M_remainder}
    64C_1^2\sum_{\ell=m_0}^\infty M_\ell\leq 1,
\end{align}
and therefore,
\begin{align}\label{int-Lip_property_Mj}
    64C_1^2 M_\ell\leq 1\quad \text{for all }\ell \geq m_0.
\end{align}
We claim that for $\ell\geq m_0 \geq 0$
  \begin{align}\label{int-Lip_H_claim}
    \begin{split}
      H(\varepsilon_{\ell+1}) & \leq C_0^{\ell-m_0+1}\Big(\frac{\varepsilon_{\ell+1}}{\varepsilon_{m_0}}\Big)^\lambda H(\varepsilon_{m_0})\\
      & \qquad+\sum_{j=m_0}^{\ell}C_0^{\ell-j}\Big(\frac{\varepsilon_{\ell+1}}{\varepsilon_{j+1}}\Big)^\lambda C_1M_j\{H(\varepsilon_j)+h(\varepsilon_j)\}.
    \end{split}
  \end{align}
  This can be proved by induction on $\ell$. The case $\ell=m_0$ is given by Lemma \ref{int-Lip_prop_Hh}. Suppose that it is true for $\ell+1$. Then, by Lemma \ref{int-Lip_prop_Hh} and using the inductive hypothesis, we have
  \begin{align*}
    H(\varepsilon_{\ell+2})&\leq C_0\Big(\frac{\varepsilon_{\ell+2}}{\varepsilon_{\ell+1}}\Big)^\lambda H(\varepsilon_{\ell+1})+C_1M_{\ell+1}\{H(\varepsilon_{\ell+1})+h(\varepsilon_{\ell+1})\}\\
    &\leq C_0\Big(\frac{\varepsilon_{\ell+2}}{\varepsilon_{\ell+1}}\Big)^\lambda \bigg\{C_0^{\ell-m_0+1}\Big(\frac{\varepsilon_{\ell+1}}{\varepsilon_{m_0}}\Big)^\lambda H(\varepsilon_{m_0})\\
    &\qquad \qquad+\sum_{j=m_0}^\ell C_0^{\ell-j}\Big(\frac{\varepsilon_{\ell+1}}{\varepsilon_{j+1}}\Big)^\lambda C_1M_j\big\{H(\varepsilon_j)+h(\varepsilon_j)\big\}\bigg\}\\
    &\qquad +C_1M_{\ell+1}\{H(\varepsilon_{\ell+1})+h(\varepsilon_{\ell+1})\}\\&\leq C_0^{\ell-m_0+2}\Big(\frac{\varepsilon_{\ell+2}}{\varepsilon_{m_0}}\Big)^\lambda H(\varepsilon_{m_0})\\&\qquad+\sum_{j=m_0}^{\ell+1} C_0^{\ell+1-j}\Big(\frac{\varepsilon_{\ell+2}}{\varepsilon_{j+1}}\Big)^\lambda C_1M_j\{H(\varepsilon_j)+h(\varepsilon_j)\},
  \end{align*}
  which is exactly the claim.

Taking the summation of \eqref{int-Lip_H_claim} from $m_0$ to $k > m_0$, we obtain by Lemma \ref{int-Lip_lem_varepsilon} that
  \begin{align*}
    \sum_{\ell=m_0}^k H(\varepsilon_{\ell+1}) &\leq \sum_{\ell=m_0}^kC_0^{\ell-m_0+1}\Big(\frac{\varepsilon_{\ell+1}}{\varepsilon_{m_0}}\Big)^\lambda H(\varepsilon_{m_0})\\&\qquad+ \sum_{\ell=m_0}^k\sum_{j=m_0}^{\ell}C_0^{\ell-j}\Big(\frac{\varepsilon_{\ell+1}}{\varepsilon_{j+1}}\Big)^\lambda C_1M_j\{H(\varepsilon_j)+h(\varepsilon_j)\}\\&\leq 2 C_0(1-K^{-1})^{\lambda} H(\varepsilon_{m_0})\sum_{\ell={m_0}}^k[C_0(1-K^{-1})^{\lambda}]^{\ell-m_0}\\&\quad+2 \sum_{j=m_0}^kC_1M_j\{H(\varepsilon_j)+h(\varepsilon_j)\}\sum_{\ell=j}^k[C_0(1-K^{-1})^{\lambda}]^{\ell-j}\\&\leq 2H(\varepsilon_{m_0})+\sum_{\ell=m_0}^k 4C_1M_\ell\{H(\varepsilon_\ell)+h(\varepsilon_\ell)\},
  \end{align*}
  where in the last step we have required $1 < K \leq K_0$ and $K_0 \in (1,2]$ is close to $1$ such that
\begin{align}\label{int-Lip_cond_K0}
  C_0(1-K_0^{-1})^{\lambda}\le 1/2.
\end{align}

Therefore, by \eqref{int-Lip_property_Mj},
\begin{align*}
    \sum_{\ell=m_0}^{k+1}H(\varepsilon_{\ell}) & = H(\varepsilon_{m_0}) + \sum_{\ell=m_0}^{k}H(\varepsilon_{\ell+1})\\
    & \leq 3H(\varepsilon_{m_0})+\frac{1}{2} \sum_{\ell = {m_0}}^k H(\varepsilon_\ell) + \sum_{\ell = m_0}^k 4C_1M_\ell h(\varepsilon_\ell),
\end{align*}
which leads to
\begin{align}\label{int-Lip_es_H}
  \sum_{\ell = m_0}^{k+1}H(\varepsilon_{\ell})\leq 6H(\varepsilon_{m_0})+\sum_{\ell = m_0}^k8C_1M_\ell h(\varepsilon_\ell).
\end{align}

On the other hand, using the estimate of $h$ in Lemma \ref{int-Lip_prop_Hh}, we have for $k\geq m_0$,
\begin{align*}
  |h(\varepsilon_{k+1})-h(\varepsilon_{m_0})| & \leq \sum_{\ell=m_0}^k|h(\varepsilon_{\ell+1})-h(\varepsilon_\ell)|\\
  &\leq C_0\sum_{\ell=m_0}^k H(\varepsilon_\ell)+\sum_{\ell=m_0}^kC_1M_\ell\{H(\varepsilon_\ell)+h(\varepsilon_\ell)\}\\
  &\leq 2C_1\sum_{\ell=m_0}^k H(\varepsilon_\ell)+\sum_{\ell=m_0}^kC_1M_\ell h(\varepsilon_\ell),
\end{align*}
where we have used the fact $M_\ell\leq 1$ for $\ell\geq m_0$ derived from \eqref{int-Lip_property_Mj}. Substituting \eqref{int-Lip_es_H} into the inequality above and using \eqref{int-Lip_es_M_remainder}, this gives
\begin{align*}
  h(\varepsilon_{k+1})&\leq h(\varepsilon_{m_0})+ 12C_1H(\varepsilon_{m_0})+\sum_{\ell=m_0}^{k}16C_1^2M_\ell h(\varepsilon_\ell)+\sum_{\ell=m_0}^kC_1M_\ell h(\varepsilon_\ell)\\
  & \leq h(\varepsilon_{m_0})+ 12C_1H(\varepsilon_{m_0})+\frac{1}{2}\sup_{m_0 \leq \ell\leq k}h(\varepsilon_\ell).
\end{align*}
Taking the supremum over $k$, we obtain
\begin{align*}
  \sup_{m_0 \leq \ell\leq k}h(\varepsilon_{\ell})\leq h(\varepsilon_{m_0})+ 12C_1H(\varepsilon_{m_0})+\frac{1}{2}\sup_{m_0 \leq \ell\leq k}h(\varepsilon_\ell),
\end{align*}
which implies, 
\begin{align*}
  \sup_{\ell \ge m_0} h(\varepsilon_{\ell})&\leq 2\{h(\varepsilon_{m_0})+ 12C_1H(\varepsilon_{m_0})\}. 
\end{align*}
Combining this estimate with \eqref{int-Lip_es_H} and \eqref{int-Lip_es_M_remainder}, and taking $k\to \infty$, we also have
\begin{align*}
  \sum_{\ell=m_0}^{\infty} H(\varepsilon_\ell)\leq 6H(\varepsilon_{m_0})+\sum_{\ell=m_0}^{\infty}8C_1M_\ell h(\varepsilon_\ell)\leq h(\varepsilon_{m_0})+8C_1H(\varepsilon_{m_0}). 
\end{align*}
Consequently, we conclude that
\begin{align}\label{est.m0-infity}
  \sum_{\ell = m_0}^\infty H(\varepsilon_\ell) + \sup_{\ell \ge m_0} h(\varepsilon_\ell)\leq C\{h(\varepsilon_{m_0})+H(\varepsilon_{m_0})\},
\end{align}
where $C$ depends only on $C_1$.

Furthermore, using the estimates in Lemma \ref{int-Lip_prop_Hh} recursively, it holds for $1\leq \ell\leq m_0$
  \begin{align}\label{est.1-m_0}
    H(\varepsilon_{\ell})+h(\varepsilon_{\ell})\leq C\{H(\varepsilon_{\ell-1})+h(\varepsilon_{\ell-1})\}\leq\cdots\leq  C^{\ell}\{H(1)+h(1)\}, 
  \end{align}
  where $C$ depends only on $d, \mu, p, \rho, [\delta]_0$ and $[\delta]_1$. According to Lemma \ref{int-Lip_lem_varepsilon}, we know $\varepsilon_k\rightarrow 0$ as $k\rightarrow\infty$. This means that for any $r\in (0, 1]$ there exists $k\in\mathbb{N}$ such that $\varepsilon_{k+1}<r\leq \varepsilon_k$. Using Lemma \ref{int-Lip_prop_Hh} again, we have
  \begin{align}\label{est.in2scale}
    H(r)+h(r)\leq C\{H(\varepsilon_k)+h(\varepsilon_k)\}. 
  \end{align}
Combining \eqref{est.m0-infity}, \eqref{est.1-m_0} and \eqref{est.in2scale}, we finally obtain
  \begin{align*}
    H(r)+h(r)\leq C\{H(1)+h(1)\}\quad\text{for all } r\in (0,1), 
  \end{align*}
  where $C$ also depends on $m_0$. This completes the proof.
\end{proof}

\begin{remark}
    The estimate \eqref{est.m0-infity} actually implies a stronger result: 
    $$\lim_{r\to 0} H(r) \to 0, $$
    which reflects the flatness of the solution $u_\e$, beyond the boundedness of $\nabla u_\e$.  
\end{remark}

\begin{theorem}\label{int-lip_lem_Lip}
  Let $c_0, K_0$ be given in Theorem \ref{thm_Hh_bound}. Suppose $[\delta]_{1+\rho}<\infty$ for some $\rho>0$ and the conditions \eqref{int-Lip_cond_sumbound} and \eqref{cond_logseparation_m} hold with $\alpha=\rho/2$ for each $m\geq 0$. Let $u_\varepsilon$ be a solution to $-\mathrm{div}(A^\varepsilon(x)\nabla u_\varepsilon)=f$ in $B_{R}$, $0<R\leq 1$ and $f\in L^p(B_{R})$ with $p>d$. Then if $K\leq K_0$, 
  \begin{align*}
    \| \nabla u_\e \|_{L^\infty(B_{R/2})} \leq C\bigg\{\bigg(\fint_{B_{R}}|\nabla u_\varepsilon|^2\bigg)^{1/2}+R\bigg(\fint_{B_{R}}|f|^p\bigg)^{1/p}\bigg\},
  \end{align*}
  where $C$ depends only on $d, \mu, p, [\delta]_0, \rho$ and $[\delta]_{1+\rho}$. 
\end{theorem}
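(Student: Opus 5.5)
Our plan is to deduce the pointwise gradient bound from Theorem \ref{thm_Hh_bound} by a blow-down/blow-up argument combined with the classical local regularity at scales where the coefficient matrix no longer oscillates.

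\textbf{Step 1: reduction.} By rescaling ($x\mapsto Rx$, with $B_\ell(y_0,\dots)$ replaced by $B_\ell(Ry_0,\dots)$ and $\e$ replaced by $\e/R$), we may assume $R=1$. The bounds \eqref{cond_B} persist with the same $\delta_\ell$. However, the normalization $\e_0=1$ changes, and conditions \eqref{int-Lip_cond_sumbound} and \eqref{cond_logseparation_m} must be rechecked. Instead of rescaling, we therefore fix $x_0\in B_{R/2}$ and apply the interior argument to $u_\e$ in the ball $B(x_0,R/2)$, translating $y_0$ and subtracting a constant.

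Let $m$ be the index with $\e_{m+1}<R/2\le \e_m$. We zoom in at scale $\e_m$ as in Section \ref{sec_rescaling}. The rescaled coefficients satisfy the same hypotheses with $[\widetilde\delta]_\alpha\le (K+1)[\delta]_\alpha$, and the separation conditions are inherited for indices above $m$. Within the single scale band $[\e_{m+1},\e_m]$, Lemma \ref{int-Lip_prop_Hh} controls $H$ and $h$ between $r=R/2$ and $r=\e_{m+1}$ with uniform constants. Theorem \ref{thm_Hh_bound}, applied to the rescaled function from scale $\e_{m+1}$ downward, then gives
\[
\sup_{0<r\le R/2}\big\{H(r;x_0)+h(r;x_0)\big\}\le C\big\{H(R/2;x_0)+h(R/2;x_0)\big\},
\]
where $H(r;x_0)$ and $h(r;x_0)$ denote the quantities centred at $x_0$.

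\textbf{Step 2: from $H+h$ to the gradient.} By Caccioppoli's inequality applied to $u_\e-P_r$ (noting that $P_r$ solves the equation up to the term $\diver(A^\e\nabla P_r)$, which we handle as below),
\[
\Big(\fint_{B(x_0,r/2)}|\nabla u_\e|^2\Big)^{1/2}\le C\{H(r)+h(r)\}.
\]
This uses the fact that $\diver(A^\e\nabla P_r)$ is a constant vector field under a divergence, contributing $\le C|\nabla P_r|$ to the energy estimate. Hence
\[
\sup_{r<R/2}\Big(\fint_{B(x_0,r)}|\nabla u_\e|^2\Big)^{1/2}\le C\big\{H(R/2)+h(R/2)\big\}.
\]
By Poincaré's inequality and the definition of $H$, the right-hand side is bounded by
\[
C\Big(\fint_{B_R}|\nabla u_\e|^2\Big)^{1/2}+CR\Big(\fint_{B_R}|f|^p\Big)^{1/p}.
\]
Since $A^\e$ is continuous, $u_\e\in C^1$ locally by Schauder-type theory (the coefficients are Dini continuous for each fixed $\e$, being a uniform limit of Lipschitz matrices with a summable modulus). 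Lebesgue differentiation then gives $|\nabla u_\e(x_0)|$ bounded by the same quantity.

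\textbf{Main obstacle.} The delicate point is Step 1: ensuring that the starting scale $R$, which falls between two scales $\e_{m+1}<R\le\e_m$ rather than at $1=\e_0$, does not spoil the hypotheses. We handle this by rescaling by $\e_m$, as in Section \ref{sec_rescaling}, so that the new sequence $\e_{m+k}/\e_m$ still satisfies \eqref{int-Lip_cond_sumbound} and \eqref{cond_logseparation_m}; both conditions are shift invariant. The remaining top band is treated by Lemma \ref{int-Lip_prop_Hh}, which allows starting at any radius $r\le\e_m$ after a further harmless dilation by a factor at most $\e_m/\e_{m+1}$, absorbed via \eqref{int-Lip_cond_Hh}. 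If the qualitative $C^1$ regularity for fixed $\e$ is in doubt, we will instead bound the limit as $r\to0$ of the averages of $|\nabla u_\e|^2$ directly, which suffices almost everywhere.
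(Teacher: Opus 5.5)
Your overall architecture is the paper's. You reduce to a unit ball, use Theorem \ref{thm_Hh_bound} to bound $H(r)+h(r)$ for all small $r$, pass to averages of $|\nabla u_\e|^2$ by Caccioppoli, return to the data by Poincar\'e, and let $r\to0$.

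The gap is in your Step~1. Lemma \ref{int-Lip_prop_Hh} is anchored at the top of the band: it bounds $H(r)$ and $h(r)$ for $r\in[\e_{m+1},\e_m]$ in terms of $H(\e_m)+h(\e_m)$. That quantity requires $u_\e$ on $B(x_0,\e_m)$, which you do not have when $R/2<\e_m$. So the lemma does not ``control $H$ and $h$ between $R/2$ and $\e_{m+1}$''.

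Your patch, a dilation by a factor up to $\e_m/\e_{m+1}$ ``absorbed via \eqref{int-Lip_cond_Hh}'', fails. The inequality \eqref{int-Lip_cond_Hh} only compares radii within a factor $2$. Iterating it across the ratio $\e_m/\e_{m+1}$ costs $C^{\log_2(\e_m/\e_{m+1})}$. This ratio is at least $(K_0-1)^{-1}$ by \eqref{int-Lip_cond_sumbound} and is unbounded in general. Bridging $[\e_{m+1},R/2]$ with bounded constants genuinely needs homogenization, namely Corollary \ref{int_coro_firststep} for the problem dilated by $R/2$. That is exactly the hypothesis check you set out to avoid.

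The check is easy, and it is what the paper does. Take $\e_{m+1}<R':=R/2\le\e_m$ and $v(x)=u_\e(x_0+R'x)$ on $B_1$.
\begin{itemize}
\item The new scales are $1,\e_{m+1}/R',\e_{m+2}/R',\dots$, all below $1$ after the first.
\item The slow variables $R'x/\e_j$, $j\le m$, are absorbed as in Section \ref{sec_rescaling}, with $\e_m$ replaced by $R'\le\e_m$ in the numerators. Hence \eqref{int-lip_es_tildedelta_bound} still holds.
\item Condition \eqref{int-Lip_cond_sumbound} is inherited, since the new ratios $\widetilde\e_k/\widetilde\e_j=\e_{m+k}/\e_{m+j}$ ($j\ge1$) form a partial sum of the original one.
\item Condition \eqref{cond_logseparation_m} at level $0$ only becomes weaker, because $|\log(\e_{m+1}/R')|\le|\log(\e_{m+1}/\e_m)|$. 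At level $m'\ge1$ it is the original condition at level $m+m'$.
\end{itemize}
Theorem \ref{thm_Hh_bound} then applies to $v$ on $B_1$ directly, and the split into a top band and a lower part is unnecessary.

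Two smaller points.
\begin{itemize}
\item $A^\e\nabla P_r$ is not a constant field. It is only bounded by $\mu^{-1}|\nabla P_r|$, which is all your Caccioppoli step uses; the paper's version with constants $q$ is simpler.
\item $A^\e$ need not be Dini continuous here. The paper proves this only under $\e_j\ge\vartheta^j$ (Lemma \ref{int-lip_thm_Dini}), and under the present hypotheses $\e_j$ may decay super-exponentially. Drop the $C^1$ claim and use your Lebesgue-point fallback, which already gives the $L^\infty$ bound.
\end{itemize}
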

\begin{proof}
  By translation and dilation, we only need to prove the bound of $|\nabla u_\e(0)|$ with $R=1$. In fact, since $\varepsilon_k \rightarrow 0$ as $k\rightarrow \infty$, for any $R\in (0,1)$, there exists $m$ such that $\varepsilon_{m+1}<R\leq \varepsilon_m$. By setting $v(x)=u_\varepsilon(Rx)$, we obtain
  \begin{align*}
    -\mathrm{div}\Big( A \Big( \frac{Rx}{\e_0} , \cdots, \frac{Rx}{\varepsilon_m}, \frac{x}{\varepsilon_{m+1}/R}, \cdots \Big) \nabla v \Big) = R^2f(Rx)\quad \text{in }B_{1}. 
  \end{align*}
  By a similar argument as Section \ref{sec_rescaling}, it is not hard to verify that all the conditions keep fulfilled for this rescaled equation.   
  
  Now assume $R = 1$. For any $0<r\leq 1$, by the Caccioppoli inequality, we have
  \begin{align}\label{est.Du-Hh}
  \begin{aligned}
    \bigg(\fint_{B_{r/2}}|\nabla u_\varepsilon|^2\bigg)^{1/2} & \leq C\inf_{q\in \mathbb{R}}\frac{1}{r}\bigg(\fint_{B_r}|u_\varepsilon-q|^2\bigg)^{1/2} + Cr\bigg(\fint_{B_r}|f|^p\bigg)^{1/p} \\
    & \leq C\{H(r; u_\varepsilon)+h(r; u_\varepsilon)\},
    \end{aligned}
  \end{align}
  where $C$ depends only on $d$ and $\mu$. On the other hand, observing that
  \begin{align*}
      h(r; u_\varepsilon)&= |\nabla P_r| = \frac{C}{r}\inf_{q\in\mathbb{R}}\bigg(\fint_{B_r}|P_r-q|^2\bigg)^{1/2}\\&\leq \frac{C}{r}\bigg(\fint_{B_r}|u_\varepsilon-P_r|^2\bigg)^{1/2}+\frac{C}{r}\inf_{q\in\mathbb{R}}\bigg(\fint_{B_r}|u_\varepsilon-q|^2\bigg)^{1/2}\\& \leq \frac{C}{r}\inf_{q\in\mathbb{R}}\bigg(\fint_{B_r}|u_\varepsilon-q|^2\bigg)^{1/2},
  \end{align*}
we obtain from the Poincar\'{e} inequality that
  \begin{align}\label{est.Hh-Du}
  \begin{aligned}
    H(r; u_\varepsilon)+h(r; u_\varepsilon)&\leq \frac{C}{r}\inf_{q\in\mathbb{R}}\bigg(\fint_{B_r}|u_\varepsilon-q|^2\bigg)^{1/2}+Cr\bigg(\fint_{B_r}|f|^p\bigg)^{1/p}\\&\leq C\bigg(\fint_{B_{r}}|\nabla u_\varepsilon|^2\bigg)^{1/2}+Cr\bigg(\fint_{B_r}|f|^p\bigg)^{1/p}.
    \end{aligned}
  \end{align}
  Combining \eqref{est.Du-Hh} and \eqref{est.Hh-Du}, as well as Theorem \ref{thm_Hh_bound}, we know
  \begin{align*}
    \bigg(\fint_{B_{r/2}}|\nabla u_\varepsilon|^2\bigg)^{1/2}&\leq C\{H(r; u_\varepsilon)+h(r; u_\varepsilon)\}\\&\leq C\{H(1; u_\varepsilon)+h(1; u_\varepsilon)\}\\&\leq C\bigg\{\bigg(\fint_{B_1}|\nabla u_\varepsilon|^2\bigg)^{1/2}+\bigg(\fint_{B_{1}}|f|^p\bigg)^{1/p}\bigg\}.
  \end{align*}
  Since this estimate holds for any $r\in (0,1)$, by
  letting $r\rightarrow0$, we obtain
  \begin{align*}
    |\nabla u_\varepsilon(0)|\leq C\bigg\{\bigg(\fint_{B_1}|\nabla u_\varepsilon|^2\bigg)^{1/2}+\bigg(\fint_{B_1}|f|^p\bigg)^{1/p}\bigg\}.
  \end{align*}
  This is the desired estimate.
\end{proof}


\begin{proof}[Proof of Theorem \ref{thm.IntLip}]
  By Theorem \ref{int-lip_lem_Lip}, it suffices to verify that \eqref{int-Lip_cond_sumbound} holds with $K\leq K_0$ and \eqref{cond_logseparation_m} holds with $\alpha=\rho/2$. Let $\nu>0$ be small enough such that $(1-\nu^{1/N})^{-1}\leq K_0$ and $|\log \nu|\nu^{\frac{\alpha\sigma}{N}}\leq c_0$. By condition \eqref{int-lip_cond_separation_ratio}, we have
  \begin{align*}
    \Big(\frac{\varepsilon_{k+1}}{\varepsilon_{k}}\Big)^N\leq \varepsilon_1\leq \nu\quad \text{for }k\geq 1,
  \end{align*}
  from which we deduce that
  \begin{align*}
    \sum_{j=1}^k\frac{\varepsilon_k}{\varepsilon_j}=\sum_{j=1}^k\prod_{\ell=j}^{k-1}\frac{\varepsilon_{\ell+1}}{\varepsilon_{\ell}}\leq \sum_{j=1}^k\nu^{\frac{k-j}{N}}\leq (1-\nu^{1/N})^{-1}\leq K_0.
  \end{align*}
On the other hand, for any $m\geq 0$ and $k\geq m+2$
  \begin{align*}
    \bigg(\frac{\varepsilon_k}{\varepsilon_{k-1}}\bigg)^{\alpha\sigma}\leq \Big(\frac{\varepsilon_{m+1}}{\varepsilon_m}\Big)^{\frac{\alpha\sigma}{N}}\leq c_0\Big|\log\frac{\varepsilon_{m+1}}{\varepsilon_m}\Big|^{-1},
  \end{align*}
where we have used \eqref{int-lip_cond_separation_ratio} in the first step and the fact
\begin{align*}
  \Big|\log\frac{\varepsilon_{m+1}}{\varepsilon_m}\Big|\Big(\frac{\varepsilon_{m+1}}{\varepsilon_m}\Big)^{\frac{\alpha\sigma}{N}}\leq |\log \nu|\nu^{\frac{\alpha\sigma}{N}}\leq c_0
\end{align*}
in the last one. In summary, all the conditions in Theorem \ref{int-lip_lem_Lip} are fulfilled and we obtain \eqref{int-lip_es_Lip}.
\end{proof}

\begin{remark}\label{rmk.illExample}
    As pointed out in Remark \ref{rmk.smallnu}, the requirement for $\e_1 < \nu$ is due to a technical reason in our method. Here we construct an example that will cause a problem. Let $\e = (\e_1, \e_2,\cdots)$ be given by
    \begin{equation*}
        \e_j = 
        \begin{cases}
            e^{-j}, \quad & \text{if } 1\le j\le m,\\
            e^{-m} \tau^{j-m}, \quad & \text{if } j > m.
        \end{cases}
    \end{equation*}
    Assume $0<\tau \ll e^{-1}$. It is easy to see that such $\e$ satisfies the scale-separation condition \eqref{int-lip_cond_separation_ratio}. However, the first $m$ scales are not separated well and homogenization does not take place above $\e_m$-scale. Our method is not able to show a Lipschitz estimate uniformly in both $m$ and $\tau$. This situation needs a more careful analysis, which will not be discussed in the present paper.
\end{remark}

The smallness assumption of $\e_1$ in the uniform Lipschitz estimate can be removed for the most interesting cases.

\begin{lemma}\label{int-lip_thm_Dini}
  Suppose that $[\delta]_{1+\rho}<\infty$ for some $\rho>0$ and there exists a constant $\vartheta\in(0, 1)$ such that
  \begin{align}\label{cond.ej>cj}
    \varepsilon_j\geq \vartheta^j\quad \text{for each }j. 
  \end{align}
  Let $u_\varepsilon$ be the same as in Theorem \ref{thm.IntLip}. Then for any $0<R\le 1$,
  \begin{align*}
    \| \nabla u_\e \|_{L^\infty(B_{R/2})} \leq C\bigg\{\bigg(\fint_{B_{R}}|\nabla u_\varepsilon|^2\bigg)^{1/2}+R\bigg(\fint_{B_{R}}|f|^p\bigg)^{1/p}\bigg\},
  \end{align*}
  where $C$ depends only on $d, \mu, p, \vartheta, \rho, [\delta]_0 $ and $[\delta]_{1+\rho}$.
\end{lemma}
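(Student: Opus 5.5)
The plan is to show that, under \eqref{cond.ej>cj}, the coefficient matrix $A^\varepsilon$ is Dini continuous, with a Dini modulus controlled only by $\vartheta$, $[\delta]_0$ and $[\delta]_1$ and independent of $\varepsilon$. The Lipschitz estimate then follows from the classical $C^1$ theory for divergence-form equations with Dini-continuous coefficients. No homogenization and no scale separation are used, which is why the smallness of $\varepsilon_1$ is not needed.

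\textbf{Step 1: a uniform modulus of continuity.} Write $A^\varepsilon=\sum_{\ell\ge0}B_\ell^\varepsilon$ with $B_\ell^\varepsilon(x)=B_\ell(x,x/\varepsilon_1,\dots,x/\varepsilon_\ell)$. By \eqref{cond_B} and \eqref{cond.ej>cj},
\[
|\nabla B_\ell^\varepsilon|\le \delta_\ell\sum_{j=0}^{\ell}\varepsilon_j^{-1}\le (\ell+1)\vartheta^{-\ell}\delta_\ell ,
\qquad |B_\ell^\varepsilon|\le\delta_\ell .
\]
Hence
\[
|A^\varepsilon(x)-A^\varepsilon(z)|\le \omega(|x-z|),\qquad \omega(t):=\sum_{\ell\ge0}\delta_\ell\min\{2,(\ell+1)\vartheta^{-\ell}t\}.
\]
The function $\omega$ is increasing, satisfies $\omega(0^+)=0$ (by dominated convergence, since $[\delta]_0<\infty$), and does not depend on $\varepsilon$.

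\textbf{Step 2: the Dini condition.} For each $\ell$, with $a_\ell=(\ell+1)\vartheta^{-\ell}\ge1$,
\[
\int_0^1\frac{\min\{2,a_\ell t\}}{t}\,dt\le 2+2\log(a_\ell)\le C\big(1+\ell|\log\vartheta|+\log(\ell+1)\big).
\]
Summing in $\ell$ gives
\[
\int_0^1\frac{\omega(t)}{t}\,dt\le C(\vartheta)\big([\delta]_0+[\delta]_1\big)<\infty .
\]
The same bound holds uniformly for all rescalings $x\mapsto Rx$ with $R\le1$, since rescaling only shrinks the modulus: $\omega_R(t)=\omega(Rt)\le\omega(t)$. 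The hypothesis $[\delta]_{1+\rho}<\infty$ is stronger than what this step needs.

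\textbf{Step 3: Lipschitz estimate for Dini coefficients.} I would invoke the known result (Y.Y. Li; Dong--Kim) that solutions of $-\mathrm{div}(A\nabla u)=f$ with $A$ elliptic with constant $\mu$, Dini modulus $\omega$, and $f\in L^p$, $p>d$, satisfy
\[
\|\nabla u\|_{L^\infty(B_{R/2})}\le C\Big\{\Big(\fint_{B_R}|\nabla u|^2\Big)^{1/2}+R\Big(\fint_{B_R}|f|^p\Big)^{1/p}\Big\},
\]
with $C$ depending only on $d,\mu,p$ and $\omega$ (through $\int_0^1\omega(t)/t\,dt$ and $\omega(1)$).

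If a self-contained argument is preferred, I would run the same Campanato scheme as in Section~\ref{sec_large-scale_u}, with $H(r)$ and $h(r)$ as defined there. In $B_r$, compare $u_\varepsilon$ with the solution $w$ of the equation with coefficients frozen at $A^\varepsilon(0)$. The energy estimate gives
\[
\frac1r\Big(\fint_{B_r}|u_\varepsilon-w|^2\Big)^{1/2}\le C\,\omega(r)\{H(2r)+h(2r)\}.
\]
This yields $H(\theta r)\le\frac12H(r)+C\omega(r)\{H(2r)+h(2r)\}$, so the hypotheses of Lemma~\ref{shen.lem} hold with $\omega_2=\omega$, $\omega_1=0$ and $\gamma=0$, valid for all $\kappa>0$. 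Letting $\kappa\to0$ and arguing as in Theorem~\ref{int-lip_lem_Lip} bounds $|\nabla u_\varepsilon(0)|$.

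\textbf{Main obstacle.} The delicate point is keeping the constants uniform in $\varepsilon$. This requires that the Lipschitz constant in Step 3 depend only on $\int_0^1\omega(t)/t\,dt$ and $\mu$, not on any finer features of the coefficients. It also requires the precise $L^2$ energy comparison with constant-coefficient solutions, which uses the pointwise bound $\|A^\varepsilon-A^\varepsilon(0)\|_{L^\infty(B_r)}\le\omega(r)$ rather than any gradient bound. The resulting constant $C$ depends on $d,\mu,p,\vartheta,[\delta]_0,[\delta]_1$, which is consistent with the stated dependence.
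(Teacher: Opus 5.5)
Your proposal follows the same route as the paper. You prove that $A^\varepsilon$ is Dini continuous with an $\varepsilon$-independent modulus, then quote the $C^1$ theory for Dini-continuous coefficients (the paper cites Lieberman and Y.Y.~Li). Your termwise modulus $\omega(t)=\sum_\ell \delta_\ell\min\{2,(\ell+1)\vartheta^{-\ell}t\}$ is correct and tidier than the paper's, which splits the series at $n(r)=\lfloor |\log r|/(2|\log\vartheta|)\rfloor$ to get $\omega(r)\le C(r^{1/2}+|\log r|^{-1-\rho})$. Your computation $\int_0^1\omega(t)\,dt/t\le C(\vartheta)([\delta]_0+[\delta]_1)$ is also correct.

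What is not justified as written is the uniformity of the final constant, which you yourself flag as the main obstacle. The results you cite give $C=C(d,\mu,p,\omega)$ with $\omega$ entering as a function. Their proofs pick a radius $r_0$ with $\int_0^{r_0}\omega(t)\,dt/t$ small, and the constant depends on $r_0$. Your $\omega$ depends on the whole sequence $(\delta_\ell)$, and a bound on $\int_0^1\omega(t)\,dt/t$ does not control $r_0$. For example, take $\delta_\ell=0$ for $\ell\notin\{0,L\}$ and $\delta_L=1/L$. Then $[\delta]_1=1$ for every $L$, yet $\int_0^{b}\omega(t)\,dt/t$ is small only when $b$ is exponentially small in $L$.

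Your self-contained fallback has the same defect. The constant in Lemma~\ref{shen.lem} depends on the $b$ chosen so that $\int_0^b\omega_2(t)\,dt/t\le (8C_2)^{-1}$, that is, on the profile of $\omega_2$ and not just its integral.

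This is exactly where the paper uses $[\delta]_{1+\rho}$. Its majorant $C(r^{1/2}+|\log r|^{-1-\rho})$, with $C$ depending only on $[\delta]_0,[\delta]_{1+\rho},\vartheta$, is a fixed Dini function. The cited theorem then yields the stated dependence, including the dependence on $\rho$. So your remark that $[\delta]_{1+\rho}$ is more than needed holds only for qualitative Dini continuity.

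You can close the gap in either of two ways.
\begin{enumerate}
\item Dominate your $\omega$ by the paper's majorant: split at $n(t)$ and use $\sum_{\ell>n}\delta_\ell\le (n+1)^{-1-\rho}[\delta]_{1+\rho}$.
\item Prove that the Lipschitz constant depends only on $\omega(1)$ and $\int_0^1\omega(t)\,dt/t$. Let $\phi_j=\inf_q(\fint_{B_{\kappa^j}}|\nabla u-q|^2)^{1/2}$ and let $q_j$ be the mean of $\nabla u$ on $B_{\kappa^j}$. Freezing coefficients gives $\phi_{j+1}\le \frac12\phi_j+C\omega(\kappa^j)(\phi_j+|q_j|)$ and $|q_{j+1}|\le |q_j|+C\phi_j$. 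A discrete Gr\"onwall argument then yields $\sup_j(\phi_j+|q_j|)\le C\exp\{C\sum_j\omega(\kappa^j)\}(\phi_0+|q_0|)$ with no smallness radius; the $f$-term only adds a geometric series.
\end{enumerate}
The second option would in fact give the lemma with $C$ depending only on $d,\mu,p,\vartheta,[\delta]_0,[\delta]_1$.
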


\begin{proof}
By dilation, it suffices to consider $R=1$. Denote the continuity modulus of $A^\varepsilon$ by $$\omega(r):=\sup_{|x_1-x_2|<r}|A^\varepsilon(x_1)-A^\varepsilon(x_2)|.$$
For $0<r\leq \frac{1}{2}$, set $n= n(r) = \lfloor \varrho |\log r|\rfloor$ where $\varrho=\frac{1}{2|\log \vartheta|}$. We calculate that
  \begin{align*}
    \omega(r)&\leq \|\nabla A_n^\varepsilon(x)\|_{L^\infty}\cdot r+2\sum_{\ell=n+1}^\infty \| B_\ell \|_{L^\infty} \leq \sum_{\ell=0}^n\sum_{j=0}^\ell \varepsilon_j^{-1}\delta_\ell r+2\sum_{\ell=n+1}^\infty \delta_\ell\\&\leq \sum_{\ell=0}^n(\ell+1)\delta_\ell\varepsilon_n^{-1} r+2\sum_{\ell=n+1}^\infty \delta_\ell\leq ([\delta]_0+[\delta]_{1})\varepsilon_n^{-1}r+2[\delta]_{1+\rho}(n+1)^{-1-\rho}\\&\leq C\vartheta^{-\lfloor \varrho |\log r|\rfloor}r+C(\lfloor \varrho |\log r|\rfloor+1)^{-1-\rho}\leq Cr^{1/2}+C|\log r|^{-1-\rho},
  \end{align*}
  where we have used the assumption \eqref{cond.ej>cj} and $C$ depends only on $[\delta]_0, [\delta]_{1+\rho}$ and $\vartheta$. Therefore,
  \begin{align*}
    \int_0^{1/2} \frac{\omega(r)}{r}dr<\infty,
  \end{align*}
  i.e., $A^\varepsilon(x)$ is Dini continuous. By the Lipschitz estimate of elliptic equations with Dini continuous coefficients (see \cite{Lieb86}; also see \cite{Li2017} for elliptic systems), we have
  \begin{align*}
    \| \nabla u_\e\|_{L^\infty(B_{1/2})} \leq C\bigg\{\bigg(\fint_{B_1}|\nabla u_\varepsilon|^2\bigg)^{1/2}+\bigg(\fint_{B_1}|f|^p\bigg)^{1/p}\bigg\},
  \end{align*}
  where $C$ depends only on $d, \mu, p $ and $\omega$.
\end{proof}

\begin{theorem}\label{thm.typicalLipEst}
    Let $\{ A_n\}_{n\in \N}$, $\delta$ and $u_\e$ satisfy the same assumptions as in Theorem \ref{thm.IntLip}. Suppose that there exists $N \ge 1$ such that
    \begin{equation}\label{cond.two-side cond}
        \bigg(\frac{\varepsilon_{k+1}}{\varepsilon_{k}}\bigg)^N\leq \frac{\varepsilon_{i+1}}{\varepsilon_{i}} \le \bigg(\frac{\varepsilon_{k+1}}{\varepsilon_{k}}\bigg)^{1/N}, \quad \text{for any } k > i \ge 0.
    \end{equation}
    Then \eqref{int-lip_es_Lip} holds with a constant $C$ depends on the same parameters.
\end{theorem}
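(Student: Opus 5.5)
We split into two cases according to the size of $\e_1$ relative to the threshold $\nu$ furnished by Theorem \ref{thm.IntLip}; here $\nu$ depends only on $d,\mu,p,\rho,N,[\delta]_0,[\delta]_1$. The two-sided condition \eqref{cond.two-side cond} contains the one-sided condition \eqref{int-lip_cond_separation_ratio}. So if $\e_1 \le \nu$, Theorem \ref{thm.IntLip} applies directly and gives \eqref{int-lip_es_Lip}. It remains to treat $\e_1 > \nu$.

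\emph{Case $\e_1>\nu$.} We apply the right-hand inequality of \eqref{cond.two-side cond} with $i=0$ (recall $\e_0=1$), which gives
\[
\e_1 = \frac{\e_1}{\e_0} \le \Big(\frac{\e_{k+1}}{\e_k}\Big)^{1/N}
\]
for every $k\ge 1$. Hence $\e_{k+1}/\e_k \ge \e_1^N > \nu^N$ for all $k\ge 0$, and the case $k=0$ is immediate since $\e_1 > \nu \ge \nu^N$. Multiplying these ratios yields $\e_j \ge \nu^{Nj}$ for all $j$. This is exactly condition \eqref{cond.ej>cj} with $\vartheta=\nu^N\in(0,1)$. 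We therefore invoke Lemma \ref{int-lip_thm_Dini}, which states that $A^\e$ is Dini continuous with modulus controlled by $\vartheta$, $[\delta]_0$ and $[\delta]_{1+\rho}$. It gives the Lipschitz estimate with a constant depending on $d,\mu,p,\vartheta,\rho,[\delta]_0,[\delta]_{1+\rho}$. Since $\vartheta=\nu^N$ and $\nu$ depends on the parameters listed in Theorem \ref{thm.IntLip}, the constant depends on the same parameters as there, plus $N$.

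The only delicate point is the bookkeeping of constants. In the second case, $\vartheta$ depends through $\nu$ on $N$ and $[\delta]_1$, and these are controlled by $[\delta]_{1+\rho}$. Also, the statement of Lemma \ref{int-lip_thm_Dini} was for $R\le 1$ after dilation. Its proof only uses $\e_n^{-1}\le\vartheta^{-n}$, and this survives rescaling by $R$, since the rescaled scales $\e_j/R$ are larger. No further obstacle is expected. The argument is a dichotomy: either $\e_1$ is small, in which case the homogenization iteration works, or it is large, in which case the two-sided ratio bound forces all scales to decay at most geometrically and classical Dini regularity applies.
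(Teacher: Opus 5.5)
Your proposal is correct and matches the paper's proof: you split on $\e_1$ versus $\nu$ and use Theorem \ref{thm.IntLip} when $\e_1$ is small. When $\e_1$ is large, you take $i=0$ in the right-hand inequality of \eqref{cond.two-side cond} to get $\e_k\ge(\nu^N)^k$, so Lemma \ref{int-lip_thm_Dini} applies with $\vartheta=\nu^N$. Your observation that the constant then also depends on $N$ through $\vartheta$ is accurate, and the same dependence is implicit in the paper's argument.
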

\begin{proof}
    We only need to consider two cases. If $\e_1 = \e_1/\e_0 < \nu$ with $\nu$ given by Theorem \ref{thm.IntLip}, then Theorem \ref{thm.typicalLipEst} follows directly from Theorem \ref{thm.IntLip}. Now if $\e_1 \ge \nu$, then by the second inequality of \eqref{cond.two-side cond}, we have
    \begin{equation*}
        \frac{\e_{k+1}}{\e_k} \ge \e_1^N \ge \nu^N,
    \end{equation*}
    for all $k > 0$.
    Let $\vartheta = \nu^N$. By iterating the above inequality, we have
    $
        \e_k \ge \vartheta^k.
    $
    Hence, Lemma \ref{int-lip_thm_Dini} gives the desired estimate.
\end{proof}

\begin{remark}\label{rmk.typical}
    Theorem \ref{thm.typicalLipEst} covers the interesting case $\e_j  = \e_1^j$ for any $\e_1 \in (0,1)$. More generally, it covers the case $\e_j = \e_1^{\beta_j}$, where  $C^{-1} \le \beta_j - \beta_{j-1} \le C$ for some constant $C>0$, and  $\e_1 \in (0,1)$. If $\e_j = \e_1^{\beta_j}$ and $\beta_j - \beta_{j-1} \simeq j^m$ for some $m>0$, then we may still establish some analog of Theorem \ref{thm.typicalLipEst}, under a faster decay assumption of $\delta = \{\delta_j\}_{j\in \N}$, such as $[\delta]_{m+2} < \infty$.
\end{remark}

\section{Boundary Lipschitz estimates}\label{sec_ext}

This section is devoted to proving the boundary Lipschitz estimates. The proofs are more or less parallel to those in the previous section, and we present only the sketch for the Dirichlet boundary value problem. 

Let $\gamma>0$ and $\psi:\mathbb{R}^{d-1}\rightarrow\mathbb{R}$ be a $C^{1,\gamma}$ function with
\begin{align}\label{ext_cond_psi}
  \psi(0)=0\quad\text{and}\quad\|\nabla\psi\|_{L^\infty(\R^{n-1})} +[\nabla\psi]_{C^{0,\gamma}(\mathbb{R}^{d-1})}\leq L.
\end{align}
Set
\begin{align*}
  D_r=D(r,\psi)=\{(x',x_d)\in\mathbb{R}^d: |x'|<r\text{~and~}\psi(x')<x_d<10(L+d)r\},
\end{align*}
\begin{align*}
  \Delta_{r}=\Delta(r,\psi)=\{(x',\psi(x'))\in\mathbb{R}^{d}:|x'|<r\}.
\end{align*}
Note that $D_r$ is a Lipschitz domain for any $r>0$ with a Lipschitz constant depending only on $L$ and $d$.
We introduce a scaling-invariant norm for functions on $\Delta_r$,
\begin{align*}
  \|g\|_{C^{1, \gamma}(\Delta_r)}=\|g\|_{L^\infty(\Delta_r)}+r\|\nabla_{\mathrm{tan}} g\|_{L^\infty(\Delta_r)}+r^{1+\gamma}[\nabla_{\mathrm{tan}}g]_{C^{0, \gamma}(\Delta_r)}, 
\end{align*}
where $\nabla_{\mathrm{tan}}g$ denotes the tangential gradient of $g$ and
\begin{align*}
  [g]_{C^{0, \gamma}(\Delta_r)}=\sup_{x, y\in \Delta_r, x\neq y}\frac{|g(x)-g(y)|}{|x-y|^\gamma}.
\end{align*}

Let $u_\varepsilon$ be a weak solution to
\begin{align}\label{eq.extue}
  -\mathrm{div}(A^\varepsilon(x)\nabla u_\varepsilon)=f \quad \text{in }D_r,\quad u_\varepsilon=g\ \ \text{on }\Delta_r. 
\end{align}
For $r\in (0,1)$, we can rescale the equation to a domain of size $1$. In fact, by setting $v(x)=u_\varepsilon(rx)$, we have
\begin{align*}
  -\mathrm{div}(\widetilde{A}^\varepsilon(x)\nabla v)=\widetilde{f}\ \ \text{in }D(1, \psi_r),\quad v=\widetilde{g}\ \ \text{on }\Delta(1, \psi_r),
\end{align*}
where $\widetilde{A}(y_0, y_1, \cdots)=A(ry_0, y_1, \cdots)$, $\widetilde{f}(x)=r^2f(rx)$, $\widetilde{g}(x)=g(rx)$ and $\psi_r(x')=r^{-1}\psi(rx')$. It is easy to see that the rescaling argument works well for the boundary estimates as in the interior case. Especially, the function $\psi_r$ satisfies condition \eqref{ext_cond_psi} with the same $L$ and $\|g\|_{C^{1, \gamma}(\Delta(r, \psi))}=\|\widetilde{g}\|_{C^{1, \gamma}(\Delta(1, \psi_r))}$. 

\begin{theorem}\label{ext_lem_approx}
    Let $f\in L^2(D_1)$ and $g\in C^1(\Delta_1)$, with $\|g\|_{C^1(\Delta_r)}=\|g\|_{L^\infty(\Delta_r)}+r\|\nabla_{\mathrm{tan}}g\|_{L^\infty(\Delta_r)}$. Assume that $[\delta]_1<\infty$. Let $u_\e$ be a weak solution of \eqref{eq.extue}. Then for each $r\in[\varepsilon_1, 1/2]$, there exists a weak solution $u_0$ to 
    \begin{equation}\label{eq.extu0}
        -\mathrm{div}(\widehat{A}\nabla u_0)=f \quad \text{in } D_r, \quad u_0 = g \quad \text{on } \Delta_r,
    \end{equation}
    such that for $0<\alpha\leq 1$,
    \begin{align*}
        \bigg(\fint_{D_r}|u_\varepsilon-u_0|^2\bigg)^{1/2}& \leq C\Lambda [\delta]_\alpha\bigg(\frac{\varepsilon_1}{r}+\sup_{k\geq 2}\frac{\e_k}{\e_{k-1}}\bigg)^{\alpha\sigma} \\
        &\quad\times\bigg\{\bigg(\fint_{D_{2r}}|u_\varepsilon|^2\bigg)^{1/2}+r^2\bigg(\fint_{D_{2r}}|f|^2\bigg)^{1/2}+\|g\|_{C^1(\Delta_{2r})}\bigg\},
    \end{align*}
    where $\Lambda $ is given by \eqref{def.Lambda}, and $\sigma$ and $C$ depend only on $d, \mu$ and $(\gamma, L)$ in \eqref{ext_cond_psi}.
\end{theorem}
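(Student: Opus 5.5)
We mimic the proof of Theorem \ref{thm_approx}, with the ball replaced by the boundary region $D_t$. First, by the rescaling discussed right before the statement ($v(x)=u_\e(rx)$, $\widetilde{A}(y_0,y_1,\cdots)=A(ry_0,y_1,\cdots)$, $\psi_r(x')=r^{-1}\psi(rx')$), the hypotheses are preserved with the same $\delta_\ell$, the same $L$ and scale-invariant norms, while $\e$ becomes $\e/r$. So we may assume $r=1/2$. The ratio $\frac{\e_1}{r}$ then plays the role of $\frac{\widetilde\e_1}{\widetilde\e_0}$, and $\sup_{k\ge1}\widetilde\e_k/\widetilde\e_{k-1}\le \frac{\e_1}{r}+\sup_{k\ge2}\frac{\e_k}{\e_{k-1}}$.

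For $t\in(1/2,3/4)$, we let $u_0^t$ solve $-\mathrm{div}(\widehat{A}\nabla u_0^t)=f$ in $D_t$ with $u_0^t=u_\e$ on $\partial D_t$. The domain $D_t$ is Lipschitz, with character depending only on $d,L$, uniformly in $t\in(1/2,3/4)$. Hence Theorem \ref{conver_thm_rate_suboptimal} applies in $D_t$, with $\sigma$ and $C$ uniform in $t$. It gives
\[
\|u_\e-u_0^t\|_{L^2(D_{1/2})}\le C\Lambda[\delta]_\alpha\mathcal{E}^{\alpha\sigma}\big(\|f\|_{L^2(D_t)}+\|u_\e\|_{H^1(\partial D_t)}\big).
\]
We then set $u_0=4\int_{1/2}^{3/4}u_0^t\,dt$. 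Each $u_0^t$ solves the homogenized equation in $D_{1/2}$ and equals $g$ on $\Delta_{1/2}\subset\Delta_t$. The average is therefore a weak solution of \eqref{eq.extu0} in $D_{1/2}$, and Cauchy--Schwarz in $t$ gives
\[
\|u_\e-u_0\|^2_{L^2(D_{1/2})}\lesssim \Lambda^2[\delta]_\alpha^2\mathcal{E}^{2\alpha\sigma}\Big(\|f\|^2_{L^2(D_1)}+\int_{1/2}^{3/4}\|u_\e\|^2_{H^1(\partial D_t)}dt\Big).
\]

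The main obstacle is bounding $\int_{1/2}^{3/4}\|u_\e\|^2_{H^1(\partial D_t)}dt$. The boundary $\partial D_t$ splits into two parts. The first is the interior part $\partial D_t\cap D_1$, consisting of the lateral surface $|x'|=t$ and the top $x_d=10(L+d)t$. These sweep out a region of $D_{3/4}$ as $t$ varies, so by the co-area formula the integral is bounded by $C\|u_\e\|^2_{H^1(D_{3/4})}$. The second is the bottom piece $\Delta_t$, where $u_\e=g$. There $\|u_\e\|_{H^1(\Delta_t)}\le C\|g\|_{C^1(\Delta_1)}$, using the tangential gradient and bounded surface measure. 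Since Theorem \ref{conver_thm_rate_suboptimal} uses the $H^1(\partial\Omega)$ norm of the Dirichlet data, this is exactly what is needed. One subtlety is that the trace on the lateral part must agree with $g$ at the corner $|x'|=t$; this causes no trouble because $u_\e\in H^1$ and the data on $\partial D_t$ is just the trace of $u_\e$.

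Finally, the boundary Caccioppoli inequality controls $\|\nabla u_\e\|_{L^2(D_{3/4})}$. To do so, we subtract an extension $G$ of $g$, for instance $G(x',x_d)=g(x',\psi(x'))$, which satisfies $\|\nabla G\|_{L^\infty}\lesssim\|g\|_{C^1}$. This gives
\[
\|\nabla u_\e\|_{L^2(D_{3/4})}\le C\big(\|u_\e\|_{L^2(D_1)}+\|f\|_{L^2(D_1)}+\|g\|_{C^1(\Delta_1)}\big).
\]
Combining the three bounds and converting to averages yields the claim, with $\mathcal{E}\le \frac{\e_1}{r}+\sup_{k\ge2}\frac{\e_k}{\e_{k-1}}$ after scaling back.
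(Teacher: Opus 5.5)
Your proposal is correct and follows the paper's proof. You rescale to $r=1/2$, average the homogenized Dirichlet approximants $u_0^t$ on $D_t$ over $t\in(1/2,3/4)$, apply Theorem~\ref{conver_thm_rate_suboptimal} on each $D_t$, and control $\int_{1/2}^{3/4}\|u_\e\|^2_{H^1(\partial D_t)}\,dt$ by the co-area formula and a boundary Caccioppoli inequality. Splitting $\partial D_t$ into the swept interior part and the graph part $\Delta_t$ (where $u_\e=g$), and using the extension $G$ of $g$, just makes explicit the steps the paper leaves as ``as in the proof of Theorem~\ref{thm_approx}''.
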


\begin{proof}
    The proof is similar to that of Theorem \ref{thm_approx}.
  By rescaling, we may assume $r=1/2$.   For $(1/2)<t<(3/4)$, let $u_0^t$ be the weak solution to
\begin{align*}
  \begin{cases}
    -\mathrm{div}(\widehat{A}(x)\nabla u_0^t)=f &\text{in }D_t\\
    u_0^t=u_\varepsilon&\text{on }\partial D_t,
  \end{cases}
\end{align*}
and set
\begin{align*}
  u_0=4\int_{1/2}^{3/4} u_0^tdt. 
\end{align*}
Note that $u_0$ satisfies the equation \eqref{eq.extu0} with $r = 1/2$. By Theorem \ref{conver_thm_rate_suboptimal}, for any $\alpha \in (0,1)$,
\begin{align*}
  \|u_\varepsilon-u_0^t\|_{L^2(D_{1/2})}\leq C\Lambda [\delta]_\alpha\Big(\sup_{k\geq 1} \frac{\e_k}{\e_{k-1}}\Big)^{\alpha\sigma}\{\|u_\varepsilon\|_{H^{1}(\partial D_t)}+\|f\|_{L^2(D_t)}\}
\end{align*}
for some $\sigma>0$ depending only on $d, \mu, \gamma $ and $L$. 
Using the co-area formula as in the proof of Theorem \ref{thm_approx}, we obtain
\begin{align*}
  \|u_\varepsilon-u_0\|_{L^2(D_{1/2})}^2
  &\le C^2\Lambda^2 [\delta]_\alpha^2 \Big(\sup_{k\geq 1} \frac{\e_k}{\e_{k-1}}\Big)^{2\alpha\sigma}\\&\quad\times\Big(\|u_\varepsilon\|_{L^2(D_{1})}^2+\|f\|_{L^2(D_{1})}^2+\|g\|_{C^1(\Delta_1)}^2\Big).
\end{align*}
This implies the desired estimate as in Theorem \ref{thm_approx}. 
\end{proof}

Now, similar to the interior estimates,  for $0<r\leq 1$, 
we define
\begin{align*}
  H(r; u_\varepsilon)=\frac{1}{r}\inf_{P\in \mathcal{P}}\bigg\{\bigg(\fint_{D_r}|u_\varepsilon-P|^2\bigg)^{1/2}+\|g-P\|_{C^{1, \gamma}(\Delta_r)}\bigg\}+r\bigg(\fint_{D_r}|f|^p\bigg)^{1/p},
\end{align*}
and
\begin{align*}
  h(r; u_\varepsilon)=|\nabla P_r|,
\end{align*}
where $P_r$ is the function achieving the infimum in $H(r; u_\varepsilon)$. As in Theorem \ref{thm_Lip_1step}, we have
\begin{lemma}\label{ext_lem_firststep}
  Suppose that $[\delta]_1<\infty$ and $0<\alpha\leq 1$. There exists a constant $0< c_0\leq 1$, depending on $d, \mu, p, [\delta]_0, [\delta]_1, \gamma $ and $L$, such that, if \eqref{cond_logseparation} holds for $k\geq 2$, then for $r\in[\varepsilon_1, 1]$,
  \begin{align*}
    H(r; u_\varepsilon)&\leq C_0 r^\lambda H(1; u_\varepsilon)+CM_0\bigg[\Big(\frac{\varepsilon_1}{r}\Big)^{\alpha\sigma}+r^\lambda+\sup_{k\geq 2}\bigg(\frac{\varepsilon_k}{\varepsilon_{k-1}}\bigg)^{\alpha\sigma}\bigg]\\&\qquad\times\{H(1; u_\varepsilon)+h(1; u_\varepsilon)\},
  \end{align*}
  where $M_0$ is given in Theorem \ref{thm_Lip_1step}, $\sigma$ is given in Lemma \ref{ext_lem_approx}, $\lambda$ and $C_0$ depend only on $d, \mu, p, [\delta]_0, \gamma$ and $L$, and $C$ depends additionally on $\alpha$ and $[\delta]_1$.
\end{lemma}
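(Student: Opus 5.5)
The plan is to follow the proof of Theorem \ref{thm_Lip_1step} verbatim, replacing balls by the boundary cylinders $D_r$ and interior approximation by Theorem \ref{ext_lem_approx}. Fix $r\in[\varepsilon_1,1/2]$ and let $u_0$ be the solution of \eqref{eq.extu0} given by Theorem \ref{ext_lem_approx}. Since the equation is unchanged by subtracting an affine function $P$ (only $f$ and $g$ change, and $\operatorname{div}(A^\varepsilon\nabla P)\neq0$ is the only issue), I would apply Theorem \ref{ext_lem_approx} to $u_\varepsilon-P_{2r}$ after absorbing the extra term $\operatorname{div}(A^\varepsilon\nabla P_{2r})$. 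Here I would proceed more cleanly: subtract only the constant part $b=P_{2r}(0)$, and bound $(\fint_{D_{2r}}|u_\varepsilon-b|^2)^{1/2}+\|g-b\|_{C^1(\Delta_{2r})}$ by $Cr\{H(2r)+h(2r)\}$, exactly as in \eqref{est.H-tr.long}. This gives
\[
\frac1r\Big(\fint_{D_r}|u_\varepsilon-u_0|^2\Big)^{1/2}\le C\Lambda[\delta]_\alpha\Big(\frac{\varepsilon_1}{r}+\sup_{k\ge2}\frac{\varepsilon_k}{\varepsilon_{k-1}}\Big)^{\alpha\sigma}\{H(2r)+h(2r)\}.
\]

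Next I need the boundary $C^{1,\beta}$ decay for the homogenized problem with Lipschitz $\widehat A$ (Theorem \ref{qual-homog_thm_hatA}, \eqref{est.DAhat}) in the $C^{1,\gamma}$ domain $D_r$ with $C^{1,\gamma}$ data. For $\theta\in(0,1/8)$ this reads
\[
\frac{1}{\theta r}\inf_{P}\Big\{\Big(\fint_{D_{\theta r}}|u_0-P|^2\Big)^{1/2}+\|g-P\|_{C^{1,\gamma}(\Delta_{\theta r})}\Big\}+\theta r\Big(\fint_{D_{\theta r}}|f|^p\Big)^{1/p}\le C\theta^{\beta}\big[\,\cdots\,\big],
\]
with $\beta=\min\{\gamma,1-d/p\}$. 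The bracket is the analogue of the right side of \eqref{est.C1a}, with $\|g-P_r\|_{C^{1,\gamma}(\Delta_r)}/r$ added and the term $r\|\nabla\widehat A\|_{L^\infty}h(r)$ kept explicitly. That term arises from freezing $\widehat A$ at $0$ and treating $\operatorname{div}((\widehat A-\widehat A(0))\nabla P_r)$ as a source. This is the standard boundary Schauder estimate applied to $u_0-P_r$, with constants depending on $d,\mu,p,\gamma,L$ and $[\delta]_0$. I expect this to be the main technical point, namely keeping the coefficient-oscillation term linear in $\|\nabla\widehat A\|_{L^\infty}\,r\,h(r)$ so that it is controlled by $M_0$.

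Choosing $\theta$ with $C\theta^\beta\le1/2$ and combining the two displays as in \eqref{est.H-tr.long} yields
\[
H(\theta r)\le\tfrac12H(r)+CM_0\Big[\Big(\frac{\varepsilon_1}{r}\Big)^{\alpha\sigma}+r+\sup_{k\ge2}\Big(\frac{\varepsilon_k}{\varepsilon_{k-1}}\Big)^{\alpha\sigma}\Big]\{H(2r)+h(2r)\}.
\]
The boundary term is harmless here, since $\|g-u_0\|$ vanishes on $\Delta_r$. The doubling properties \eqref{int-Lip_cond_Hh} for the boundary $H,h$ follow as in \cite{Shen_book}, and for the $g$-part they follow from the scale-invariance of the $C^{1,\gamma}$ norm.

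Lemma \ref{shen.lem} with $\kappa=\varepsilon_1$, $\omega_1(t)=t^{\alpha\sigma}$, $\omega_2(t)=t$ and $\gamma$ the supremum term, under \eqref{cond_logseparation}, then gives $\max_{\varepsilon_1\le r\le1}\{H+h\}\le C\{H(1)+h(1)\}$. Substituting this back and iterating along $r=\theta^{n}$ exactly as at the end of the proof of Theorem \ref{thm_Lip_1step} gives the claim with $\lambda=\log2/|\log\theta|$. The trivial range $r\in[\theta,1]$ is handled by doubling.
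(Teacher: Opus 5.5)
Your proposal is correct and follows the paper's proof. The paper's proof also combines Theorem \ref{ext_lem_approx} applied to $u_\varepsilon-b$ with $b$ a constant, the boundary $C^{1,\beta}$ decay for the homogenized problem with the term $r\|\nabla\widehat{A}\|_{L^\infty}h(r)$ kept explicit (so it is absorbed into $M_0$), and then Lemma \ref{shen.lem} plus the $\theta^n$ iteration exactly as in Theorem \ref{thm_Lip_1step}; just delete your opening aside about subtracting a full affine function, since $\operatorname{div}(A^\varepsilon\nabla P)$ is an oscillating, non-small term, and keep the constant subtraction you settle on.
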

\begin{proof}
    Using the boundary $C^{1, \gamma}$ estimates of the elliptic equations with Lipschitz coefficients, one can deduce as in Theorem \ref{thm_Lip_1step} that there exists a small constant $\theta\in (0, 1/8)$, depending only on $d, \mu, p, \|\nabla \widehat{A}\|_{L^\infty}, \gamma $ and $ L$, such that
  \begin{align*}
    &\frac{1}{\theta r}\inf_{P\in \mathcal{P}}\bigg\{\bigg(\fint_{D_{\theta r}}|u_0-P|^2\bigg)^{1/2}+\|g-P\|_{C^{1, \gamma}(\Delta_{\theta r})}\bigg\}+\theta r\bigg(\fint_{D_{\theta r}}|f|^p\bigg)^{1/p}\\
    &\leq \frac{1}{2r}\bigg(\fint_{D_{r}}|u_0-P_r|^2\bigg)^{1/2}+\frac{1}{2r}\|g-P_r\|_{C^{1, \gamma}(\Delta_r)}+\frac{r}{2}\bigg(\fint_{D_{r}}|f|^p\bigg)^{1/p}\\
    &\qquad\qquad+\frac{r}{2}\|\nabla_x\widehat{A}\|_{L^\infty(D_r)} h(r; u_\varepsilon)\\
    &\leq \frac{1}{2r}\bigg(\fint_{D_{r}}|u_\varepsilon-u_0|^2\bigg)^{1/2}+\frac{1}{2}H(r)+\frac{r}{2}\|\nabla_x\widehat{A}\|_{L^\infty(D_r)} h(r),
  \end{align*}
  which, together with Lemma \ref{ext_lem_approx}, gives
  \begin{align*}
    H(\theta r)&\leq C\Lambda [\delta]_\alpha\bigg(\frac{\varepsilon_1}{r}+\sup_{k\geq 2}\frac{\e_k}{\e_{k-1}}\bigg)^{\alpha\sigma}\{H(2r)+h(2r)\}\\
    &\qquad+\frac{1}{2}H(r)+\frac{r}{2}\|\nabla_x\widehat{A}\|_{L^\infty(D_r)} h(r).
  \end{align*}
  The rest of the proof is exactly the same as that of Theorem \ref{thm_Lip_1step}. 
\end{proof}

Based on Lemma \ref{ext_lem_firststep}, following the same procedures of Corollary \ref{int_coro_firststep}, Lemma \ref{int-Lip_prop_Hh} and Theorem \ref{thm_Hh_bound}, we arrive at
\begin{theorem}\label{ext_lem_Hh_bound}
     Let $c_0$ be the universal constant given by Lemma \ref{ext_lem_firststep}. Suppose that $[\delta]_{1+\rho}<\infty$ for some $\rho>0$ and that conditions \eqref{int-Lip_cond_sumbound} and \eqref{cond_logseparation_m} hold with $\alpha=\rho/2$ for each $m\geq 0$. There exists $K_0\in(1, 2]$, depending only on $d, \mu, p, [\delta]_0, \gamma $ and $L$, such that if $1<K\leq K_0$, then for any $r\in(0, 1]$,
  \begin{align*}
    H(r; u_\varepsilon)+h(r; u_\varepsilon)\leq C\{H(1; u_\varepsilon)+h(1; u_\varepsilon)\},
  \end{align*}
  where $C$ depends on $d, \mu, p, [\delta]_0, \gamma, L, \rho$ and $[\delta]_{1+\rho}$.
\end{theorem}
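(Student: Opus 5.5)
The plan is to follow the interior argument of Section~\ref{sec_iteration}, with Lemma~\ref{ext_lem_firststep} replacing Theorem~\ref{thm_Lip_1step} throughout.

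\textbf{Step 1: derivative estimate.} From Lemma~\ref{ext_lem_firststep} I will derive the boundary analogue of Corollary~\ref{int_coro_firststep}: for $r\in[\varepsilon_1,1]$,
\begin{equation*}
|h(r)-h(1)|\le C_0H(1)+CM_0\{H(1)+h(1)\}.
\end{equation*}
This needs the boundary version of \eqref{int-Lip_cond_Hh} for $H,h$ defined on $D_r$ with the boundary term $\|g-P\|_{C^{1,\gamma}(\Delta_r)}$. I expect that version to hold with $C$ depending only on $d,L$, since $D_r$ and $D_{2r}$ are comparable Lipschitz domains. The proof then sums $H(\theta^j)$ over $j$, using the log-separation condition \eqref{cond_logseparation} to control $(n+1)\sup_k(\varepsilon_k/\varepsilon_{k-1})^{\alpha\sigma}$.

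\textbf{Step 2: rescaling.} Set $v_m(x)=u_\varepsilon(\varepsilon_m x)$. This solves a boundary problem in $D(\varepsilon_m^{-1},\psi_{\varepsilon_m})$ with coefficients $\widetilde A$ as in Section~\ref{sec_rescaling}. Two facts make the rescaled problem admissible with the same parameters:
\begin{itemize}
\item $\psi_{\varepsilon_m}$ satisfies \eqref{ext_cond_psi} with the same $L$, because $\varepsilon_m\le 1$ only improves the H\"older seminorm;
\item $[\widetilde\delta]_\alpha\le (K+1)[\delta]_\alpha$ by \eqref{int-lip_es_tildedelta_bound}.
\end{itemize}
All constants in Step~1 ($c_0,\lambda,C_0,C$) depend monotonically on $[\delta]_0,[\delta]_1,\gamma,L$, so they stay uniform in $m$. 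Since $H(r;v_m)=\varepsilon_mH(\varepsilon_m r;u_\varepsilon)$ and likewise for $h$, rescaling back gives the exact analogue of Lemma~\ref{int-Lip_prop_Hh} on $[\varepsilon_{m+1},\varepsilon_m]$, with the same $M_m$ as in \eqref{int-Lip_def_MN}.

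\textbf{Step 3: iteration.} This is purely an argument about the sequences $H(\varepsilon_\ell)$ and $h(\varepsilon_\ell)$, so it carries over verbatim from Theorem~\ref{thm_Hh_bound}.
\begin{itemize}
\item Lemma~\ref{lem.sumMm} (it uses only $\delta$ and $\varepsilon$) gives $m_0$ with $64C_1^2\sum_{\ell\ge m_0}M_\ell\le1$.
\item Choose $K_0$ by \eqref{int-Lip_cond_K0}, now with the boundary $C_0$ and $\lambda$.
\item The inductive claim \eqref{int-Lip_H_claim}, summed with Lemma~\ref{int-Lip_lem_varepsilon}, gives \eqref{est.m0-infity}.
\item The finitely many scales $1\le\ell\le m_0$ are handled crudely by \eqref{est.1-m_0}, and intermediate radii by \eqref{est.in2scale}.
\end{itemize}

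The main obstacle is the boundary version of \eqref{int-Lip_cond_Hh}, specifically $|h(t)-h(s)|\le CH(2r)$. It requires the norm equivalence $|\nabla P|\lesssim r^{-1}(\fint_{D_r}|P|^2)^{1/2}$ for affine $P$, uniformly over the rescaled domains $D(r,\psi)$. I would prove this from the fact that $D_r$ contains a ball of radius $cr$, with $c$ depending on $L,d$. Together with the trivial monotonicity of the $C^{1,\gamma}$ boundary norm under restriction from $\Delta_{2r}$ to $\Delta_t$, this gives both inequalities in \eqref{int-Lip_cond_Hh}.
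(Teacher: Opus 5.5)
Your proposal is correct and follows the paper's route: the boundary analogue of Corollary \ref{int_coro_firststep} from Lemma \ref{ext_lem_firststep}, rescaling to obtain the analogue of Lemma \ref{int-Lip_prop_Hh} with the same $M_m$, and then the sequence iteration of Theorem \ref{thm_Hh_bound} with $K_0$ fixed by \eqref{int-Lip_cond_K0}. The paper only says ``following the same procedures''; your checks fill in details it leaves implicit. These are the invariance of \eqref{ext_cond_psi} under $\psi\mapsto\psi_{\varepsilon_m}$, the scaling identities for $H$ and $h$, and the boundary version of \eqref{int-Lip_cond_Hh} via a ball of radius $cr$ inside $D_r$.
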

This finally leads to the following boundary estimate (exactly Theorem \ref{thm.BdryLip}) by the same proofs as in Theorems \ref{int-lip_lem_Lip} and  \ref{thm.IntLip}.
\begin{theorem}\label{Lip-B}
    Assume that $[\delta]_{1+\rho}<\infty$ for some $\rho>0$. Let $u_\varepsilon$ be a weak solution to 
    \begin{equation*}
        -\mathrm{div}(A^\varepsilon(x)\nabla u_\varepsilon)=f \quad \text{in }D_R,\quad u_\varepsilon=g\ \ \text{on }\Delta_R,
    \end{equation*}
    where $0<R\leq 1$, $f\in L^p(B_{R})$ with $p>d$, $g\in C^{1, \gamma}(\Delta_R)$. Suppose that the scale-separation condition \eqref{int-lip_cond_separation_ratio} holds for some $N\geq 1$. Then there exists a constant $\nu>0$, depending only on $d, \mu, p, \rho, N, [\delta]_0, [\delta]_1, \gamma$ and $L$, such that whenever $\varepsilon_1\leq \nu$, we have
  \begin{align*}
    \| \nabla u_\e\|_{L^\infty(D_{R/2})} \leq C\bigg\{\bigg(\fint_{D_{R}}|\nabla u_\varepsilon|^2\bigg)^{1/2}+R\bigg(\fint_{D_{R}}|f|^p\bigg)^{1/p}+R^{-1}\|g\|_{C^{1, \gamma}(\Delta_R)}\bigg\},
  \end{align*}
  where $C$ depends only on $d, \mu, p, [\delta]_0, \gamma, L, \rho$ and $[\delta]_{1+\rho}$. 
\end{theorem}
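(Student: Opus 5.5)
The plan is to deduce the statement from Theorem \ref{ext_lem_Hh_bound}, in the same way that Theorem \ref{thm.IntLip} was deduced from Theorem \ref{int-lip_lem_Lip}. This requires two steps: first check the hypotheses of Theorem \ref{ext_lem_Hh_bound}, then convert the bound on $H+h$ into a pointwise gradient bound near the boundary.

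\textbf{Step 1: verifying the hypotheses.} Choose $\nu>0$ so small that $(1-\nu^{1/N})^{-1}\le K_0$ and $|\log\nu|\,\nu^{\alpha\sigma/N}\le c_0$, with $\alpha=\rho/2$. Here $K_0$, $c_0$ and $\sigma$ are the boundary versions from Theorem \ref{ext_lem_Hh_bound}, Lemma \ref{ext_lem_firststep} and Theorem \ref{ext_lem_approx}; they depend on $\gamma$ and $L$. Exactly as in the proof of Theorem \ref{thm.IntLip}, \eqref{int-lip_cond_separation_ratio} together with $\e_1\le\nu$ gives $\e_{k+1}/\e_k\le\nu^{1/N}$. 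This yields \eqref{int-Lip_cond_sumbound} with $K\le K_0$ and \eqref{cond_logseparation_m} for every $m\ge0$.

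\textbf{Step 2: reduction to a boundary point.} By the rescaling discussed at the start of Section \ref{sec_ext}, it suffices to take $R=1$. Rescaling preserves the constant $L$, and the rescaled $\e$-sequence still satisfies the conditions, as in Theorem \ref{int-lip_lem_Lip}.

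Fix $x_0\in D_{1/2}$ and let $t=\mathrm{dist}(x_0,\partial D_1)$, which we may take comparable to the distance to $\Delta_1$. If $t$ is of order one, the interior Theorem \ref{int-lip_lem_Lip} applies directly.

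Otherwise, let $z\in\Delta_{3/4}$ be the boundary point nearest $x_0$. Translate $z$ to the origin and apply Theorem \ref{ext_lem_Hh_bound} on $D_r(z)$ for $r\in(0,1/4]$, after the harmless normalization of $\psi$ at $z$. This gives $H(r)+h(r)\le C\{H(1/4)+h(1/4)\}$.

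Then apply the interior estimate of Theorem \ref{int-lip_lem_Lip} on the ball $B(x_0,t/2)$ to the function $u_\e-P_{2t}$. The coefficient is not constant, so $P_{2t}$ is not a solution. However, $-\mathrm{div}(A^\e\nabla(u_\e-P_{2t}))=f+\mathrm{div}(A^\e\nabla P_{2t})$, and the extra term is a divergence of a bounded field of size $h(2t)$. To avoid handling a divergence-form right-hand side, I would instead use the split
\[
|\nabla u_\e(x_0)|\le C\Big(\fint_{B(x_0,t/2)}|\nabla u_\e|^2\Big)^{1/2}+Ct\Big(\fint|f|^p\Big)^{1/p},
\]
which follows from the interior result. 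Then bound the local $L^2$ average of $\nabla u_\e$ by Caccioppoli on $D_{2t}(z)$:
\[
\Big(\fint_{D_{t}}|\nabla u_\e|^2\Big)^{1/2}\le C\{H(2t)+h(2t)\}.
\]
This is the boundary analogue of \eqref{est.Du-Hh}. It uses $u_\e-P$ with boundary data $g-P$, controlled by $\|g-P\|_{C^{1,\gamma}}$, which is built into $H$.

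\textbf{Step 3: closing the estimate.} Finally, as in \eqref{est.Hh-Du}, bound $H(1/4)+h(1/4)$ by the $L^2$ average of $\nabla u_\e$, the term in $f$, and $\|g\|_{C^{1,\gamma}(\Delta_1)}$. The key point is to choose $P(x)=g(0)+$ a tangential affine piece. This controls $\|g-P\|_{C^{1,\gamma}}$ by $\|g\|_{C^{1,\gamma}}$ and controls $\fint|u_\e-P|^2$ by Poincaré using $u_\e-g$.

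\textbf{Main obstacle.} The step I expect to be hardest is the boundary Caccioppoli and Poincaré comparison between $H+h$ and local gradient averages. There the $C^{1,\gamma}$ norm of $g-P$ must be handled uniformly across scales, including the translation of the base point along $\Delta$. Everything else transfers from Section \ref{sec_interior-lip} verbatim.
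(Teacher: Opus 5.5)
Your proposal is correct and follows the paper's route. The paper derives this theorem from Theorem \ref{ext_lem_Hh_bound} by repeating the hypothesis check of Theorem \ref{thm.IntLip} and the Caccioppoli/Poincar\'e conversion of Theorem \ref{int-lip_lem_Lip}; your patching of the boundary bound at the nearest boundary point $z$ with the interior estimate on $B(x_0,t/2)$ makes explicit what the paper leaves implicit, with one small correction: since you also invoke Theorem \ref{int-lip_lem_Lip}, $\nu$ must satisfy the interior thresholds $K_0, c_0$ as well as the boundary ones (take the smaller of the two).
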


Combining the interior and boundary estimates, we obtain the uniform Lipschitz regularity in $\Omega$.

\begin{theorem}\label{Lip-BB}
  Let $\Omega$ be a bounded $C^{1, \gamma}$ domain in $\R^d$, $f\in L^p(\Omega)$ with $p>d$, $g\in C^{1, \gamma}(\partial\Omega)$. Assume that $[\delta]_{1+\rho}<\infty$ for some $\rho>0$ and the separation condition \eqref{int-lip_cond_separation_ratio} holds for some $N\geq 1$. Let $u_\varepsilon$ be the solution to \eqref{eq_infinite}. There exists a constant $\nu>0$, depending only on $d, \mu, p, \rho, N, [\delta]_0, [\delta]_1$ and the $C^{1, \gamma}$ character of $\partial\Omega$, such that whenever $\varepsilon_1\leq \nu$, we have
  \begin{align*}
    \|\nabla u_\varepsilon\|_{L^\infty(\Omega)}\leq C\{\|f\|_{L^p(\Omega)}+\|g\|_{C^{1, \gamma}(\partial\Omega)}\},
  \end{align*}
  where $C$ depends only on $d, \mu, p, [\delta]_0, \rho, [\delta]_{1+\rho}$ and $\Omega$. 
\end{theorem}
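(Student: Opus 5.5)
The plan is the standard covering argument that combines the local interior estimate (Theorem \ref{thm.IntLip}, in the form of Theorem \ref{int-lip_lem_Lip}) with the local boundary estimate (Theorem \ref{Lip-B}), together with a global energy bound. First we normalize. Since $\Omega$ is bounded and $C^{1,\gamma}$, there is $r_0\in(0,1]$, depending only on the $C^{1,\gamma}$ character of $\partial\Omega$, with the following property. For every $z\in\partial\Omega$, after a rigid motion taking $z$ to $0$, we have $\Omega\cap B(z,Cr_0)=D(r_0,\psi)$ and $\partial\Omega\cap B(z,Cr_0)\supset\Delta(r_0,\psi)$, with $\psi$ satisfying \eqref{ext_cond_psi} for a fixed $L$. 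Rotations preserve the structure of $A^\e$ after a change of variables, but the periodicity lattice is rotated, so it is cleaner to avoid rotating. We can instead use that the proofs of Section \ref{sec_ext} only used the Lipschitz/$C^{1,\gamma}$ character of $D_r$ and the convergence-rate theorems, which hold in arbitrary Lipschitz domains. So we apply the boundary estimate in local graph domains in the original coordinates, up to possibly relabeling axes. This is the one point I expect needs care. The remedy I would try first is to note that Theorem \ref{conver_thm_rate_suboptimal} and Theorem \ref{ext_lem_approx} hold for any Lipschitz domain, so $D_r$ may be any rotated graph domain without changing $A^\e$.

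Second, the global energy estimate. Extend $g$ to $G\in C^{1,\gamma}(\overline\Omega)$ with $\|G\|_{C^{1,\gamma}(\Omega)}\le C\|g\|_{C^{1,\gamma}(\partial\Omega)}$. Testing the equation for $u_\e-G$ gives
\[
\|\nabla u_\e\|_{L^2(\Omega)}\le C\big(\|f\|_{L^2(\Omega)}+\|g\|_{C^{1,\gamma}(\partial\Omega)}\big)\le C\big(\|f\|_{L^p(\Omega)}+\|g\|_{C^{1,\gamma}(\partial\Omega)}\big),
\]
with $C$ depending only on $\mu$ and $\Omega$.

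Third, the covering. Take any $x\in\Omega$ and set $\rho=\operatorname{dist}(x,\partial\Omega)$. If $\rho\ge r_0/4$, apply Theorem \ref{int-lip_lem_Lip} (whose hypotheses are verified under $\e_1\le\nu$ exactly as in the proof of Theorem \ref{thm.IntLip}) in $B(x,r_0/4)$. This gives $|\nabla u_\e(x)|\le C r_0^{-d/2}\|\nabla u_\e\|_{L^2(\Omega)}+C\|f\|_{L^p}$. If $\rho<r_0/4$, choose $z\in\partial\Omega$ with $|x-z|=\rho$. Then $x\in D(r_0/2,\psi)$ centered at $z$, and Theorem \ref{Lip-B} with $R=r_0$ gives
\[
|\nabla u_\e(x)|\le C\Big\{r_0^{-d/2}\|\nabla u_\e\|_{L^2(\Omega)}+r_0^{1-d/p}\|f\|_{L^p(\Omega)}+r_0^{-1}\|g\|_{C^{1,\gamma}(\partial\Omega)}\Big\},
\]
where the scale-invariant norm on $\Delta_{r_0}$ is bounded by the global norm because $r_0\le1$.

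Finally, combine these bounds with the energy estimate. We take $\nu$ to be the minimum of the interior and boundary thresholds, so it depends on the listed parameters. Since $r_0$ depends only on $\Omega$, the constant depends only on $d,\mu,p,[\delta]_0,\rho,[\delta]_{1+\rho}$ and $\Omega$, as claimed. The main obstacle is the localization and coordinate issue in the first step. The estimates themselves are routine once the local theorems apply.
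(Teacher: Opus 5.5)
Your proposal is correct and follows the same route as the paper, which justifies this theorem only by the phrase ``combining the interior and boundary estimates'': a global energy bound via a $C^{1,\gamma}$ extension of $g$, then a covering of $\Omega$ by interior balls of radius comparable to $r_0$ (Theorem \ref{int-lip_lem_Lip}) and boundary regions $D(r_0,\psi)$ (Theorem \ref{Lip-B}), with $\nu$ taken as the smaller of the two thresholds. Your concern about rotations is harmless, because the proofs in Section \ref{sec_ext} use only the Lipschitz character of $D_t$, dilations about the boundary point, and regularity of the homogenized operator, and none of these involves the orientation of the periodicity lattice (a rotated coefficient is simply periodic with respect to $O\Z^d$, and every estimate is unchanged).
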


By a real-variable argument involving reverse H\"older inequalities (see e.g. \cite{Shen_book}), the uniform Lipschitz estimates in Theorems \ref{int-lip_lem_Lip} and \ref{Lip-B} imply the uniform $W^{1, p}$ estimates.

\begin{theorem}\label{W1p-1}
    Let $\Omega$ be a bounded $C^{1, \gamma}$ domain in $\R^d$.
    Suppose $\delta$ and $\e$ satisfy the same conditions as in Theorem \ref{Lip-BB}.
    For $f\in L^p(\Omega)$ with $1< p< \infty$, let $u_\e$ be the solution of the boundary value problem,
    \begin{equation*}
    \left\{
    \aligned
    -\text{\rm div} (A^\e (x) \nabla u_\e)
    &=\text{\rm div}(f) & \quad & \text{ in } \Omega,\\
    u_\e & =0 & \quad & \text{ on } \partial\Omega.
    \endaligned
    \right.
    \end{equation*}
    Then,
    \begin{equation*}
        \|\nabla u_\e \|_{L^p (\Omega)}
        \le C_p \| f \|_{L^p(\Omega)},
    \end{equation*}
    where $C_p$ depends only on $d, \mu, p, [\delta]_0, \rho, [\delta]_{1+\rho}$ and $\Omega$. 
\end{theorem}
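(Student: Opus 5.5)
The plan is to follow the standard real-variable route to $W^{1,p}$ estimates (as in \cite{Shen_book}), using the uniform Lipschitz estimates as the input. First, the case $p=2$ is just the energy estimate: testing with $u_\e$ gives $\|\nabla u_\e\|_{L^2(\Omega)}\le \mu^{-1}\|f\|_{L^2(\Omega)}$. Second, the range $1<p<2$ will follow from $2<p<\infty$ by duality. The adjoint matrix $(A^\e)^*$ has exactly the same multiscale structure, with $B_\ell^*$ satisfying \eqref{cond_B} with the same $\delta_\ell$. Hence Theorems \ref{int-lip_lem_Lip} and \ref{Lip-B} hold for the adjoint operator, and we can use $\int_\Omega \nabla u_\e\cdot G=-\int_\Omega f\cdot\nabla v_\e$, where $v_\e$ solves the adjoint problem with data $\mathrm{div}(G)$. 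So the real work is the case $p>2$.

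For $p>2$, I would apply the real-variable lemma of Shen (the Calderón–Zygmund-type argument with reverse Hölder, \cite[Theorem 4.2.3]{Shen_book}) with $F=|\nabla u_\e|^2$ and $q$ any exponent with $q/2>p/2$. Fix a ball $B=B(x_0,r)$ with $r$ small relative to $\mathrm{diam}(\Omega)$ and either $4B\subset\Omega$ or $x_0\in\partial\Omega$. Decompose $u_\e=v+w$ in $4B\cap\Omega$:
\begin{itemize}
\item $w$ solves $-\mathrm{div}(A^\e\nabla w)=\mathrm{div}(f\chi_{4B\cap\Omega})$ with $w=0$ on $\partial\Omega$, so the energy estimate gives $\fint|\nabla w|^2\le C\fint_{4B\cap\Omega}|f|^2$;
\item $v=u_\e-w$ solves the homogeneous equation in $4B\cap\Omega$ with $v=0$ on $4B\cap\partial\Omega$.
\end{itemize}

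The key step is the reverse Hölder inequality for $v$:
\[
\Big(\fint_{2B\cap\Omega}|\nabla v|^q\Big)^{1/q}\le \|\nabla v\|_{L^\infty(2B\cap\Omega)}\le C\Big(\fint_{4B\cap\Omega}|\nabla v|^2\Big)^{1/2}.
\]
In the interior case this is Theorem \ref{int-lip_lem_Lip}, which follows from Theorem \ref{thm.IntLip}. At the boundary it is Theorem \ref{Lip-B} with $f=0$ and $g=0$, after flattening the local graph representation of the $C^{1,\gamma}$ boundary. The scale restriction $R\le1$ in those theorems is harmless after covering $\Omega$ by finitely many balls of radius comparable to $\min(1,r_0)$, where $r_0$ comes from the $C^{1,\gamma}$ character. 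Since the constants there are uniform in $\e$ (given $\e_1\le\nu$ and \eqref{int-lip_cond_separation_ratio}), the lemma yields
\[
\|\nabla u_\e\|_{L^p(\Omega)}\le C\big(\|\nabla u_\e\|_{L^2(\Omega)}+\|f\|_{L^p(\Omega)}\big)\le C\|f\|_{L^p(\Omega)}.
\]

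The main subtlety I expect is checking that the rescalings implicit in applying the Lipschitz estimates at arbitrary centers and radii keep all hypotheses valid uniformly. Translation replaces $A(y_0,y_1,\dots)$ by $A(y_0+x_0,y_1+x_0/\e_1,\dots)$, which preserves \eqref{cond_B} and periodicity. The dilation argument was already verified in the proof of Theorem \ref{int-lip_lem_Lip} and Section \ref{sec_ext}. So the constants depend only on the listed quantities.
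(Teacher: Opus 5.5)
Your proposal is correct and is essentially the paper's argument. The paper simply invokes the real-variable (reverse H\"older) method of \cite{Shen_book}, with the uniform interior and boundary Lipschitz estimates of Theorems \ref{int-lip_lem_Lip} and \ref{Lip-B} as input, and your $v+w$ decomposition, the duality step for $1<p<2$ via the adjoint (which has the same multiscale structure with the same $\delta_\ell$), and the localization to radii $\le 1$ are exactly the standard details behind that one-line proof.

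One caution: do not literally flatten the boundary by a nonlinear change of variables, since that destroys the periodic multiscale structure of $A^\e$. Theorem \ref{Lip-B} is already stated on the curved graph domains $D_R$, so a translation and rotation suffice; the rotation at most replaces $\Z^d$ by a rotated lattice, which does not affect any of the estimates.
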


   Finally, we point out that with minor modifications of their proofs,
   one may extend the uniform estimates
   in Theorems \ref{Lip-BB} and \ref{W1p-1} to solutions with Neumann boundary conditions,
   under the same assumptions on $\e$, $\delta$, and $\Omega$.
 
\bibliographystyle{alpha}
\bibliography{ref}
\end{document}